\newtheorem{thm}{Theorem}[section]
\newtheorem{prop}[thm]{Proposition}
\newtheorem{lem}[thm]{Lemma}
\newtheorem{cor}[thm]{Corollary}
\theoremstyle{remark}
\newtheorem{rem}[thm]{Remark}
\theoremstyle{definition}
\newcommand*{\rom}[1]{\expandafter\@slowromancap\romannumeral #1@} 
\renewcommand{\phi}{\varphi} 
\newcommand{\diff}{\mbox{d}} 
\newcommand{\R}{\mathbb R} 
\newcommand{\nooutput}[1]{}
\begin{document}

\title[$C^{\infty }$-Regularization By Noise of Singular ODE's]{$C^{\infty }$-Regularization By Noise of Singular ODE's}
\date{\today}

\author[O. Amine]{Oussama Amine}
\address[Oussama Amine]{Department of Mathematics, University of Oslo, Moltke Moes vei 35, P.O. Box 1053 Blindern, 0316 Oslo, Norway.}
\email[]{oussamaa@math.uio.no}
\author[D. Ba\~{n}os]{David Ba\~{n}os}
\address{Department of Mathematics, University of Oslo, Moltke Moes vei 35, P.O. Box 1053 Blindern, 0316 Oslo, Norway.}
\email{davidru@math.uio.no}
\author[F. Proske]{Frank Proske}
\address{Department of Mathematics, University of Oslo, Moltke Moes vei 35, P.O. Box 1053 Blindern, 0316 Oslo, Norway.}
\email{proske@math.uio.no}

\keywords{Regularization by noise, singular SDE's, stochastic flows, Malliavin calculus, compactness criterion}
\subjclass[2010]{60H10, 49N60}


\begin{abstract}
In this paper we construct a new type of noise of fractional nature that has
a strong regularizing effect on differential equations. We consider an
equation with this noise with a highly irregular coefficient. We employ a
new method to prove existence and uniqueness of global strong solutions,
where classical methods fail because of the "roughness" and
non-Markovianity of the driving process. In addition, we prove the rather
remarkable property that such solutions are infinitely many times
classically differentiable with respect to the initial condition in spite of
the vector field being discontinuous. The technique used in this article corresponds to the Nash-Moser principle combined with a new concept of "higher order averaging operators along highly fractal stochastic curves". This approach may provide a general principle for the study of regularization by noise effects in connection with important classes of partial differential equations.
\end{abstract}

\maketitle

\section{Introduction}
Consider the ordinary differential equation (ODE)%
\begin{equation}
\frac{d}{dt}X_{t}^{x}=b(t,X_{t}^{x}),\quad X_{0}=x,\quad 0\leq t\leq T
\label{ODE}
\end{equation}%
for a vector field $b:[0,T]\times \mathbb{R}^{d}\longrightarrow \mathbb{R}%
^{d}$.

It is well-known that the ODE \eqref{ODE} admits the existence of a unique
solution $X_{t}$, $0\leq t\leq T$, if $b$ is a Lipschitz function of linear
growth, uniformly in time. Further, if in addition $b\in C^{k}([0,T]\times 
\mathbb{R}^{d};\mathbb{R}^{d})$, $k\geq 1$, then the flow associated with
the ODE \eqref{ODE} inherits the regularity from the vector field, that is 
\begin{align*}
(x\longmapsto X_{t}^{x})\in C^{k}(\mathbb{R}^{d};\mathbb{R}^{d}).
\end{align*}
However, well-posedness of the ODE \eqref{ODE} in the sense of existence,
uniqueness and the regularity of solutions or flow may fail, if the driving
vector field $b$ lacks regularity, that is if $b$ e.g. is not Lipschitzian
or discontinuous.

In this article we aim at studying the restoration of well-posedness of the
ODE \eqref{ODE} in the above sense by perturbing the equation via a specific
noise process $\mathbb{B}_{t}$, $0\leq t\leq T$, that is we are interested
to analyze strong solutions to the following stochastic differential
equation (SDE)%
\begin{equation} \label{SDE}
X_{t}^{x}=x+\int_{0}^{t}b(t,X_{s}^{x})ds+\mathbb{B}_{t},\quad 0\leq t\leq T,
\end{equation}%
where the driving process $\mathbb{B}_{t},$ $0\leq t\leq T$ is a stationary
Gaussian process with non-H\"{o}lder continuous paths given by 
\begin{equation} \label{B}
\mathbb{B}_{t}=\sum_{n\geq 1}\lambda _{n}B_{t}^{H_{n},n}.
\end{equation}%
Here $B_{\cdot }^{H_{n},n}$, $n\geq 1$ are independent fractional Brownian
motions in $\mathbb{R}^{d}$ with Hurst parameters $H_{n}\in (0,\frac{1}{2}%
),n\geq 1$ such that 
\begin{equation*}
H_{n}\searrow 0
\end{equation*}%
for $n\longrightarrow \infty $. Further, $\sum_{n\geq 1}\left\vert \lambda
_{n}\right\vert <\infty $ for $\lambda _{n}\in \mathbb{R},n\geq 1$.

In fact, on the other hand, the SDE (\ref{SDE}) can be also naturally recast
for $Y_{t}^{x}:=X_{t}^{x}-\mathbb{B}_{t}$ in terms of the ODE%
\begin{equation}
Y_{t}^{x}=x+\int_{0}^{t}b^{\ast }(t,Y_{s}^{x})ds,  \label{RODE}
\end{equation}%
where $b^{\ast }(t,y):=b(t,y+\mathbb{B}_{t})$ is a "randomization" of the
input vector field $b$.

We recall (for $d=1$) that a fractional Brownian motion $B_{\cdot }^{H}$
with Hurst parameter $H\in (0,1)$ is a centered Gaussian process on some
probability space with a covariance structure $R_{H}(t,s)$ given by 
\begin{equation*}
R_{H}(t,s)=E[B_{t}^{H}B_{s}^{H}]=\frac{1}{2}(s^{2H}+t^{2H}+\left\vert
t-s\right\vert ^{2H}),\quad t,s\geq 0.
\end{equation*}%
We mention that $B_{\cdot }^{H}$ has a version with H\"{o}lder continuous
paths with exponent strictly smaller than $H$. The fractional Brownian
motion coincides with the Brownian motion for $H=\frac{1}{2}$, but is
neither a semimartingale nor a Markov process, if $H\neq \frac{1}{2}$. 
We also recall here that a fractional Brownian motion $B_{\cdot }^{H}$ has a
representation in terms of a stochastic integral as%
\begin{equation}
B_{t}^{H}=\int_{0}^{t}K_{H}(t,u)dW_{u},  \label{Rep}
\end{equation}%
where $W_{\cdot }$ is a Wiener process and where $K_{H}(t,\cdot )$ is an
integrable kernel. See e.g. \cite{Nua10} and the references therein for more
information about fractional Brownian motion.

\bigskip

Using Malliavin calculus combined with integration-by-parts techniques based
on Fourier analysis, we want to show in this paper the existence of a unique
global strong solution $X_{\cdot }^{x}$ to \eqref{SDE} with a stochastic
flow which is \emph{smooth}, that is 
\begin{equation} \label{Smooth}
(x\longmapsto X_{t}^{x})\in C^{\infty }(\mathbb{R}^{d};\mathbb{R}^{d})\quad %
\mbox{a.e. for all}\quad t,
\end{equation}%
when the driving vector field $b$ is \emph{singular}, that is more
precisely, when 
\begin{equation*}
b\in \mathcal{L}_{2,p}^{q}:=L^{q}([0,T];L^{p}(\mathbb{R}^{d};\mathbb{R}%
^{d}))\cap L^{1}(\R^{d};L^{\infty }([0,T];\mathbb{R}^{d}))
\end{equation*}%
for $p,q\in (2,\infty ]$.

We think that the latter result is rather surprising since it seems to
contradict the paradigm in the theory of (stochastic) dynamical systems that
solutions to ODE's or SDE's inherit their regularity from the driving vector
fields.

Further, we expect that the regularizing effect of the noise in \eqref{SDE}
will also pay off dividends in PDE theory and in the study of dynamical
systems with respect to singular SDE's:

For example, if $X_{\cdot }^{x}$ is a solution to the ODE \eqref{ODE} on $%
[0,\infty )$, then $X:[0,\infty )\times \mathbb{R}^{d}\longrightarrow 
\mathbb{R}^{d}$ may have the interpretation of a flow of a fluid with
respect to the velocity field $u=b$ of an incompressible inviscid fluid,
which is described by a solution to an incompressible Euler equation%
\begin{align}  \label{Euler}
u_{t}+(Du)u+\triangledown P=0,\text{ }\triangledown \cdot u=0,
\end{align}
where $P:[0,\infty )\times \mathbb{R}^{d}\longrightarrow \mathbb{R}^{d}$ is
the pressure field.

Since solutions to \eqref{Euler} may be singular, a deeper analysis of the
regularity of such solutions also necessitates the study of ODE's \eqref{ODE}
with irregular vector fields. See e.g. Di Perna, Lions \cite{DPL89} or
Ambrosio \cite{ambrosio.04} in connection with the construction of
(generalized) flows associated with singular ODE's.

In the context of stochastic regularization of the ODE \eqref{ODE} in the
sense of \eqref{SDE}, however, the obtained results in this article
naturally give rise to the question, whether the constructed smooth
stochastic flow in \eqref{Smooth} may be used for the study of regular
solutions of a stochastic version of the Euler equation \eqref{Euler}.

Regarding applications to the theory of stochastic dynamical systems one may
study the behaviour of orbits with respect to solutions to SDE's \eqref{SDE}
with singular vector fields at sections on a $2$-dimensional sphere (Theorem
of Poincar\'{e}-Bendixson). Another application may pertain to stability
results in the sense of a modified version of the Theorem of Kupka-Smale 
\cite{Smale.63}. We mention that well-posedness in the sense of existence
and uniqueness of strong solutions to \eqref{ODE} via regularization of
noise was first found by Zvonkin \cite{Zvon74} in the early 1970ties in the
one-dimensional case for a driving process given by the Brownian motion,
when the vector field $b$ is merely bounded and measurable. Subsequently the
latter result, which can be considered a milestone in SDE theory, was
extended to the multidimensional case by Veretennikov \cite{Ver79}.

Other more recent results on this topic in the case of Brownian motion were
e.g. obtained by Krylov, R\"{o}ckner \cite{KR05}, where the authors
established existence and uniqueness of strong solutions under some
integrability conditions on $b$. See also the works of Gy\"{o}ngy, Krylov 
\cite{GyK96} and Gy\"{o}ngy, Martinez \cite{GyM01}. As for a generalization
of the result of Zvonkin \cite{Zvon74} to the case of stochastic evolution
equations on a Hilbert space, we also mention the striking paper of Da
Prato, Flandoli, Priola, R\"{o}ckner \cite{DPFPR13}, who constructed strong
solutions for bounded and measurable drift coefficients by employing
solutions of infinite-dimensional Kolmogorov equations in connection with a
technique known as the "It\^{o}-Tanaka-Zvonkin trick".

The common general approach used by the above mentioned authors for the
construction of strong solutions is based on the so-called Yamada-Watanabe
principle \cite{YW71}: The authors prove the existence of a weak solution
(by means of e.g. Skorokhod's or Girsanov's theorem) and combine it with the
property of pathwise uniqueness of solutions, which is shown by using
solutions to (parabolic) PDE's, to eventually obtain strong uniqueness. As
for this approach in the case of certain classes of L\'{e}vy processes the
reader may consult Priola \cite{Priola12} or Zhang \cite{Zhang13} and the
references therein.

Let us comment on here that the methods of the above authors, which are
essentially limited to equations with Markovian noise, cannot be directly
used in connection with our SDE \eqref{SDE}. The reason for this is that the
initial noise in \eqref{SDE} is not a Markov process. Furthermore, it is
even not a semimartingale due to the properties of a fractional Brownian
motion.

In addition, we point out that our approach is diametrically opposed to the
Yamada-Watanabe principle: We first construct a strong solution to %
\eqref{SDE} by using Mallliavin calculus. Then we verify uniqueness in law
of solutions, which enables us to establish strong uniqueness, that is we
use the following principle:

\begin{align*}
\fbox{Strong existence}+\text{\fbox{Uniqueness in law}}\Rightarrow \text{ 
\fbox{Strong uniqueness}.}
\end{align*}

\bigskip

Finally, let us also mention some results in the literature on the existence
and uniqueness of strong solutions of singular SDE's driven by a
non-Markovian noise in the case of fractional Brownian motion:

The first results in this direction were obtained by Nualart, Ouknine \cite%
{nualart.ouknine.02,nualart.ouknine.03} for one-dimensional SDE's with
additive noise. For example, using the comparison theorem, the authors in 
\cite{nualart.ouknine.02} are able to derive unique strong solutions to such
equations for locally unbounded drift coefficients and Hurst parameters $H<%
\frac{1}{2}$.

More recently, Catellier, Gubinelli \cite{CG} developed a construction
method for strong solutions of multi-dimensional singular SDE's with
additive fractional noise and $H\in (0,1)$ for vector fields $b$ in the
Besov-H\"{o}lder space $B_{\infty ,\infty }^{\alpha +1},\alpha \in \mathbb{R}
$. Here the solutions obtained are even \emph{path-by-path} in the sense of
Davie \cite{Da07} and the construction technique of the authors rely on the
Leray-Schauder-Tychonoff fixed point theorem and a comparison principle
based on an average translation operator.

Another recent result which is based on Malliavin techniques very similar to
our paper can be found in Ba\~{n}os, Nilssen, Proske \cite{BNP.17}. Here the
authors proved the existence of unique strong solutions for coefficients 
\begin{align*}
b\in L_{\infty ,\infty }^{1,\infty }:=L^{1}(\R^d;L^{\infty }([0,T];\mathbb{R}%
^{d}))\cap L^{\infty }(\R^d;L^{\infty }([0,T];\mathbb{R}^{d}))
\end{align*}
for sufficiently small $H\in (0,\frac{1}{2})$.

The approach in \cite{BNP.17} is different from the above mentioned ones and
the results for vector fields $b\in L_{\infty ,\infty }^{1,\infty }$ are not
in the scope of the techniques in \cite{CG}. See also \cite{BOPP.17} in the
case fractional noise driven SDE's with distributional drift.

\bigskip

Let us now turn to results in the literature on the well-posedness of
singular SDE's under the aspect of the regularity of stochastic flows:

If we assume that the vector field $b$ in the ODE \eqref{ODE} is not smooth,
but merely require that $b\in W^{1,p}$ and $\triangledown \cdot b\in
L^{\infty },$ then it was shown in \cite{DPL89} the existence of a unique
generalized flow $X$ associated with the ODE \eqref{ODE}. See also \cite%
{ambrosio.04} for a generalization of the latter result to the case of
vector fields of bounded variation.

On the other hand, if $b$ in ODE \eqref{ODE} is less regular than required 
\cite{DPL89,ambrosio.04}, then a flow may even not exist in a generalized
sense.

However, the situation changes, if we regularize the ODE \eqref{ODE} by an
(additive) noise:

For example, if the driving noise in the SDE \eqref{SDE} is chosen to be a
Brownian noise, or more precisely if we consider the SDE 
\begin{align*}
dX_{t}=u(t,X_{t})dt+dB_{t},\quad s,t\geq 0,\quad X_{s}=x\in \mathbb{R}^{d}
\end{align*}
with the associated stochastic flow $\varphi _{s,t}:\mathbb{R}%
^{d}\rightarrow \mathbb{R}^{d}$, the authors in \cite{MNP14} could prove for
merely bounded and measurable vector fields $b$ a regularizing effect of the
Brownian motion on the ODE \eqref{ODE} that is they could show that $\varphi
_{s,t}$ is a stochastic flow of Sobolev diffeomorphisms with 
\begin{align*}
\varphi _{s,t},\varphi _{s,t}^{-1}\in L^{2}(\Omega ;W^{1,p}(\mathbb{R}%
^{d};w))\text{ }
\end{align*}
for all $s,t$ and $p\in (1,\infty)$, where $W^{1,p}(\mathbb{R}^{d};w)$ is a
weighted Sobolev space with weight function $w:\mathbb{R}^{d}\rightarrow
\lbrack 0,\infty )$. Further, as an application of the latter result, which
rests on techniques similar to those used in this paper, the authors also
study solutions of a singular stochastic transport equation with
multiplicative noise of Stratonovich type.

Another work in this direction with applications to Navier-Stokes equations,
which invokes similar techniques as introduced in \cite{MNP14}, deals with
globally integrable $u\in L^{r,q}$ for $r/d+2/q<1$ ($r$ stands here for the
spatial variable and $q$ for the temporal variable). In this context, we
also mention the paper \cite{FedFlan.13}, where the authors present an
alternative method to the above mentioned ones based on solutions to
backward Kolmogorov equations. See also \cite{FedFlan10}. We also refer to 
\cite{Priola12} and \cite{Zhang13} in the case of $\alpha$-stable processes.

\bigskip

On the other hand if we consider a noise in the SDE \eqref{SDE}, which is
rougher than Brownian motion with respect to the path properties and given
by fractional Brownian motion for small Hurst parameters, one can even
observe a stronger regularization by noise effect on the ODE \eqref{ODE}:
For example, using Malliavin techniques very similar to those in our paper,
the authors in \cite{BNP.17} are able to show for vector fields $b\in
L_{\infty ,\infty }^{1,\infty }$ the existence of higher order Fr\'{e}chet
differentiable stochastic flows 
\begin{align*}
(x\mapsto X_{t}^{x})\in C^{k}(\mathbb{R}^{d})\quad \text{a.e. for all} \quad
t,
\end{align*}%
provided $H=H(k)$ is sufficient small.

Another work in connection with fractional Brownian motion is that of
Catellier, Gubinelli \cite{CG}, where the authors under certain conditions
obtain Lipschitz continuity of the associated stochastic flow for drift
coefficients $b$ in the Besov-H\"{o}lder space $B_{\infty,\infty }^{\alpha
+1},\alpha \in \mathbb{R}$.

\bigskip

We again stress that our approach for the construction of strong solutions
of singular SDE's \eqref{SDE} in connection with smooth stochastic flows is
not based on the Yamada-Watanabe principle or techniques from Markov or
semimartingale theory as commonly used in the literature. In fact, our
construction method has its roots in a series of papers \cite{MMNPZ10}, \cite%
{MBP06}, \cite{MBP10}, \cite{BNP.17}. See also \cite{HaaPros.14} in the case
of SDE's driven by L\'{e}vy processes, \cite{FNP.13}, \cite{MNP14} regarding
the study of singular stochastic partial differential equations or \cite%
{BOPP.17}, \cite{BHP.17} in the case of functional SDE's.

The method we aim at employing in this paper for the
construction of strong solutions rests on a compactness criterion for square
integrable functionals of a cylindrical Brownian motion from Malliavin
calculus, which is a generalization of that in \cite{DPMN92}, applied to
solutions $X_{\cdot }^{x,n}$ 
\begin{align*}
dX_{t}^{x,n}=b_{n}(t,X_{t}^{x,n})dt+d\mathbb{B}_{t},\quad
X_{0}^{x,n}=x,\quad n\geq 1,
\end{align*}
where $b_{n},n\geq 0$ are smooth vector fields converging to $b\in \mathcal{L%
}_{2,p}^{q}$. Then using variational techniques based on Fourier analysis,
we prove that $X_{t}^{x}$ as a solution to \eqref{SDE} is the strong $L^{2}-$%
limit of $X_{t}^{x,n}$ for all $t$.

To be more specific (in the case of time-homogeneous vector fields), we
"linearize" the problem of finding strong solutions by applying Malliavin
derivatives $D^{i}$ in the direction of Wiener processes $W^{i}$ with
respect to the corresponding representations of $B_{\cdot }^{H_{i},i}$ in (%
\ref{Rep}) in connection with (\ref{B}) and get the linear equation%
\begin{equation}
D_{t}^{i}X_{u}^{x,n}=\int_{t}^{u}b_{n}^{\shortmid
}(X_{s}^{x,n})D_{t}^{i}X_{s}^{x,n}ds+K_{H}(u,t)I_{d},0\leq t<u,n\geq 1,
\label{LinearD}
\end{equation}%
where $b_{n}^{\shortmid }$ denotes the spatial derivative of $b_{n}$, $K_{H}$
the kernel in (\ref{Rep}) and $I_{d}\in \mathbb{R}^{d\times d}$ the unit
matrix. Picard iteration then yields%
\begin{equation}
D_{t}^{i}X_{u}^{x,n}=K_{H}(u,t)I_{d}+\sum_{m\geq
1}\int_{t<s_{1}<...<s_{m}<u}b_{n}^{\shortmid
}(X_{s_{m}}^{x,n})...b_{n}^{\shortmid
}(X_{s_{1}}^{x,n})K_{H}(s_{1},t)I_{d}ds_{1}...ds_{m}.  \label{MD}
\end{equation}%
In a next step, in order to "get rid of" the derivatives of $b_{n}$ in (\ref%
{MD}), we use Girsanov's change of measure in connection with the following
"local time variational calculus" argument:%
\begin{equation}
\int_{0<s_{1}<...<s_{n}<t}\kappa (s)D^{\alpha }f(\mathbb{B}_{s})ds=\int_{%
\mathbb{R}^{dn}}D^{\alpha }f(z)L_{\kappa }^{n}(t,z)dz=(-1)^{\left\vert
\alpha \right\vert }\int_{\mathbb{R}^{dn}}f(z)D^{\alpha }L_{\kappa
}^{n}(t,z)dz,  \label{localtime}
\end{equation}%
for $\mathbb{B}_{s}:=(\mathbb{B}_{s_{1}},...,\mathbb{B}_{s_{n}})$ and smooth
functions $f:\mathbb{R}^{dn}\longrightarrow \mathbb{R}$ with compact
support, where $D^{\alpha }$ stands for a partial derivative of order $%
\left\vert \alpha \right\vert $ for a multi-index $\alpha $). Here, $%
L_{\kappa }^{n}(t,z)$ is a spatially differentiable local time of $\mathbb{B}%
_{\cdot }$ on a simplex scaled by non-negative integrable function $\kappa
(s)=$ $\kappa _{1}(s)...\kappa _{n}(s)$.

Using the latter enables us to derive upper bounds based on Malliavin
derivatives $D^{i}$ of the solutions in terms of continuous functions of $%
\left\Vert b_{n}\right\Vert _{\mathcal{L}_{2,p}^{q}}$, which we can use in
conncetion with a compactness criterion for square integrable functionals of
a cylindrical Brownian motion to obtain the strong solution as a $L^{2}-$%
limit of approximating solutions.

Based on similar previous arguments we also verify that the flow associated
with \eqref{SDE} for $b\in \mathcal{L}_{2,p}^{q}$ is smooth by using an
estimate of the form 
\begin{equation*}
\sup_{t}\sup_{x\in U}E\left[ \left\Vert \frac{\partial ^{k}}{\partial x^{k}}%
X_{t}^{x,n}\right\Vert ^{\alpha }\right] \leq C_{p,q,d,H,k,\alpha ,T}\left(
\left\Vert b_{n}\right\Vert _{\mathcal{L}_{2,p}^{q}}\right) ,n\geq 1
\end{equation*}%
for arbitrary $k\geq 1,$ where $C_{p,q,d,H,k,\alpha ,T}:[0,\infty
)\rightarrow \lbrack 0,\infty )$ is a continuous function, depending on $%
p,q,d,H=\{H_{n}\}_{n\geq 1},k,\alpha ,T$ for $\alpha \geq 1$ and $U\subset 
\mathbb{R}^{d}$ a fixed bounded domain. See Theorem \ref{VI_derivative}.

We also mention that the method used in this article significantly differs
from that in \cite{BNP.17} and related works, since the underlying noise of $%
\mathbb{B}_{\cdot }$ in \eqref{SDE} is of infinite-dimensional nature, that
is a cylindrical Brownian motion. The latter however, requires in this paper
the application of an infinite-dimensional version of the compactness
criterion in \cite{DPMN92} tailored to the driving noise $\mathbb{B}_{\cdot
} $.

It is crucial to note here that the above technique explained in the
case of perturbed ODE's of the form (\ref{SDE}) reveals or strongly hints at
a general principle, which could be used to study important classes of PDE's
in connection with conservation laws or fluid dynamics. In fact, we believe
that the following underlying principles may play a major role in the
analysis of solutions to\ PDE's:

\bigskip

\textbf{1.} \emph{Nash-Moser principle}: The idea of this principle, which
goes back to J. Nash \cite{Nash} and and J. Moser \cite{Moser}, can be
(roughly) explained as follows:

Assume a function $\Phi $ of class $C^{k}$. Then the Nash-Moser technique
pertains to the study of solutions $u$ to the equation%
\begin{equation}
\Phi (u)=\Phi (u_{0})+f,  \label{NMeq}
\end{equation}%
where $u_{0}\in C^{\infty }$ is given and where $f$ is a "small"
perturbation.

In the setting of our paper, the latter equation corresponds to the SDE (\ref%
{SDE}) with a (non-deterministic) perturbation given by $f=\mathbb{B}_{\cdot
}$ (or $\varepsilon \mathbb{B}_{\cdot }$ for small $\varepsilon >0$). Then,
using this principle, the problem of studying solutions to (\ref{NMeq}) is
"linearized" by analyzing solutions to the linear equation%
\begin{equation}
\Phi ^{\shortmid }(u)v=g,  \label{NMlinear}
\end{equation}%
where $\Phi ^{\shortmid }$ stands for the Fr\'{e}chet derivative of $\Phi $.
The study of the latter problem, however, usually comes along with a "loss
of derivatives", which can be measured by "tame" estimates based on a
(decreasing) family of Banach spaces $E_{s},0\leq s<\infty $ with norms $%
\left\vert \cdot \right\vert _{s}$ such that $\cap _{s\geq 0}E_{s}=C^{\infty
}$. Typically, $E_{s}=C^{s}$ (H\"{o}lder spaces) or $E_{s}=H^{s}$ (Sobolev
spaces).

In our situation, equation (\ref{NMlinear}) has its analogon in (\ref%
{LinearD}) with respect to the (stochastic Sobolev) derivative $D^{i}$ (or
the Fr\'{e}chet derivative $D$ in connection with flows).

Roughly speaking, in the case of H\"{o}lder spaces, assume that 
\begin{equation*}
\Phi ^{\shortmid }(u)\psi (u)=Id
\end{equation*}%
for a linear mapping $\psi (u)$, which satisfies the "tame" estimate:%
\begin{equation*}
\left\vert \psi (u)g\right\vert _{\alpha }\leq C(\left\vert g\right\vert
_{\alpha +\lambda }+\left\vert g\right\vert _{\lambda }(1+\left\vert
u\right\vert _{\alpha +r}))
\end{equation*}%
for numbers $\lambda ,r\geq 0$ and $\alpha \geq 0$. In addition, require a
similar estimate with respect to $\Phi ^{\shortmid \shortmid }(u)$. Then,
there exists in a certain neighbourhood $W$ of the origin such that for $%
f\in W$ equation (\ref{NMeq}) has a solution $u(f)\in C^{\alpha }$. Solution
here means that there exists a sequence $u_{j},j\geq 1$ in $C^{\infty }$
such that for all $\varepsilon >0$, $u_{j}\longrightarrow u$ in $C^{\alpha
-\varepsilon }$ and $\Phi (u_{j})\longrightarrow \Phi (u_{0})+f$ in $%
C^{\alpha +\lambda -\varepsilon }$ for $j\longrightarrow \infty $. The proof
of the latter result rests on a Newton approximation scheme and results from
Littlewood-Paley theory. See also \cite{AlG} and the references therein.

\bigskip

\textbf{2.} \emph{Signature of higher order averaging operators along a
highly fractal stochastic curve}: In fact another, but to the best of our
knowledge new principle, which comes into play in connection with our
technique for the study of perturbed ODE's, is the "extraction" of
information from "signatures" of \emph{higher order averaging operators}
along a highly irregular or fractal stochastic curve $\gamma _{t}=\mathbb{B}%
_{t}$ of the form%

\begin{eqnarray}
&&(T_{t}^{0,\gamma ,l_{1},...,l_{k}}(b)(x),T_{t}^{1,\gamma
,l_{1},...,l_{k}}(b)(x),T_{t}^{2,\gamma ,l_{1},...,l_{k}}(b)(x),...) \notag \\
&=&(I_{d},\int_{\mathbb{R}^{d}}b(x^{(1)}+z_{1})\Gamma _{\kappa
}^{1,l_{1},...,l_{k}}(z_{1})dz_{1},  \notag \\
&&\int_{\mathbb{R}^{2d}}b^{\otimes 2}(x^{(2)}+z_{2})\Gamma _{\kappa
}^{2,l_{1},...,l_{k}}(z_{2})dz_{2},\int_{\mathbb{R}^{3d}}b^{\otimes
3}(x^{(3)}+z_{3})\Gamma _{\kappa }^{3,l_{1},...,l_{k}}(z_{3})dz_{3},...) 
\notag \\
&\in &\mathbb{R}^{d\times d}\times \mathbb{R}^{d}\times \mathbb{R}^{d\times d}\times...
\label{S}
\end{eqnarray}%
where $b:\mathbb{R}^{d}\longrightarrow \mathbb{R}^{d}$ is a "rough", that is
a merely (locally integrable) Borel measurable vector field and 
\begin{equation*}
\Gamma _{\kappa }^{n,l_{1},...,l_{k}}(z_{n})=(D^{\alpha
^{j_{1},...,j_{n-1},j,l_{1},...,l_{k}}}L_{\kappa }^{n}(t,z_{n}))_{1\leq
j_{1},...,j_{n-1},j\leq d}
\end{equation*}%
for multi-indices $\alpha ^{j_{1},...,j_{n-1},j,l_{1},...,l_{k}}\in \mathbb{N%
}_{0}^{nd}$ of order $\left\vert \alpha
^{j_{1},...,j_{n-1},j,l_{1},...,l_{k}}\right\vert =n+k-1$ for all (fixed) $%
l_{1},...,l_{k}\in \{1,...,d\}$, $k\geq 0$ and $x^{(n)}:=(x,...,x)\in 
\mathbb{R}^{nd}$. Here $L_{\kappa }^{n}$ is the local time from (\ref%
{localtime}) and the multiplication of $b^{\otimes n}(z_{n})$ and $\Gamma
_{\kappa }^{n,l_{1},...,l_{k}}(z_{n})$ in the above signature is defined via tensor
contraction as%
\begin{equation*}
(b^{\otimes n}(z_{n})\Gamma _{\kappa
}^{n,l_{1},...,l_{k}}(z_{n}))_{ij}=\sum_{j_{1},...,j_{n-1}=1}^{d}(b^{\otimes
n}(z_{n}))_{ij_{1},...,j_{n-1}}(\Gamma _{\kappa
}^{n,l_{1},...,l_{k}}(z_{n}))_{j_{1},...,j_{n-1}j}, n\geq 2\text{.}
\end{equation*}%
If $k=0$, we simply set%
\begin{equation*}
T_{t}^{n,\gamma ,l_{1},...,l_{k}}(b)(x)=T_{t}^{n,\gamma }(b)(x)=\int_{%
\mathbb{R}^{d}}b(z)L_{\kappa }^{1}(t,z)dz
\end{equation*}%
for all $n\geq 1$.

The motivation for the concept (\ref{S}) for rough vector fields $b$ comes
from the integration by parts formula (\ref{localtime}) applied to each
summand of (\ref{MD}) (under a change of measure), which can be written in terms
of $T_{u}^{n,\gamma ,l_{1},...,l_{k}}(b)(x)$ for $k=1$.

Higher order derivatives $(D^{i})^{k}$ (or alternatively Fr\'{e}chet
derivatives $D^{k}$ of order $k$) in connection with (\ref{MD}) give rise to
the definition of operators $T_{u}^{n,\gamma ,l_{1},...,l_{k}}(b)(x)$ for
general $k\geq 1$ (see Section $5$).

For example, if $n=1$, $k=2$, $\kappa \equiv 1$, then we have for (smooth) $b
$ that 
\begin{eqnarray}
\int_{0}^{t}b^{\shortmid \shortmid }(x+\gamma _{s})ds
&=&\int_{0}^{t}b^{\shortmid \shortmid }(x+\mathbb{B}_{s})ds  \notag \\
&=&(\int_{\mathbb{R}^{d}}b(x^{(1)}+z_{1})(D^{2}L_{\kappa
}^{1}(t,z_{1}))_{l_{1},l_{2}}dz_{1})_{1\leq l_{1},l_{2}\leq d}  \notag \\
&=&(\int_{\mathbb{R}^{d}}b(x^{(1)}+z_{1})\Gamma _{\kappa
}^{1,l_{1},l_{2}}(z_{1})dz_{1})_{_{1\leq l_{1},l_{2}\leq d}}  \notag \\
&=&(T_{t}^{1,\gamma ,l_{1},l_{2}}(b)(x))_{1\leq l_{1},l_{2}\leq d}\in 
\mathbb{R}^{d}\otimes \mathbb{R}^{d}\text{.}  \label{Example}
\end{eqnarray}%
In the case, when $n=1$, $k=0$, $\kappa \equiv 1$ and $\gamma _{t}=B_{t}^{H}$
a fractional Brownian motion for $H<\frac{1}{2},$ the first order averaging
operator $T_{t}^{1,\gamma }$ along the curve $\gamma _{t}$ in (\ref{S})
coincides with that in Catellier, Gubinelli \cite{CG} given by%
\begin{equation*}
T_{t}^{\gamma }(b)(x)=\int_{0}^{t}b(x+B_{s}^{H})ds,
\end{equation*}%
which was used by the authors- as mentioned before- to study the
regularization effect of $\gamma _{t}$ on ODE's perturbed by such curves.
For example, if $b\in B_{\infty ,\infty }^{\alpha +1}$ (Besov-H\"{o}lder
space) with $\alpha >2-\frac{1}{2H}$, then the corresponding SDE (\ref{SDE})
driven by $B_{\cdot }^{H}$ admits a unique Lipschitz flow. The reason- and
this is important to mention here- why the latter authors "only" obtain
Lipschitz flows and not higher regularity is that they do not take into
account in their analysis information coming from higher order averaging
operators $T_{t}^{n,\gamma ,l_{1},...,l_{k}}$ for $n>1$, $k\geq 1$. Here in
this article, we rely in fact on the information based on such higher order
averaging operators to be able to study $C^{\infty }-$regularization effects
with respect to flows.

Let us also mention here that T. Tao, J. Wright \cite{TW} actually
introduced averaging operators of the type $T_{t}^{\gamma }$ along (smooth) 
\emph{deterministic} curves $\gamma _{t}$ for improving bounds of such
operators on $L^{p}$ along such curves. See also the recent work of 
\cite{Gressman} and the references therein.

On the other hand, in view of the possibility of a geometric study of the
regularity of solutions to ODE's or PDE's, it would be (motivated by (\ref%
{Example}) natural to replace the signatures in (\ref{S}) by the following
family of signatures for rough vector fields $b$:%
\begin{eqnarray*}
S_{t}^{n}(b)(x) &:&=(1,T_{t}^{n,\gamma }(b)(x),(T_{t}^{n,\gamma
,l_{1}}(b)(x))_{1\leq l_{1}\leq d},(T_{t}^{n,\gamma
,l_{1},l_{2}}(b)(x))_{1\leq l_{1},l_{2}\leq d},...) \\
&\in &T(\mathbb{R}^{d}):=\prod_{k\geq 0}(\otimes _{i=1}^{k}\mathbb{R}%
^{d}),n\geq 1,
\end{eqnarray*}%
where we use the convention $\otimes _{i=1}^{0}\mathbb{R}^{d}=\mathbb{R}$.
The space $T(\mathbb{R}^{d})$ becomes an associative algebra under tensor
multiplication. Then the regularity of solutions to ODE's or
PDE's can be analyzed by means of such signatures in connection with Lie
groups $\mathfrak{G}\subset T_{1}(\mathbb{R}^{d}):=\{(g_{0},g_{1},...)\in T(%
\mathbb{R}^{d}):g_{0}=1\}$. 

In this context, it would be conceivable to be able to derive a Chen-Strichartz
type of formula by means of $S_{t}^{n}(b)$ in connection with a
sub-Riemannian geometry for the study of flows.\ See \cite{Baudoin} and the
references therein.

\bigskip 

\textbf{3.} \emph{Removal of a "thin" set of "worst case" input data via
noisy perturbation}: As explained before well-posedness of the ODE (\ref{ODE}%
) can be restored by "randomization" or perturbation of the input vector
field $b$ in (\ref{RODE}). The latter suggests that this procedure
leads to a removal of a "thin" set of "worst case" input data, which do not
allow for regularization or the restoration of well-posedness. It would be
interesting here to develop methods for the measurement of the size of such
"thin" sets

\bigskip

The organization of our article is as follows: In Section \ref{frameset} we
discuss the mathematical framework of this paper. Further, in Section \ref%
{monstersection} we derive important estimates via variational techniques
based on Fourier analysis, which are needed later on for the proofs of the
main results of this paper. Section \ref{strongsol} is devoted to the
construction of unique strong solutions to the SDE \eqref{SDE}. Finally, in
Section \ref{flowsection} we show $C^{\infty }-$regularization by noise $%
\mathbb{B}_{\cdot }$ of the singular ODE \eqref{ODE}.

\subsection{Notation}

Throughout the article, we will usually denote by $C$ a generic constant. If 
$\pi$ is a collection of parameters then $C_{\pi}$ will denote a collection
of constants depending only on the collection $\pi$. Given differential
structures $M$ and $N$, we denote by $C_c^{\infty}(M;N)$ the space of
infinitely many times continuously differentiable function from $M$ to $N$
with compact support. For a complex number $z\in \mathbb{C}$, $\overline{z}$
denotes the conjugate of $z$ and $\boldsymbol{i}$ the imaginary unit. Let $E$
be a vector space, we denote by $|x|$, $x\in E$ the Euclidean norm. For a
matrix $A$, we denote $|A|$ its determinant and $\|A\|_\infty$ its maximum
norm.

\section{Framework and Setting}

\label{frameset}

In this section we recollect some specifics on Fourier analysis, shuffle
products, fractional calculus and fractional Brownian motion which will be
extensively used throughout the article. The reader might consult \cite%
{Mall97}, \cite{Mall78} or \cite{DOP08} for a general theory on Malliavin
calculus for Brownian motion and \cite[Chapter 5]{Nua10} for fractional
Brownian motion. For more detailed theory on harmonic analysis and Fourier
transform the reader is referred to \cite{grafakos.08}.

\subsection{Fourier Transform}

In the course of the paper we will make use of the Fourier transform. There
are several definitions in the literature. In the present article we have
taken the following: let $f\in L^1(\R^d)$ then we define its \emph{Fourier
tranform}, denoted it by $\widehat{f}$, by 
\begin{align}  \label{Fourier}
\widehat{f}(\xi) = \int_{\R^d} f(x) e^{-2\pi \boldsymbol{i} \langle
x,\xi\rangle_{\R^d}} dx, \quad \xi \in \R^d.
\end{align}

The above definition can be actually extended to functions in $L^2(\R^d)$
and it makes the operator $L^2(\R^d) \ni f \mapsto \widehat{f}\in L^2(\R^d)$
a linear isometry which, by polarization, implies 
\begin{equation*}
\langle \widehat{f},\widehat{g}\rangle_{L^2(\R^d)} = \langle f,g\rangle_{L^2(%
\R^d)},\quad f,g\in L^2(\R^d),
\end{equation*}
where 
\begin{equation*}
\langle f,g\rangle_{L^2(\R^d)} = \int_{\R^d} f(z)\overline{g(z)} dz,\quad
f,g\in L^2(\R^d).
\end{equation*}

\subsection{Shuffles}

\label{VI_shuffles}

Let $k\in \mathbb{N}$. For given $m_1,\dots, m_k\in \mathbb{N}$, denote 
\begin{equation*}
m_{1:j} := \sum_{i=1}^j m_i,
\end{equation*}
e.g. $m_{1:k} = m_1+\cdots +m_k$ and set $m_0:=0$. Denote by $S_{m}
=\{\sigma: \{1,\dots, m\}\rightarrow \{1,\dots,m\} \}$ the set of
permutations of length $m \in \mathbb{N}$. Define the set of \emph{shuffle
permutations} of length $m_{1:k} = m_1+\cdots m_k$ as 
\begin{equation*}
S(m_1,\dots, m_k) := \{\sigma\in S_{m_{1:k}}: \, \sigma(m_{1:i} +1)<\cdots
<\sigma(m_{1:i+1}), \, i=0,\dots,k-1\},
\end{equation*}
and the $m$-dimensional simplex in $[0,T]^m$ as 
\begin{equation*}
\Delta_{t_0,t}^m:=\{(s_1,\dots,s_m)\in [0,T]^m : \, t_0<s_1<\cdots <
s_m<t\}, \quad t_0,t\in [0,T], \quad t_0<t.
\end{equation*}
Let $f_i:[0,T] \rightarrow [0,\infty)$, $i=1,\dots,m_{1:k}$ be integrable
functions. Then, we have 
\begin{align}  \label{VI_shuffle}
\begin{split}
\prod_{i=0}^{k-1} \int_{\Delta_{t_0,t}^{m_i}} f_{m_{1:i}+1}(s_{m_{1:i}+1})
&\cdots f_{m_{1:i+1}}(s_{m_{1:i+1}}) ds_{m_{1:i}+1}\cdots ds_{m_{1:i+1}} \\
&= \sum_{\sigma^{-1}\in S(m_1,\dots, m_k)} \int_{\Delta_{t_0,t}^{m_{1:k}}}
\prod_{i=1}^{m_{1:k}} f_{\sigma(i)}(w_i) dw_1\cdots dw_{m_{1:k}}.
\end{split}%
\end{align}
The above is a trivial generalisation of the case $k=2$ where 
\begin{align}  \label{shuffleIntegral}
\begin{split}
\int_{\substack{ t_0<s_1\cdots <s_{m_1}<t  \\ t_0<s_{m_1+1}<\cdots
<s_{m_1+m_2}<t}} &\prod_{i=1}^{m_1+m_2} f_i(s_i) \, ds_1 \cdots ds_{m_1+m_2}
\\
&\hspace{-1cm}= \sum_{\sigma^{-1}\in S(m_1,m_2)} \int_{t_0<w_1<\cdots
<w_{m_1+m_2}<t} \prod_{i=1}^{m_1+m_2} f_{\sigma(i)} (w_i) dw_1\cdots
dw_{m_1+m_2}
\end{split}%
,
\end{align}
which can be for instance found in \cite{LCL.04}.

We will also need the following formula. Given indices $j_0,j_1,\dots,
j_{k-1}\in \mathbb{N}$ such that $1\leq j_i\leq m_{i+1}$, $i=1,\dots,k-1$
and we set $j_0:=m_1+1$. Introduce the subset $S_{j_1,\dots,j_{k-1}}(m_1,%
\dots,m_k)$ of $S(m_1,\dots, m_k)$ defined as 
\begin{align*}
S_{j_1,\dots, j_{k-1}}(m_1,\dots,m_k):=& \, \Big\{\sigma \in
S(m_1,\dots,m_k): \, \sigma(m_{1:i}+1)<\cdots <\sigma(m_{1:i} + j_i -1), \\
&\sigma(l)=l, \, m_{1:i} + j_i \leq l \leq m_{1:i+1} , \, i=0,\dots,k-1 %
\Big\}.
\end{align*}
We have 
\begin{align}  \label{VI_shuffle2}
\begin{split}
&\int_{\Delta_{t_0,t}^{m_k} \times
\Delta_{t_0,s_{m_{1:k-1}+j_{k-1}}}^{m_{k-1}} \times \cdots \times
\Delta_{t_0, s_{m_1+j_1}}^{m_1}} \prod_{i=1}^{m_{1:k}} f_i (s_i) \,
ds_1\cdots ds_{m_{1:k}} \\
& \hspace{1cm} = \int_{\substack{ t_0< s_1<\cdots <s_{m_1}< s_{m_1+j_1}  \\ %
t_0<s_{m_1+m_2+1}<\cdots < s_{m_1+m_2}< s_{m_1+m_2+j_2}  \\ \vdots  \\ %
t_0<s_{m_1+\cdots m_{k-1}+1}<\cdots <s_{m_1+\cdots +m_k}< t}}
\prod_{i=1}^{m_{1:k}} f_i (s_i) \, ds_1\cdots ds_{m_{1:k}} \\
& \hspace{1cm} = \sum_{\sigma^{-1}\in S_{j_1,\dots, j_{k-1}}(m_1,\dots,
m_k)} \int_{t_0<w_1<\cdots <w_{m_{1:k}}<t} \prod_{i=1}^{m_{1:k}}
f_{\sigma(i)}(w_i) \, dw_1\cdots dw_{m_{1:k}}.
\end{split}%
.
\end{align}

\begin{equation*}
\# S(m_1,\dots,m_k) = \frac{(m_1+\cdots+m_k)!}{m_1! \cdots m_k!},
\end{equation*}
where $\#$ denotes the number of elements in the given set. Then by using
Stirling's approximation, one can show that 
\begin{equation*}
\# S(m_1,\dots,m_k) \leq C^{m_1+\cdots+m_k}
\end{equation*}
for a large enough constant $C>0$. Moreover, 
\begin{equation*}
\# S_{j_1,\dots,j_{k-1}}(m_1,\dots,m_k) \leq \# S(m_1,\dots,m_k).
\end{equation*}


\subsection{Fractional Calculus}

\label{VI_fraccal} We pass in review here some basic definitions and
properties on fractional calculus. The reader may consult \cite%
{samko.et.al.93} and \cite{lizorkin.01} for more information about this
subject.

Suppose $a,b\in \R$ with $a<b$. Further, let $f\in L^{p}([a,b])$ with $p\geq
1$ and $\alpha >0$. Introduce the \emph{left-} and \emph{right-sided
Riemann-Liouville fractional integrals} by 
\begin{equation*}
I_{a^{+}}^{\alpha }f(x)=\frac{1}{\Gamma (\alpha )}\int_{a}^{x}(x-y)^{\alpha
-1}f(y)dy
\end{equation*}%
and 
\begin{equation*}
I_{b^{-}}^{\alpha }f(x)=\frac{1}{\Gamma (\alpha )}\int_{x}^{b}(y-x)^{\alpha
-1}f(y)dy
\end{equation*}%
for almost all $x\in \lbrack a,b]$, where $\Gamma $ stands for the Gamma
function.

Furthermore, for an integer $p\geq 1$, denote by $I_{a^{+}}^{\alpha }(L^{p})$
(resp. $I_{b^{-}}^{\alpha }(L^{p})$) the image of $L^{p}([a,b])$ of the
operator $I_{a^{+}}^{\alpha }$ (resp. $I_{b^{-}}^{\alpha }$). If $f\in
I_{a^{+}}^{\alpha }(L^{p})$ (resp. $f\in I_{b^{-}}^{\alpha }(L^{p})$) and $%
0<\alpha <1$ then we define the \emph{left-} and \emph{right-sided
Riemann-Liouville fractional derivatives} by 
\begin{equation*}
D_{a^{+}}^{\alpha }f(x)=\frac{1}{\Gamma (1-\alpha )}\frac{\diff}{\diff x}%
\int_{a}^{x}\frac{f(y)}{(x-y)^{\alpha }}dy
\end{equation*}%
and 
\begin{equation*}
D_{b^{-}}^{\alpha }f(x)=\frac{1}{\Gamma (1-\alpha )}\frac{\diff}{\diff x}%
\int_{x}^{b}\frac{f(y)}{(y-x)^{\alpha }}dy.
\end{equation*}

The above left- and right-sided derivatives of $f$ \ can be represented as
follows: 
\begin{equation*}
D_{a^{+}}^{\alpha }f(x)=\frac{1}{\Gamma (1-\alpha )}\left( \frac{f(x)}{%
(x-a)^{\alpha }}+\alpha \int_{a}^{x}\frac{f(x)-f(y)}{(x-y)^{\alpha +1}}%
dy\right) ,
\end{equation*}
\begin{equation*}
D_{b^{-}}^{\alpha }f(x)=\frac{1}{\Gamma (1-\alpha )}\left( \frac{f(x)}{%
(b-x)^{\alpha }}+\alpha \int_{x}^{b}\frac{f(x)-f(y)}{(y-x)^{\alpha +1}}%
dy\right) .
\end{equation*}

By construction one also finds the relations 
\begin{equation*}
I_{a^{+}}^{\alpha }(D_{a^{+}}^{\alpha }f)=f
\end{equation*}%
for all $f\in I_{a^{+}}^{\alpha }(L^{p})$ and 
\begin{equation*}
D_{a^{+}}^{\alpha }(I_{a^{+}}^{\alpha }f)=f
\end{equation*}%
for all $f\in L^{p}([a,b])$ and similarly for $I_{b^{-}}^{\alpha }$ and $%
D_{b^{-}}^{\alpha }$.

\subsection{Fractional Brownian motion}

Consider $d$-dimensional \emph{fractional Brownian motion }$%
B_{t}^{H}=(B_{t}^{H,(1)},...,B_{t}^{H,(d)}),$ $0\leq t\leq T$ with Hurst
parameter $H\in (0,1/2)$. So $B_{\cdot }^{H}$ is a centered Gaussian process
with covariance structure 
\begin{equation*}
(R_{H}(t,s))_{i,j}:=E[B_{t}^{H,(i)}B_{s}^{H,(j)}]=\delta _{i,j}\frac{1}{2}%
\left( t^{2H}+s^{2H}-|t-s|^{2H}\right) ,\quad i,j=1,\dots ,d,
\end{equation*}%
where $\delta _{i,j}=1$ if $i=j$ and $\delta _{i,j}=0$ otherwise.

One finds that $E[|B_{t}^{H}-B_{s}^{H}|^{2}]=d|t-s|^{2H}$. The latter
implies that $B_{\cdot }^{H}$ has stationary increments and H\"{o}lder
continuous trajectories of index $H-\varepsilon $ for all $\varepsilon \in
(0,H)$. In addition, one also checks that the increments of $B_{\cdot }^{H}$%
, $H\in (0,1/2)$ are not independent. This fact however, complicates the
study of e.g. SDE's driven by the such processes compared to the Wiener
setting. Another difficulty one is faced with in connection with such
processes is that they are not semimartingales, see e.g. \cite[Proposition
5.1.1]{Nua10}.

In what follows let us briefly discuss the construction of fractional
Brownian motion via an isometry. In fact, this construction can be done
componentwise. Therefore, for convenience we confine ourselves to the
one-dimensional case. We refer to \cite{Nua10} for further details.

Let us denote by $\mathcal{E}$ the set of step functions on $[0,T]$ and by $%
\mathcal{H}$ the Hilbert space, which is obtained by the closure of $%
\mathcal{E}$ with respect to the inner product 
\begin{equation*}
\langle 1_{[0,t]},1_{[0,s]}\rangle _{\mathcal{H}}=R_{H}(t,s).
\end{equation*}%
The mapping $1_{[0,t]}\mapsto B_{t}^{H}$ has an extension to an isometry
between $\mathcal{H}$ and the Gaussian subspace of $L^{2}(\Omega )$
associated with $B^{H}$. We denote the isometry by $\varphi \mapsto
B^{H}(\varphi )$.

The following result, which can be found in (see \cite[Proposition 5.1.3]%
{Nua10} ), provides an integral representation of $R_{H}(t,s)$, when $H<1/2$:

\begin{prop}
Let $H<1/2$. The kernel 
\begin{equation*}
K_H(t,s)= c_H \left[\left( \frac{t}{s}\right)^{H- \frac{1}{2}} (t-s)^{H- 
\frac{1}{2}} + \left( \frac{1}{2}-H\right) s^{\frac{1}{2}-H} \int_s^t u^{H-%
\frac{3}{2}} (u-s)^{H-\frac{1}{2}} du\right],
\end{equation*}
where $c_H = \sqrt{\frac{2H}{(1-2H) \beta(1-2H , H+1/2)}}$ being $\beta$ the
Beta function, satisfies 
\begin{align}  \label{VI_RH}
R_H(t,s) = \int_0^{t\wedge s} K_H(t,u)K_H(s,u)du.
\end{align}
\end{prop}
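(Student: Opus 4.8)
The plan is to verify the reproducing identity by reducing it to two explicit Beta-type integral evaluations. By the symmetry of both sides in $(t,s)$ together with continuity, it suffices to treat $0<s\le t$, so that $t\wedge s=s$ and $R_H(t,s)=\frac{1}{2}(t^{2H}+s^{2H}-(t-s)^{2H})$. Write $\Phi(t,s):=\int_0^s K_H(t,u)K_H(s,u)\,du$ for the right-hand side; the goal is $\Phi=R_H$ on $\{s\le t\}$. First I would record the differentiated form of the kernel: setting $a:=H-\frac12\in(-\frac12,0)$, a direct differentiation of the defining expression, in which the $t$-derivative of the first summand and the Leibniz term from the integral summand combine, gives the clean formula $\partial_t K_H(t,s)=c_H a\,(t/s)^{a}(t-s)^{a-1}$. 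This is precisely the point of the second summand in the definition of $K_H$: it is engineered so that $\partial_t K_H$ collapses to a single power.

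Since the upper limit $s$ of $\Phi$ does not depend on $t$, for fixed $s<t$ I can differentiate under the integral sign (the singularity $K_H(s,u)\sim c_H(s-u)^{a}$ at $u=s$ is integrable, so dominated convergence applies) to obtain $\partial_t\Phi(t,s)=\int_0^s \partial_t K_H(t,u)\,K_H(s,u)\,du$. The identity then reduces to matching this against $\partial_t R_H(t,s)=H\bigl(t^{2a}-(t-s)^{2a}\bigr)$. Inserting the explicit $\partial_t K_H$ and $K_H(s,u)$, and applying Fubini to the double integral arising from the integral summand of $K_H(s,u)$, reduces everything to the Euler integral $\int_0^1 \xi^{-2a}(1-\xi)^{a}(1-z\xi)^{a-1}\,d\xi$ (with $z$ equal to $s/t$ or to an inner integration variable over $t$), i.e. to a Gauss hypergeometric / Beta expression. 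The parameter values are special enough that the hypergeometric factor collapses to elementary powers, producing exactly $H(t^{2a}-(t-s)^{2a})$; this evaluation is the computational heart of the argument.

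Having matched $t$-derivatives, integrating in $t$ yields $\Phi(t,s)=R_H(t,s)+C(s)$ with $C(s)$ independent of $t$ for $t>s$. To pin $C(s)=0$ I would let $t\downarrow s$, using continuity of both sides (note $\partial_t R_H$ and $\partial_t\Phi$ blow up consistently like $(t-s)^{2a}$, yet their $t$-integrals converge), which gives $C(s)=\Phi(s,s)-s^{2H}$. By the scaling $K_H(\lambda t,\lambda u)=\lambda^{a}K_H(t,u)$ one has $\Phi(s,s)=s^{2H}\int_0^1 K_H(1,v)^2\,dv$, so it remains to verify the single normalisation $\int_0^1 K_H(1,v)^2\,dv=1$. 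This is again a Beta integral, and it is exactly the condition fixing $c_H=\sqrt{2H/((1-2H)\beta(1-2H,H+\frac12))}$; once it holds, $C(s)\equiv0$ and $\Phi=R_H$.

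The main obstacle is the Beta/hypergeometric evaluation in the second paragraph: keeping track of the several gamma factors and verifying that the specific parameters force the $\,{}_2F_1$ to degenerate to the elementary combination $t^{2a}-(t-s)^{2a}$, a secondary difficulty being the careful handling of the integrable diagonal singularity of $K_H(s,\cdot)$. A technically cleaner but notationally heavier alternative, better suited to the fractional-calculus framework of Section~\ref{VI_fraccal}, is to express $s^{1/2-H}K_H(t,\cdot)$ through a right-sided Riemann--Liouville operator $I_{t^-}^{H+1/2}$ applied to a power, and then to deduce $R_H(t,s)=\langle K_H(t,\cdot),K_H(s,\cdot)\rangle_{L^2}$ from the semigroup law $I^{\alpha}I^{\beta}=I^{\alpha+\beta}$ and the fractional integration-by-parts relation between $I_{a^+}^\alpha$ and $I_{b^-}^\alpha$; this route avoids explicit hypergeometrics at the cost of justifying the operator identities on the relevant singular function classes.
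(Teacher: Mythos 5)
The first thing to note is that the paper does not prove this proposition at all: it is quoted from \cite[Proposition 5.1.3]{Nua10}, whose derivation (going back to Decreusefond--\"Ust\"unel and Al\`os--Mazet--Nualart) runs through the fractional-calculus representation of the Hilbert space $\mathcal{H}=I_{T^-}^{1/2-H}(L^2)$. Your direct verification is therefore a genuinely different, self-contained route, and I confirm that its core works. With $a=H-\frac12$ your differentiated-kernel formula is correct, $\partial_tK_H(t,s)=c_H\,a\,(t/s)^a(t-s)^{a-1}$, and the ``computational heart'' does collapse exactly as you predict: all integrands are positive, so Tonelli justifies the Fubini step, and substituting $u=v\xi$ gives
\begin{equation*}
\int_0^v u^{-2a}(v-u)^a(t-u)^{a-1}\,du=v^{1-a}t^{a-1}\,\beta(1-2a,1+a)\,{}_2F_1\bigl(1-a,1-2a;2-a;v/t\bigr),
\end{equation*}
after which the combination $zF(z)-a\int_0^zF(w)\,dw$ (with $F(z)={}_2F_1(1-a,1-2a;2-a;z)$, $z=s/t$) differentiates, via $\frac{d}{dz}\bigl[z^{1-a}F(z)\bigr]=(1-a)z^{-a}\,{}_2F_1(1-a,1-2a;1-a;z)=(1-a)z^{-a}(1-z)^{2a-1}$, to an elementary power. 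Matching against $\partial_tR_H=H\bigl(t^{2a}-(t-s)^{2a}\bigr)$ then reduces, using $\Gamma(2-2H)=(1-2H)\Gamma(1-2H)$ and $\Gamma(\tfrac52-H)=(\tfrac32-H)\Gamma(\tfrac32-H)$, to the single condition $c_H^2(1-2H)\beta(1-2H,H+\tfrac12)=2H$, which is precisely the stated constant; both sides vanish at $z=0$, so the derivative match is complete. Your dominated-convergence appeals are also easily discharged by your own formula: $\partial_tK_H<0$, so $0\le K_H(t,u)\le K_H(s,u)\in L^2(0,s)$, which covers both differentiation under the integral and the continuity of $\Phi(t,s)$ as $t\downarrow s$.

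Two points deserve sharpening. First, the closing normalisation $\int_0^1K_H(1,v)^2\,dv=1$ is heavier than ``again a Beta integral'': squaring the two-term kernel yields, besides $\beta(2-2H,2H)$ from the first summand, a cross term and a genuine triple integral, whose reduction needs the same hypergeometric manipulations a second time (or the fractional-calculus route). It is classical and does come out to $1$, but it is work of the same order as your main step, not a one-line evaluation. Second, your remark that this normalisation ``is exactly the condition fixing $c_H$'' misstates the logic: as shown above, the derivative-matching step \emph{already} forces the same value of $c_H$, so your proof in fact requires two independent evaluations that must return one and the same constant. They do --- this consistency is why the proposition is true --- but both must actually be executed; neither can be waved through as merely defining $c_H$. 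Finally, your alternative sketch through $I_{T^-}^{1/2-H}$, the semigroup law and fractional integration by parts is, in substance, the proof behind the paper's citation; it trades your hypergeometric bookkeeping for operator-domain bookkeeping and is the shorter path if one is willing to quote the machinery of Section \ref{VI_fraccal} wholesale, whereas your route has the virtue of being elementary and entirely explicit.
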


The kernel $K_{H}$ also has a representation in terms of a fractional
derivative as follows 
\begin{equation*}
K_{H}(t,s)=c_{H}\Gamma \left( H+\frac{1}{2}\right) s^{\frac{1}{2}-H}\left(
D_{t^{-}}^{\frac{1}{2}-H}u^{H-\frac{1}{2}}\right) (s).
\end{equation*}

Let us now introduce a linear operator $K_{H}^{\ast }:\mathcal{E}\rightarrow
L^{2}([0,T])$ by 
\begin{equation*}
(K_{H}^{\ast }\varphi )(s)=K_{H}(T,s)\varphi (s)+\int_{s}^{T}(\varphi
(t)-\varphi (s))\frac{\partial K_{H}}{\partial t}(t,s)dt
\end{equation*}%
for every $\varphi \in \mathcal{E}$. We see that $(K_{H}^{\ast
}1_{[0,t]})(s)=K_{H}(t,s)1_{[0,t]}(s)$. From this and \eqref{VI_RH} we
obtain that $K_{H}^{\ast }$ is an isometry between $\mathcal{E}$ and $%
L^{2}([0,T])$ which has an extension to the Hilbert space $\mathcal{H}$.

For a $\varphi \in \mathcal{H}$ one proves the following representations for 
$K_{H}^{\ast }$: 
\begin{equation*}
(K_{H}^{\ast }\varphi )(s)=c_{H}\Gamma \left( H+\frac{1}{2}\right) s^{\frac{1%
}{2}-H}\left( D_{T^{-}}^{\frac{1}{2}-H}u^{H-\frac{1}{2}}\varphi (u)\right)
(s),
\end{equation*}
\begin{align*}
(K_{H}^{\ast }\varphi )(s)=& \,c_{H}\Gamma \left( H+\frac{1}{2}\right)
\left( D_{T^{-}}^{\frac{1}{2}-H}\varphi (s)\right) (s) \\
& +c_{H}\left( \frac{1}{2}-H\right) \int_{s}^{T}\varphi (t)(t-s)^{H-\frac{3}{%
2}}\left( 1-\left( \frac{t}{s}\right) ^{H-\frac{1}{2}}\right) dt.
\end{align*}

On the other hand one also gets the relation $\mathcal{H}=I_{T^{-}}^{\frac{1%
}{2}-H}(L^{2})$ (see \cite{decreu.ustunel.98} and \cite[Proposition 6]%
{alos.mazet.nualart.01}).

Using the fact that $K_{H}^{\ast }$ is an isometry from $\mathcal{H}$ into $%
L^{2}([0,T])$, the $d$-dimensional process $W=\{W_{t},t\in \lbrack 0,T]\}$
given by 
\begin{equation*}
W_{t}:=B^{H}((K_{H}^{\ast })^{-1}(1_{[0,t]}))
\end{equation*}%
is a Wiener process and the process $B^{H}$ can be represented as 
\begin{equation}
B_{t}^{H}=\int_{0}^{t}K_{H}(t,s)dW_{s}\text{.}
\label{VI_BHW}
\end{equation}%
See \cite{alos.mazet.nualart.01}.

In the sequel, we denote by $W_{\cdot }$ a standard Wiener process on a
given probability space endowed with the natural filtration generated by $W$
augmented by all $P$-null sets. Further, $B_{\cdot }:=B_{\cdot }^{H}$ stands
for the fractional Brownian motion with Hurst parameter $H\in (0,1/2)$ given
by the representation \eqref{VI_BHW}.

In the following, we need a version of Girsanov's theorem for fractional
Brownian motion which goes back to \cite[Theorem 4.9]{decreu.ustunel.98}.
Here we state the version given in \cite[Theorem 3.1]{nualart.ouknine.02}.
In preparation of this, we introduce an isomorphism $K_{H}$ from $%
L^{2}([0,T])$ onto $I_{0+}^{H+\frac{1}{2}}(L^{2})$ associated with the
kernel $K_{H}(t,s)$ in terms of the fractional integrals as follows, see 
\cite[Theorem 2.1]{decreu.ustunel.98} 
\begin{equation*}
(K_{H}\varphi )(s)=I_{0^{+}}^{2H}s^{\frac{1}{2}-H}I_{0^{+}}^{\frac{1}{2}%
-H}s^{H-\frac{1}{2}}\varphi ,\quad \varphi \in L^{2}([0,T]).
\end{equation*}

Using the latter and the properties of the Riemann-Liouville fractional
integrals and derivatives, one finds that the inverse of $K_{H}$ is given by 
\begin{equation} \label{opK_H-1}
(K_{H}^{-1}\varphi )(s)=s^{\frac{1}{2}-H}D_{0^{+}}^{\frac{1}{2}-H}s^{H-\frac{%
1}{2}}D_{0^{+}}^{2H}\varphi (s),\quad \varphi \in I_{0+}^{H+\frac{1}{2}%
}(L^{2}).
\end{equation}

Hence, if $\varphi $ is absolutely continuous, see \cite{nualart.ouknine.02}%
, one can prove that 
\begin{equation} \label{VI_inverseKH}
(K_{H}^{-1}\varphi )(s)=s^{H-\frac{1}{2}}I_{0^{+}}^{\frac{1}{2}-H}s^{\frac{1%
}{2}-H}\varphi ^{\prime }(s),\quad a.e.
\end{equation}

\begin{thm}[Girsanov's theorem for fBm]
\label{VI_girsanov} Let $u=\{u_t, t\in [0,T]\}$ be an $\mathcal{F}$-adapted
process with integrable trajectories and set $\widetilde{B}_t^H = B_t^H +
\int_0^t u_s ds, \quad t\in [0,T].$ Assume that

\begin{itemize}
\item[(i)] $\int_0^{\cdot} u_s ds \in I_{0+}^{H+\frac{1}{2}} (L^2 ([0,T]))$, 
$P$-a.s.

\item[(ii)] $E[\xi_T]=1$ where 
\begin{equation*}
\xi_T := \exp\left\{-\int_0^T K_H^{-1}\left( \int_0^{\cdot} u_r
dr\right)(s)dW_s - \frac{1}{2} \int_0^T K_H^{-1} \left( \int_0^{\cdot} u_r
dr \right)^2(s)ds \right\}.
\end{equation*}
\end{itemize}

Then the shifted process $\widetilde{B}^H$ is an $\mathcal{F}$-fractional
Brownian motion with Hurst parameter $H$ under the new probability $%
\widetilde{P}$ defined by $\frac{d\widetilde{P}}{dP}=\xi_T$.
\end{thm}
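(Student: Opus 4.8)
The plan is to reduce the assertion to the classical Girsanov theorem for the driving Wiener process $W$, by transporting the drift $\int_0^\cdot u_s\,ds$ through the kernel operator $K_H$. First I would exploit assumption (i): since $\int_0^\cdot u_s\,ds\in I_{0+}^{H+\frac12}(L^2([0,T]))$ $P$-a.s. and $K_H$ is an isomorphism from $L^2([0,T])$ onto $I_{0+}^{H+\frac12}(L^2)$, the process
\[
\widetilde u:=K_H^{-1}\Big(\int_0^\cdot u_r\,dr\Big)
\]
is well defined and lies in $L^2([0,T])$ $P$-a.s. Applying the representation \eqref{VI_inverseKH} to the absolutely continuous primitive $\varphi=\int_0^\cdot u_r\,dr$, whose derivative is $\varphi'=u$, I get $\widetilde u(s)=s^{H-\frac12}\big(I_{0+}^{\frac12-H}[\,r^{\frac12-H}u_r\,]\big)(s)$, which is $\mathcal F$-adapted because the left-sided integral $I_{0+}^{\frac12-H}$ is a causal operator. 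From $K_HK_H^{-1}=\mathrm{Id}$ and the definition of $K_H$ this yields the key identity $\int_0^t u_s\,ds=(K_H\widetilde u)(t)=\int_0^t K_H(t,s)\widetilde u(s)\,ds$.

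Next I would set $\widetilde W_t:=W_t+\int_0^t \widetilde u(s)\,ds$, so that $d\widetilde W_s=dW_s+\widetilde u(s)\,ds$, and combine the representation \eqref{VI_BHW} with the identity just obtained to write
\[
\widetilde B_t^H=B_t^H+\int_0^t u_s\,ds=\int_0^t K_H(t,s)\,dW_s+\int_0^t K_H(t,s)\widetilde u(s)\,ds=\int_0^t K_H(t,s)\,d\widetilde W_s .
\]
Thus the shifted fractional Brownian motion $\widetilde B^H$ is obtained from the shifted Wiener process $\widetilde W$ through exactly the same kernel representation that produces $B^H$ from $W$.

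I would then apply the classical Girsanov theorem to the adapted drift $\widetilde u$. Substituting $\widetilde u=K_H^{-1}(\int_0^\cdot u_r\,dr)$ shows that the exponential density in the statement equals $\xi_T=\exp\{-\int_0^T\widetilde u(s)\,dW_s-\frac12\int_0^T\widetilde u(s)^2\,ds\}$. Assumption (ii), namely $E[\xi_T]=1$, guarantees that $\xi_T$ is a genuine probability density, so that $\widetilde P$ defined by $\tfrac{d\widetilde P}{dP}=\xi_T$ is a probability measure and, by the Brownian Girsanov theorem, $\widetilde W$ is a standard $d$-dimensional $\mathcal F$-Wiener process under $\widetilde P$. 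Feeding $\widetilde W$ back into $\widetilde B_t^H=\int_0^t K_H(t,s)\,d\widetilde W_s$ and invoking that this kernel transform of a Wiener process is, by the isometry $K_H^{\ast}$ and \eqref{VI_RH}, a fractional Brownian motion with Hurst parameter $H$, I conclude that $\widetilde B^H$ is an $\mathcal F$-fractional Brownian motion with Hurst parameter $H$ under $\widetilde P$.

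The main obstacle I anticipate lies in the two technical points behind the first step: verifying that $\widetilde u$ is genuinely $\mathcal F$-adapted (so that the classical Girsanov theorem is applicable — this hinges on the causality of $I_{0+}^{\frac12-H}$ and on the validity of formula \eqref{VI_inverseKH} for the absolutely continuous primitive of $u$), and justifying that the transport identity $\int_0^t K_H(t,s)\widetilde u(s)\,ds=(K_H\widetilde u)(t)$ holds on the full $P$-probability-one event where (i) is satisfied. Since the construction of fractional Brownian motion is performed componentwise, the reduction to the one-dimensional case causes no difficulty, and the remainder is a direct transcription of the Brownian Girsanov theorem through the kernel representation \eqref{VI_BHW}.
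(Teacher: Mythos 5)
Your argument is correct and is essentially the standard proof of this result: the paper itself states Theorem \ref{VI_girsanov} without proof, citing \cite{decreu.ustunel.98} and \cite{nualart.ouknine.02}, and the kernel-transport reduction you describe (writing $\widetilde{B}^H_t=\int_0^t K_H(t,s)\,d\widetilde{W}_s$ with $\widetilde{W}_t=W_t+\int_0^t K_H^{-1}(\int_0^\cdot u_r\,dr)(s)\,ds$ and invoking the classical Brownian Girsanov theorem) is precisely the argument in those references and the one the authors reuse verbatim in their proof of Theorem \ref{girsanov} for the regularizing process $\mathbb{B}^H$. The technical points you flag (adaptedness of $K_H^{-1}(\int_0^\cdot u_r\,dr)$ via the causality of $I_{0+}^{\frac12-H}$, and the identification of the operator $K_H$ with the integral operator of kernel $K_H(t,s)$) are exactly the ones settled in \cite[Theorem 2.1]{decreu.ustunel.98}, so no gap remains.
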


\begin{rem}
For the multidimensional case, define 
\begin{equation*}
(K_H \varphi)(s):= ( (K_H \varphi^{(1)} )(s), \dots, (K_H
\varphi^{(d)})(s))^{\ast}, \quad \varphi \in L^2([0,T];\R^d),
\end{equation*}
where $\ast$ denotes transposition. Similarly for $K_H^{-1}$ and $K_H^{\ast}$%
.
\end{rem}

Finally, we mention a crucial property of the fractional Brownian motion
which was proven by \cite{pitt.78} for general Gaussian vector fields.

Let $m\in \mathbb{N}$ and $0=:t_0<t_1<\cdots <t_m<T$. Then for every $%
\xi_1,\dots, \xi_m\in \R^d$ there exists a positive finite constant $C>0$
(depending on $m$) such that 
\begin{align}  \label{VI_SLND}
Var\left[ \sum_{j=1}^m \langle\xi_j, B_{t_j}^H-B_{t_{j-1}}^H\rangle_{\R^d}%
\right] \geq C \sum_{j=1}^m |\xi_j|^2 E\left[|B_{t_j}^H-B_{t_{j-1}}^H|^2%
\right].
\end{align}

The above property is known as the \emph{local non-determinism} property of
the fractional Brownian motion. A stronger version of the local
non-determinism, which we want to make use of in this paper and which is
referred to as \emph{two sided strong local non-determinism} in the
literature, is also satisfied by the fractional Brownian motion: There
exists a constant $K>0$, depending only on $H$ and $T$, such that for any $%
t\in \lbrack 0,T]$, $0<r<t$, 
\begin{equation}
Var\left[ B_{t}^{H}|\ \{B_{s}^{H}:|t-s|\geq r\}\right] \geq Kr^{2H}.
\label{2sided}
\end{equation}%
The reader may e.g. consult \cite{pitt.78} or \cite{xiao.11} for more
information on this property.

\section{A New Regularizing Process}

\label{monstersection}

Throughout this article we operate on a probability space $(\Omega ,%
\mathfrak{A},P)$ equipped with a filtration $\mathcal{F}:=\{\mathcal{F}%
_{t}\}_{t\in \lbrack 0,T]}$ where $T>0$ is fixed, generated by a process $%
\mathbb{B}_{\cdot }=\mathbb{B}_{\cdot }^{H}=\{\mathbb{B}_{t}^{H},t\in
\lbrack 0,T]\}$ to be defined later and here $\mathfrak{A}:=\mathcal{F}_{T}$.

Let $H=\{H_{n}\}_{n\geq 1}\subset (0,1/2)$ be a sequence of numbers such
that $H_{n}\searrow 0$ for $n\longrightarrow \infty $. Also, consider $%
\lambda =\{\lambda _{n}\}_{n\geq 1}\subset \R$ a sequence of real numbers
such that there exists a bijection 
\begin{equation}
\{n:\lambda _{n}\neq 0\}\rightarrow \mathbb{N}  \label{lambdacond}
\end{equation}%
and 
\begin{equation}
\sum_{n=1}^{\infty }|\lambda _{n}|\in (0,\infty ).  \label{lambdacond2}
\end{equation}

Let $\{W_{\cdot }^{n}\}_{n\geq 1}$ be a sequence of independent $d$%
-dimensional standard Brownian motions taking values in $\R^{d}$ and define
for every $n\geq 1$, 
\begin{equation} \label{compfBm}
B_{t}^{H_{n},n}=\int_{0}^{t}K_{H_{n}}(t,s)dW_{s}^{n}=\left(
\int_{0}^{t}K_{H_{n}}(t,s)dW_{s}^{n,1},\dots
,\int_{0}^{t}K_{H_{n}}(t,s)dW_{s}^{n,d}\right) ^{\ast }.
\end{equation}

By construction, $B_{\cdot }^{H_{n},n}$, $n\geq 1$ are pairwise independent $%
d$-dimensional fractional Brownian motions with Hurst parameters $H_{n}$.
Observe that $W_{\cdot }^{n}$ and $B_{\cdot }^{H_{n},n}$ generate the same
filtrations, see \cite[Chapter 5, p. 280]{Nua10}. We will be interested in
the following stochastic process 
\begin{equation}
\mathbb{B}_{t}^{H}=\sum_{n=1}^{\infty }\lambda _{n}B_{t}^{H_{n},n},\quad
t\in \lbrack 0,T].  \label{monster}
\end{equation}

Finally, we need another technical condition on the sequence $\lambda
=\{\lambda _{n}\}_{n\geq 1}$, which is used to ensure continuity of the
sample paths of $\mathbb{B}_{\cdot }^{H}$: 
\begin{equation}
\sum_{n=1}^{\infty }|\lambda _{n}|E\left[ \sup_{0\leq s\leq
1}|B_{s}^{H_{n},n}|\right] <\infty ,  \label{contcond}
\end{equation}%
where $\sup_{0\leq s\leq 1}|B_{s}^{H_{n},n}|\in L^{1}(\Omega )$ indeed, see
e.g. \cite{berman.89}.

The following theorem gives a precise definition of the process $\mathbb{B}%
_{\cdot }^{H}$ and some of its relevant properties.

\begin{thm}
\label{monsterprocess} Let $H=\{H_{n}\}_{n\geq 1}\subset (0,1/2)$ be a
sequence of real numbers such that $H_{n}\searrow 0$ for $n\longrightarrow
\infty $ and $\lambda =\{\lambda _{n}\}_{n\geq 1}\subset \R$ satisfying %
\eqref{lambdacond}, \eqref{lambdacond2} and \eqref{contcond}. Let $%
\{B_{\cdot }^{H_{n},n}\}_{n=1}^{\infty }$ be a sequence of $d$-dimensional
independent fractional Brownian motions with Hurst parameters $H_{n}$, $%
n\geq 1$, defined as in \eqref{compfBm}. Define the process 
\begin{equation*}
\mathbb{B}_{t}^{H}:=\sum_{n=1}^{\infty }\lambda _{n}B_{t}^{H_{n},n},\quad
t\in \lbrack 0,T],
\end{equation*}%
where the convergence is $P$-a.s. and $\mathbb{B}_{t}^{H}$ is a well defined
object in $L^{2}(\Omega )$ for every $t\in \lbrack 0,T]$.

Moreover, $\mathbb{B}_t^H$ is normally distributed with zero mean and
covariance given by 
\begin{equation*}
E[\mathbb{B}_t^H (\mathbb{B}_s^H)^\ast] = \sum_{n=1}^{\infty} \lambda_n^2
R_{H_n}(t,s)I_d,
\end{equation*}
where $\ast$ denotes transposition, $I_d$ is the $d$-dimensional identity
matrix and $R_{H^n}(t,s):= \frac{1}{2}\left(s^{2H_n}+t^{2H_n}-|t-s|^{2H_n}%
\right)$ denotes the covariance function of the components of the fractional
Brownian motions $B_t^{H_n,n}$.

The process $\mathbb{B}_{\cdot }^{H}$ has stationary increments. It does not
admit any version with H\"{o}lder continuous paths of any order. $\mathbb{B}%
_{\cdot }^{H}$ has no finite $p$-variation for any order $p>0$, hence $%
\mathbb{B}_{\cdot }^{H}$ is not a semimartingale. It is not a Markov process
and hence it does not possess independent increments.

Finally, under condition \eqref{contcond}, $\mathbb{B}_{\cdot }^{H}$ has $P$%
-a.s. continuous sample paths.
\end{thm}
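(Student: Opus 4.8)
The plan is to dispatch the distributional (``soft'') claims first and then the genuinely pathwise ones. For convergence I would estimate second moments: by the covariance of $B^{H_n,n}$ one has $E[|\lambda_n B_t^{H_n,n}|^2]=d\lambda_n^2 t^{2H_n}$, and since $t^{2H_n}\le\max(1,T)$ uniformly on $[0,T]$ while $\sum_n\lambda_n^2<\infty$ (a consequence of \eqref{lambdacond2}, as $\lambda_n\to0$ forces $\lambda_n^2\le|\lambda_n|$ eventually), the partial sums of \eqref{monster} are Cauchy in $L^2(\Omega;\R^d)$, giving the $L^2$-object. Each partial sum is a finite linear combination of independent centered Gaussian vectors, hence centered Gaussian, and centered Gaussian laws are closed under $L^2$-limits (characteristic functions converge), so $\mathbb{B}_t^H$ is centered Gaussian. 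The covariance is computed termwise: independence of the $B^{H_n,n}$ kills all cross terms, leaving $E[\mathbb{B}_t^H(\mathbb{B}_s^H)^\ast]=\sum_n\lambda_n^2 R_{H_n}(t,s)I_d$. Stationarity of increments is inherited termwise, since for any shift $\tau$ the covariance $E[(\mathbb{B}_{t+\tau}^H-\mathbb{B}_\tau^H)(\mathbb{B}_{s+\tau}^H-\mathbb{B}_\tau^H)^\ast]=\sum_n\lambda_n^2 E[(B_{t+\tau}^{H_n,n}-B_\tau^{H_n,n})(B_{s+\tau}^{H_n,n}-B_\tau^{H_n,n})^\ast]$ is $\tau$-independent by stationarity of each $B^{H_n,n}$. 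For a.s. convergence and continuity I would invoke \eqref{contcond}: by self-similarity $\sup_{0\le s\le T}|B_s^{H_n,n}|\stackrel{d}{=}T^{H_n}\sup_{0\le s\le1}|B_s^{H_n,n}|$ with $T^{H_n}\le\max(1,T)$, so $E[\sum_n|\lambda_n|\sup_{0\le s\le T}|B_s^{H_n,n}|]<\infty$, whence $\sum_n|\lambda_n|\sup_{s\le T}|B_s^{H_n,n}|<\infty$ a.s.; the Weierstrass $M$-test then yields a.s. uniform convergence of \eqref{monster} on $[0,T]$, and a uniform limit of continuous paths is continuous.

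The pathwise roughness carries the real content, and the driving mechanism is a single lower bound: by the covariance formula and stationarity,
\begin{equation*}
\sigma^2(h):=E\big[|\mathbb{B}_{t+h}^H-\mathbb{B}_t^H|^2\big]=d\sum_{n\ge1}\lambda_n^2|h|^{2H_n}\ge d\lambda_{n_0}^2|h|^{2H_{n_0}}
\end{equation*}
for any fixed $n_0$ with $\lambda_{n_0}\ne0$. For the non-H\"older claim, fix $\gamma>0$ and, using $H_n\searrow0$, pick $n_0$ with $H_{n_0}<\gamma$. If some modification had a.s. H\"older-$\gamma$ paths, its (random) H\"older seminorm $K$ would be a.s. finite, so $P(K\le M)\ge\frac12$ for some $M$; at a fixed $t$ this forces $P(|\mathbb{B}_{t+h}^H-\mathbb{B}_t^H|\le M|h|^\gamma)\ge\frac12$ for all small $h$. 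But the left side equals $P(|N(0,1)|\le M|h|^\gamma/\sigma(h))$ with $M|h|^\gamma/\sigma(h)\lesssim|h|^{\gamma-H_{n_0}}\to0$, so this probability tends to $0$, a contradiction. For the $p$-variation I would use dyadic partitions of $[0,T]$: for fixed $p>0$ choose $n_0$ with $H_{n_0}<1/p$, so that $E\big[\sum_k|\mathbb{B}^H_{(k+1)2^{-N}}-\mathbb{B}^H_{k2^{-N}}|^p\big]=2^N c_{p,d}\big(\sum_n\lambda_n^2 2^{-2NH_n}\big)^{p/2}\gtrsim2^{N(1-pH_{n_0})}\to\infty$. \emph{The main obstacle} is upgrading this divergence of expectations to the almost-sure statement $V_p(\mathbb{B}^H)=\infty$: I would either combine the Gaussian zero--one law (finiteness of $p$-variation is membership in a measurable linear subspace of path space, hence has probability $0$ or $1$) with a variance/concentration estimate on the dyadic sums to rule out probability $1$ for $\{V_p<\infty\}$, or exploit the two-sided strong local non-determinism \eqref{2sided} of the rough component to decorrelate the increments and run Borel--Cantelli. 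Since $p>0$ is arbitrary, in particular the quadratic variation is infinite, so $\mathbb{B}^H$ is not a continuous semimartingale.

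Finally, for the absence of the Markov property and of independent increments I would again work from $R(s,t)=\sum_n\lambda_n^2 R_{H_n}(s,t)$. A centered Gaussian process is Markov iff its covariance factorizes as $R(s,u)R(t,t)=R(s,t)R(t,u)$ for $s\le t\le u$; I would exhibit a triple of times where this identity fails for $\sum_n\lambda_n^2 R_{H_n}$ — it already fails for a single fBm with $H_{n_0}<\frac12$, and the positivity of the weights $\lambda_n^2$ precludes a cancellation restoring it — giving non-Markovianity. The stated implication then yields no independent increments, since a process with independent increments is necessarily Markov; alternatively this is seen directly, as for disjoint intervals the increment covariance $\sum_n\lambda_n^2 E[(B_t^{H_n,n}-B_s^{H_n,n})(B_v^{H_n,n}-B_u^{H_n,n})]$ is a sum of strictly negative fBm increment correlations (here each $H_n<\frac12$), hence nonzero.
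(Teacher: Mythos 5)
Your treatment of the distributional claims ($L^2$ well-posedness via $\sum_n\lambda_n^2<\infty$, Gaussianity through characteristic functions, termwise covariance and stationarity by independence) coincides with the paper's. For almost sure convergence and continuity you take a genuinely different and more elementary route: the paper invokes Kolmogorov's three-series theorem for pointwise a.s.\ convergence and then Belyaev's dichotomy for separable, stochastically continuous processes with stationary increments (citing \cite[Theorem 5.3.10]{MR.06}) to produce a continuous \emph{version}, whereas your Weierstrass $M$-test from \eqref{contcond} plus self-similarity gives a.s.\ \emph{uniform} convergence of \eqref{monster} directly, so the process itself (built from continuous versions of each $B^{H_n,n}$) has continuous paths; both arguments rest on the same moment bound $\sum_n|\lambda_n|T^{H_n}E[\sup_{s\le 1}|B_s^{H_n,n}|]<\infty$. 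Your non-H\"older proof via $\sigma^2(h)\geq d\lambda_{n_0}^2|h|^{2H_{n_0}}$ and the small-ball contradiction is a genuine argument for what the paper dismisses with ``trivially \dots cancellations are not possible,'' and it correctly uses \eqref{lambdacond} to find $\lambda_{n_0}\neq 0$ with $H_{n_0}<\gamma$.

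Two soft spots remain, of different severity. First, the $p$-variation upgrade you flag is indeed needed but closable along the lines you sketch: for $p\geq 1$, $S_N^{1/p}$ is a norm of a Gaussian vector, so Borell--TIS gives concentration at scale $\sigma(2^{-N})\to 0$ around a median of order $(E S_N)^{1/p}\to\infty$, whence $S_N\to\infty$ in probability and a.s.\ along a subsequence; monotonicity of $p$-variation in $p$ handles small $p$. But repair the final inference: infinite sup-over-partitions $2$-variation does \emph{not} preclude semimartingality (Brownian motion already has a.s.\ infinite true $2$-variation), so either show the dyadic quadratic sums diverge in probability (contradicting convergence of quadratic variation for semimartingales), or take $p>2$ and use that continuous semimartingales have finite $p$-variation for every $p>2$; the same repair is implicitly needed in the paper's ``hence.'' Second, and this is the one real gap: ``the positivity of the weights $\lambda_n^2$ precludes a cancellation'' is an assertion, not an argument. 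The Gaussian Markov criterion $R(s,u)R(t,t)=R(s,t)R(t,u)$ is multiplicative, so termwise failure with positive weights does not transfer to the sum; you must verify failure at a concrete triple $s<t<u$ or in an asymptotic regime. (The paper is equally silent here -- its proof says only ``one also argues\dots'' -- so you are not below its bar, but a proof must supply this check.) Your direct computation showing strictly negative correlation of disjoint increments, via strict concavity of $x\mapsto x^{2H_n}$, is correct and rigorously establishes the failure of independent increments on its own, independently of the Markov question.
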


\begin{proof}
One can verify, employing Kolmogorov's three series theorem, that the series
converges $P$-a.s. and we easily see that 
\begin{equation*}
E[|\mathbb{B}_t^H|^2] = d\sum_{n=1}^{\infty}\lambda_n^2 t^{2H_n}\leq d(1+t)
\sum_{n=1}^{\infty}\lambda_n^2<\infty,
\end{equation*}
where we used that $x^{\alpha}\leq 1+x$ for all $x\geq 0$ and any $\alpha\in[%
0,1]$.

The Gaussianity of $\mathbb{B}_{t}^{H}$ follows simply by observing that for
every $\theta \in \R^{d}$, 
\begin{equation*}
E\left[ \exp \left\{ \boldsymbol{i}\langle \theta ,\mathbb{B}_{t}^{H}\rangle
_{\R^{d}}\right\} \right] =e^{-\frac{1}{2}\sum_{n=1}^{\infty
}\sum_{j=1}^{d}\lambda _{n}^{2}t^{2H_{n}}\theta ^{2}},
\end{equation*}%
where we used the independence of $B_{t}^{H_{n},n}$ for every $n\geq 1$. The
covariance formula follows easily again by independence of $B_{t}^{H_{n},n}$.

The stationarity follows by the fact that $B^{H_n,n}$ are independent and
stationary for all $n\geq 1$.

The process $\mathbb{B}_{\cdot }^{H}$ could \emph{a priori} be very
irregular. Since $\mathbb{B}_{\cdot }^{H}$ is a stochastically continuous
separable process with stationary increments, we know by \cite[Theorem 5.3.10%
]{MR.06} that either $\mathbb{B}^{H}$ has $P$-a.s. continuous sample paths
on all open subsets of $[0,T]$ or $\mathbb{B}^{H}$ is $P$-a.s. unbounded on
all open subsets on $[0,T]$. Under condition \eqref{contcond} and using the
self-similarity of the fractional Brownian motions we see that 
\begin{align*}
E\left[ \sup_{s\in \lbrack 0,T]}|\mathbb{B}_{s}^{H}|\right] & \leq
\sum_{n=1}^{\infty }|\lambda _{n}|T^{H_{n}}E\left[ \sup_{s\in \lbrack
0,1]}|B_{s}^{H_{n},n}|\right] \\
& \hspace{2cm}\leq (1+T)\sum_{n=1}^{\infty }|\lambda _{n}|E\left[ \sup_{s\in
\lbrack 0,1]}|B_{s}^{H_{n},n}|\right] <\infty
\end{align*}%
and hence by Belyaev's dichotomy for separable stochastically continuous
processes with stationary increments (see e.g. \cite[Theorem 5.3.10]{MR.06})
there exists a version of $\mathbb{B}_{\cdot }^{H}$ with continuous sample
paths.

Trivially, $\mathbb{B}_{\cdot }^{H}$ is never H\"{o}lder continuous since
for arbitrary small $\alpha >0$ there is always $n_{0}\geq 1$ such that $%
H_{n}<\alpha $ for all $n\geq n_{0}$ and since the sequence $\lambda $
satisfies \eqref{lambdacond} cancellations are not possible. Further, one
also argues that $\mathbb{B}_{\cdot }^{H}$ is neither Markov nor has finite
variation of any order $p>0$ which then implies that $\mathbb{B}_{\cdot
}^{H} $ is not a semimartingale.
\end{proof}

We will refer to \eqref{monster} as a \emph{regularizing} cylindrical
fractional Brownian motion with associated Hurst sequence $H$ or simply a
regularizing fBm.

Next, we state a version of Girsanov's theorem which actually shows that
equation \eqref{maineq} admits a weak solution. Its proof is mainly based on
the classical Girsanov theorem for a standard Brownian motion in Theorem \ref%
{VI_girsanov}.

\begin{thm}[Girsanov]
\label{girsanov} Let $u:[0,T]\times \Omega \rightarrow \R^d$ be a (jointly
measurable) $\mathcal{F}$-adapted process with integrable trajectories such
that $t\mapsto \int_0^t u_s ds$ belongs to the domain of the operator $%
K_{H_{n_0}}^{-1}$ from \eqref{opK_H-1} for some $n_0\geq 1$.

Define the $\R^d$-valued process 
\begin{equation*}
\widetilde{\mathbb{B}}_t^H := \mathbb{B}_t^H + \int_0^t u_s ds.
\end{equation*}

Define the probability $\widetilde{P}_{n_0}$ in terms of the Radon-Nikodym
derivative 
\begin{equation*}
\frac{d\widetilde{P}_{n_0}}{dP_{n_0}}:=\xi_T,
\end{equation*}
where 
\begin{equation*}
\xi_T^{n_0} := \exp \left\{-\int_0^T K_{H_{n_0}}^{-1}\left( \frac{1}{%
\lambda_{n_0}} \int_0^{\cdot} u_s ds \right) (s) dW_s^{n_0} -\frac{1}{2}%
\int_0^T \left|K_{H_{n_0}}^{-1}\left(\frac{1}{\lambda_{n_0}}
\int_0^{\cdot}u_s ds \right) (s)\right|^2 ds\right\}.
\end{equation*}

If $E[\xi _{T}^{n_{0}}]=1$, then $\widetilde{\mathbb{B}_{\cdot }}^{H}$ is a
regularizing $\R^{d}$-valued cylindrical fractional Brownian motion with
respect to $\mathcal{F}$ under the new measure $\widetilde{P}_{n_{0}}$ with
Hurst sequence $H$.
\end{thm}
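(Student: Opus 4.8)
The plan is to reduce the multidimensional, cylindrical Girsanov statement of Theorem~\ref{girsanov} to the classical one-component fractional Girsanov theorem (Theorem~\ref{VI_girsanov}) by isolating a single index $n_0$ and treating the remaining summands of $\mathbb{B}_\cdot^H$ as an independent part that is left untouched by the change of measure. First I would write $\mathbb{B}_t^H = \lambda_{n_0} B_t^{H_{n_0},n_0} + R_t$, where $R_t := \sum_{n\neq n_0}\lambda_n B_t^{H_n,n}$ is independent of $W_\cdot^{n_0}$ (and hence of $B_\cdot^{H_{n_0},n_0}$), since the driving Brownian motions $\{W_\cdot^n\}_{n\geq 1}$ are mutually independent by construction in \eqref{compfBm}. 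The shift $\int_0^t u_s\,ds$ is then absorbed entirely into the $n_0$-th component: writing $\widetilde{\mathbb{B}}_t^H = \lambda_{n_0}\bigl(B_t^{H_{n_0},n_0} + \tfrac{1}{\lambda_{n_0}}\int_0^t u_s\,ds\bigr) + R_t$, the term in parentheses is exactly a fractional Brownian motion shifted by the drift $\tfrac{1}{\lambda_{n_0}}u$.

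The key steps, in order, would be as follows. I would first verify that the hypotheses of Theorem~\ref{VI_girsanov} hold for the shift $v_t := \tfrac{1}{\lambda_{n_0}} u_t$ with Hurst parameter $H_{n_0}$: condition (i) there is precisely the assumption that $t\mapsto\int_0^t u_s\,ds$ lies in the domain of $K_{H_{n_0}}^{-1}$, i.e. in $I_{0+}^{H_{n_0}+\frac12}(L^2)$, so that $K_{H_{n_0}}^{-1}\bigl(\tfrac{1}{\lambda_{n_0}}\int_0^\cdot u_s\,ds\bigr)$ is well defined; and condition (ii) is the assumed normalization $E[\xi_T^{n_0}]=1$, noting that the Radon--Nikodym density $\xi_T^{n_0}$ in our statement is literally the density $\xi_T$ of Theorem~\ref{VI_girsanov} written out for the process $W_\cdot^{n_0}$ and the shift $v$. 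Applying Theorem~\ref{VI_girsanov} componentwise (as licensed by the multidimensional Remark following it) then yields that, under $\widetilde{P}_{n_0}$, the shifted object $B_\cdot^{H_{n_0},n_0} + \tfrac{1}{\lambda_{n_0}}\int_0^\cdot u_s\,ds$ is an $\mathcal{F}$-fractional Brownian motion with Hurst parameter $H_{n_0}$.

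Next I would argue that the density $\xi_T^{n_0}$ depends only on $W_\cdot^{n_0}$, so the law of the independent remainder $R_\cdot$ is unchanged by passing from $P$ to $\widetilde{P}_{n_0}$; in particular the summands $\{B_\cdot^{H_n,n}\}_{n\neq n_0}$ remain independent fractional Brownian motions with their original Hurst parameters, and they remain independent of the new $n_0$-th fractional Brownian motion, because independence of the $\sigma$-algebra generated by $\{W^n\}_{n\neq n_0}$ from $\mathcal{F}^{n_0}_T$ is preserved under a change of measure whose density is $\mathcal{F}^{n_0}_T$-measurable. Assembling the pieces, under $\widetilde{P}_{n_0}$ the process $\widetilde{\mathbb{B}}_\cdot^H = \lambda_{n_0}(\text{new fBm}) + \sum_{n\neq n_0}\lambda_n B_\cdot^{H_n,n}$ is again a sum of independent scaled fractional Brownian motions with the \emph{same} Hurst sequence $H$ and the \emph{same} coefficients $\lambda$, so by the very definition established in Theorem~\ref{monsterprocess} it is a regularizing cylindrical fractional Brownian motion. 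I would close by checking that conditions \eqref{lambdacond}, \eqref{lambdacond2}, \eqref{contcond} are properties of the fixed sequences $H$ and $\lambda$ alone and are therefore inherited automatically.

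The main obstacle I expect is the independence-preservation step: one must confirm rigorously that tilting by an $\mathcal{F}^{n_0}_T$-measurable density neither disturbs the law of $R_\cdot$ nor the mutual independence between $R_\cdot$ and the $n_0$-th coordinate. This is where the factorization of the underlying Wiener space into independent blocks $\{W^n\}_n$ must be used carefully, together with the fact that $\widetilde{P}_{n_0}$ is a genuine probability measure (guaranteed by $E[\xi_T^{n_0}]=1$). A secondary technical point is ensuring that $\widetilde{\mathbb{B}}_\cdot^H$ still converges $P$-a.s.\ and in $L^2(\Omega)$ and retains continuous sample paths under the new measure; since $\widetilde{P}_{n_0}\ll P$ these almost-sure properties transfer immediately, but I would state this explicitly to conclude that $\widetilde{\mathbb{B}}_\cdot^H$ is a well-defined regularizing fBm in the precise sense of Theorem~\ref{monsterprocess}.
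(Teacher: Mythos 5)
Your proposal is correct and follows essentially the same route as the paper's own proof: isolate the $n_{0}$-th summand, absorb the drift $\int_0^\cdot u_s\,ds$ into $\lambda_{n_0}B^{H_{n_0},n_0}$, apply the one-component fractional Girsanov theorem (Theorem \ref{VI_girsanov}) to produce $\widetilde{W}^{n_0}$ and hence a new fBm $\widetilde{B}^{H_{n_0},n_0}$ under $\widetilde{P}_{n_0}$, and reassemble the sum with the remaining components unchanged. In fact you are somewhat more careful than the paper, which leaves implicit the point you flag as the main obstacle — that tilting by an $\mathcal{F}^{n_0}_T$-measurable density preserves both the law of the remainder $\sum_{n\neq n_0}\lambda_n B^{H_n,n}$ and its independence from the $n_0$-th coordinate.
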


\begin{proof}
Indeed, write 
\begin{align*}
\widetilde{\mathbb{B}}_t^H &= \int_0^t u_s ds +
\lambda_{n_0}B_t^{H_{n_0},n_0}+\sum_{n\neq n_0}^{\infty} \lambda_n
B_t^{H_n,n} \\
&= \lambda_{n_0}\left(\frac{1}{\lambda_{n_0}}\int_0^t u_s ds +
B_t^{H_{n_0},n_0}\right) + \sum_{n\neq n_0}^{\infty} \lambda_n B_t^{H_n,n} \\
&= \lambda_{n_0}\left(\frac{1}{\lambda_{n_0}}\int_0^t u_s ds + \int_0^t
K_{H_{n_0}}(t,s) dW_s^{n_0}\right) + \sum_{n\neq n_0}^{\infty} \lambda_n
B_t^{H_n,n} \\
&= \lambda_{n_0}\left(\int_0^t K_{H_{n_0}}(t,s) d\widetilde{W}%
_s^{n_0}\right) + \sum_{n\neq n_0}^{\infty} \lambda_n B_t^{H_n,n},
\end{align*}
where 
\begin{equation*}
\widetilde{W}_t^{n_0} := W_t^{n_0} + \int_0^t K_{H_{n_0}}^{-1}\left(\frac{1}{%
\lambda_{n_0}} \int_0^{\cdot} u_r dr\right)(s)ds.
\end{equation*}

Then it follows from Theorem \ref{VI_girsanov} or \cite[Theorem 3.1]%
{nualart.ouknine.03} that 
\begin{equation*}
\widetilde{B}_t^{H_{n_0},n_0}:= \int_0^t K_{H_{n_0}}(t,s) d\widetilde{W}%
_t^{n_0}
\end{equation*}
is a fractional Brownian motion with Hurst parameter $H_{n_0}$ under the
measure 
\begin{equation*}
\frac{d\widetilde{P}_{n_0}}{dP_{n_0}} = \exp \left\{-\int_0^T
K_{H_{n_0}}^{-1}\left( \frac{1}{\lambda_{n_0}} \int_0^{\cdot} u_s ds \right)
(s) dW_s^{n_0} -\frac{1}{2}\int_0^T \left|K_{H_{n_0}}^{-1}\left(\frac{1}{%
\lambda_{n_0}} \int_0^{\cdot} u_s ds \right) (s)\right|^2 ds\right\}.
\end{equation*}

Hence, 
\begin{equation*}
\widetilde{\mathbb{B}}_t^H = \sum_{n=1}^{\infty} \lambda_n\widetilde{B}%
_t^{H_n,n},
\end{equation*}
where 
\begin{equation*}
\widetilde{B}_t^{H_n,n} = 
\begin{cases}
B_t^{H_n,n} \quad \mbox{if}\quad n\neq n_0, \\ 
\widetilde{B}_t^{H_{n_0},n_0} \quad \mbox{if}\quad n= n_0%
\end{cases}%
,
\end{equation*}
defines a regularizing $\R^d$-valued cylindrical fractional Brownian motion
under $\widetilde{P}_{n_0}$.
\end{proof}

\begin{rem}
In the above Girsanov theorem we just modify the law of the drift plus one
selected fractional Brownian motion with Hurst parameter $H_{n_0}$. In our
proof later, we show that actually $t\mapsto \int_0^t b(s,\mathbb{B}_s^H)ds$
belongs to the domain of the operators $K_{H_n}^{-1}$ for any $n\geq 1$ but
only large $n\geq 1$ satisfy Novikov's condition for arbitrary selected
values of $p,q\in (2,\infty]$.
\end{rem}

\bigskip

Consider now the following stochastic differential equation with the driving
noise $\mathbb{B}_{\cdot }^{H}$, introduced earlier: 
\begin{equation} \label{eqsmooth}
X_{t}=x+\int_{0}^{t}b(s,X_{s})ds+\mathbb{B}_{t}^{H},\quad t\in \lbrack 0,T],
\end{equation}%
where $x\in \R^{d}$ and $b$ is regular.

The following result summarises the classical existence and uniqueness
theorem and some of the properties of the solution. Existence and uniqueness
can be conducted using the classical arguments of $L^{2}([0,T]\times \Omega
) $-completeness in connection with a Picard iteration argument.

\begin{thm}
Let $b:[0,T]\times \R^{d}\rightarrow \R^{d}$ be continuously differentiable
in $\R^{d}$ with bounded derivative uniformly in $t\in \lbrack 0,T]$ and
such that there exists a finite constant $C>0$ independent of $t$ such that $%
|b(t,x)|\leq C(1+|x|)$ for every $(t,x)\in \lbrack 0,T]\times \R^{d}$. Then
equation \eqref{eqsmooth} admits a unique global strong solution which is $P$%
-a.s. continuously differentiable in $x$ and Malliavin differentiable in
each direction $W^{i}$, $i\geq 1$ of $\mathbb{B}_{\cdot }^{H}$. Moreover,
the space derivative and Malliavin derivatives of $X$ satisfy the following
linear equations 
\begin{equation*}
\frac{\partial }{\partial x}X_{t}=I_{d}+\int_{0}^{t}b^{\prime }(s,X_{s})%
\frac{\partial }{\partial x}X_{s}ds,\quad t\in \lbrack 0,T]
\end{equation*}%
and 
\begin{equation*}
D_{t_{0}}^{i}X_{t}=\lambda
_{i}K_{H_{i}}(t,t_{0})I_{d}+\int_{t_{0}}^{t}b^{\prime
}(s,X_{s})D_{t_{0}}^{i}X_{s}ds,\quad i\geq 1,\quad t_{0},t\in \lbrack
0,T],\quad t_{0}<t,
\end{equation*}%
where $b^{\prime }$ denotes the space Jacobian matrix of $b$, $I_{d}$ the $d$%
-dimensional identity matrix and $D_{t_{0}}^{i}$ the Malliavin derivative
along $W^{i}$, $i\geq 1$. Here, the last identity is meant in the $L^{p}$%
-sense $[0,T]$.
\end{thm}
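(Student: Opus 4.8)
The plan is to reduce the SDE \eqref{eqsmooth} to a random ODE and then handle existence, uniqueness and the two differentiability statements separately. First I would set $Y_t := X_t - \mathbb{B}_t^H$, so that a process $X$ solves \eqref{eqsmooth} if and only if $Y$ solves
\begin{equation*}
Y_t = x + \int_0^t b^{\ast}(s, Y_s)\, ds, \qquad b^{\ast}(s,y) := b(s, y + \mathbb{B}_s^H).
\end{equation*}
By Theorem \ref{monsterprocess} the process $\mathbb{B}_{\cdot}^H$ has $P$-a.s. continuous sample paths, so for a.e. $\omega$ the field $b^{\ast}$ is continuous in $(s,y)$, globally Lipschitz in $y$ with the same constant as $b$ (the spatial derivative $b'$ being bounded uniformly in $t$), and of linear growth. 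Classical Cauchy--Lipschitz theory then yields, for a.e. $\omega$, a unique continuous solution $Y_{\cdot}^x$ on all of $[0,T]$, the linear growth ruling out explosion; measurability and $\mathcal{F}$-adaptedness follow because the Picard iterates are adapted and converge uniformly on $[0,T]$. Setting $X_t^x = Y_t^x + \mathbb{B}_t^H$ then produces the unique global strong solution.

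For the differentiability in $x$, I would apply pathwise the classical theorem on smooth dependence of ODE solutions on the initial datum: since $b^{\ast}(s,\cdot)\in C^1(\R^d)$ with Jacobian bounded uniformly in $(s,\omega)$, the map $x\mapsto Y_t^x$ is $P$-a.s. continuously differentiable, and hence so is $x\mapsto X_t^x = Y_t^x + \mathbb{B}_t^H$. The Jacobian $J_t := \frac{\partial}{\partial x} X_t$ is obtained by differentiating under the integral sign, yielding the variational equation
\begin{equation*}
J_t = I_d + \int_0^t b'(s, X_s) J_s\, ds,
\end{equation*}
whose well-posedness follows from the boundedness of $s\mapsto b'(s,X_s)$; convergence of the difference quotients $\frac{1}{h}(X_t^{x+h}-X_t^x)$ to $J_t$ is then a routine Gr\"{o}nwall estimate.

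For the Malliavin differentiability I would run a Picard iteration at the level of $\D^{1,2}$. Put $X_t^{(0)} = x + \mathbb{B}_t^H$ and $X_t^{(n+1)} = x + \int_0^t b(s, X_s^{(n)})\, ds + \mathbb{B}_t^H$. Since $B_t^{H_i,i} = \int_0^t K_{H_i}(t,s)\, dW_s^i$, one computes $D_{t_0}^i \mathbb{B}_t^H = \lambda_i K_{H_i}(t, t_0) I_d \mathbf{1}_{\{t_0 < t\}}$, so every iterate lies in $\D^{1,2}$; applying the chain rule for $D^i$ (legitimate because $b$ is $C^1$ with bounded spatial derivative, hence Lipschitz) gives
\begin{equation*}
D_{t_0}^i X_t^{(n+1)} = \lambda_i K_{H_i}(t, t_0) I_d + \int_{t_0}^t b'(s, X_s^{(n)})\, D_{t_0}^i X_s^{(n)}\, ds.
\end{equation*}
Using the identity $\int_0^T K_{H_i}(t,t_0)^2\, dt_0 = R_{H_i}(t,t) = t^{2H_i} < \infty$ from \eqref{VI_RH}, the uniform bound on $b'$ and Gr\"{o}nwall's lemma, I would show that $(X_t^{(n)})_n$ is Cauchy in $\D^{1,2}$, i.e. both $X^{(n)}\to X$ in $L^2(\Omega)$ and $D^i X^{(n)}$ converges in $L^2(\Omega;\mathcal{H})$. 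Closedness of the operator $D^i$ then guarantees that the limit $X_t$ is Malliavin differentiable with $D^i X_t$ solving the stated linear equation, interpreted in the $L^p$-sense on $[0,T]$ after upgrading the moment bounds via the linear growth and Gr\"{o}nwall.

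The main obstacle, I expect, is the Malliavin step rather than the deterministic ones: one must control the Malliavin derivatives \emph{uniformly in $n$ and simultaneously across the infinitely many noise directions $W^i$}, which entails handling the kernel $K_{H_i}$ that is singular on the diagonal (its square integrability, equal to $R_{H_i}(t,t)=t^{2H_i}$, being exactly what rescues the estimate) and checking that the cylindrical structure $\mathbb{B}^H = \sum_n \lambda_n B^{H_n,n}$, together with \eqref{lambdacond2}, does not spoil the summability in the limit. Justifying the interchange of the Picard limit with $D^i$ — that is, combining the closability argument with the uniform $\D^{1,2}$-bounds — is where the bulk of the technical care is needed.
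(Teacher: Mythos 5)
Your argument is correct in substance, but it takes a partly different route from the paper: the paper gives no detailed proof, merely indicating a Picard iteration carried out directly on the SDE \eqref{eqsmooth} in $L^{2}([0,T]\times \Omega )$, whereas you subtract the noise and solve the random ODE for $Y_{t}=X_{t}-\mathbb{B}_{t}^{H}$ pathwise (the reduction the paper itself records as \eqref{RODE} in the introduction). Since the noise is additive this reduction is legitimate, and it buys you the $C^{1}$-dependence on $x$ essentially for free from deterministic ODE theory, which is cleaner than differentiating an $L^{2}$-Picard scheme; the paper's hinted route, on the other hand, dovetails directly with the Malliavin step, where your iteration in each fixed direction $W^{i}$ is the canonical argument, with the correct inhomogeneity $D_{t_{0}}^{i}\mathbb{B}_{t}^{H}=\lambda _{i}K_{H_{i}}(t,t_{0})I_{d}$ and the correct use of $\int_{0}^{t}K_{H_{i}}(t,t_{0})^{2}\,dt_{0}=t^{2H_{i}}$ from \eqref{VI_RH}. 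Note also that the theorem only asserts differentiability in each direction $W^{i}$ separately, so the uniformity across infinitely many directions you worry about at the end is not actually required here (summability in $i$, via \eqref{lambdacond2}, would matter only for a genuinely cylindrical derivative).

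Two points need repair, both minor. First, the hypotheses do not include continuity of $b$ in $t$ (only continuous differentiability in space, uniformly in $t$), so your claim that $b^{\ast }$ is continuous in $(s,y)$ for a.e.\ $\omega $ is unjustified as stated; however, the Picard/Carath\'{e}odory argument needs only measurability in $s$, the uniform Lipschitz bound in $y$, and integrability supplied by linear growth together with the $P$-a.s.\ boundedness of $\mathbb{B}_{\cdot }^{H}$ on $[0,T]$ (Theorem \ref{monsterprocess}), so nothing breaks. Second, your claim that the iterates are Cauchy in $\D^{1,2}$ does not follow directly: the difference of the derivative equations contains the term $\bigl(b^{\prime }(s,X_{s}^{(n)})-b^{\prime }(s,X_{s}^{(m)})\bigr)D_{t_{0}}^{i}X_{s}^{(m)}$, and $b^{\prime }$ is only bounded and continuous, not Lipschitz, so there is no contraction at the level of derivatives. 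The standard remedy --- which you yourself name as a fallback --- is to use only the uniform bound $\sup_{n}E\int_{0}^{T}\Vert D_{t_{0}}^{i}X_{t}^{(n)}\Vert ^{2}dt_{0}<\infty $ (Gr\"{o}nwall plus the kernel identity above), the convergence $X_{t}^{(n)}\rightarrow X_{t}$ in $L^{2}(\Omega )$, and the closability of $D^{i}$ (\cite[Proposition 1.2.3]{Nua10}, exactly the tool the paper invokes elsewhere), which yields $X_{t}\in \D^{1,2}$ and weak convergence of the derivatives; the stated linear equation for $D_{t_{0}}^{i}X_{t}$ is then identified by dominated convergence along an a.s.\ convergent subsequence, using boundedness and continuity of $b^{\prime }$. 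With these two adjustments your proof is complete.
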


\section{Construction of the Solution}

\label{strongsol}

We aim at constructing a Malliavin differentiable unique global $\mathcal{F}$%
-strong solution to the following equation 
\begin{equation} \label{maineq}
dX_{t}=b(t,X_{t})dt+d\mathbb{B}_{t}^{H},\quad X_{0}=x\in \R^{d},\quad t\in
\lbrack 0,T],
\end{equation}%
where the differential is interpreted formally in such a way that if %
\eqref{maineq} admits a solution $X_{\cdot }$, then 
\begin{equation*}
X_{t}=x+\int_{0}^{t}b(s,X_{s})ds+\mathbb{B}_{t}^{H},t\in \lbrack 0,T],
\end{equation*}%
whenever it makes sense. Denote by $L_{p}^{q}:=L^{q}([0,T];L^{p}(\R^{d};\R%
^{d}))$, $p,q\in \lbrack 1,\infty ]$ the Banach space of integrable
functions such that 
\begin{equation*}
\Vert f\Vert _{L_{p}^{q}}:=\left( \int_{0}^{T}\left( \int_{\R%
^{d}}|f(t,z)|^{p}dz\right) ^{q/p}dt\right) ^{1/q}<\infty ,
\end{equation*}%
where we take the essential supremum's norm in the cases $p=\infty $ and $%
q=\infty $.

In this paper, we want to reach the class of discontinuous coefficients $%
b:[0,T]\times \R^{d}\rightarrow \R^{d}$ in the Banach space 
\begin{equation*}
\mathcal{L}_{2,p}^{q}:=L^{q}([0,T];L^{p}(\R^{d};\R^{d}))\cap L^{1}(\R%
^{d};L^{\infty }([0,T];\mathbb{R}^{d})),\quad p,q\in (2,\infty ],
\end{equation*}%
of functions $f:[0,T]\times \R^{d}\rightarrow \R^{d}$ with the norm 
\begin{equation*}
\Vert f\Vert _{\mathcal{L}_{2,p}^{q}}=\Vert f\Vert _{L_{p}^{q}}+\Vert f\Vert
_{L_{\infty }^{1}}
\end{equation*}%
for chosen $p,q\in (2,\infty ]$, where 
\begin{equation*}
L_{\infty }^{1}:=L^{1}(\R^{d};L^{\infty }([0,T];\mathbb{R}^{d})).
\end{equation*}

Hence, our computations also show the result for uniformly bounded
coefficients that are square-integrable.

We will show existence and uniqueness of strong solutions of equation %
\eqref{maineq} driven by a $d$-dimensional regularizing fractional Brownian
motion with Hurst sequence $H$ with coefficients $b$ belonging to the class $%
\mathcal{L}_{2,p}^{q}$. Moreover, we will prove that such solution is
Malliavin differentiable and infinitely many times differentiable with
respect to the initial value $x$, where $d\geq 1$, $p,q\in (2,\infty ]$ are
arbitrary.

\begin{rem}
We would like to remark that with the method employed in the present
article, the existence of weak solutions and the uniqueness in law, holds
for drift coefficients in the space $L_{p}^{q}$. In fact, as we will see
later on, we need the additional space $L_{\infty }^{1}$ to obtain unique
strong solutions.
\end{rem}

This solution is neither a semimartingale, nor a Markov process, and it has
very irregular paths. We show in this paper that the process $\mathbb{B}%
_{\cdot }^{H}$ is a right noise to use in order to produce infinitely
classically differentiable flows of \eqref{maineq} for highly irregular
coefficients.

To construct a solution the main key is to approximate $b$ by a sequence of
smooth functions $b_n$ a.e. and denoting by $X^n = \{X_t^n, t\in [0,T]\}$
the approximating solutions, we aim at using an \emph{ad hoc} compactness
argument to conclude that the set $\{X_t^n\}_{n\geq 1}\subset L^2(\Omega)$
for fixed $t\in [0,T]$ is relatively compact.

As for the regularity of the mapping $x\mapsto X_{t}^{x}$, we are interested
in proving that it is infinitely many times differentiable. It is known that
the SDE $dX_{t}=b(t,X_{t})dt+dB_{t}^{H}$, $X_{0}=x\in \R^{d}$ admits a
unique strong solution for irregular vector fields $b\in L_{\infty ,\infty
}^{1,\infty }$ and that the mapping $x\mapsto X_{t}^{x}$ belongs, $P$-a.s.,
to $C^{k}$ if $H=H(k,d)<1/2$ is small enough. Hence, by adding the noise $%
\mathbb{B}_{\cdot }^{H}$, we should expect the solution of \eqref{maineq} to
have a smooth flow.

Hereunder, we establish the following main result, which will be stated
later on in this Section in a more precise form\emph{\ }(see Theorem \ref%
{VI_mainthm}):

\bigskip

\emph{Let }$b\in \mathcal{L}_{2,p}^{q}$\emph{, }$p,q\in (2,\infty ]$\emph{\
and assume that }$\lambda =\{\lambda _{i}\}_{i\geq 1}$\emph{\ in (}\ref%
{monster}\emph{) satisfies certain growth conditions to be specified later
on. Then there exists a unique (global) strong solution }$X=\{X_{t},t\in
\lbrack 0,T]\}$\emph{\ of equation }\eqref{maineq}\emph{. Moreover, for
every }$t\in \lbrack 0,T]$\emph{, }$X_{t}$\emph{\ is Malliavin
differentiable in each direction of the Brownian motions }$W^{n}$\emph{, }$%
n\geq 1$\emph{\ in }\eqref{compfBm}.

\bigskip

The proof of Theorem \ref{VI_mainthm} consists of the following steps:

\begin{enumerate}
\item First, we give the construction of a weak solution $X_{\cdot }$ to %
\eqref{maineq} by means of Girsanov's theorem for the process $\mathbb{B}%
_{\cdot }^{H}$, that is we introduce a probability space $(\Omega ,\mathfrak{%
A},P)$, on which a regularizing fractional Brownian motion $\mathbb{B}%
_{\cdot }^{H}$ and a process $X_{\cdot }$ are defined, satisfying the SDE %
\eqref{maineq}. However, a priori $X_{\cdot }$ is not adapted to the natural
filtration $\mathcal{F}=\{\mathcal{F}_{t}\}_{t\in \lbrack 0,T]}$ with
respect to $\mathbb{B}_{\cdot }^{H}$.

\item In the next step, consider an approximation of the drift coefficient $%
b $ by a sequence of compactly supported and infinitely continuously
differentiable functions (which always exists by standard approximation
results) $b_{n}:[0,T]\times \R^{d}\rightarrow \R^{d}$, $n\geq 0$ such that $%
b_{n}(t,x)\rightarrow b(t,x)$ for a.e. $(t,x)\in \lbrack 0,T]\times \R^{d}$
and such that $\sup_{n\geq 0}\Vert b_{n}\Vert _{\mathcal{L}_{2,p}^{q}}\leq M$
for some finite constant $M>0$. Then by the previous Section we know that
for each smooth coefficient $b_{n}$, $n\geq 0$, there exists unique strong
solution $X^{n}=\{X_{t}^{n},t\in \lbrack 0,T]\}$ to the SDE 
\begin{equation} \label{VI_Xn}
dX_{t}^{n}=b_{n}(t,X_{t}^{n})du+d\mathbb{B}_{t}^{H},\,\,0\leq t\leq
T,\,\,\,X_{0}^{n}=x\in \mathbb{R}^{d}\,.
\end{equation}%
Then we prove that for each $t\in \lbrack 0,T]$ the sequence $X_{t}^{n}$
converges weakly to the conditional expectation $E[X_{t}|\mathcal{F}_{t}]$
in the space $L^{2}(\Omega )$ of square integrable random variables.

\item By the previous Section we have that for each $t\in \lbrack 0,T]$ the
strong solution $X_{t}^{n}$, $n\geq 0$, is Malliavin differentiable, and
that the Malliavin derivatives $D_{s}^{i}X_{t}^{n}$, $i\geq 1$, $0\leq s\leq
t$, with respect to $W^{i}$ in \eqref{compfBm} satisfy 
\begin{equation*}
D_{s}^{i}X_{t}^{n}=\lambda _{i}K_{H_{i}}(t,s)I_{d}+\int_{s}^{t}b_{n}^{\prime
}(u,X_{u}^{n})D_{s}^{i}X_{u}^{n}du,
\end{equation*}%
for every $i\geq 1$ where $b_{n}^{\prime }$ is the Jacobian of $b_{n}$ and $%
I_{d}$ the identity matrix in $\R^{d\times d}$. Then, we apply an
infinite-dimensional compactness criterion for square integrable functionals
of a cylindrical Wiener process based on Malliavin calculus to show that for
every $t\in \lbrack 0,T]$ the set of random variables $\{X_{t}^{n}\}_{n\geq
0}$ is relatively compact in $L^{2}(\Omega )$. The latter, however, enables
us to prove that $X_{t}^{n}$ converges strongly in $L^{2}(\Omega )$ to $%
E[X_{t}|\mathcal{F}_{t}]$. Further we find that $E[X_{t}|\mathcal{F}_{t}]$
is Malliavin differentiable as a consequence of the compactness criterion.

\item We verify that $E[X_{t}|\mathcal{F}_{t}]=X_{t}$. So it follows that $%
X_{t}$ is $\mathcal{F}_{t}$-measurable and thus a strong solution on our
specific probability space.

\item Uniqueness in law is enough to guarantee pathwise uniqueness.
\end{enumerate}

\bigskip

In view of the above scheme, we go ahead with step (1) by first providing
some preparatory lemmas in order to verify Novikov's condition for $\mathbb{B%
}_{\cdot }^{H}$. Consequently, a weak solution can be constructed via a
change of measure.

\begin{lem}
\label{interlemma} Let $\mathbb{B}_{\cdot }^{H}$ be a $d$-dimensional
regularizing fBm and $p,q\in \lbrack 1,\infty ]$. Then for every Borel
measurable function $h:[0,T]\times \R^{d}\rightarrow \lbrack 0,\infty )$ we
have 
\begin{equation} \label{estimateh}
E\left[ \int_{0}^{T}h(t,\mathbb{B}_{t}^{H})dt\right] \leq C\Vert h\Vert
_{L_{p}^{q}},
\end{equation}%
where $C>0$ is a constant depending on $p$, $q$, $d$ and $H$. Also, 
\begin{equation} \label{estimatehexp}
E\left[ \exp \left\{ \int_{0}^{T}h(t,\mathbb{B}_{t}^{H})dt\right\} \right]
\leq A(\Vert h\Vert _{L_{p}^{q}}),
\end{equation}%
where $A$ is an analytic function depending on $p$, $q$, $d$ and $H$.
\end{lem}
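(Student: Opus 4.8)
The plan is to exploit that for each fixed $t$ the random variable $\mathbb{B}_t^H$ is a centered Gaussian vector in $\R^d$ with independent components of common variance $\sigma_t^2:=\sum_{n\geq 1}\lambda_n^2 t^{2H_n}$, so that it admits the density $p_t(z)=(2\pi\sigma_t^2)^{-d/2}\exp(-|z|^2/(2\sigma_t^2))$. For \eqref{estimateh} I would write $E[\int_0^T h(t,\mathbb{B}_t^H)\,dt]=\int_0^T\!\int_{\R^d}h(t,z)p_t(z)\,dz\,dt$ and apply H\"older in space with conjugate exponents $p,p'$, using the elementary Gaussian computation $\|p_t\|_{L^{p'}(\R^d)}=C_{p,d}\,\sigma_t^{-d/p}$. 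A further H\"older in time with exponents $q,q'$ then reduces everything to the finiteness of $\int_0^T\sigma_t^{-dq'/p}\,dt$. Here the decisive point is the freedom built into the construction: since $H_n\searrow0$ and, by \eqref{lambdacond}, infinitely many $\lambda_n$ are nonzero, I may fix an index $n_1$ with $\lambda_{n_1}\neq0$ and $H_{n_1}$ so small that $dq'H_{n_1}/p<1$; then $\sigma_t^2\geq\lambda_{n_1}^2 t^{2H_{n_1}}$ yields $\int_0^T\sigma_t^{-dq'/p}\,dt\leq C\int_0^T t^{-dq'H_{n_1}/p}\,dt<\infty$, which proves \eqref{estimateh}. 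The endpoint cases $p=\infty$ or $q=\infty$ are easier and follow by the obvious simplifications, using $\|p_t\|_{L^1}=1$.

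For \eqref{estimatehexp} I would expand the exponential and symmetrize onto the simplex, so that $E[\exp\{\int_0^T h(t,\mathbb{B}_t^H)\,dt\}]=\sum_{m\geq0}\int_{\Delta_{0,T}^m}E[\prod_{j=1}^m h(t_j,\mathbb{B}_{t_j}^H)]\,dt_1\cdots dt_m$, the factorials cancelling and the interchange being justified by positivity. The core is therefore a bound on the $m$-point expectation $E[\prod_{j=1}^m h(t_j,\mathbb{B}_{t_j}^H)]$. Since $(\mathbb{B}_{t_1}^H,\dots,\mathbb{B}_{t_m}^H)$ is jointly Gaussian with independent $\R^d$-components, I would factor its joint density by successive conditioning and integrate the variables from $j=m$ down to $j=1$. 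At each step the conditional law of $\mathbb{B}_{t_j}^H$ given $\mathbb{B}_{t_1}^H,\dots,\mathbb{B}_{t_{j-1}}^H$ is Gaussian with a deterministic conditional variance $\widetilde\sigma_j^2\,I_d$, the conditional mean being irrelevant since the $L^{p'}$-norm of a Gaussian density is translation invariant. Applying the spatial H\"older bound at each level gives $E[\prod_{j=1}^m h(t_j,\mathbb{B}_{t_j}^H)]\leq C_{p,d}^m\prod_{j=1}^m\|h(t_j,\cdot)\|_{L^p}\,\widetilde\sigma_j^{-d/p}$.

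It remains to bound $\widetilde\sigma_j^2$ from below, and this is where the strong local non-determinism \eqref{2sided} enters, which I regard as the main technical point. Conditioning on the larger Gaussian field generated by all the components $B^{H_n,n}$ over $\{s:|t_j-s|\geq t_j-t_{j-1}\}$, which can only decrease the conditional variance, and using independence across $n$, I would estimate $\widetilde\sigma_j^2\geq\sum_{n\geq1}\lambda_n^2\,\mathrm{Var}[B_{t_j}^{H_n,(1)}\mid\{B_s^{H_n}:|t_j-s|\geq t_j-t_{j-1}\}]\geq\sum_{n\geq1}\lambda_n^2 K_n(t_j-t_{j-1})^{2H_n}\geq\lambda_{n_1}^2 K_{n_1}(t_j-t_{j-1})^{2H_{n_1}}$, with $t_0:=0$ and the same small index $n_1$ as above, so that $\widetilde\sigma_j^{-d/p}\leq C\,(t_j-t_{j-1})^{-\beta}$ with $\beta:=dH_{n_1}/p$ and $\beta q'<1$. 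Writing $g(t):=\|h(t,\cdot)\|_{L^p}$, a final H\"older in time over the simplex separates $\int_{\Delta_{0,T}^m}\prod_j g(t_j)^q\,dt\leq\|h\|_{L_p^q}^{mq}$ from the Dirichlet integral $\int_{\Delta_{0,T}^m}\prod_j(t_j-t_{j-1})^{-\beta q'}\,dt=\Gamma(1-\beta q')^m\,\Gamma(m(1-\beta q')+1)^{-1}\,T^{m(1-\beta q')}$. Collecting the factors, the $m$-th term of the series is bounded by $b_m\|h\|_{L_p^q}^m$ where $b_m$ carries the decaying factor $\Gamma(m(1-\beta q')+1)^{-1/q'}$; by Stirling $b_m^{1/m}\to0$, so $A(x):=\sum_{m\geq0}b_m x^m$ is an entire function depending only on $p,q,d,H$, and $E[\exp\{\int_0^T h(t,\mathbb{B}_t^H)\,dt\}]\leq A(\|h\|_{L_p^q})$, as claimed. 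The only genuinely delicate ingredient is the uniform conditional-variance lower bound via \eqref{2sided}; everything else is H\"older's inequality together with the Gamma-function bookkeeping that produces the analyticity.
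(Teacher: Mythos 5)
Your proof is correct and follows essentially the same route as the paper's: a Gaussian-density/H\"older bound in space and then in time, a conditional-variance lower bound via strong local non-determinism reduced to a single component $n_1$ with $\lambda_{n_1}\neq 0$ and $H_{n_1}$ small enough that the singular time factor is integrable, and a Taylor expansion of the exponential over simplices whose $m$-th term decays through Gamma-function (factorial) bookkeeping, yielding an entire majorant $A$. The only difference is presentational: the paper iterates a one-step estimate conditioned on the filtration $\mathcal{F}_{t_0}$ using the Volterra representation and independence of Wiener increments, whereas you condition successively on the discrete past $(\mathbb{B}_{t_1}^H,\dots,\mathbb{B}_{t_{j-1}}^H)$ and invoke the two-sided property \eqref{2sided} directly, with the Dirichlet--Beta integral made explicit — the same mechanism.
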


\begin{proof}
Let $\mathbb{B}_{\cdot }^{H}$ be a $d$-dimensional regularizing fBm, then 
\begin{equation*}
\mathbb{B}_{t}^{H}-E\left[ \mathbb{B}_{t}^{H}|\mathcal{F}_{t_{0}}\right]
=\sum_{n=1}^{\infty }\lambda _{n}\int_{t_{0}}^{t}K_{H_{n}}(t,s)dW_{s}^{n}.
\end{equation*}%
So because of the independence of the increments of the Brownian motion, we
find that%
\begin{equation*}
Var\left[ \mathbb{B}_{t}^{H}|\mathcal{F}_{t_{0}}\right] =Var[\mathbb{B}%
_{t}^{H}-E\left[ \mathbb{B}_{t}^{H}|\mathcal{F}_{t_{0}}\right] ].
\end{equation*}%
On the other the strong local non-determinism of the fractional Brownian
motion yields%
\begin{equation*}
Var[\mathbb{B}_{t}^{H}-E\left[ \mathbb{B}_{t}^{H}|\mathcal{F}_{t_{0}}\right]
]=Var\left[ \mathbb{B}_{t}^{H}|\mathcal{F}_{t_{0}}\right] \geq
\sum_{n=1}^{\infty }\lambda _{n}^{2}C_{n}(t-t_{0})^{2H_{n}},
\end{equation*}%
where $C_{n}$ are the constants depending on $H_{n}$.

Hence, by a conditioning argument it is easy to see that for every Borel
measurable function $h$ we have 
\begin{align*}
E& \left[ \int_{t_{0}}^{T}h(t_{1},\mathbb{B}_{t_{1}}^{H})dt_{1}\bigg|%
\mathcal{F}_{t_{0}}\right] \\
& \leq \int_{t_{0}}^{T}\int_{\R^{d}}h(t_{1},Y+z)(2\pi )^{-d/2}\sigma
_{t_{0},t_{1}}^{-d}\exp \left( -\frac{|z|^{2}}{2\sigma _{t_{0},t_{1}}^{2}}%
\right) dzdt_{1}\bigg|_{Y=\sum_{n=1}^{\infty }\lambda
_{n}\int_{0}^{t_{0}}K_{H_{n}}(t,s)dW_{s}^{n}},
\end{align*}%
where 
\begin{equation*}
\sigma _{t_{0},t_{1}}^{2}:=\sum_{n=1}^{\infty }\lambda
_{n}^{2}C_{n}|t_{1}-t_{0}|^{2H_{n}}.
\end{equation*}%
Applying H\"{o}lder's inequality, first w.r.t. $z$ and then w.r.t. $t_{1}$
we arrive at 
\begin{align*}
E& \left[ \int_{t_{0}}^{T}h(t_{1},\mathbb{B}_{t_{1}}^{H})dt_{1}\bigg|%
\mathcal{F}_{t_{0}}\right] \leq \\
& \leq C\left( \int_{t_{0}}^{T}\left( \int_{\R^{d}}h(t_{1},x_{1})^{p}dx_{1}%
\right) ^{q/p}dt_{1}\right) ^{1/q}\left( \int_{t_{0}}^{T}\left( \sigma
_{t_{0},t_{1}}^{2}\right) ^{-dq^{\prime }(p^{\prime }-1)/2p^{\prime
}}dt_{1}\right) ^{1/q^{\prime }},
\end{align*}%
for some finite constant $C>0$. The time integral is finite for arbitrary
values of $d,q^{\prime }$ and $p^{\prime }$. To see this, use the bound $%
\sum_{n}a_{n}\geq a_{n_{0}}$ for $a_{n}\geq 0$ and for all $n_{0}\geq 1$.
Hence, 
\begin{align*}
\int_{t_{0}}^{T}& \left( \sum_{n=1}^{\infty }\lambda
_{n}^{2}C_{n}(t_{1}-t_{0})^{2H_{n}}\right) ^{-dq^{\prime }(p^{\prime
}-1)/2p^{\prime }}dt_{1} \\
& \leq \left( \lambda _{n_{0}}^{2}C_{n_{0}}\right) ^{-dq^{\prime }(p^{\prime
}-1)/2p^{\prime }}\int_{t_{0}}^{T}(t_{1}-t_{0})^{-H_{n_{0}}dq^{\prime
}(p^{\prime }-1)/p^{\prime }}dt_{1},
\end{align*}%
then for fixed $d,q^{\prime }$ and $p^{\prime }$ choose $n_{0}$ so that $%
H_{n_{0}}dq^{\prime }(p^{\prime }-1)/p^{\prime }<1$. Actually, the above
estimate already implies that all exponential moments are finite by \cite[%
Lemma 1.1]{Por90}. Here, though we need to derive the explicit dependence on
the norm of $h$.

Altogether, 
\begin{align}  \label{conditionalest}
E\left[\int_{t_0}^T h(t_1,\mathbb{B}_{t_1}^H) dt_1\bigg| \mathcal{F}_{t_0} %
\right] \leq C \left(\int_{t_0}^T \left(\int_{\R^d} h(t_1,x_1)^p
dx_1\right)^{q/p} dt_1\right)^{1/q},
\end{align}
and setting $t_0=0$ this proves \eqref{estimateh}.

In order to prove \eqref{estimatehexp}, Taylor's expansion yields 
\begin{equation*}
E\left[ \exp \left\{ \int_{0}^{T}h(t,\mathbb{B}_{t}^{H})dt\right\} \right]
=1+\sum_{m=1}^{\infty }E\left[ \int_{0}^{T}\int_{t_{1}}^{T}\cdots
\int_{t_{m-1}}^{T}\prod_{j=1}^{m}h(t_{j},\mathbb{B}_{t_{j}}^{H})dt_{m}\cdots
dt_{1}\right] .
\end{equation*}%
Using \eqref{conditionalest} iteratively we have 
\begin{equation*}
E\left[ \exp \left\{ \int_{0}^{T}h(t,\mathbb{B}_{t}^{H})dt\right\} \right]
\leq \frac{C^{m}}{(m!)^{1/q}}\left( \int_{0}^{T}\left( \int_{\R%
^{d}}h(t,x)^{p}dx\right) ^{q/p}dt\right) ^{m/q}=\frac{C^{m}\Vert h\Vert
_{L_{p}^{q}}^{m}}{(m!)^{1/q}},
\end{equation*}%
and the result follows with $A(x):=\sum_{m=1}^{\infty }\frac{C^{m}}{%
(m!)^{1/q}}x^{m}$.
\end{proof}

\begin{lem}
\label{domainKH} Let $\mathbb{B}_{\cdot }^{H}$ be a $d$-dimensional
regularizing fBm and assume $b\in L_{p}^{q}$, $p,q\in \lbrack 2,\infty ]$.
Then for every $n\geq 1$, 
\begin{equation*}
t\mapsto \int_{0}^{t}b(s,\mathbb{B}_{s}^{H})ds\in I_{0+}^{H_{n}+\frac{1}{2}%
}(L^{2}([0,T])),\quad P-a.s.,
\end{equation*}%
i.e. the process $t\mapsto \int_{0}^{t}b(s,\mathbb{B}_{s}^{H})ds$ belongs to
the domain of the operator $K_{H_{n}}^{-1}$ for every $n\geq 1$, $P$-a.s.
\end{lem}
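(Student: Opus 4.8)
The plan is to exhibit explicitly an $L^{2}([0,T])$-valued (random) function $\phi$ for which $\psi:=\int_{0}^{\cdot}b(s,\mathbb{B}_{s}^{H})\,ds=I_{0+}^{H_{n}+\frac{1}{2}}\phi$; by the very definition of the image space this places $\psi$ in $I_{0+}^{H_{n}+\frac{1}{2}}(L^{2}([0,T]))$, i.e. in the domain of $K_{H_{n}}^{-1}$ as recorded in \eqref{opK_H-1}. The candidate is $\phi:=I_{0+}^{\frac{1}{2}-H_{n}}[b(\cdot,\mathbb{B}_{\cdot}^{H})]$, and the identity $\psi=I_{0+}^{H_{n}+\frac{1}{2}}\phi$ will follow from the semigroup property of the Riemann--Liouville fractional integrals (see \cite{samko.et.al.93}).

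First I would check that $\psi$ is well defined and absolutely continuous. Applying Lemma \ref{interlemma} to the nonnegative function $h=|b|$ (legitimate since $p,q\in[2,\infty]\subset[1,\infty]$) gives $E[\int_{0}^{T}|b(s,\mathbb{B}_{s}^{H})|\,ds]\leq C\Vert b\Vert_{L_{p}^{q}}<\infty$, so that $s\mapsto b(s,\mathbb{B}_{s}^{H})$ lies in $L^{1}([0,T])$ $P$-a.s. and $\psi$ is absolutely continuous with $\psi'(s)=b(s,\mathbb{B}_{s}^{H})$ a.e.

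The crucial integrability step is to show $s\mapsto b(s,\mathbb{B}_{s}^{H})\in L^{2}([0,T])$ $P$-a.s. Here I would apply Lemma \ref{interlemma} to $h=|b|^{2}$: since $\Vert\,|b|^{2}\,\Vert_{L_{p/2}^{q/2}}=\Vert b\Vert_{L_{p}^{q}}^{2}$ and $p/2,q/2\in[1,\infty]$ precisely because $p,q\geq2$, we obtain $E[\int_{0}^{T}|b(s,\mathbb{B}_{s}^{H})|^{2}\,ds]\leq C\Vert b\Vert_{L_{p}^{q}}^{2}<\infty$, hence $b(\cdot,\mathbb{B}_{\cdot}^{H})\in L^{2}([0,T])$ $P$-a.s. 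This is exactly the point where the hypothesis $p,q\geq2$ enters. Next, because $\frac{1}{2}-H_{n}\in(0,\frac{1}{2})$ for every $n$, the kernel $v\mapsto v^{-\frac{1}{2}-H_{n}}$ is integrable on $[0,T]$, so by Young's convolution inequality the operator $I_{0+}^{\frac{1}{2}-H_{n}}$ is bounded from $L^{2}([0,T])$ into itself; consequently $\phi=I_{0+}^{\frac{1}{2}-H_{n}}[b(\cdot,\mathbb{B}_{\cdot}^{H})]\in L^{2}([0,T])$ $P$-a.s.

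Finally I would assemble the pieces. Since $\psi=I_{0+}^{1}\psi'$ and $1=(H_{n}+\frac{1}{2})+(\frac{1}{2}-H_{n})$, the semigroup law $I_{0+}^{\alpha+\beta}=I_{0+}^{\alpha}I_{0+}^{\beta}$ on $L^{1}([0,T])$ yields $\psi=I_{0+}^{H_{n}+\frac{1}{2}}(I_{0+}^{\frac{1}{2}-H_{n}}\psi')=I_{0+}^{H_{n}+\frac{1}{2}}\phi$ with $\phi\in L^{2}([0,T])$, so $\psi\in I_{0+}^{H_{n}+\frac{1}{2}}(L^{2}([0,T]))$ as required; a countable intersection over $n\geq1$ then produces a single $P$-null exceptional set. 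I expect the only delicate points to be the justification of the semigroup identity within the $L^{p}$ fractional-calculus framework and the $L^{2}\to L^{2}$ mapping property of $I_{0+}^{\frac{1}{2}-H_{n}}$, both standard but worth citing, together with the index bookkeeping in the two applications of Lemma \ref{interlemma} that forces $p,q\geq2$.
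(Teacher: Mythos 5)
Your proposal is correct, and although it rests on the same analytic core as the paper's proof, the fractional-calculus step is executed by a genuinely different route. Both arguments reduce everything to Lemma \ref{interlemma} applied to $|b|^{2}$ with exponents $p/2,q/2\in[1,\infty]$ (which is exactly where $p,q\geq 2$ enters, and your bookkeeping $\Vert\,|b|^{2}\Vert_{L_{p/2}^{q/2}}=\Vert b\Vert_{L_{p}^{q}}^{2}$ is right). But the paper verifies membership in $I_{0+}^{H_{n}+\frac{1}{2}}(L^{2})$ by showing that the fractional \emph{derivative} $D_{0+}^{H_{n}+\frac{1}{2}}\psi$ of $\psi(t)=\int_{0}^{t}|b(s,\mathbb{B}_{s}^{H})|\,ds$ lies in $L^{2}([0,T])$, using the difference representation of $D_{0+}^{\alpha}$ and a pointwise-in-$t$ estimate, whereas you exhibit the preimage directly, $\phi=I_{0+}^{\frac{1}{2}-H_{n}}[b(\cdot,\mathbb{B}_{\cdot}^{H})]$, via the semigroup law $I_{0+}^{1}=I_{0+}^{H_{n}+\frac{1}{2}}I_{0+}^{\frac{1}{2}-H_{n}}$ together with Young's inequality for the integrable kernel $v\mapsto v^{-\frac{1}{2}-H_{n}}$. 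These are two computations of the same object, since $D_{0+}^{H_{n}+\frac{1}{2}}\psi=I_{0+}^{\frac{1}{2}-H_{n}}\psi'$ for absolutely continuous $\psi$ with $\psi(0)=0$; still, your execution buys something real. The paper's displayed estimate is defective as written: the factor $\int_{0}^{t}(t-s)^{-H_{n}-\frac{3}{2}}\,ds$ appearing there is infinite, and indeed a uniform-in-$t$ bound of $\vert D_{0+}^{H_{n}+\frac{1}{2}}\psi(t)\vert^{2}$ by $C\int_{0}^{T}|b(u,\mathbb{B}_{u}^{H})|^{2}\,du$ cannot hold for a merely square-integrable integrand (take $g(u)=(t-u)^{-1/2}|\log(t-u)|^{-1}$ near $u=t$ to see $I_{0+}^{\frac{1}{2}-H_{n}}g(t)=\infty$ at a point while $g\in L^{2}$). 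Your operator-norm argument, $\Vert I_{0+}^{\frac{1}{2}-H_{n}}g\Vert_{L^{2}}\leq\Vert k\Vert_{L^{1}}\Vert g\Vert_{L^{2}}$, needs only the $L^{2}$-in-$t$ bound and therefore sidesteps this flaw entirely; it is the watertight version of the lemma's proof. The only points deserving explicit citation are the semigroup identity on $L^{1}$ and $I_{0+}^{1}\psi'=\psi$ for absolutely continuous $\psi$ vanishing at $0$, both standard in \cite{samko.et.al.93}. Conversely, the paper's formulation has the mild advantage of directly exhibiting $K_{H_{n}}^{-1}\psi$ in the form \eqref{VI_inverseKH} that is reused in Proposition \ref{novikov}, but as a proof of the present lemma your construction of the explicit preimage, followed by a single null-set intersection over $n\geq 1$, is preferable.
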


\begin{proof}
Using the property that $D_{0^{+}}^{H+\frac{1}{2}}I_{0^{+}}^{H+\frac{1}{2}%
}(f)=f$ for $f\in L^{2}([0,T])$ we need to show that for every $n\geq 1$, 
\begin{equation*}
D_{0^{+}}^{H_{n}+\frac{1}{2}}\int_{0}^{\cdot }|b(s,\mathbb{B}_{s}^{H})|ds\in
L^{2}([0,T]),\quad P-a.s.
\end{equation*}%
Indeed, 
\begin{align*}
\left\vert D_{0^{+}}^{H_{n}+\frac{1}{2}}\left( \int_{0}^{\cdot }|b(s,\mathbb{%
B}_{s}^{H})|ds\right) (t)\right\vert =& \frac{1}{\Gamma \left( \frac{1}{2}%
-H_{n}\right) }\Bigg(\frac{1}{t^{H_{n}+\frac{1}{2}}}\int_{0}^{t}|b(u,\mathbb{%
B}_{u}^{H})|du \\
& +\,\left( H+\frac{1}{2}\right) \int_{0}^{t}(t-s)^{-H_{n}-\frac{3}{2}%
}\int_{s}^{t}|b(u,\mathbb{B}_{u}^{H})|duds\Bigg) \\
& \hspace{-4cm}\leq \frac{1}{\Gamma \left( \frac{1}{2}-H_{n}\right) }\Bigg(%
\frac{1}{t^{H_{n}+\frac{1}{2}}}+\,\left( H+\frac{1}{2}\right)
\int_{0}^{t}(t-s)^{-H_{n}-\frac{3}{2}}ds\Bigg)\int_{0}^{t}|b(u,\mathbb{B}%
_{u}^{H})|du.
\end{align*}%
Hence, for some finite constant $C_{H,T}>0$ we have 
\begin{equation*}
\left\vert D_{0^{+}}^{H+\frac{1}{2}}\left( \int_{0}^{\cdot }|b(s,\mathbb{%
\tilde{B}}_{s}^{H})|ds\right) (t)\right\vert ^{2}\leq
C_{H,T}\int_{0}^{T}|b(u,\mathbb{B}_{u}^{H})|^{2}du
\end{equation*}%
and taking expectation the result follows by Lemma \ref{interlemma} applied
to $|b|^{2}$.
\end{proof}

\bigskip

We are now in a position to show that Novikov's condition is met if $n$ is
large enough.

\begin{prop}
\label{novikov} Let $\mathbb{B}_t^H$ be a $d$-dimensional regularizing
fractional Brownian motion with Hurst sequence $H$. Assume $b\in L_p^q$, $%
p,q\in (2,\infty]$. Then for every $\mu \in \R$, there exists $n_0$ with $%
H_{n}< \frac{1}{2}-\frac{1}{p}$ for every $n\geq n_0$ and such that for
every $n\geq n_0$ we have 
\begin{equation*}
E\left[ \exp\left\{\mu \int_0^T \left|K_{H_n}^{-1}\left( \frac{1}{\lambda_n}%
\int_0^{\cdot} b(r,\mathbb{B}_r^H) dr\right) (s)\right|^2 ds\right\} \right]
\leq C_{\lambda_n,H_n,d,\mu,T}(\|b\|_{L_{p}^{q}})
\end{equation*}
for some real analytic function $C_{\lambda_n,H_n,d,\mu,T}$ depending only
on $\lambda_n$, $H_n$, $d$, $T$ and $\mu$.

In particular, there is also some real analytic function $\widetilde{C}%
_{\lambda_n,H_n,d,\mu,T}$ depending only on $\lambda_n$, $H_n$, $d$, $T$ and 
$\mu$ such that 
\begin{equation*}
E\left[ \mathcal{E}\left(\int_0^T K_{H_n}^{-1}\left(\frac{1}{\lambda_n}%
\int_0^{\cdot} b(r,\mathbb{B}_r^H) dr\right)^{\ast} (s) dW_s^n\right)^\mu %
\right] \leq \widetilde{C}_{H,d,\mu,T}(\|b\|_{L_{p}^{q}}),
\end{equation*}
for every $\mu \in \R$.
\end{prop}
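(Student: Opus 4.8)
The plan is to verify Novikov's condition by collapsing the \emph{quadratic} functional $\int_0^T|K_{H_n}^{-1}(\cdots)(s)|^2\,ds$ into a \emph{linear} integral of $|b|^2$ along the path of $\mathbb{B}_\cdot^H$, after which Lemma \ref{interlemma} applies directly. First I would record an explicit pointwise representation. Since $b\in L_p^q$ with $p,q>2$, Lemma \ref{interlemma} applied to $|b|$ shows $r\mapsto b(r,\mathbb{B}_r^H)$ is $P$-a.s.\ integrable, so $\varphi:=\frac{1}{\lambda_n}\int_0^\cdot b(r,\mathbb{B}_r^H)\,dr$ is absolutely continuous and, by Lemma \ref{domainKH}, lies in the domain of $K_{H_n}^{-1}$. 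Hence \eqref{VI_inverseKH} together with the definition of the Riemann--Liouville integral gives, componentwise,
\[
K_{H_n}^{-1}(\varphi)(s) = \frac{1}{\lambda_n\Gamma(\tfrac12-H_n)}\,s^{H_n-\frac12}\int_0^s(s-u)^{-H_n-\frac12}u^{\frac12-H_n}b(u,\mathbb{B}_u^H)\,du.
\]

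The key step is then a Cauchy--Schwarz estimate against the \emph{nonnegative} kernel $K(s,u):=(s-u)^{-H_n-\frac12}u^{\frac12-H_n}\mathbf 1_{\{u<s\}}$. Using $\big|\int_0^s K(s,u)b(u,\mathbb{B}_u^H)\,du\big|^2\le\big(\int_0^s K(s,u)\,du\big)\big(\int_0^s K(s,u)|b(u,\mathbb{B}_u^H)|^2\,du\big)$ and evaluating the first factor as a Beta integral, $\int_0^s K(s,u)\,du=s^{1-2H_n}B(\tfrac32-H_n,\tfrac12-H_n)$, the prefactor $s^{2H_n-1}$ cancels exactly. After a Fubini interchange in $s$ (whose inner integral $\int_u^T(s-u)^{-H_n-\frac12}\,ds\le T^{\frac12-H_n}/(\tfrac12-H_n)$ is finite precisely because $H_n<\tfrac12$) and the trivial bound $u^{\frac12-H_n}\le T^{\frac12-H_n}$, this collapses the quadratic form to
\[
\int_0^T|K_{H_n}^{-1}(\varphi)(s)|^2\,ds\le \frac{C_{H_n,T}}{\lambda_n^2}\int_0^T|b(u,\mathbb{B}_u^H)|^2\,du.
\]

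With this linear bound the first exponential estimate is immediate: for $\mu\le0$ the exponent is nonpositive and the expectation is $\le1$, while for $\mu>0$ I would apply Lemma \ref{interlemma} to the nonnegative function $h=\frac{\mu C_{H_n,T}}{\lambda_n^2}|b|^2$ \emph{with the halved exponents} $(p/2,q/2)$, which are admissible exactly because $p,q>2$, and which satisfy $\||b|^2\|_{L_{p/2}^{q/2}}=\|b\|_{L_p^q}^2$. This yields $E[\exp(\mu\int_0^T|K_{H_n}^{-1}(\varphi)(s)|^2\,ds)]\le A(\tfrac{\mu C_{H_n,T}}{\lambda_n^2}\|b\|_{L_p^q}^2)$, a real analytic function of $\|b\|_{L_p^q}$; the condition $H_n<\tfrac12-\tfrac1p$ (available since $H_n\searrow0$) merely selects the indices $n\ge n_0$ for which this estimate is combined with the Girsanov change of measure of Theorem \ref{girsanov}. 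For the second claim, set $M_t:=\int_0^t K_{H_n}^{-1}(\varphi)^{\ast}(s)\,dW_s^n$, a continuous local martingale with $\langle M\rangle_T=\int_0^T|K_{H_n}^{-1}(\varphi)(s)|^2\,ds$. Writing $\mathcal{E}(M)^\mu=\exp(\mu M_T-2\mu^2\langle M\rangle_T)\exp((2\mu^2-\tfrac\mu2)\langle M\rangle_T)$ and applying Cauchy--Schwarz gives
\[
E[\mathcal{E}(M)^\mu]\le E[\mathcal{E}(2\mu M)_T]^{1/2}\,E\big[\exp((4\mu^2-\mu)\langle M\rangle_T)\big]^{1/2}.
\]
Since $\mathcal{E}(2\mu M)$ is a nonnegative local martingale, hence a supermartingale with $E[\mathcal{E}(2\mu M)_T]\le1$, and the second factor is controlled by the first claim with $\nu=4\mu^2-\mu$, the right-hand side is bounded by a real analytic function of $\|b\|_{L_p^q}$.

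As for the difficulty: the only genuinely non-routine point is the linearisation — recognising that Cauchy--Schwarz against the nonnegative Riemann--Liouville kernel reduces the functional to $\int_0^T|b(u,\mathbb{B}_u^H)|^2\,du$, so that Lemma \ref{interlemma} applies directly with exponents $(p/2,q/2)$. Without this observation one is forced to Taylor-expand the exponential of a genuinely quadratic form, which requires the iterated conditioning and strong local non-determinism machinery of Lemma \ref{interlemma} on $2m$-fold ordered simplices together with delicate control of the resulting combinatorial factors; the Cauchy--Schwarz reduction circumvents this entirely and simultaneously explains the restriction $p,q>2$.
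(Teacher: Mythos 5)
Your proposal is correct, but at the decisive analytic step it takes a genuinely different route from the paper. The paper estimates $|\theta_s^n|$ from \eqref{VI_inverseKH} by plain H\"older's inequality with exponents $\bigl(1+\varepsilon,\frac{1+\varepsilon}{\varepsilon}\bigr)$, $\varepsilon<1$ strictly: the endpoint $\varepsilon=1$ is unavailable there because the kernel factor $(s-u)^{-(H_n+\frac12)(1+\varepsilon)}$ must be integrable, which forces $H_n<\frac{1}{1+\varepsilon}-\frac12$. This yields $\int_0^T|\theta_s^n|^2ds\le C\bigl(1+\int_0^T|b(r,\mathbb{B}_r^H)|^{\frac{1+\varepsilon}{\varepsilon}}dr\bigr)$, and Lemma \ref{interlemma} is then applied to $h=C\mu|b|^{(1+\varepsilon)/\varepsilon}$; the need to push $\varepsilon$ close to $1$ so that $|b|^{(1+\varepsilon)/\varepsilon}$ still lies in an admissible $L_{\tilde p}^{\tilde q}$ space is exactly what generates the hypothesis $H_n<\frac12-\frac1p$ and the threshold $n_0$. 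Your weighted Cauchy--Schwarz against the finite measure $K(s,u)\,du$, $K(s,u)=(s-u)^{-H_n-\frac12}u^{\frac12-H_n}\mathbf{1}_{\{u<s\}}$, sidesteps this tension: $K(s,\cdot)$ is integrable for \emph{every} $H_n<\frac12$ (plain Cauchy--Schwarz splitting $K\in L^2$, $b\in L^2$ would fail, since it needs $H_n<0$), the Beta integral $\int_0^sK(s,u)\,du=s^{1-2H_n}B(\frac32-H_n,\frac12-H_n)$ cancels the prefactor $s^{2H_n-1}$ exactly, and Tonelli gives the clean linear bound $\int_0^T|\theta_s^n|^2ds\le C_{H_n,T}\lambda_n^{-2}\int_0^T|b(u,\mathbb{B}_u^H)|^2du$ with $|b|^2\in L_{p/2}^{q/2}$, admissible precisely because $p,q>2$. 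Both proofs then conclude via the same exponential-moment estimate \eqref{estimatehexp}. What your route buys is a sharper and cleaner statement: the estimate holds for \emph{all} $n\ge1$ with $H_n<\frac12$, so the condition $H_n<\frac12-\frac1p$ and the threshold $n_0$ are not needed for this proposition at all under your argument (they are still used elsewhere in the paper, e.g. in Lemma \ref{VI_weakconv}); what the paper's route buys is nothing extra here beyond staying within the single H\"older template it reuses later.

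On the second claim, the paper is essentially silent, whereas you supply a complete standard argument: writing $\mathcal{E}(M)^\mu=\exp(\mu M_T-2\mu^2\langle M\rangle_T)\exp\bigl((2\mu^2-\frac{\mu}{2})\langle M\rangle_T\bigr)$, applying Cauchy--Schwarz, dominating $\exp(2\mu M_T-4\mu^2\langle M\rangle_T)\le\mathcal{E}(2\mu M)_T$ and using the supermartingale bound $E[\mathcal{E}(2\mu M)_T]\le1$, then invoking the first claim with $4\mu^2-\mu$ in place of $\mu$. This is correct, including the observation that $x\mapsto A(c\,x^2)$ remains real analytic.
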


\begin{proof}
By Lemma \ref{domainKH} both random variables appearing in the statement are
well defined. Then, fix $n\geq n_0$ and denote $\theta_s^n :=
K_{H_n}^{-1}\left(\frac{1}{\lambda_n}\int_0^{\cdot} |b(r,\mathbb{B}_r^H)|
dr\right) (s)$. Then using relation \eqref{VI_inverseKH} we have 
\begin{align}  \label{thetan}
|\theta_s^n| =& \left|\frac{1}{\lambda_n}s^{H_n-\frac{1}{2}} I_{0^+}^{\frac{1%
}{2}-H_n} s^{\frac{1}{2}-H_n} |b(s,\mathbb{B}_s^H)|\right|  \notag \\
=&\frac{1/|\lambda_n|}{\Gamma \left(\frac{1}{2}-H_n\right)} s^{H_n- \frac{1}{%
2}} \int_0^s (s-r)^{-\frac{1}{2}-H_n} r^{\frac{1}{2}-H_n} |b(r,\mathbb{B}%
_r^H)|dr.
\end{align}
Observe that since $H_n< \frac{1}{2}-\frac{1}{p}$, $p\in (2,\infty]$ we may
take $\varepsilon\in [0,1)$ such that $H_n<\frac{1}{1+\varepsilon}-\frac{1}{2%
}$ and apply H\"{o}lder's inequality with exponents $1+\varepsilon$ and $%
\frac{1+\varepsilon}{\varepsilon}$, where the case $\varepsilon=0$
corresponds to the case where $b$ is bounded. Then we get

\begin{align}  \label{thetabound}
|\theta_s^n| \leq C_{\varepsilon,\lambda_n,H_n} s^{\frac{1}{1+\varepsilon}%
-H_n-\frac{1}{2}} \left(\int_0^s |b(r,\mathbb{B}_r^H)|^{\frac{1+\varepsilon}{%
\varepsilon}}dr\right)^{\frac{\varepsilon}{1+\varepsilon}},
\end{align}
where 
\begin{equation*}
C_{\varepsilon,\lambda_n, H_n}:=\frac{\Gamma\left(1-(1+\varepsilon)(H_n+1/2)%
\right)^{\frac{1}{1+\varepsilon}}\Gamma\left(1+(1+\varepsilon)(1/2-H_n)%
\right)^{\frac{1}{1+\varepsilon}} }{\lambda_n \Gamma \left(\frac{1}{2}%
-H_n\right) \Gamma \left(2(1-(1+\varepsilon)H_n)\right)^{\frac{1}{%
1+\varepsilon}}}.
\end{equation*}

Squaring both sides and using the fact that $|b|\geq 0$ we have the
following estimate 
\begin{align*}
|\theta_s^n|^2 \leq C_{\varepsilon,\lambda_n,H_n}^2 s^{\frac{2}{1+\varepsilon%
}-2H_n-1} \left(\int_0^T |b(r,\mathbb{B}_r^H)|^{\frac{1+\varepsilon}{%
\varepsilon}}dr\right)^{\frac{2\varepsilon}{1+\varepsilon}}, \quad P-a.s.
\end{align*}
Since $0<\frac{2\varepsilon}{1+\varepsilon}<1$ and $|x|^{\alpha}\leq \max
\{\alpha,1-\alpha\}(1+|x|)$ for any $x\in \R$ and $\alpha\in (0,1)$ we have 
\begin{align}  \label{VI_fracL2}
\int_0^T |\theta_s^n|^2 ds \leq C_{\varepsilon,\lambda_n,H_n,T} \left(1+
\int_0^T |b(r,\mathbb{B}_r^H)|^{\frac{1+\varepsilon}{\varepsilon}}dr\right),
\quad P-a.s.
\end{align}
for some constant $C_{\varepsilon,\lambda_n, H_n,T}>0$. Then estimate %
\eqref{estimatehexp} from Lemma \ref{interlemma} with $h =
C_{\varepsilon,\lambda_n,H_n,T} \ \mu \ b^{\frac{1+\varepsilon}{\varepsilon}%
} $ with $\varepsilon\in [0,1)$ arbitrarily close to one yields the result
for $p,q\in (2,\infty]$.
\end{proof}

Let $(\Omega ,\mathfrak{A},\widetilde{P})$ be some given probability space
which carries a regularizing fractional Brownian motion $\widetilde{\mathbb{B%
}_{\cdot }}^{H}$ with Hurst sequence $H=\{H_{n}\}_{n\geq 1}$ and set $%
X_{t}:=x+\widetilde{\mathbb{B}}_{t}^{H}$, \mbox{$t\in [0,T]$}, $x\in \R^{d}$%
. Set $\theta _{t}^{n_{0}}:=\left( K_{H_{n_{0}}}^{-1}\left( \frac{1}{\lambda
_{n_{0}}}\int_{0}^{\cdot }b(r,X_{r})dr\right) \right) (t)$ for some fixed $%
n_{0}\geq 1$ such that Proposition \ref{novikov} can be applied and consider
the new measure defined by 
\begin{equation*}
\frac{dP_{n_{0}}}{d\widetilde{P}_{n_{0}}}=Z_{T}^{n_{0}},
\end{equation*}%
where 
\begin{equation*}
Z_{t}^{n_{0}}:=\prod_{n=1}^{\infty }\mathcal{E}\left( \theta _{\cdot
}^{n_{0}}\right) _{t}:=\exp \left\{ \int_{0}^{t}\left( \theta
_{s}^{n_{0}}\right) ^{\ast }dW_{s}^{n_{0}}-\frac{1}{2}\int_{0}^{t}|\theta
_{s}^{n_{0}}|^{2}ds\right\} ,\quad t\in \lbrack 0,T].
\end{equation*}

In view of Proposition \ref{novikov} the above random variable defines a new
probability measure and by Girsanov's theorem, see Theorem \ref{girsanov},
the process 
\begin{equation} \label{VI_weak}
\mathbb{B}_{t}^{H}:=X_{t}-x-\int_{0}^{t}b(s,X_{s})ds,\quad t\in \lbrack 0,T]
\end{equation}%
is a regularizing fractional Brownian motion on $(\Omega ,\mathfrak{A}%
,P_{n_{0}})$ with Hurst sequence $H$. Hence, because of \eqref{VI_weak}, the
couple $(X,\mathbb{B}_{\cdot }^{H})$ is a weak solution of \eqref{maineq} on 
$(\Omega ,\mathfrak{A},P_{n_{0}})$. Since $n_{0}\geq 1$ is fixed we will
omit the notation $P_{n_{0}}$ and simply write $P$.

Henceforth, we confine ourselves to the filtered probability space $(\Omega ,%
\mathfrak{A},P)$, $\mathcal{F}=\{\mathcal{F}_{t}\}_{t\in \lbrack 0,T]}$
which carries the weak solution $(X,\mathbb{B}_{\cdot }^{H})$ of %
\eqref{maineq}.

\begin{rem}
\label{VI_stochbasisrmk} In order to establish existence of a strong
solution, the main difficulty here is that $X_{\cdot }$ is $\mathcal{F}$%
-adapted. In fact, in this case $X_{t}=F_{t}(\mathbb{B}_{\cdot }^{H})$ for
some family of progressively measurable functional $F_{t}$, $t\in \lbrack
0,T]$ on $C([0,T];\R^{d})$ and for any other stochastic basis $(\hat{\Omega},%
\hat{\mathfrak{A}},\hat{P},\hat{\mathbb{B}})$ one gets that $X_{t}:=F_{t}(%
\hat{\mathbb{B}}_{\cdot })$, $t\in \lbrack 0,T]$, is a solution to SDE~%
\eqref{maineq}, which is adapted with respect to the natural filtration of $%
\hat{\mathbb{B}}_{\cdot }$. But this exactly gives the existence of a strong
solution to SDE~\eqref{maineq}.
\end{rem}

We take a weak solution $X_{\cdot }$ of \eqref{maineq} and consider $E[X_{t}|%
\mathcal{F}_{t}]$. The next result corresponds to step (2) of our program.

\begin{lem}
\label{VI_weakconv} Let $b_n:[0,T]\times \R^d\rightarrow \R^d$, $n\geq 1 $,
be a sequence of compactly supported smooth functions converging a.e. to $b$
such that $\sup_{n\geq 1} \|b_n\|_{L_p^q}<\infty$. Let $t\in [0,T]$ and $%
X_t^n$ denote the solution of \eqref{maineq} when we replace $b$ by $b_n$.
Then for every $t\in [0,T]$ and continuous function $\varphi:\R^d
\rightarrow \R$ of at most linear growth we have that 
\begin{equation*}
\varphi(X_t^{n}) \xrightarrow{n \to \infty} E\left[ \varphi(X_t) |\mathcal{F}%
_t \right],
\end{equation*}
weakly in $L^2(\Omega)$.
\end{lem}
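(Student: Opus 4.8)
The plan is to prove the weak convergence by combining a uniform $L^{2}$-bound with the identification of the limit on a total family of test variables. First I would reduce the statement as follows. Since each $X_{t}^{n}$ is $\mathcal{F}_{t}$-measurable, the variables $\varphi(X_{t}^{n})$ lie in the closed (hence weakly closed) subspace $L^{2}(\Omega,\mathcal{F}_{t},P)$, and for any $Z\in L^{2}(\Omega)$ one has $E[\varphi(X_{t}^{n})Z]=E[\varphi(X_{t}^{n})\,E[Z|\mathcal{F}_{t}]]$. Hence it suffices to show (a) $\sup_{n}E[|\varphi(X_{t}^{n})|^{2}]<\infty$, and (b) $E[\varphi(X_{t}^{n})\bar{Y}]\to E[\varphi(X_{t})\bar{Y}]$ for every $\bar{Y}$ in a total subset of $L^{2}(\Omega,\mathcal{F}_{t},P)$, for instance $\bar{Y}=\Phi((\mathbb{B}_{s}^{H})_{s\le t})$ with $\Phi$ bounded and continuous on path space. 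An $\varepsilon/3$-argument based on (a) then upgrades (b) to all $Z\in L^{2}(\Omega)$ and identifies the weak limit as $E[\varphi(X_{t})|\mathcal{F}_{t}]$.

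The main tool is a Girsanov representation against the Gaussian reference $x+\mathbb{B}^{H}$. Fix $n_{0}$ as in Proposition \ref{novikov}, valid for every $b_{n}$ because $\sup_{n}\|b_{n}\|_{L_{p}^{q}}\le M<\infty$. Applying Theorem \ref{girsanov} with the adapted drift $u_{s}^{n}=-b_{n}(s,x+\mathbb{B}_{s}^{H})$ turns $x+\mathbb{B}^{H}$ into a weak solution of \eqref{maineq} under $d\widehat{P}_{n}=G_{n}\,dP$, where
\[
G_{n}=\exp\left\{\int_{0}^{T}K_{H_{n_{0}}}^{-1}\Big(\tfrac{1}{\lambda_{n_{0}}}\int_{0}^{\cdot}b_{n}(r,x+\mathbb{B}_{r}^{H})\,dr\Big)(s)^{\ast}dW_{s}^{n_{0}}-\tfrac12\int_{0}^{T}\Big|K_{H_{n_{0}}}^{-1}\Big(\tfrac{1}{\lambda_{n_{0}}}\int_{0}^{\cdot}b_{n}(r,x+\mathbb{B}_{r}^{H})\,dr\Big)(s)\Big|^{2}ds\right\}.
\]
Since $b_{n}$ is smooth, \eqref{maineq} with drift $b_{n}$ is pathwise unique, hence unique in law, so the law of $x+\mathbb{B}^{H}$ under $\widehat{P}_{n}$ equals that of $X^{n}$ under $P$; thus for bounded measurable $\Phi$ on path space, $E[\Phi(X^{n})]=E[\Phi(x+\mathbb{B}^{H})G_{n}]$. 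Taking $\Phi(w)=|\varphi(w_{t})|^{2}$, Cauchy--Schwarz gives $E[|\varphi(X_{t}^{n})|^{2}]\le E[|\varphi(x+\mathbb{B}_{t}^{H})|^{4}]^{1/2}E[G_{n}^{2}]^{1/2}$; the first factor is finite because $\varphi$ has at most linear growth and $\mathbb{B}_{t}^{H}$ is Gaussian, while $\sup_{n}E[G_{n}^{2}]<\infty$ follows from Proposition \ref{novikov} (splitting $G_{n}^{2}$ into a stochastic exponential and an $\exp\{\int|\cdot|^{2}\}$ factor) and $\|b_{n}\|_{L_{p}^{q}}\le M$. This proves (a).

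For (b), I would apply the same representation to $\Phi_{n}(w)=\varphi(w_{t})\,\Phi\big((w_{s}-x-\int_{0}^{s}b_{n}(r,w_{r})\,dr)_{s\le t}\big)$, which encodes $\varphi(X_{t}^{n})\bar{Y}$ because $\mathbb{B}_{s}^{H}=X_{s}^{n}-x-\int_{0}^{s}b_{n}(r,X_{r}^{n})\,dr$. This yields
\[
E[\varphi(X_{t}^{n})\bar{Y}]=E\Big[\varphi(x+\mathbb{B}_{t}^{H})\,\Phi\big((\mathbb{B}_{s}^{H}-\textstyle\int_{0}^{s}b_{n}(r,x+\mathbb{B}_{r}^{H})\,dr)_{s\le t}\big)\,G_{n}\Big],
\]
an expectation over the fixed law of $\mathbb{B}^{H}$. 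The weak solution $X$ obeys the analogous identity with $b$ and its density $G$, directly by the Girsanov construction of $X$ (no uniqueness in law is needed here). It then remains to pass to the limit inside this expectation: the integrand converges in probability to the one built from $b$, and uniform integrability (from $\sup_{n}E[G_{n}^{1+\delta}]<\infty$ via Proposition \ref{novikov}, the linear growth of $\varphi$, and boundedness of $\Phi$) promotes this to convergence of the expectations, giving $E[\varphi(X_{t})\bar{Y}]$.

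The crux, and the main obstacle, is the convergence $\int_{0}^{\cdot}b_{n}(r,x+\mathbb{B}_{r}^{H})\,dr\to\int_{0}^{\cdot}b(r,x+\mathbb{B}_{r}^{H})\,dr$ (uniformly on $[0,t]$, in probability) and the attendant $G_{n}\to G$, given only a.e.\ convergence $b_{n}\to b$ together with a uniform $L_{p}^{q}$-bound rather than $L_{p}^{q}$-convergence. I would obtain it from Lemma \ref{interlemma}: the estimate $E[\int_{0}^{T}h(s,\mathbb{B}_{s}^{H})\,ds]\le C\|h\|_{L_{p}^{q}}$ exhibits the occupation measure of $\mathbb{B}^{H}$ as having an $L_{p'}^{q'}$-density, so that $\sup_{n}\|b_{n}\|_{L_{p}^{q}}\le M$ forces uniform integrability of $\{b_{n}(\cdot,\mathbb{B}_{\cdot}^{H})\}$ with respect to $ds\otimes dP$; since $\mathbb{B}_{s}^{H}$ has a density for $s>0$, $b_{n}\to b$ a.e.\ holds occupation-a.e., and Vitali's theorem gives $E[\int_{0}^{T}|b_{n}-b|(s,\mathbb{B}_{s}^{H})\,ds]\to0$. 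Feeding this through the operator $K_{H_{n_{0}}}^{-1}$, continuous on the relevant fractional-integral spaces, yields convergence of the exponents defining $G_{n}$, whence $G_{n}\to G$ in probability, upgraded to $L^{1}(P)$ by the uniform-integrability bound above.
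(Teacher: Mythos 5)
Your proposal is correct and follows essentially the same route as the paper's own proof: both transfer every expectation to the Gaussian reference measure via Girsanov, encoding the test functional through the identity $\mathbb{B}_s^H = X_s^n - x - \int_0^s b_n(r,X_r^n)\,dr$, test against a total subset of $L^2(\Omega,\mathcal{F}_t,P)$, and pass to the limit in the Girsanov densities using the bounds of Lemma \ref{interlemma}, the estimate \eqref{thetabound} and Proposition \ref{novikov}. The differences are only organizational — the paper tests against the exponential family $\Sigma_t$ rather than bounded continuous path functionals and uses the elementary inequality $|e^x-e^y|\leq e^{x+y}|x-y|$ where you invoke Vitali's theorem via uniform integrability of the occupation-measure composition — and your explicit uniform $L^2$-bound on $\varphi(X_t^n)$ spells out a step the paper leaves implicit when reducing weak convergence to a total family.
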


\begin{proof}
Let us assume, without loss of generality, that $x=0$. In the course of the
proof we always assume that for fixed $p,q\in (2,\infty ]$ then $n_{0}\geq 1$
is such that $H_{n_{0}}<\frac{1}{2}-\frac{1}{p}$ and hence Proposition \ref%
{novikov} can be applied.

First we show that 
\begin{align}  \label{VI_doleansDadeConvergence}
\mathcal{E}\left(\frac{1}{\lambda_{n_0}} \int_0^t
K_{H_{n_0}}^{-1}\left(\int_0^{\cdot} b_n(r,\mathbb{B}^H_r)dr\right)^{%
\ast}(s) dW_s^{n_0} \right) \rightarrow \mathcal{E}\left( \int_0^t
K_{H_{n_0}}^{-1}\left(\frac{1}{\lambda_{n_0}}\int_0^{\cdot} b(r,\mathbb{B}%
^H_r)dr\right)^{\ast}(s) dW_s^{n_0}\right)
\end{align}
in $L^p(\Omega)$ for all $p \geq 1$. To see this, note that 
\begin{equation*}
K_{H_{n_0}}^{-1}\left(\frac{1}{\lambda_{n_0}}\int_0^{\cdot} b_n(r,\mathbb{B}%
^H_r)dr\right)(s) \rightarrow K_{H_{n_0}}^{-1}\left(\frac{1}{\lambda_{n_0}}%
\int_0^{\cdot} b(r,\mathbb{B}^H_r)dr\right)(s)
\end{equation*}
in probability for all $s$. Indeed, from \eqref{thetabound} we have a
constant $C_{\varepsilon,\lambda_{n_0},H_{n_0}}>0$ such that 
\begin{align*}
E\Bigg[\Big|& K_{H_{n_0}}^{-1}\left(\frac{1}{\lambda_{n_0}}\int_0^{\cdot}
b_n(r,\mathbb{B}^H_r)dr\right)(s) - K_{H_{n_0}}^{-1}\left(\frac{1}{%
\lambda_{n_0}}\int_0^{\cdot} b(r,\mathbb{B}^H_r)dr\right)(s)\Big| \Bigg] \\
&\leq C_{\varepsilon,,\lambda_{n_0},H_{n_0}} s^{\frac{1}{1+\varepsilon}%
-H_{n_0}-\frac{1}{2}} \left(\int_0^s |b_n(r,\mathbb{B}_r^H) -b(r,\mathbb{B}%
_r^H) |^{\frac{1+\varepsilon}{\varepsilon}} dr\right)^{\frac{\varepsilon}{%
1+\varepsilon}} \rightarrow 0
\end{align*}
as $n \rightarrow \infty$ by Lemma \ref{interlemma}.

Moreover, $\left\{ K_{H_{n_0}}^{-1}(\frac{1}{\lambda_{n_0}}\int_0^{\cdot}
b_n(r,\mathbb{B}_r^H)dr) \right\}_{n \geq 0}$ is bounded in $L^2([0,t]
\times \Omega; \mathbb{R}^d)$. This is directly seen from (\ref{VI_fracL2})
in Proposition \ref{novikov}.

Consequently 
\begin{equation*}
\int_0^t K_{H_{n_0}}^{-1}\left(\frac{1}{\lambda_{n_0}}\int_0^{\cdot} b_n(r,%
\mathbb{B}_r^H)dr \right)^{\ast}(s) dW_s^{n_0} \rightarrow \int_0^t
K_{H_{n_0}}^{-1}\left(\frac{1}{\lambda_{n_0}}\int_0^{\cdot} b(r,\mathbb{B}%
_r^H)dr\right)^{\ast}(s) dW_s^{n_0}
\end{equation*}
and 
\begin{equation*}
\int_0^t \left|K_{H_{n_0}}^{-1}\left(\frac{1}{\lambda_{n_0}}\int_0^{\cdot}
b_n(r,\mathbb{B}_r^H)dr\right)(s)\right|^2 ds \rightarrow \int_0^t
\left|K_{H_{n_0}}^{-1}\left(\frac{1}{\lambda_{n_0}}\int_0^{\cdot} b(r,%
\mathbb{B}_r^H)dr\right)(s)\right|^2 ds
\end{equation*}
in $L^2(\Omega)$ since the latter is bounded $L^p(\Omega)$ for any $p \geq 1$%
, see Proposition \ref{novikov}.

By applying the estimate $|e^{x}-e^{y}|\leq e^{x+y}|x-y|$, H\"{o}lder's
inequality and the bounds in Proposition \ref{novikov} in connection with
Lemma \ref{interlemma} we see that (\ref{VI_doleansDadeConvergence}) holds.

Similarly, one finds that 
\begin{equation*}
\exp \left\{ \left\langle \alpha ,\int_{s}^{t}b_{n}(r,\mathbb{B}%
_{r}^{H})dr\right\rangle \right\} \rightarrow \exp \left\{ \left\langle
\alpha ,\int_{s}^{t}b(r,\mathbb{B}_{r}^{H})dr\right\rangle \right\}
\end{equation*}%
in $L^{p}(\Omega )$ for all $p\geq 1$, $0\leq s\leq t\leq T$, $\alpha \in \R%
^{d}$.

In order to complete the proof, we note that the set 
\begin{equation*}
\Sigma _{t}:=\left\{ \exp \{\sum_{j=1}^{k}\langle \alpha _{j},\mathbb{B}%
_{t_{j}}^{H}-\mathbb{B}_{t_{j-1}}^{H}\rangle \}:\{\alpha
_{j}\}_{j=1}^{k}\subset \mathbb{R}^{d},0=t_{0}<\dots <t_{k}=t,k\geq 1\right\}
\end{equation*}%
is a total subspace of $L^{2}(\Omega ,\mathcal{F}_{t},P)$ and therefore it
is sufficient to prove the convergence 
\begin{equation*}
\lim_{n\rightarrow \infty }E\left[ \left( \varphi (X_{t}^{n})-E[\varphi
(X_{t})|\mathcal{F}_{t}]\right) \xi \right] =0
\end{equation*}%
for all $\xi \in \Sigma _{t}$. In doing so, we notice that $\varphi $ is of
linear growth and hence $\varphi (\mathbb{B}_{t}^{H})$ has all moments.
Thus, we obtain the following convergence 
\begin{equation*}
E\left[ \varphi (X_{t}^{n})\exp \left\{ \sum_{j=1}^{k}\langle \alpha _{j},%
\mathbb{B}_{t_{j}}^{H}-\mathbb{B}_{t_{j-1}}^{H}\rangle \right\} \right]
\end{equation*}%
\begin{equation*}
=E\left[ \varphi (X_{t}^{n})\exp \left\{ \sum_{j=1}^{k}\langle \alpha
_{j},X_{t_{j}}^{n}-X_{t_{j-1}}^{n}-%
\int_{t_{j-1}}^{t_{j}}b_{n}(s,X_{s}^{n})ds\rangle \right\} \right]
\end{equation*}%
\begin{equation*}
=E[\varphi (\mathbb{B}_{t}^{H})\exp \{\sum_{j=1}^{k}\langle \alpha _{j},%
\mathbb{B}_{t_{j}}^{H}-\mathbb{B}_{t_{j-1}}^{H}-%
\int_{t_{j-1}}^{t_{j}}b_{n}(s,\mathbb{B}_{s}^{H})ds\rangle \}\mathcal{E}%
\left( \int_{0}^{t}K_{H_{n_{0}}}^{-1}\left( \frac{1}{\lambda _{n_{0}}}%
\int_{0}^{\cdot }b_{n}(r,\mathbb{B}_{r}^{H})dr\right) ^{\ast
}(s)dW_{s}^{n_{0}}\right) ]
\end{equation*}%
\begin{equation*}
\rightarrow E[\varphi (\mathbb{B}_{t}^{H})\exp \{\sum_{j=1}^{k}\langle
\alpha _{j},\mathbb{B}_{t_{j}}^{H}-\mathbb{B}_{t_{j-1}}^{H}-%
\int_{t_{j-1}}^{t_{j}}b(s,\mathbb{B}_{s}^{H})ds\rangle \}\mathcal{E}\left(
\int_{0}^{t}K_{H_{n_{0}}}^{-1}\left( \frac{1}{\lambda _{n_{0}}}%
\int_{0}^{\cdot }b(r,\mathbb{B}_{r}^{H})dr\right) ^{\ast
}(s)dW_{s}^{n_{0}}\right) ]
\end{equation*}%
\begin{equation*}
=E[\varphi (X_{t})\exp \{\sum_{j=1}^{k}\langle \alpha _{j},\mathbb{B}%
_{t_{j}}^{H}-\mathbb{B}_{t_{j-1}}^{H}\rangle \}]
\end{equation*}%
\begin{equation*}
=E[E[\varphi (X_{t})|\mathcal{F}_{t}]\exp \{\sum_{j=1}^{k}\langle \alpha
_{j},\mathbb{B}_{t_{j}}^{H}-\mathbb{B}_{t_{j-1}}^{H}\rangle \}].
\end{equation*}
\end{proof}

\bigskip

\bigskip

We now turn to step (3) of our program. For its completion we need to derive
some crucial estimates.

\bigskip In preparation of those estimates, we introduce some notation and
definitions:

Let $m$ be an integer and let the function $f:[0,T]^{m}\times (\R%
^{d})^{m}\rightarrow \R$ be of the form 
\begin{equation}
f(s,z)=\prod_{j=1}^{m}f_{j}(s_{j},z_{j}),\quad s=(s_{1},\dots ,s_{m})\in
\lbrack 0,T]^{m},\quad z=(z_{1},\dots ,z_{m})\in (\R^{d})^{m},  \label{f}
\end{equation}%
where $f_{j}:[0,T]\times \R^{d}\rightarrow \R$, $j=1,\dots ,m$ are smooth
functions with compact support. Further, let $\varkappa
:[0,T]^{m}\rightarrow \R$ a function of the form 
\begin{equation}
\varkappa (s)=\prod_{j=1}^{m}\varkappa _{j}(s_{j}),\quad s\in \lbrack
0,T]^{m},  \label{kappa}
\end{equation}%
where $\varkappa _{j}:[0,T]\rightarrow \R$, $j=1,\dots ,m$ are integrable
functions.

Let $\alpha _{j}$ be a multi-index and denote by $D^{\alpha _{j}}$ its
corresponding differential operator. For $\alpha =(\alpha _{1},\dots ,\alpha
_{m})$ viewed as an element of $\mathbb{N}_{0}^{d\times m}$ we define $%
|\alpha |=\sum_{j=1}^{m}\sum_{l=1}^{d}\alpha _{j}^{(l)}$ and write 
\begin{equation*}
D^{\alpha }f(s,z)=\prod_{j=1}^{m}D^{\alpha _{j}}f_{j}(s_{j},z_{j}).
\end{equation*}

The objective of this section is to establish an integration by parts
formula of the form 
\begin{equation}
\int_{\Delta _{\theta ,t}^{m}}D^{\alpha }f(s,\mathbb{B}_{s})ds=\int_{(\R%
^{d})^{m}}\Lambda _{\alpha }^{f}(\theta ,t,z)dz,  \label{ibp}
\end{equation}%
where $\mathbb{B}:=\mathbb{B}_{\cdot }^{H}$, for a random field $\Lambda
_{\alpha }^{f}$. In fact, we can choose $\Lambda _{\alpha }^{f}$ by 
\begin{equation}
\Lambda _{\alpha }^{f}(\theta ,t,z)=(2\pi )^{-dm}\int_{(\R%
^{d})^{m}}\int_{\Delta _{\theta
,t}^{m}}\prod_{j=1}^{m}f_{j}(s_{j},z_{j})(-iu_{j})^{\alpha _{j}}\exp
\{-i\langle u_{j},\mathbb{B}_{s_{j}}-z_{j}\rangle \}dsdu.  \label{LambdaDef}
\end{equation}

Let us strat by \emph{defining} $\Lambda _{\alpha }^{f}(\theta ,t,z)$ as
above and show that it is a well-defined element of $L^{2}(\Omega )$.

We also need the following notation: Given $(s,z)=(s_{1},\dots
,s_{m},z_{1}\dots ,z_{m})\in \lbrack 0,T]^{m}\times (\R^{d})^{m}$ and a
shuffle $\sigma \in S(m,m)$ we define 
\begin{equation*}
f_{\sigma }(s,z):=\prod_{j=1}^{2m}f_{[\sigma (j)]}(s_{j},z_{[\sigma (j)]})
\end{equation*}%
and 
\begin{equation*}
\varkappa _{\sigma }(s):=\prod_{j=1}^{2m}\varkappa _{\lbrack \sigma
(j)]}(s_{j}),
\end{equation*}%
where $[j]$ is equal to $j$ if $1\leq j\leq m$ and $j-m$ if $m+1\leq j\leq
2m $.

For a multiindex $\alpha $, define

\begin{eqnarray*}
&&\Psi _{\alpha }^{f}(\theta ,t,z,H) \\
&:&=\prod_{l=1}^{d}\sqrt{(2\left\vert \alpha ^{(l)}\right\vert )!}%
\sum_{\sigma \in S(m,m)}\int_{\Delta _{0,t}^{2m}}\left\vert f_{\sigma
}(s,z)\right\vert \prod_{j=1}^{2m}\frac{1}{\left\vert
s_{j}-s_{j-1}\right\vert ^{H(d+2\sum_{l=1}^{d}\alpha _{\lbrack \sigma
(j)]}^{(l)})}}ds_{1}...ds_{2m}
\end{eqnarray*}

respectively, 
\begin{eqnarray*}
&&\Psi _{\alpha }^{\varkappa }(\theta ,t,H) \\
&:&=\prod_{l=1}^{d}\sqrt{(2\left\vert \alpha ^{(l)}\right\vert )!}%
\sum_{\sigma \in S(m,m)}\int_{\Delta _{0,t}^{2m}}\left\vert \varkappa
_{\sigma }(s)\right\vert \prod_{j=1}^{2m}\frac{1}{\left\vert
s_{j}-s_{j-1}\right\vert ^{H(d+2\sum_{l=1}^{d}\alpha _{\lbrack \sigma
(j)]}^{(l)})}}ds_{1}...ds_{2m}.
\end{eqnarray*}

\begin{thm}
\label{mainthmlocaltime} Suppose that $\Psi _{\alpha }^{f}(\theta
,t,z,H_{r}),\Psi _{\alpha }^{\varkappa }(\theta ,t,H_{r})<\infty $ for some $%
r\geq r_{0}$. Then, $\Lambda _{\alpha }^{f}(\theta ,t,z)$ as in %
\eqref{LambdaDef} is a random variable in $L^{2}(\Omega )$.\ Further, there
exists a universal constant $C_{r}=C(T,H_{r},d)>0$ such that%
\begin{equation}
E[\left\vert \Lambda _{\alpha }^{f}(\theta ,t,z)\right\vert ^{2}]\leq \frac{1%
}{\lambda _{r}^{2md}}C_{r}^{m+\left\vert \alpha \right\vert }\Psi _{\alpha
}^{f}(\theta ,t,z,H_{r}).  \label{supestL}
\end{equation}%
Moreover, we have%
\begin{equation}
\left\vert E[\int_{(\mathbb{R}^{d})^{m}}\Lambda _{\alpha }^{f}(\theta
,t,z)dz]\right\vert \leq \frac{1}{\lambda _{r}^{md}}C_{r}^{m/2+\left\vert
\alpha \right\vert /2}\prod_{j=1}^{m}\left\Vert f_{j}\right\Vert _{L^{1}(%
\mathbb{R}^{d};L^{\infty }([0,T]))}(\Psi _{\alpha }^{\varkappa }(\theta
,t,H_{r}))^{1/2}.  \label{intestL}
\end{equation}
\end{thm}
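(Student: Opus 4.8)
The plan is to obtain both \eqref{supestL} and \eqref{intestL} by computing moments of $\Lambda_\alpha^f$ directly from its Fourier representation \eqref{LambdaDef}, turning each into a finite sum of deterministic simplex integrals with singular kernels. The two engines are the Gaussianity of $\mathbb{B}_\cdot^H$, which turns the frequency integrals into Gaussian moments, and the two-sided strong local non-determinism \eqref{2sided}, which supplies the quantitative lower bound on conditional variances that controls those moments. Since the $u$-integral in \eqref{LambdaDef} is only conditionally convergent, I would throughout insert a regulariser $e^{-\varepsilon|u|^2}$ to make every interchange of integration and expectation legitimate, and recover the statements by letting $\varepsilon\downarrow 0$, the limits being finite precisely because $\Psi_\alpha^f$ and $\Psi_\alpha^\varkappa$ are assumed finite.

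For \eqref{supestL}, I would write $E[|\Lambda_\alpha^f|^2]=E[\Lambda_\alpha^f\overline{\Lambda_\alpha^f}]$ and, using that the $f_j$ are real, expand the product of the two copies of \eqref{LambdaDef} into a double time-integral over $\Delta_{\theta,t}^m\times\Delta_{\theta,t}^m$ and a $2m$-fold frequency integral in $u,u'$. The shuffle identity \eqref{shuffleIntegral} rewrites the product of the two $m$-simplices as $\sum_{\sigma\in S(m,m)}\int_{\Delta_{\theta,t}^{2m}}$; relabelling the merged times as $s_1<\dots<s_{2m}$ and the merged frequencies as $v_1,\dots,v_{2m}$ (each $v_j$ equal to some $u$ or some $-u'$ and carrying the multi-index $\alpha_{[\sigma(j)]}$), the expectation becomes the Gaussian characteristic function $\exp(-\tfrac12\,\mathrm{Var}[\sum_j\langle v_j,\mathbb{B}_{s_j}\rangle])$.

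The key step is to estimate the frequency integral. Passing to the increment variables $\eta_k:=\sum_{j\ge k}v_j$ (a Jacobian-one change with $v_j=\eta_j-\eta_{j+1}$) turns the quadratic form into $\mathrm{Var}[\sum_k\langle\eta_k,\mathbb{B}_{s_k}-\mathbb{B}_{s_{k-1}}\rangle]$; retaining only the $r$-th fractional Brownian motion in \eqref{monster}, which contributes $\lambda_r^2$ times its own variance, and applying \eqref{2sided} gives the lower bound $c\,\lambda_r^2\sum_k|\eta_k|^2|s_k-s_{k-1}|^{2H_r}$. Because the covariance of $\mathbb{B}_\cdot^H$ is diagonal (Theorem \ref{monsterprocess}), both this Gaussian weight and the monomial $\prod_j|v_j|^{|\alpha_{[\sigma(j)]}|}$ factorise over the $d$ coordinates, so the integral splits into a product over $l=1,\dots,d$ of one-dimensional Gaussian-moment integrals. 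Evaluating these, each gap contributes a factor $|s_k-s_{k-1}|^{-H_r(d+2|\alpha_{[\sigma(k)]}|)}$ — I would bound the exact exponent from above by $d+2|\alpha_{[\sigma(k)]}|$, paying only a $T$-power constant, and this choice keeps the resulting $2m$-fold simplex integral convergent exactly because $r\ge r_0$ forces $H_r$ small — while the Gaussian moments yield the constant $\prod_l\sqrt{(2|\alpha^{(l)}|)!}$. Collecting the powers of $\lambda_r$ and the remaining universal constants into $\lambda_r^{-2md}C_r^{m+|\alpha|}$ produces \eqref{supestL}, and its finiteness gives $\Lambda_\alpha^f\in L^2(\Omega)$.

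For \eqref{intestL} I would first integrate in $z$: taking expectation and carrying out $\int_{(\R^d)^m}dz$ in \eqref{LambdaDef} lets me estimate each $f_j$ by the spatial $L^1$-mass of its temporal supremum, pulling out $\prod_j\|f_j\|_{L^1(\R^d;L^\infty([0,T]))}$ and leaving a first-moment quantity governed by the integrable weights $\varkappa_j$. A Cauchy–Schwarz step analogous to the computation above, feeding in the same shuffle, local-non-determinism and Gaussian-moment estimates, then yields the square root $(\Psi_\alpha^\varkappa(\theta,t,H_r))^{1/2}$ together with the correspondingly halved exponents $\lambda_r^{-md}$ and $C_r^{m/2+|\alpha|/2}$, which is \eqref{intestL}. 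I expect the main obstacle to be the combined Gaussian-moment and local-non-determinism estimate: one must keep the constants sharp enough to land on $\sqrt{(2|\alpha^{(l)}|)!}$ rather than the full factorial, arrange the $|s_k-s_{k-1}|$-exponents so that the simplex integral converges (which is what dictates the smallness of $H_r$), and carefully track the powers of $\lambda_r$ and $C_r$ through the cross-terms generated by $v_j=\eta_j-\eta_{j+1}$; the conditional convergence of the $u$-integral, handled by the $\varepsilon$-regularisation, is the accompanying technical point.
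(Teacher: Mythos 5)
Your overall architecture coincides with the paper's up to a decisive point: squaring \eqref{LambdaDef} via the sign-flip change of variables in the conjugate copy, merging the two $m$-simplices by the shuffle identity \eqref{shuffleIntegral} into a sum over $S(m,m)$ on $\Delta_{\theta,t}^{2m}$, taking expectation to produce the Gaussian weight $\exp\{-\tfrac12\mathrm{Var}[\sum_j\langle u_{\sigma(j)},\mathbb{B}_{s_j}\rangle]\}$, discarding all components of \eqref{monster} except the $r$-th at the cost of $\lambda_r$, factorizing over the $d$ coordinates, and for \eqref{intestL} using $|E\int\Lambda\,dz|\le\int(E|\Lambda|^2)^{1/2}dz$ together with extraction of $\prod_j\|f_j\|_{L^1(\mathbb{R}^d;L^\infty([0,T]))}$ — all of this matches, and your $\varepsilon$-regularisation of the conditionally convergent $u$-integral is a legitimate (indeed more careful) way to justify the Fubini steps the paper performs tacitly. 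The gap is in the Gaussian-moment step. After your increment substitution $\eta_k=\sum_{j\ge k}v_j$, the quadratic form becomes diagonal-dominated via \eqref{2sided}, but the monomial $\prod_j|v_j^{(l)}|^{\alpha^{(l)}_{[\sigma(j)]}}$ does \emph{not} diagonalize: $v_j=\eta_j-\eta_{j+1}$ forces a binomial expansion in which the derivative powers are redistributed, so $|\eta_k^{(l)}|$ can carry up to $\alpha^{(l)}_{[\sigma(k-1)]}+\alpha^{(l)}_{[\sigma(k)]}$ powers. Consequently the $k$-th gap acquires a factor $|s_k-s_{k-1}|^{-H_r(d+\beta_k)}$ with $\beta_k$ possibly \emph{exceeding} $2\sum_l\alpha^{(l)}_{[\sigma(k)]}$ whenever neighbouring multi-indices are unequal, and then the integrand is genuinely more singular at that gap than the integrand of $\Psi_\alpha^f$; the pointwise comparison "pay a $T$-power" only works when you raise an exponent, not lower it. So your asserted per-gap factor $|s_k-s_{k-1}|^{-H_r(d+2|\alpha_{[\sigma(k)]}|)}$ does not follow from the diagonal lower bound, and the quantity you actually control is a different functional (finite under comparable smallness of $H_r$, and related to $\Psi$ only after evaluating both sides as iterated Beta integrals), not the stated bound \eqref{supestL} by $\Psi_\alpha^f$ itself — which matters here because finiteness of $\Psi_\alpha^f$ is precisely the hypothesis of the theorem.

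The paper avoids the redistribution problem altogether and this is the actual novelty of its proof. Instead of passing to increments, it substitutes $u\mapsto Q_r^{-1/2}u/\lambda_r$ for the full covariance matrix $Q_r(s)$, writes the frequency integral as a Gaussian moment $E[\prod_{j=1}^{2m}|\langle Q_r^{-1/2}Z,e_j\rangle|^{\alpha^{(l)}_{[\sigma(j)]}}]$, and bounds it by $\sqrt{\mathrm{perm}(\Sigma)}$ using the Li--Wei inequality (Lemma \ref{LiWei}); the permanent bound \eqref{PSD} for positive semidefinite matrices then yields $(2|\alpha^{(l)}|)!\prod_i a_{ii}$, and Lemma \ref{CD} identifies each diagonal entry $a_{ii}$ as the reciprocal conditional variance $\sigma_j^{-2}$, to which the two-sided strong local non-determinism \eqref{2sided} applies (together with the determinant lower bound $(\det Q_r)^{1/2}\ge K^{(2m-1)/2}\prod_j|s_j-s_{j-1}|^{H_r}$). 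This mechanism attaches the full weight $2\sum_l\alpha^{(l)}_{[\sigma(j)]}$ of the $j$-th multi-index to the $j$-th gap — exactly the shape of $\Psi_\alpha^f$ — and simultaneously produces the constant $\prod_l\sqrt{(2|\alpha^{(l)}|)!}$. To repair your argument you should either adopt this permanent/conditional-variance route, or restrict to the case of Remark \ref{Remark 3.4} (all $\sum_l\alpha_j^{(l)}$ equal, where redistribution is harmless) and re-derive the general statement by comparing the evaluated iterated integrals as in Lemma \ref{VI_iterativeInt} rather than their integrands.
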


\begin{proof}
For notational simplicity we consider $\theta =0$ and set $\mathbb{B}_{\cdot
}=\mathbb{B}_{\cdot }^{H}$, $\Lambda _{\alpha }^{f}(t,z)=\Lambda _{\alpha
}^{f}(0,t,z).$

For an integrable function $g:(\mathbb{R}^{d})^{m}\longrightarrow \mathbb{C}$
we get that%
\begin{eqnarray*}
&&\left\vert \int_{(\mathbb{R}^{d})^{m}}g(u_{1},...,u_{m})du_{1}...du_{m}%
\right\vert ^{2} \\
&=&\int_{(\mathbb{R}^{d})^{m}}g(u_{1},...,u_{m})du_{1}...du_{m}\int_{(%
\mathbb{R}^{d})^{m}}\overline{g(u_{m+1},...,u_{2m})}du_{m+1}...du_{2m} \\
&=&\int_{(\mathbb{R}^{d})^{m}}g(u_{1},...,u_{m})du_{1}...du_{m}(-1)^{dm}%
\int_{(\mathbb{R}^{d})^{m}}\overline{g(-u_{m+1},...,-u_{2m})}%
du_{m+1}...du_{2m},
\end{eqnarray*}%
where we employed the change of variables $(u_{m+1},...,u_{2m})\longmapsto
(-u_{m+1},...,-u_{2m})$ in the last equality.

This yields%
\begin{eqnarray*}
&&\left\vert \Lambda _{\alpha }^{f}(t,z)\right\vert ^{2} \\
&=&(2\pi )^{-2dm}(-1)^{dm}\int_{(\mathbb{R}^{d})^{2m}}\int_{\Delta
_{0,t}^{m}}\prod_{j=1}^{m}f_{j}(s_{j},z_{j})(-iu_{j})^{\alpha
_{j}}e^{-i\left\langle u_{j},\mathbb{B}_{s_{j}}-z_{j}\right\rangle
}ds_{1}...ds_{m} \\
&&\times \int_{\Delta
_{0,t}^{m}}\prod_{j=m+1}^{2m}f_{[j]}(s_{j},z_{[j]})(-iu_{j})^{\alpha
_{\lbrack j]}}e^{-i\left\langle u_{j},\mathbb{B}_{s_{j}}-z_{[j]}\right%
\rangle }ds_{m+1}...ds_{2m}du_{1}...du_{2m} \\
&=&(2\pi )^{-2dm}(-1)^{dm}\sum_{\sigma \in S(m,m)}\int_{(\mathbb{R}%
^{d})^{2m}}\left( \prod_{j=1}^{m}e^{-i\left\langle
z_{j},u_{j}+u_{j+m}\right\rangle }\right) \\
&&\times \int_{\Delta _{0,t}^{2m}}f_{\sigma }(s,z)\prod_{j=1}^{2m}u_{\sigma
(j)}^{\alpha _{\lbrack \sigma (j)]}}\exp \left\{
-\sum_{j=1}^{2m}\left\langle u_{\sigma (j)},\mathbb{B}_{s_{j}}\right\rangle
\right\} ds_{1}...ds_{2m}du_{1}...du_{2m},
\end{eqnarray*}%
where we applied shuffling in connection with Section \ref{VI_shuffles} in
the last step.

By taking the expectation on both sides in connection with the assumption
that the fractional Brownian motions $B_{\cdot }^{i,H_{i}},i\geq 1$ are
independent we find that%
\begin{align}\label{Lambda}
\begin{split}
&E[\left| \Lambda _{\alpha }^{f}(t,z)\right|^{2}]  
\\
&=(2\pi )^{-2dm}(-1)^{dm}\sum_{\sigma \in S(m,m)}\int_{(\mathbb{R}^{d})^{2m}}\left( \prod_{j=1}^{m}e^{-i\left\langle
z_{j},u_{j}+u_{j+m}\right\rangle }\right)  \\
&\times \int_{\Delta _{0,t}^{2m}}f_{\sigma }(s,z)\prod_{j=1}^{2m}u_{\sigma (j)}^{\alpha _{\lbrack \sigma (j)]}}\exp \left\{ -\frac{1}{2}
Var[\sum_{j=1}^{2m}\left\langle u_{\sigma (j)},\mathbb{B}_{s_{j}}\right
\rangle ]\right\} ds_{1}...ds_{2m}du_{1}...du_{2m}  \\
&=(2\pi )^{-2dm}(-1)^{dm}\sum_{\sigma \in S(m,m)}\int_{(\mathbb{R}
^{d})^{2m}}\left( \prod_{j=1}^{m}e^{-i\left\langle
z_{j},u_{j}+u_{j+m}\right\rangle }\right)   \\
&\times \int_{\Delta _{0,t}^{2m}}f_{\sigma }(s,z)\prod_{j=1}^{2m}u_{\sigma (j)}^{\alpha _{\lbrack \sigma (j)]}}\exp \left\{ -\frac{1}{2}\sum_{n\geq 1}\lambda _{n}^{2}\sum_{l=1}^{d}Var[\sum_{j=1}^{2m}u_{\sigma
(j)}^{(l)}B_{s_{j}}^{(l),n,H_{n}}]\right\}
ds_{1}\dots ds_{2m}du_{1}^{(1)}\dots du_{2m}^{(1)} \\
&\dots du_{1}^{(d)}\dots du_{2m}^{(d)} \\
&=(2\pi )^{-2dm}(-1)^{dm}\sum_{\sigma \in S(m,m)}\int_{(\mathbb{R}^{d})^{2m}}\left( \prod_{j=1}^{m}e^{-i\left\langle
z_{j},u_{j}+u_{j+m}\right\rangle }\right)
\\
&\times \int_{\Delta _{0,t}^{2m}}f_{\sigma }(s,z)\prod_{j=1}^{2m}u_{\sigma(j)}^{\alpha _{\lbrack \sigma (j)]}}\prod_{n\geq
1}\prod_{l=1}^{d}\exp \left\{ -\frac{1}{2}\lambda _{n}^{2}((u_{\sigma
(j)}^{(l)})_{1\leq j\leq 2m})^{\ast }Q_{n}((u_{\sigma (j)}^{(l)})_{1\leq
j\leq 2m})\right\} ds_{1}\dots ds_{2m}  \\
&du_{\sigma (1)}^{(1)}\dots du_{\sigma (2m)}^{(1)}\dots du_{\sigma
(1)}^{(d)}\dots du_{\sigma (2m)}^{(d)},
\end{split}
\end{align}
where $\ast $ stands for transposition and where 
\begin{equation*}
Q_{n}=Q_{n}(s):=(E[B_{s_{i}}^{(1)}B_{s_{j}}^{(1)}])_{1\leq i,j\leq 2m}.
\end{equation*}%
Further, we get that%
\begin{align}\label{Lambda2}
\begin{split}
&\int_{\Delta _{0,t}^{2m}}\left| f_{\sigma }(s,z)\right| \int_{(%
\mathbb{R}^{d})^{2m}}\prod_{j=1}^{2m}\prod_{l=1}^{d}\left| u_{\sigma
(j)}^{(l)}\right|^{\alpha _{\lbrack \sigma
(j)]}^{(l)}}\prod_{n\geq 1}\prod_{l=1}^{d}\exp \left\{ -\frac{1}{2}%
\lambda _{n}^{2}((u_{\sigma (j)}^{(l)})_{1\leq j\leq 2m})^{\ast
}Q_{n}((u_{\sigma (j)}^{(l)})_{1\leq j\leq 2m})\right\}   \\
&du_{\sigma (1)}^{(1)}\dots du_{\sigma (2m)}^{(1)}\dots du_{\sigma
(1)}^{(d)}\dots du_{\sigma (2m)}^{(d)}ds_{1}\dots ds_{2m} \\
&\leq \int_{\Delta _{0,t}^{2m}}\left| f_{\sigma }(s,z)\right|
\int_{(\mathbb{R}^{d})^{2m}}\prod_{j=1}^{2m}\prod_{l=1}^{d}\left|
u_{j}^{(l)}\right|^{\alpha _{\lbrack \sigma (j)]}^{(l)}} \\
&\times \prod_{l=1}^{d}\exp \left\{ -\frac{1}{2}\lambda_{r}^{2}\left\langle Q_{r}u^{(l)},u^{(l)}\right\rangle \right\}  \\
&du_{1}^{(1)}\dots du_{2m}^{(1)}\dots du_{1}^{(d)}\dots du_{2m}^{(d)}ds_{1}\dots ds_{2m} \\
&=\int_{\Delta _{0,t}^{2m}}\left| f_{\sigma }(s,z)\right|
\prod_{l=1}^{d}\int_{\mathbb{R}^{2m}}(\prod_{j=1}^{2m}\left|
u_{j}^{(l)}\right|^{\alpha _{\lbrack \sigma (j)]}^{(l)}})\exp \left\{-
\frac{1}{2}\lambda _{r}^{2}\left\langle Q_{r}u^{(l)},u^{(l)}\right\rangle
\right\} du_{1}^{(l)}\dots du_{2m}^{(l)}ds_{1}\dots ds_{2m},
\end{split}
\end{align}
where
\begin{equation*}
u^{(l)}:=(u_{j}^{(l)})_{1\leq j\leq 2m}.
\end{equation*}%
We obtain that%
\begin{eqnarray*}
&&\int_{\mathbb{R}^{2m}}(\prod_{j=1}^{2m}\left\vert u_{j}^{(l)}\right\vert
^{\alpha _{\lbrack \sigma (j)]}^{(l)}})\exp \left\{ -\frac{1}{2}\lambda
_{r}^{2}\left\langle Q_{r}u^{(l)},u^{(l)}\right\rangle \right\}
du_{1}^{(l)}...du_{2m}^{(l)} \\
&=&\frac{1}{\lambda _{r}^{2m}}\frac{1}{(\det Q_{r})^{1/2}}\int_{\mathbb{R}%
^{2m}}(\prod_{j=1}^{2m}\left\vert \left\langle
Q_{r}^{-1/2}u^{(l)},e_{j}\right\rangle \right\vert ^{\alpha _{\lbrack \sigma
(j)]}^{(l)}})\exp \left\{ -\frac{1}{2}\left\langle
u^{(l)},u^{(l)}\right\rangle \right\} du_{1}^{(l)}...du_{2m}^{(l)},
\end{eqnarray*}%
where $e_{i},i=1,...,2m$ is the standard ONB of $\mathbb{R}^{2m}$.

We also have that%
\begin{eqnarray*}
&&\int_{\mathbb{R}^{2m}}(\prod_{j=1}^{2m}\left\vert \left\langle
Q_{r}^{-1/2}u^{(l)},e_{j}\right\rangle \right\vert ^{\alpha _{\lbrack \sigma
(j)]}^{(l)}})\exp \left\{ -\frac{1}{2}\left\langle
u^{(l)},u^{(l)}\right\rangle \right\} du_{1}^{(l)}...du_{2m}^{(l)} \\
&=&(2\pi )^{m}E[\prod_{j=1}^{2m}\left\vert \left\langle
Q_{r}^{-1/2}Z,e_{j}\right\rangle \right\vert ^{\alpha _{\lbrack \sigma
(j)]}^{(l)}}],
\end{eqnarray*}%
where%
\begin{equation*}
Z\sim \mathcal{N}(\mathcal{O},I_{2m\times 2m}).
\end{equation*}%
On the other hand, it follows from Lemma \ref{LiWei}, which is a type of
Brascamp-Lieb inequality, that%
\begin{eqnarray*}
&&E[\prod_{j=1}^{2m}\left\vert \left\langle Q_{r}^{-1/2}Z,e_{j}\right\rangle
\right\vert ^{\alpha _{\lbrack \sigma (j)]}^{(l)}}] \\
&\leq &\sqrt{perm(\sum )}=\sqrt{\sum_{\pi \in S_{2\left\vert \alpha
^{(l)}\right\vert }}\prod_{i=1}^{2\left\vert \alpha ^{(l)}\right\vert
}a_{i\pi (i)}},
\end{eqnarray*}%
where $perm(\sum )$ is the permanent of the covariance matrix $\sum
=(a_{ij}) $ of the Gaussian random vector%
\begin{equation*}
\underset{\alpha _{\lbrack \sigma (1)]}^{(l)}\text{ times}}{\underbrace{%
(\left\langle Q^{-1/2}Z,e_{1}\right\rangle ,...,\left\langle
Q^{-1/2}Z,e_{1}\right\rangle }},\underset{\alpha _{\lbrack \sigma (2)]}^{(l)}%
\text{ times}}{\underbrace{\left\langle Q^{-1/2}Z,e_{2}\right\rangle
,...,\left\langle Q^{-1/2}Z,e_{2}\right\rangle }},...,\underset{\alpha
_{\lbrack \sigma (2m)]}^{(l)}\text{ times}}{\underbrace{\left\langle
Q^{-1/2}Z,e_{2m}\right\rangle ,...,\left\langle
Q^{-1/2}Z,e_{2m}\right\rangle }}),
\end{equation*}%
$\left\vert \alpha ^{(l)}\right\vert :=\sum_{j=1}^{m}\alpha _{j}^{(l)}$ and
where $S_{n}$ denotes the permutation group of size $n$.

Furthermore, using an upper bound for the permanent of positive semidefinite
matrices (see \cite{AG}) or direct computations, we find that%
\begin{equation}
perm(\sum )=\sum_{\pi \in S_{2\left\vert \alpha ^{(l)}\right\vert
}}\prod_{i=1}^{2\left\vert \alpha ^{(l)}\right\vert }a_{i\pi (i)}\leq
(2\left\vert \alpha ^{(l)}\right\vert )!\prod_{i=1}^{2\left\vert \alpha
^{(l)}\right\vert }a_{ii}.  \label{PSD}
\end{equation}

Let now $i\in \lbrack \sum_{k=1}^{j-1}\alpha _{\lbrack \sigma
(k)]}^{(l)}+1,\sum_{k=1}^{j}\alpha _{\lbrack \sigma (k)]}^{(l)}]$ for some
arbitrary fixed $j\in \{1,...,2m\}$. Then%
\begin{equation*}
a_{ii}=E[\left\langle Q_{r}^{-1/2}Z,e_{j}\right\rangle \left\langle
Q_{r}^{-1/2}Z,e_{j}\right\rangle ].
\end{equation*}

\bigskip Further, substitution yields%
\begin{eqnarray*}
&&E[\left\langle Q_{r}^{-1/2}Z,e_{j}\right\rangle \left\langle
Q_{r}^{-1/2}Z,e_{j}\right\rangle ] \\
&=&(\det Q_{r})^{1/2}\frac{1}{(2\pi )^{m}}\int_{\mathbb{R}^{2m}}\left\langle
u,e_{j}\right\rangle ^{2}\exp (-\frac{1}{2}\left\langle
Q_{r}u,u\right\rangle )du_{1}...du_{2m} \\
&=&(\det Q_{r})^{1/2}\frac{1}{(2\pi )^{m}}\int_{\mathbb{R}%
^{2m}}u_{j}^{2}\exp (-\frac{1}{2}\left\langle Q_{r}u,u\right\rangle
)du_{1}...du_{2m}
\end{eqnarray*}

\bigskip

In the next step, we want to apply Lemma \ref{CD}. Then we obtain that%
\begin{eqnarray*}
&&\int_{\mathbb{R}^{2m}}u_{j}^{2}\exp (-\frac{1}{2}\left\langle
Q_{r}u,u\right\rangle )du_{1}...du_{m} \\
&=&\frac{(2\pi )^{(2m-1)/2}}{(\det Q_{r})^{1/2}}\int_{\mathbb{R}}v^{2}\exp (-%
\frac{1}{2}v^{2})dv\frac{1}{\sigma _{j}^{2}} \\
&=&\frac{(2\pi )^{m}}{(\det Q_{r})^{1/2}}\frac{1}{\sigma _{j}^{2}},
\end{eqnarray*}%
where $\sigma _{j}^{2}:=Var[B_{s_{j}}^{H_{r}}\left\vert
B_{s_{1}}^{H_{r}},...,B_{s_{2m}}^{H_{r}}\text{ without }B_{s_{j}}^{H_{r}}%
\right] .$

We now aim at using strong local non-determinism of the form (see (\ref%
{2sided})): For all $t\in \lbrack 0,T],$ $0<r<t:$%
\begin{equation*}
Var[B_{t}^{H_{r}}\left\vert B_{s}^{H_{r}},\left\vert t-s\right\vert \geq r 
\right] \geq Kr^{2H_{r}}
\end{equation*}%
for a constant $K$ depending on $H_{r}$ and $T$.

The latter entails that 
\begin{equation*}
(\det Q_{r}(s))^{1/2}\geq K^{(2m-1)/2}\left\vert s_{1}\right\vert
^{H_{r}}\left\vert s_{2}-s_{1}\right\vert ^{H_{r}}...\left\vert
s_{2m}-s_{2m-1}\right\vert ^{H_{r}}
\end{equation*}%
as well as%
\begin{equation*}
\sigma _{j}^{2}\geq K\min \{\left\vert s_{j}-s_{j-1}\right\vert
^{2H_{r}},\left\vert s_{j+1}-s_{j}\right\vert ^{2H_{r}}\}.
\end{equation*}%
Hence%
\begin{eqnarray*}
\prod_{j=1}^{2m}\sigma _{j}^{-2\alpha _{\lbrack \sigma (j)]}^{(l)}} &\leq
&K^{-2m}\prod_{j=1}^{2m}\frac{1}{\min \{\left\vert s_{j}-s_{j-1}\right\vert
^{2H_{r}\alpha _{\lbrack \sigma (j)]}^{(l)}},\left\vert
s_{j+1}-s_{j}\right\vert ^{2H_{r}\alpha _{\lbrack \sigma (j)]}^{(l)}}\}} \\
&\leq &C^{m+\left\vert \alpha ^{(l)}\right\vert }\prod_{j=1}^{2m}\frac{1}{%
\left\vert s_{j}-s_{j-1}\right\vert ^{4H_{r}\alpha _{\lbrack \sigma
(j)]}^{(l)}}}
\end{eqnarray*}%
for a constant $C$ only depending on $H_{r}$ and $T$.

\bigskip So we conclude from (\ref{PSD}) that%
\begin{eqnarray*}
perm(\sum ) &\leq &(2\left\vert \alpha ^{(l)}\right\vert
)!\prod_{i=1}^{2\left\vert \alpha ^{(l)}\right\vert }a_{ii} \\
&\leq &(2\left\vert \alpha ^{(l)}\right\vert )!\prod_{j=1}^{2m}((\det
Q_{r})^{1/2}\frac{1}{(2\pi )^{m}}\frac{(2\pi )^{m}}{(\det Q_{r})^{1/2}}\frac{%
1}{\sigma _{j}^{2}})^{\alpha _{\lbrack \sigma (j)]}^{(l)}} \\
&\leq &(2\left\vert \alpha ^{(l)}\right\vert )!C^{m+\left\vert \alpha
^{(l)}\right\vert }\prod_{j=1}^{2m}\frac{1}{\left\vert
s_{j}-s_{j-1}\right\vert ^{4H_{r}\alpha _{\lbrack \sigma (j)]}^{(l)}}}.
\end{eqnarray*}%
Thus%
\begin{eqnarray*}
&&E[\prod_{j=1}^{2m}\left\vert \left\langle Q_{r}^{-1/2}Z,e_{j}\right\rangle
\right\vert ^{\alpha _{\lbrack \sigma (j)]}^{(l)}}]\leq \sqrt{perm(\sum )} \\
&\leq &\sqrt{(2\left\vert \alpha ^{(l)}\right\vert )!}C^{m+\left\vert \alpha
^{(l)}\right\vert }\prod_{j=1}^{2m}\frac{1}{\left\vert
s_{j}-s_{j-1}\right\vert ^{2H_{r}\alpha _{\lbrack \sigma (j)]}^{(l)}}}.
\end{eqnarray*}%
Therefore we see from (\ref{Lambda}) and (\ref{Lambda2}) that%
\begin{eqnarray*}
&&E[\left\vert \Lambda _{\alpha }^{f}(\theta ,t,z)\right\vert ^{2}] \\
&\leq &C^{m}\sum_{\sigma \in S(m,m)}\int_{\Delta _{0,t}^{2m}}\left\vert
f_{\sigma }(s,z)\right\vert \prod_{l=1}^{d}\int_{\mathbb{R}%
^{2m}}(\prod_{j=1}^{2m}\left\vert u_{j}^{(l)}\right\vert ^{\alpha _{\lbrack
\sigma (j)]}^{(l)}})\exp \left\{ -\frac{1}{2}\left\langle
Q_{r}u^{(l)},u^{(l)}\right\rangle \right\}
du_{1}^{(l)}...du_{2m}^{(l)}ds_{1}...ds_{2m} \\
&\leq &M^{m}\sum_{\sigma \in S(m,m)}\int_{\Delta _{0,t}^{2m}}\left\vert
f_{\sigma }(s,z)\right\vert \frac{1}{\lambda _{r}^{2md}}\frac{1}{(\det
Q(s))^{d/2}}\prod_{l=1}^{d}\sqrt{(2\left\vert \alpha ^{(l)}\right\vert )!}%
C^{m+\left\vert \alpha ^{(l)}\right\vert }\prod_{j=1}^{2m}\frac{1}{%
\left\vert s_{j}-s_{j-1}\right\vert ^{2H_{r}\alpha _{\lbrack \sigma
(j)]}^{(l)}}}ds_{1}...ds_{2m} \\
&=&\frac{1}{\lambda _{r}^{2md}}M^{m}C^{md+\left\vert \alpha \right\vert
}\prod_{l=1}^{d}\sqrt{(2\left\vert \alpha ^{(l)}\right\vert )!}\sum_{\sigma
\in S(m,m)}\int_{\Delta _{0,t}^{2m}}\left\vert f_{\sigma }(s,z)\right\vert
\prod_{j=1}^{2m}\frac{1}{\left\vert s_{j}-s_{j-1}\right\vert
^{H_{r}(d+2\sum_{l=1}^{d}\alpha _{\lbrack \sigma (j)]}^{(l)})}}%
ds_{1}...ds_{2m}
\end{eqnarray*}%
for a constant $M$ depending on $d$.

\bigskip In the final step, we want to prove estimate (\ref{intestL}). Using
the inequality (\ref{supestL}), we get that%
\begin{eqnarray*}
&&\left\vert E\left[ \int_{(\mathbb{R}^{d})^{m}}\Lambda _{\alpha
}^{\varkappa f}(\theta ,t,z)dz\right] \right\vert \\
&\leq &\int_{(\mathbb{R}^{d})^{m}}(E[\left\vert \Lambda _{\alpha
}^{\varkappa f}(\theta ,t,z)\right\vert ^{2})^{1/2}dz\leq \frac{1}{\lambda
_{r}^{md}}C^{m/2+\left\vert \alpha \right\vert /2}\int_{(\mathbb{R}%
^{d})^{m}}(\Psi _{\alpha }^{\varkappa f}(\theta ,t,z,H_{r}))^{1/2}dz.
\end{eqnarray*}%
By taking the supremum over $[0,T]$ with respect to each function $f_{j}$,
i.e.%
\begin{equation*}
\left\vert f_{[\sigma (j)]}(s_{j},z_{[\sigma (j)]})\right\vert \leq
\sup_{s_{j}\in \lbrack 0,T]}\left\vert f_{[\sigma (j)]}(s_{j},z_{[\sigma
(j)]})\right\vert ,j=1,...,2m
\end{equation*}%
we find that%
\begin{eqnarray*}
&&\left\vert E\left[ \int_{(\mathbb{R}^{d})^{m}}\Lambda _{\alpha
}^{\varkappa f}(\theta ,t,z)dz\right] \right\vert \\
&\leq &\frac{1}{\lambda _{r}^{md}}C^{m/2+\left\vert \alpha \right\vert
/2}\max_{\sigma \in S(m,m)}\int_{(\mathbb{R}^{d})^{m}}\left(
\prod_{l=1}^{2m}\left\Vert f_{[\sigma (l)]}(\cdot ,z_{[\sigma
(l)]})\right\Vert _{L^{\infty }([0,T])}\right) ^{1/2}dz \\
&&\times (\prod_{l=1}^{d}\sqrt{(2\left\vert \alpha ^{(l)}\right\vert )!}%
\sum_{\sigma \in S(m,m)}\int_{\Delta _{0,t}^{2m}}\left\vert \varkappa
_{\sigma }(s)\right\vert \prod_{j=1}^{2m}\frac{1}{\left\vert
s_{j}-s_{j-1}\right\vert ^{H(d+2\sum_{l=1}^{d}\alpha _{\lbrack \sigma
(j)]}^{(l)})}}ds_{1}...ds_{2m})^{1/2} \\
&=&\frac{1}{\lambda _{r}^{md}}C^{m/2+\left\vert \alpha \right\vert
/2}\max_{\sigma \in S(m,m)}\int_{(\mathbb{R}^{d})^{m}}\left(
\prod_{l=1}^{2m}\left\Vert f_{[\sigma (l)]}(\cdot ,z_{[\sigma
(l)]})\right\Vert _{L^{\infty }([0,T])}\right) ^{1/2}dz\cdot (\Psi _{\alpha
}^{\varkappa }(\theta ,t,H_{r}))^{1/2} \\
&=&\frac{1}{\lambda _{r}^{md}}C^{m/2+\left\vert \alpha \right\vert /2}\int_{(%
\mathbb{R}^{d})^{m}}\prod_{j=1}^{m}\left\Vert f_{j}(\cdot ,z_{j})\right\Vert
_{L^{\infty }([0,T])}dz\cdot (\Psi _{\alpha }^{\varkappa }(\theta
,t,H_{r}))^{1/2} \\
&=&\frac{1}{\lambda _{r}^{md}}C^{m/2+\left\vert \alpha \right\vert
/2}\prod_{j=1}^{m}\left\Vert f_{j}(\cdot ,z_{j})\right\Vert _{L^{1}(\mathbb{R%
}^{d};L^{\infty }([0,T]))}\cdot (\Psi _{\alpha }^{\varkappa }(\theta
,t,H_{r}))^{1/2}.
\end{eqnarray*}
\end{proof}

Using Theorem \ref{mainthmlocaltime} we obtain the following crucial
estimate (compare \cite{BNP.17}, \cite{BOPP.17}, \cite{ABP} and \cite{ACHP}):

\begin{prop}
\label{mainestimate1} Let the functions $f$ and $\varkappa $ be as in (\ref%
{f}), respectively as in (\ref{kappa}). Further, let $\theta ,\theta \prime
,t\in \lbrack 0,T],\theta \prime <\theta <t$ and%
\begin{equation*}
\varkappa _{j}(s)=(K_{H_{r_{0}}}(s,\theta )-K_{H_{r_{0}}}(s,\theta \prime
))^{\varepsilon _{j}},\theta <s<t
\end{equation*}%
for every $j=1,...,m$ with $(\varepsilon _{1},...,\varepsilon _{m})\in
\{0,1\}^{m}$ for $\theta ,\theta \prime \in \lbrack 0,T]$ with $\theta
\prime <\theta .$ Let $\alpha \in (\mathbb{N}_{0}^{d})^{m}$ be a
multi-index. If for some $r\geq r_{0}$ 
\begin{equation*}
H_{r}<\frac{\frac{1}{2}-\gamma _{r_{0}}}{(d-1+2\sum_{l=1}^{d}\alpha
_{j}^{(l)})}
\end{equation*}%
holds for all $j$, where $\gamma _{r_{0}}\in (0,H_{r_{0}})$ is sufficiently
small, then there exists a universal constant $C_{r_{0}}$ (depending on $%
H_{r_{0}}$, $T$ and $d$, but independent of $m$, $\{f_{i}\}_{i=1,...,m}$ and 
$\alpha $) such that for any $\theta ,t\in \lbrack 0,T]$ with $\theta <t$ we
have%
\begin{eqnarray*}
&&\left\vert E\int_{\Delta _{\theta ,t}^{m}}\left( \prod_{j=1}^{m}D^{\alpha
_{j}}f_{j}(s_{j},\mathbb{B}_{s_{j}})\varkappa _{j}(s_{j})\right)
ds\right\vert \\
&\leq &\frac{1}{\lambda _{r}^{md}}C_{r_{0}}^{m+\left\vert \alpha \right\vert
}\prod_{j=1}^{m}\left\Vert f_{j}(\cdot ,z_{j})\right\Vert _{L^{1}(\mathbb{R}%
^{d};L^{\infty }([0,T]))}\left( \frac{\theta -\theta \prime }{\theta \theta
\prime }\right) ^{\gamma _{r_{0}}\sum_{j=1}^{m}\varepsilon _{j}}\theta
^{(H_{r_{0}}-\frac{1}{2}-\gamma _{r_{0}})\sum_{j=1}^{m}\varepsilon _{j}} \\
&&\times \frac{(\prod_{l=1}^{d}(2\left\vert \alpha ^{(l)}\right\vert
)!)^{1/4}(t-\theta )^{-H_{r}(md+2\left\vert \alpha \right\vert )+(H_{r_{0}}-%
\frac{1}{2}-\gamma _{r_{0}})\sum_{j=1}^{m}\varepsilon _{j}+m}}{\Gamma
(-H_{r}(2md+4\left\vert \alpha \right\vert )+2(H_{r_{0}}-\frac{1}{2}-\gamma
_{r_{0}})\sum_{j=1}^{m}\varepsilon _{j}+2m)^{1/2}}.
\end{eqnarray*}
\end{prop}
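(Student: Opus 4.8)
The plan is to recognise the left-hand side as an instance of the integration-by-parts identity \eqref{ibp} and then to feed the resulting object into the bound \eqref{intestL} of Theorem~\ref{mainthmlocaltime}. Since each weight $\varkappa_j$ is deterministic and depends only on $s_j$, I would fold the product $\prod_{j=1}^m\varkappa_j(s_j)$ into the time-simplex integral defining $\Lambda_\alpha^f$ in \eqref{LambdaDef}, so that
\begin{equation*}
E\int_{\Delta_{\theta,t}^m}\Big(\prod_{j=1}^m D^{\alpha_j}f_j(s_j,\mathbb{B}_{s_j})\varkappa_j(s_j)\Big)\,ds = E\Big[\int_{(\R^d)^m}\Lambda_\alpha^{\varkappa f}(\theta,t,z)\,dz\Big],
\end{equation*}
with $\Lambda_\alpha^{\varkappa f}$ the field from \eqref{LambdaDef} carrying the extra factor $\prod_j\varkappa_j(s_j)$. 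This is exactly the weighted field for which the estimate \eqref{intestL} was proved, its right-hand side already separating the spatial $L^1(\R^d;L^\infty([0,T]))$-norms of the $f_j$ from the temporal weight encoded in $\Psi_\alpha^\varkappa$. The whole problem is thereby reduced to bounding $\Psi_\alpha^\varkappa(\theta,t,H_r)$ for the specific choice $\varkappa_j(s)=(K_{H_{r_0}}(s,\theta)-K_{H_{r_0}}(s,\theta'))^{\varepsilon_j}$.

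The second step is a pointwise estimate of the kernel increment. Using the explicit expression for $K_{H_{r_0}}$ recalled before Theorem~\ref{VI_girsanov}, together with a H\"older-type argument in the base point, I would establish a bound of the form
\begin{equation*}
|K_{H_{r_0}}(s,\theta)-K_{H_{r_0}}(s,\theta')|\leq C\Big(\tfrac{\theta-\theta'}{\theta\theta'}\Big)^{\gamma_{r_0}}\theta^{H_{r_0}-\frac{1}{2}-\gamma_{r_0}}(s-\theta)^{H_{r_0}-\frac{1}{2}-\gamma_{r_0}},\qquad \theta<s,
\end{equation*}
valid for every sufficiently small $\gamma_{r_0}\in(0,H_{r_0})$; the factor $((\theta-\theta')/\theta\theta')^{\gamma_{r_0}}$ quantifies the gain from the H\"older regularity of the kernel in its second argument, while the two power factors record its genuine blow-up near $s=\theta$. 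Raising this to $\varepsilon_j$ and collecting the $2\sum_{j}\varepsilon_j$ non-trivial occurrences produced by the shuffle in the definition of $\varkappa_\sigma$ yields the constant-in-$s$ prefactor $((\theta-\theta')/\theta\theta')^{\gamma_{r_0}\sum_j\varepsilon_j}\theta^{(H_{r_0}-\frac{1}{2}-\gamma_{r_0})\sum_j\varepsilon_j}$ (after the square root in \eqref{intestL}); the remaining singular factors $(s_j-\theta)^{H_{r_0}-\frac{1}{2}-\gamma_{r_0}}$, being negative powers, are dominated by $(s_j-s_{j-1})^{H_{r_0}-\frac{1}{2}-\gamma_{r_0}}$ and are thereby absorbed into the chain of increments already present in $\Psi_\alpha^\varkappa$.

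The third step is the evaluation of the purely temporal simplex integral, which after the previous reduction reads, with $s_0=\theta$,
\begin{equation*}
\sum_{\sigma\in S(m,m)}\int_{\Delta_{\theta,t}^{2m}}\prod_{j=1}^{2m}(s_j-s_{j-1})^{-H_r(d+2\sum_{l=1}^d\alpha_{[\sigma(j)]}^{(l)})+(H_{r_0}-\frac{1}{2}-\gamma_{r_0})\delta_j}\,ds_1\cdots ds_{2m},
\end{equation*}
where $\delta_j\in\{0,1\}$ marks the nodes carrying an absorbed kernel singularity. Here the hypothesis $H_r<(\tfrac{1}{2}-\gamma_{r_0})/(d-1+2\sum_{l}\alpha_j^{(l)})$, together with $H_r\le H_{r_0}$ (which holds since $r\ge r_0$ and $H_n\searrow0$), is exactly what keeps every one-dimensional exponent strictly above $-1$, both at ordinary nodes and at those loaded with the extra factor $(s_j-s_{j-1})^{H_{r_0}-\frac{1}{2}-\gamma_{r_0}}$; this is the reason the denominator in the hypothesis carries $d-1$ rather than $d$. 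I would then apply the iterated Beta (Dirichlet) formula for $\int_{\Delta_{\theta,t}^{2m}}\prod_j(s_j-s_{j-1})^{b_j}\,ds$, which produces $(t-\theta)$ raised to the total exponent $2m-H_r(2md+4|\alpha|)+2(H_{r_0}-\frac{1}{2}-\gamma_{r_0})\sum_j\varepsilon_j$ divided by a single Gamma function of the same argument, the bounded Beta numerators $\prod\Gamma(b_j+1)$ being swept into the universal constant. Combining the factor $\prod_{l=1}^d\sqrt{(2|\alpha^{(l)}|)!}$ from the definition of $\Psi_\alpha^\varkappa$, taking the square root dictated by \eqref{intestL} (which yields the fourth-root factor and halves the Gamma argument), and bounding $\#S(m,m)\le C^m$ by Stirling as in Subsection~\ref{VI_shuffles}, gives precisely the asserted right-hand side.

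I expect the main obstacle to be the simultaneous bookkeeping of the $(t-\theta)$-exponent and the argument of the Gamma function through the Dirichlet integration and the subsequent square root, ensuring that the $\varepsilon_j$-dependent shifts land in exactly the stated slots and that the absorption $(s_j-\theta)^{H_{r_0}-\frac{1}{2}-\gamma_{r_0}}\le(s_j-s_{j-1})^{H_{r_0}-\frac{1}{2}-\gamma_{r_0}}$ stays within the integrability budget fixed by the hypothesis on $H_r$. A subsidiary difficulty is deriving the kernel-increment estimate in the clean multiplicative form above while keeping the exponents uniform in $s\in(\theta,t)$ and the constant independent of $m$, $\{f_j\}$ and $\alpha$.
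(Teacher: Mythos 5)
Your proposal is correct and follows essentially the same route as the paper: reduce via \eqref{intestL} of Theorem \ref{mainthmlocaltime} to bounding $\Psi_\alpha^{\varkappa}(\theta,t,H_r)$, apply the kernel-increment estimate $K_{H}(s,\theta)-K_{H}(s,\theta')\leq C\bigl(\tfrac{\theta-\theta'}{\theta\theta'}\bigr)^{\gamma}\theta^{H-\frac{1}{2}-\gamma}(s-\theta)^{H-\frac{1}{2}-\gamma}$ (the paper's \eqref{estimateK}), and evaluate the simplex integral by iterated Beta integration (the paper's Lemma \ref{VI_iterativeInt}), with the hypothesis on $H_r$ entering exactly as you say to keep each exponent above $-1$. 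The only cosmetic deviation is that you absorb $(s_j-\theta)^{H_{r_0}-\frac{1}{2}-\gamma_{r_0}}\leq(s_j-s_{j-1})^{H_{r_0}-\frac{1}{2}-\gamma_{r_0}}$ before applying the plain Dirichlet formula, whereas Lemma \ref{VI_iterativeInt} integrates exactly while letting the $(s_j-\theta)$-exponents accumulate, yielding $\Pi_\gamma(2m)$ — the same bound either way.
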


\begin{proof}
From the definition of $\Lambda _{\alpha }^{\varkappa f}$ (\ref{LambdaDef})
we see that the integral in our proposition can be expressed as%
\begin{equation*}
\int_{\Delta _{\theta ,t}^{m}}\left( \prod_{j=1}^{m}D^{\alpha
_{j}}f_{j}(s_{j},B_{s_{j}}^{H})\varkappa _{j}(s_{j})\right) ds=\int_{\mathbb{%
R}^{dm}}\Lambda _{\alpha }^{\varkappa f}(\theta ,t,z)dz.
\end{equation*}%
By taking expectation and using Theorem \ref{mainthmlocaltime} we get that%
\begin{equation*}
\left\vert E\int_{\Delta _{\theta ,t}^{m}}\left( \prod_{j=1}^{m}D^{\alpha
_{j}}f_{j}(s_{j},B_{s_{j}}^{H})\varkappa _{j}(s_{j})\right) ds\right\vert
\leq \frac{1}{\lambda _{r}^{md}}C_{r}^{m/2+\left\vert \alpha \right\vert
/2}\prod_{j=1}^{m}\left\Vert f_{j}(\cdot ,z_{j})\right\Vert _{L^{1}(\mathbb{R%
}^{d};L^{\infty }([0,T]))}\cdot (\Psi _{\alpha }^{\varkappa }(\theta
,t,H_{r}))^{1/2},
\end{equation*}%
where in this case 
\begin{eqnarray*}
&&\Psi _{k}^{\varkappa }(\theta ,t,H_{r}) \\
&:&=\prod_{l=1}^{d}\sqrt{(2\left\vert \alpha ^{(l)}\right\vert )!}%
\sum_{\sigma \in S(m,m)}\int_{\Delta
_{0,t}^{2m}}\prod_{j=1}^{2m}(K_{H_{r}}(s_{j},\theta )-K_{H_{r}}(s_{j},\theta
\prime ))^{\varepsilon _{\lbrack \sigma (j)]}} \\
&&\frac{1}{\left\vert s_{j}-s_{j-1}\right\vert
^{H_{r}(d+2\sum_{l=1}^{d}\alpha _{\lbrack \sigma (j)]}^{(l)})}}%
ds_{1}...ds_{2m}.
\end{eqnarray*}%
We wish to use Lemma \ref{VI_iterativeInt}. For this purpose, we need that $%
-H_{r}(d+2\sum_{l=1}^{d}\alpha _{\lbrack \sigma (j)]}^{(l)})+(H_{r_{0}}-%
\frac{1}{2}-\gamma _{r_{0}})\varepsilon _{\lbrack \sigma (j)]}>-1$ for all $%
j=1,...,2m.$ The worst case is, when $\varepsilon _{\lbrack \sigma (j)]}=1$
for all $j$. So $H_{r}<\frac{\frac{1}{2}-\gamma _{r}}{(d-1+2\sum_{l=1}^{d}%
\alpha _{\lbrack \sigma (j)]}^{(l)})}$ for all $j$, since $H_{r_{0}}\geq
H_{r}$. Therfore, we get that%
\begin{eqnarray*}
\Psi _{\alpha }^{\varkappa }(\theta ,t,H_{r}) &\leq
&C_{r_{0}}^{2m}\sum_{\sigma \in S(m,m)}\left( \frac{\theta -\theta \prime }{%
\theta \theta \prime }\right) ^{\gamma _{r_{0}}\sum_{j=1}^{2m}\varepsilon
_{\lbrack \sigma (j)]}}\theta ^{(H_{r_{0}}-\frac{1}{2}-\gamma
_{r_{0}})\sum_{j=1}^{2m}\varepsilon _{\lbrack \sigma (j)]}} \\
&&\times \prod_{l=1}^{d}\sqrt{(2\left\vert \alpha ^{(l)}\right\vert )!}\Pi
_{\gamma }(2m)(t-\theta )^{-H_{r}(2md+4\left\vert \alpha \right\vert
)+(H_{r}-\frac{1}{2}-\gamma _{r})\sum_{j=1}^{2m}\varepsilon _{\lbrack \sigma
(j)]}+2m},
\end{eqnarray*}%
where $\Pi _{\gamma }(m)$ is defined as in Lemma \ref{VI_iterativeInt} and
where $C_{r_{0}}$ is a constant, which only depends on $H_{r_{0}}$ and $T$.
The factor $\Pi _{\gamma }(m)$ has the following upper bound: 
\begin{equation*}
\Pi _{\gamma }(2m)\leq \frac{\prod_{j=1}^{2m}\Gamma
(1-H_{r}(d+2\sum_{l=1}^{d}\alpha _{\lbrack \sigma (j)]}^{(l)}))}{\Gamma
(-H_{r}(2md+4\left\vert \alpha \right\vert )+(H_{r_{0}}-\frac{1}{2}-\gamma
_{r_{0}})\sum_{j=1}^{2m}\varepsilon _{\lbrack \sigma (j)]}+2m)}.
\end{equation*}%
Note that $\sum_{j=1}^{2m}\varepsilon _{\lbrack \sigma
(j)]}=2\sum_{j=1}^{m}\varepsilon _{j}.$ Hence, it follows that%
\begin{eqnarray*}
&&(\Psi _{k}^{\varkappa }(\theta ,t,H_{r}))^{1/2} \\
&\leq &C_{r_{0}}^{m}\left( \frac{\theta -\theta \prime }{\theta \theta
\prime }\right) ^{\gamma _{r_{0}}\sum_{j=1}^{m}\varepsilon _{j}}\theta
^{(H_{r}-\frac{1}{2}-\gamma _{r_{0}})\sum_{j=1}^{m}\varepsilon _{j}} \\
&&\times \frac{(\prod_{l=1}^{d}(2\left\vert \alpha ^{(l)}\right\vert
)!)^{1/4}(t-\theta )^{-H_{r}(md+2\left\vert \alpha \right\vert )-(H_{r_{0}}-%
\frac{1}{2}-\gamma _{r_{0}})\sum_{j=1}^{m}\varepsilon _{j}+m}}{\Gamma
(-H_{r}(2md+4\left\vert \alpha \right\vert )+2(H_{r_{0}}-\frac{1}{2}-\gamma
_{r_{0}})\sum_{j=1}^{m}\varepsilon _{j}+2m)^{1/2}},
\end{eqnarray*}%
where we used $\prod_{j=1}^{2m}\Gamma (1-H_{r}(d+2\sum_{l=1}^{d}\alpha
_{\lbrack \sigma (j)]}^{(l)})\leq K^{m}$ for a constant $K=K(\gamma
_{r_{0}})>0$ and $\sqrt{a_{1}+...+a_{m}}\leq \sqrt{a_{1}}+...\sqrt{a_{m}}$
for arbitrary non-negative numbers $a_{1},...,a_{m}$.
\end{proof}

\begin{prop}
\label{mainestimate2} Let the functions $f$ and $\varkappa $ be as in (\ref%
{f}), respectively as in (\ref{kappa}). Let $\theta ,t\in \lbrack 0,T]$ with 
$\theta <t$ and%
\begin{equation*}
\varkappa _{j}(s)=(K_{H_{r_{0}}}(s,\theta ))^{\varepsilon _{j}},\theta <s<t
\end{equation*}%
for every $j=1,...,m$ with $(\varepsilon _{1},...,\varepsilon _{m})\in
\{0,1\}^{m}$. Let $\alpha \in (\mathbb{N}_{0}^{d})^{m}$ be a multi-index. If
for some $r\geq r_{0}$ 
\begin{equation*}
H_{r}<\frac{\frac{1}{2}-\gamma _{r_{0}}}{(d-1+2\sum_{l=1}^{d}\alpha
_{j}^{(l)})}
\end{equation*}%
holds for all $j$, where $\gamma _{r_{0}}\in (0,H_{r_{0}})$ is sufficiently
small, then there exists a universal constant $C_{r_{0}}$ (depending on $%
H_{r_{0}}$, $T$ and $d$, but independent of $m$, $\{f_{i}\}_{i=1,...,m}$ and 
$\alpha $) such that for any $\theta ,t\in \lbrack 0,T]$ with $\theta <t$ we
have%
\begin{eqnarray*}
&&\left\vert E\int_{\Delta _{\theta ,t}^{m}}\left( \prod_{j=1}^{m}D^{\alpha
_{j}}f_{j}(s_{j},\mathbb{B}_{s_{j}})\varkappa _{j}(s_{j})\right)
ds\right\vert  \\
&\leq &\frac{1}{\lambda _{r}^{md}}C_{r_{0}}^{m+\left\vert \alpha \right\vert
}\prod_{j=1}^{m}\left\Vert f_{j}(\cdot ,z_{j})\right\Vert _{L^{1}(\mathbb{R}%
^{d};L^{\infty }([0,T]))}\theta ^{(H_{r_{0}}-\frac{1}{2})\sum_{j=1}^{m}%
\varepsilon _{j}} \\
&&\times \frac{(\prod_{l=1}^{d}(2\left\vert \alpha ^{(l)}\right\vert
)!)^{1/4}(t-\theta )^{-H_{r}(md+2\left\vert \alpha \right\vert )+(H_{r_{0}}-%
\frac{1}{2}-\gamma _{r_{0}})\sum_{j=1}^{m}\varepsilon _{j}+m}}{\Gamma
(-H_{r}(2md+4\left\vert \alpha \right\vert )+2(H_{r_{0}}-\frac{1}{2}-\gamma
_{r_{0}})\sum_{j=1}^{m}\varepsilon _{j}+2m)^{1/2}}.
\end{eqnarray*}
\end{prop}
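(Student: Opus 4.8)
The plan is to follow the proof of Proposition \ref{mainestimate1} almost verbatim, the only genuine change being in the treatment of the weight $\varkappa_j$. First I would use the definition \eqref{LambdaDef} of $\Lambda_\alpha^{\varkappa f}$ to rewrite the simplex integral as
\[
\int_{\Delta_{\theta,t}^m}\Big(\prod_{j=1}^m D^{\alpha_j}f_j(s_j,\mathbb{B}_{s_j})\varkappa_j(s_j)\Big)\,ds=\int_{\mathbb{R}^{dm}}\Lambda_\alpha^{\varkappa f}(\theta,t,z)\,dz,
\]
then take expectations and invoke estimate \eqref{intestL} of Theorem \ref{mainthmlocaltime}. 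This reduces the problem to bounding $\Psi_\alpha^\varkappa(\theta,t,H_r)$, which in the present situation carries the factor $\prod_{j=1}^{2m}(K_{H_{r_0}}(s_j,\theta))^{\varepsilon_{[\sigma(j)]}}$ in place of the kernel difference appearing in Proposition \ref{mainestimate1}.

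The key step is a pointwise estimate of the single kernel. For $\theta<s$ I would use the explicit representation of $K_{H_{r_0}}(s,\theta)$ together with $H_r\le H_{r_0}$ to extract a bound of the form $(K_{H_{r_0}}(s,\theta))^{\varepsilon}\le C_{r_0}\,\theta^{(H_{r_0}-\frac12)\varepsilon}(s-\theta)^{(H_{r_0}-\frac12-\gamma_{r_0})\varepsilon}$, valid for $\varepsilon\in\{0,1\}$ and sufficiently small $\gamma_{r_0}\in(0,H_{r_0})$. This is precisely where the argument diverges from Proposition \ref{mainestimate1}: because there is no increment $K_{H_{r_0}}(s,\theta)-K_{H_{r_0}}(s,\theta')$ to control, no H\"older-in-$\theta$ estimate is needed, the factor $\big(\tfrac{\theta-\theta'}{\theta\theta'}\big)^{\gamma_{r_0}\sum\varepsilon_j}$ disappears, and the prefactor sharpens from $\theta^{(H_{r_0}-\frac12-\gamma_{r_0})\sum\varepsilon_j}$ to the cleaner $\theta^{(H_{r_0}-\frac12)\sum\varepsilon_j}$ recorded in the statement.

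With this bound in hand I would insert the singularity $(s_j-\theta)^{(H_{r_0}-\frac12-\gamma_{r_0})\varepsilon_{[\sigma(j)]}}$ into the simplex integral defining $\Psi_\alpha^\varkappa$ and apply Lemma \ref{VI_iterativeInt}. The hypothesis $H_r<\frac{\frac12-\gamma_{r_0}}{d-1+2\sum_{l}\alpha_j^{(l)}}$ guarantees, in the worst case $\varepsilon_{[\sigma(j)]}=1$, that $-H_r(d+2\sum_l\alpha_{[\sigma(j)]}^{(l)})+(H_{r_0}-\frac12-\gamma_{r_0})>-1$, so each iterated integration converges and produces exactly the $\Gamma$-quotient and the $(t-\theta)$-power in the claimed bound. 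Finally I would control the sum over shuffles by $\#S(m,m)\le C^m$, absorb all $r_0$-dependent constants into a single $C_{r_0}^{m+|\alpha|}$, and take the square root dictated by \eqref{intestL} to reach the stated estimate.

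The main obstacle I anticipate is the kernel estimate of the middle paragraph: one must verify that the singular behaviour of $K_{H_{r_0}}(s,\theta)$ as $s\downarrow\theta$ really is of order $(s-\theta)^{H_{r_0}-\frac12-\gamma_{r_0}}$ with the $\theta$-dependence cleanly factored as $\theta^{H_{r_0}-\frac12}$, and that the resulting exponent stays above $-1$ after being combined, through the shuffling, with the $|s_j-s_{j-1}|^{-H_r(d+2\sum_l\alpha^{(l)})}$ singularities. Once this integrability bookkeeping is settled, Lemma \ref{VI_iterativeInt} delivers the remainder exactly as in Proposition \ref{mainestimate1}.
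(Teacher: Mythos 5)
Your proposal is correct and follows essentially the same route as the paper, whose entire proof of this proposition is the remark that it is "similar to the previous proposition": you repeat the argument of Proposition \ref{mainestimate1} via \eqref{intestL} of Theorem \ref{mainthmlocaltime}, replacing the H\"older-in-$\theta$ increment bound by the single-kernel estimate $|K_{H_{r_0}}(s,\theta)|\leq C\,\theta^{H_{r_0}-\frac{1}{2}}(s-\theta)^{H_{r_0}-\frac{1}{2}}$ (the content of the paper's Lemma \ref{VI_iterativeInt2}, with the harmless downgrade $(s-\theta)^{H_{r_0}-\frac{1}{2}}\leq T^{\gamma_{r_0}}(s-\theta)^{H_{r_0}-\frac{1}{2}-\gamma_{r_0}}$ matching the stated exponents), and then run the iterated Beta-function integration and shuffle count exactly as before. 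Your integrability bookkeeping, using $H_{r_0}\geq H_r$ to reduce the worst case $\varepsilon_{[\sigma(j)]}=1$ to the stated hypothesis on $H_r$, is precisely the verification the paper intends.
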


\begin{proof}
The proof is similar to the previous proposition.
\end{proof}

\begin{rem}
\label{Remark 3.4} We mention that%
\begin{equation*}
\prod_{l=1}^{d}(2\left\vert \alpha ^{(l)}\right\vert )!\leq (2\left\vert
\alpha \right\vert )!C^{\left\vert \alpha \right\vert }
\end{equation*}%
for a constant $C$ depending on $d$. Later on in the paper, when we deal
with the existence of strong solutions, we will consider the case%
\begin{equation*}
\alpha _{j}^{(l)}\in \{0,1\}\text{ for all }j,l
\end{equation*}%
with%
\begin{equation*}
\left\vert \alpha \right\vert =m.
\end{equation*}
\end{rem}

\bigskip

The next proposition is a verification of the sufficient condition needed to
guarantee relative compactness of the approximating sequence $%
\{X_{t}^{n}\}_{n\geq 1}$.

\begin{prop}
\label{Holderintegral} Let $b_{n}:[0,T]\times \R^{d}\rightarrow \R^{d}$, $%
n\geq 1$, be a sequence of compactly supported smooth functions converging
a.e. to $b$ such that $\sup_{n\geq 1}\Vert b_{n}\Vert _{\mathcal{L}%
_{2,p}^{q}}<\infty $, $p,q\in (2,\infty ]$. Let $X_{\cdot }^{n}$ denote the
solution of \eqref{maineq} when we replace $b$ by $b_{n}$. Further, let $%
C_{i}$ for $r_{0}=i$ be the (same) constant (depending only on $H_{i}$, $T$
and $d$) in the estimates of Proposition \ref{mainestimate1} and \ref%
{mainestimate2}. Then there exist sequences $\{\alpha _{i}\}_{i=1}^{\infty }$%
, $\beta =\{\beta _{i}\}_{i=1}^{\infty }$ (depending only on $%
\{H_{i}\}_{i=1}^{\infty }$) with $0<\alpha _{i}<\beta _{i}<\frac{1}{2}$, $%
\delta =\{\delta _{i}\}_{i=1}^{\infty }$ as in Theorem \ref{compinf} and $%
\lambda =\{\lambda _{i}\}_{i=1}^{\infty }$ in (\ref{monster}), which
satisfies (\ref{lambdacond}), (\ref{lambdacond2}), (\ref{contcond}) and
which is of the form $\lambda _{i}=\phi _{i}\cdot \varphi (C_{i})$ being
independent of the size of $\sup_{n\geq 1}\Vert b_{n}\Vert _{\mathcal{L}%
_{2,p}^{q}}$ for a sequence $\{\phi _{i}\}_{i=1}^{\infty }$ and a bounded function $\varphi$ , such that 
\begin{equation}
\sum_{i=1}^{\infty }\frac{|\phi _{i}|^{2}}{1-2^{-2(\beta _{i}-\alpha
_{i})}\delta _{i}^{2}}<\infty ,  \label{Finite}
\end{equation}%
\begin{equation*}
\sup_{n\geq 1}E[\Vert X_{t}^{n}\Vert ^{2}]<\infty ,
\end{equation*}%
\begin{equation*}
\sup_{n\geq 1}\sum_{i=1}^{\infty }\frac{1}{\delta _{i}^{2}}%
\int_{0}^{t}E[\Vert D_{t_{0}}^{i}X_{t}^{n}\Vert ^{2}]dt_{0}\leq
C_{1}(\sup_{n\geq 1}\Vert b_{n}\Vert _{\mathcal{L}_{2,p}^{q}})<\infty ,
\end{equation*}%
and 
\begin{eqnarray*}
&&\sup_{n\geq 1}\sum_{i=1}^{\infty }\frac{1}{(1-2^{-2(\beta _{i}-\alpha
_{i})})\delta _{i}^{2}}\int_{0}^{t}\int_{0}^{t}\frac{E[\Vert
D_{t_{0}}^{i}X_{t}^{n}-D_{t_{0}^{\prime }}^{i}X_{t}^{n}\Vert ^{2}]}{%
|t_{0}-t_{0}^{\prime }|^{1+2\beta _{i}}}dt_{0}dt_{0}^{\prime } \\
&\leq &C_{2}(\sup_{n\geq 1}\Vert b_{n}\Vert _{\mathcal{L}_{2,p}^{q}})<\infty 
\end{eqnarray*}%
for all $t\in \lbrack 0,T]$, where $C_{j}:[0,\infty )\longrightarrow \lbrack
0,\infty ),$ $j=1,2$ are continuous functions depending on $%
\{H_{i}\}_{i=1}^{\infty }$, $p$, $q$, $d$, $T$ and where $D^{i}$ denotes the
Malliavin derivative in the direction of the standard Brownian motion $W^{i}$%
, $i\geq 1$. Here, $\Vert \cdot \Vert $ denotes any matrix norm.
\end{prop}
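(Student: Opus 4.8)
The plan is to reduce all four assertions to computations with the Gaussian process $\mathbb{B}_\cdot^H$ and to control the Malliavin derivatives through a Picard expansion whose terms are estimated by the local-time integration-by-parts bounds of Propositions \ref{mainestimate1} and \ref{mainestimate2} (themselves consequences of Theorem \ref{mainthmlocaltime}). From the linear equation $D_{t_0}^i X_t^n=\lambda_i K_{H_i}(t,t_0)I_d+\int_{t_0}^t b_n'(s,X_s^n)D_{t_0}^i X_s^n\,ds$ for the smooth drift $b_n$, Picard iteration as in \eqref{MD} gives $D_{t_0}^i X_t^n=\sum_{m\geq 0}J_m^{i,n}$, where $J_0=\lambda_i K_{H_i}(t,t_0)I_d$ and $J_m=\lambda_i\int_{\Delta_{t_0,t}^m}b_n'(s_m,X_{s_m}^n)\cdots b_n'(s_1,X_{s_1}^n)K_{H_i}(s_1,t_0)\,ds$. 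By Minkowski's inequality in $L^2(\Omega)$ it then suffices to bound $\|J_m\|_{L^2(\Omega)}$ and, for the Hölder assertion, the corresponding increments in $t_0$, and to sum over $m$.

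First I would dispose of $\sup_n E[\|X_t^n\|^2]<\infty$: a Girsanov change of measure (Theorem \ref{girsanov}) turns $X_\cdot^n$ into $x+\mathbb{B}_\cdot^H$, and Cauchy-Schwarz together with the exponential-moment bound of Proposition \ref{novikov} and the finiteness of $E[\|\mathbb{B}_t^H\|^2]$ from Theorem \ref{monsterprocess} yields the claim. For $\|J_m\|_{L^2(\Omega)}$ I would again pass to $\mathbb{B}_\cdot^H$ by Girsanov, split off the density by Cauchy-Schwarz (its moments being finite by Proposition \ref{novikov}), and expand the remaining moment of $J_m$ along $\mathbb{B}_\cdot^H$ as an integral over a product of simplices; reorganising it by the shuffle identity \eqref{VI_shuffle} into a single simplex, every factor is a first-order derivative $D^{\alpha_k}b_n$ evaluated at $\mathbb{B}_{s_k}^H$, so Proposition \ref{mainestimate2} applies with $\varkappa_1=K_{H_i}(\cdot,t_0)$, $\varepsilon_1=1$, $\varepsilon_j=0$ otherwise, the number of factors being a fixed multiple of $m$ and each of order one (Remark \ref{Remark 3.4}); note that the Hurst constraint $H_{r(i)}<(\frac12-\gamma_{i})/(d+1)$ is unaffected by the number of factors, so a suitable $r(i)\geq i$ can be fixed once and for all. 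The decisive point is that the integration by parts transfers every derivative off $b_n$ onto the local time, so that only $\prod_k\|(b_n)_k\|_{L^1(\R^d;L^\infty)}\leq\|b_n\|_{L_\infty^1}^m$ survives — this is exactly where the space $L_\infty^1$ inside $\mathcal{L}_{2,p}^q$ is indispensable. The summation over $m$ converges because, by Stirling's formula, the square root of the $\Gamma$-denominator in Proposition \ref{mainestimate2} outgrows both the combinatorial factor $((2|\alpha|)!)^{1/4}$ and every geometric factor $C_i^{cm}\|b_n\|_{L_\infty^1}^m\lambda_{r(i)}^{-cmd}$ once $H_{r(i)}$ is small, the sum being a continuous (indeed real-analytic) function of $\|b_n\|_{\mathcal{L}_{2,p}^q}$; integration in $t_0$ is harmless, since the worst $t_0$-singularities, $t_0^{2H_i-1}$ and $(t-t_0)^{2H_i-1}$, are integrable.

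For the Hölder-in-$t_0$ assertion I would subtract the two Picard series termwise; the $m$-th difference carries the factor $K_{H_i}(s_1,t_0)-K_{H_i}(s_1,t_0')$, which places it in the scope of Proposition \ref{mainestimate1} with $\theta=t_0$, $\theta'=t_0'$ and $\sum_j\varepsilon_j=1$. This supplies the gain $\bigl(\frac{t_0-t_0'}{t_0 t_0'}\bigr)^{\gamma_i}$, so that after dividing by $|t_0-t_0'|^{1+2\beta_i}$ the exponent of $|t_0-t_0'|$ becomes $2\gamma_i-1-2\beta_i$. Choosing, for each $i$, a small $\gamma_i\in(0,H_i)$ and then $0<\alpha_i<\beta_i<\gamma_i<\frac12$ makes the double $t_0,t_0'$-integral converge and produces a per-direction bound of the shape $\lambda_i^2\,G_i(\|b_n\|_{\mathcal{L}_{2,p}^q})$ with $G_i$ continuous, the residual $(t_0 t_0')^{-\gamma_i}$ and $\theta^{H_i-1/2-\gamma_i}$ weights being integrable for $H_i$ small.

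The main obstacle is the final, $n$-uniform summation over the infinitely many directions $i$, and this is where the freedom in the prescribed form $\lambda_i=\phi_i\,\varphi(C_i)$ is used. With $\varphi$ a fixed bounded function, condition \eqref{lambdacond2} reduces to summability of $\{\phi_i\}$, after which \eqref{lambdacond} and \eqref{contcond} are arranged by taking $\phi_i$ of full support and rapidly decaying. The tension is that the constants $G_i$ contain inflating factors $\lambda_{r(i)}^{-cmd}$, so shrinking $\lambda$ to guarantee convergence of $\mathbb{B}_\cdot^H$ simultaneously enlarges $G_i$; the role of $\varphi(C_i)$ is precisely to absorb, in an $n$-independent way, the $C_i$- and $r(i)$-dependence of $G_i$, while the rapidly decaying $\phi_i$ and an accompanying choice of $\delta_i$ force the two weighted sums $\sum_i\delta_i^{-2}\int_0^t E[\|D_{t_0}^iX_t^n\|^2]\,dt_0$ and $\sum_i\bigl((1-2^{-2(\beta_i-\alpha_i)})\delta_i^2\bigr)^{-1}\int_0^t\int_0^t(\cdots)\,dt_0\,dt_0'$ to converge to continuous functions of $\sup_n\|b_n\|_{\mathcal{L}_{2,p}^q}$, and at the same time make \eqref{Finite} hold. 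Balancing these competing requirements — all uniformly in $n$ and with the sequence $\lambda$ itself independent of $\sup_n\|b_n\|_{\mathcal{L}_{2,p}^q}$ — is the delicate core of the proposition.
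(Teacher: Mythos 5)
Your overall architecture matches the paper's: Picard expansion of the linearized Malliavin-derivative equation, Girsanov plus Cauchy--Schwarz plus the Novikov bounds of Proposition \ref{novikov} to pass to $\mathbb{B}_{\cdot}^{H}$, shuffling into a single simplex, the local-time estimates of Propositions \ref{mainestimate1} and \ref{mainestimate2} with one derivative per factor, and absorption of the constants $C_i$ by the bounded function $\varphi$ in $\lambda_i=\phi_i\varphi(C_i)$. But your treatment of the H\"older increment contains a genuine gap: you subtract the two Picard series ``termwise'' and assert that the $m$-th difference ``carries the factor $K_{H_i}(s_1,t_0)-K_{H_i}(s_1,t_0')$''. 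This is false as stated, because the two series live on different simplices: the $m$-th term of $D_{t_0}^iX_t^n$ integrates over $\Delta_{t_0,t}^m$ while that of $D_{t_0'}^iX_t^n$ integrates over $\Delta_{t_0',t}^m$. The termwise difference therefore splits into the kernel-increment piece you describe \emph{plus} a domain-mismatch piece supported on configurations with some $s_j\in(t_0',t_0)$, and that piece carries no kernel increment at all. The paper isolates it through the decomposition $D_{t_0}^iX_t^n-D_{t_0'}^iX_t^n=\lambda_i(I_1+I_2^n+I_3^n)$, where $I_3^n$ is the product of the full Picard series on $[t_0,t]$ (bounded uniformly via Proposition \ref{mainestimate2} with all $\varepsilon_j=0$) and the series for $D_{t_0'}^iX_{t_0}^n-\lambda_iK_{H_i}(t_0,t_0')I_d$ over the short simplices $\Delta_{t_0',t_0}^m$; there the factor needed to beat $|t_0-t_0'|^{-1-2\beta_i}$ comes from the \emph{length} $t_0-t_0'$ of the integration interval (the $(t-\theta)$-exponent in Proposition \ref{mainestimate2}), not from a kernel increment. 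Without this term your double integral in $(t_0,t_0')$ cannot be closed.

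Two smaller inaccuracies: first, after Girsanov and Cauchy--Schwarz you must control the $L^4$ (more generally $L^{2^q}$) norm of the iterated integral along $\mathbb{B}_{\cdot}^{H}$, so after shuffling the fourth power into a single simplex of length $4m$ the weight $\mathcal{K}_{t_0,t_0'}^{H_i}$ appears exactly four times; Proposition \ref{mainestimate1} must therefore be invoked with $\sum_j\varepsilon_j=4$ and $|\alpha|=4m$, as the paper does, not with $\varepsilon_1=1$ and $\varepsilon_j=0$ otherwise. This only changes exponents, and the paper's parameter choice $\gamma_i<H_i/4$, $0<\beta_i<\gamma_i$ (close to your $\beta_i<\gamma_i<H_i$) still closes the $t_0,t_0'$-integral. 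Second, you correctly identify the balancing $\lambda_i=\phi_i\varphi(C_i)$ as the crux but defer its execution; note that boundedness of $\varphi$ alone does not suffice, since the estimates produce factors $C_i^{2m}$ growing in the expansion index $m$. One needs $\sup_x x^{2m}\varphi(x)$ to grow slowly enough in $m$: with the paper's choice $\varphi(x)=\exp(-x^{100})$ one gets $C_i^{2m}\lambda_i\leq\phi_i\left(\tfrac{1}{50}\right)^{m/50}m^{m/50}$, which the factorial decay of the $\Gamma$-denominator in Propositions \ref{mainestimate1} and \ref{mainestimate2} dominates, and the $i$-sums then converge uniformly in $n$ by \eqref{Finite}. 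This quantitative step is where the specific form of $\varphi$ enters, and it should be carried out rather than asserted.
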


\begin{rem}
\label{Phi}The proof Proposition \ref{Holderintegral} shows that one may for
example choose $\lambda _{i}=\phi _{i}\cdot \varphi (C_{i})$ in (\ref%
{monster}) for $\varphi (x)=\exp (-x^{100})$ and $\{\phi
_{i}\}_{i=1}^{\infty }$ satisfying (\ref{Finite}).
\end{rem}

\begin{proof}
The most challenging estimate is the last one, the two others can be proven
easily. Take $t_{0},t_{0}^{\prime }>0$ such that $0<t_{0}^{\prime }<t_{0}<t$%
. Using the chain rule for the Malliavin derivative, see \cite[Proposition
1.2.3]{Nua10}, we have 
\begin{equation*}
D_{t_{0}}^{i}X_{t}^{n}=\lambda
_{i}K_{H_{i}}(t,t_{0})I_{d}+\int_{t_{0}}^{t}b_{n}^{\prime
}(t_{1},X_{t_{1}}^{n})D_{t_{0}}X_{t_{1}}^{n}dt_{1}
\end{equation*}%
$P$-a.s. for all $0\leq t_{0}\leq t$ where $b_{n}^{\prime }(t,z)=\left( 
\frac{\partial }{\partial z_{j}}b_{n}^{(i)}(t,z)\right) _{i,j=1,\dots ,d}$
denotes the Jacobian matrix of $b_{n}$ at a point $(t,z)$ and $I_{d}$ the
identity matrix in $\R^{d\times d}$. Thus we have

\begin{align*}
D_{t_{0}}^{i}X_{t}^{n}-& D_{t_{0}^{\prime }}^{i}X_{t}^{n}=\lambda
_{i}(K_{H_{i}}(t,t_{0})I_{d}-K_{H_{i}}(t,t_{0}^{\prime })I_{d}) \\
& +\int_{t_{0}}^{t}b_{n}^{\prime
}(t_{1},X_{t_{1}}^{n})D_{t_{0}}^{i}X_{t_{1}}^{n}dt_{1}-\int_{t_{0}^{\prime
}}^{t}b_{n}^{\prime }(t_{1},X_{t_{1}}^{n})D_{t_{0}^{\prime
}}^{i}X_{t_{1}}^{n}dt_{1} \\
=& \lambda _{i}(K_{H_{i}}(t,t_{0})I_{d}-K_{H_{i}}(t,t_{0}^{\prime })I_{d}) \\
& -\int_{t_{0}^{\prime }}^{t_{0}}b_{n}^{\prime
}(t_{1},X_{t_{1}}^{n})D_{t_{0}^{\prime
}}^{i}X_{t_{1}}^{n}dt_{1}+\int_{t_{0}}^{t}b_{n}^{\prime
}(t_{1},X_{t_{1}}^{n})(D_{t_{0}}^{i}X_{t_{1}}^{n}-D_{t_{0}^{\prime
}}^{i}X_{t_{1}}^{n})dt_{1} \\
=& \lambda _{i}\mathcal{K}_{t_{0},t_{0}^{\prime
}}^{H_{i}}(t)I_{d}-(D_{t_{0}^{\prime }}^{i}X_{t_{0}}^{n}-\lambda
_{i}K_{H_{i}}(t_{0},t_{0}^{\prime })I_{d}) \\
& +\int_{t_{0}}^{t}b_{n}^{\prime
}(t_{1},X_{t_{1}}^{n})(D_{t_{0}}^{i}X_{t_{1}}^{n}-D_{t_{0}^{\prime
}}^{i}X_{t_{1}}^{n})dt_{1},
\end{align*}%
where as in Proposition \ref{mainestimate1} we define 
\begin{equation*}
\mathcal{K}_{t_{0},t_{0}^{\prime
}}^{H_{i}}(t)=K_{H_{i}}(t,t_{0})-K_{H_{i}}(t,t_{0}^{\prime }).
\end{equation*}

Iterating the above equation we arrive at

\begin{align*}
D_{t_{0}}^{i}X_{t}^{n}-& D_{t_{0}^{\prime }}^{i}X_{t}^{n}=\lambda _{i}%
\mathcal{K}_{t_{0},t_{0}^{\prime }}^{H_{i}}(t)I_{d} \\
& +\lambda _{i}\sum_{m=1}^{\infty }\int_{\Delta
_{t_{0},t}^{m}}\prod_{j=1}^{m}b_{n}^{\prime }(t_{j},X_{t_{j}}^{n})\mathcal{K}%
_{t_{0},t_{0}^{\prime }}^{H_{i}}(t_{m})I_{d}dt_{m}\cdots dt_{1} \\
& -\left( I_{d}+\sum_{m=1}^{\infty }\int_{\Delta
_{t_{0},t}^{m}}\prod_{j=1}^{m}b_{n}^{\prime
}(t_{j},X_{t_{j}}^{n})dt_{m}\cdots dt_{1}\right) \left( D_{t_{0}^{\prime
}}^{i}X_{t_{0}}^{n}-\lambda _{i}K_{H_{i}}(t_{0},t_{0}^{\prime })I_{d}\right)
.
\end{align*}%
On the other hand, observe that one may again write 
\begin{equation*}
D_{t_{0}^{\prime }}^{i}X_{t_{0}}^{n}-\lambda
_{i}K_{H_{i}}(t_{0},t_{0}^{\prime })I_{d}=\lambda _{i}\sum_{m=1}^{\infty
}\int_{\Delta _{t_{0}^{\prime },t_{0}}^{m}}\prod_{j=1}^{m}b_{n}^{\prime
}(t_{j},X_{t_{j}}^{n})(K_{H_{i}}(t_{m},t_{0}^{\prime })I_{d})\,dt_{m}\cdots
dt_{1}.
\end{equation*}%
In summary, 
\begin{equation*}
D_{t_{0}}^{i}X_{t}^{n}-D_{t_{0}^{\prime }}^{i}X_{t}^{n}=\lambda
_{i}I_{1}(t_{0}^{\prime },t_{0})+\lambda _{i}I_{2}^{n}(t_{0}^{\prime
},t_{0})+\lambda _{i}I_{3}^{n}(t_{0}^{\prime },t_{0}),
\end{equation*}%
where 
\begin{align*}
I_{1}(t_{0}^{\prime },t_{0}):=& \mathcal{K}_{t_{0},t_{0}^{\prime
}}^{H_{i}}(t)I_{d}=K_{H_{i}}(t,t_{0})I_{d}-K_{H_{i}}(t,t_{0}^{\prime })I_{d}
\\
I_{2}^{n}(t_{0}^{\prime },t_{0}):=& \sum_{m=1}^{\infty }\int_{\Delta
_{t_{0},t}^{m}}\prod_{j=1}^{m}b_{n}^{\prime }(t_{j},X_{t_{j}}^{n})\mathcal{K}%
_{t_{0},t_{0}^{\prime }}^{H_{i}}(t_{m})I_{d}\ dt_{m}\cdots dt_{1} \\
I_{3}^{n}(t_{0}^{\prime },t_{0}):=& -\left( I_{d}+\sum_{m=1}^{\infty
}\int_{\Delta _{t_{0},t}^{m}}\prod_{j=1}^{m}b_{n}^{\prime
}(t_{j},X_{t_{j}}^{n})dt_{m}\cdots dt_{1}\right) \\
& \times \left( \sum_{m=1}^{\infty }\int_{\Delta _{t_{0}^{\prime
},t_{0}}^{m}}\prod_{j=1}^{m}b_{n}^{\prime
}(t_{j},X_{t_{j}}^{n})(K_{H_{i}}(t_{m},t_{0}^{\prime })I_{d})dt_{m}\cdots
dt_{1}.\right) .
\end{align*}

Hence, 
\begin{equation*}
E[\Vert D_{t_{0}}^{i}X_{t}^{n}-D_{t_{0}^{\prime }}^{i}X_{t}^{n}\Vert
^{2}]\leq C\lambda _{i}^{2}\left( E[\Vert I_{1}(t_{0}^{\prime },t_{0})\Vert
^{2}]+E[\Vert I_{2}^{n}(t_{0}^{\prime },t_{0})\Vert ^{2}]+E[\Vert
I_{3}^{n}(t_{0}^{\prime },t_{0})\Vert ^{2}]\right) .
\end{equation*}

It follows from Lemma \ref{VI_doubleint} and condition \eqref{Finite} that 
\begin{align*}
\sum_{i=1}^{\infty }\frac{\lambda _{i}^{2}}{1-2^{-2(\beta _{i}-\alpha
_{i})}\delta _{i}^{2}}& \int_{0}^{t}\int_{0}^{t}\frac{\Vert
I_{1}(t_{0}^{\prime },t_{0})\Vert _{L^{2}(\Omega )}^{2}}{|t_{0}-t_{0}^{%
\prime }|^{1+2\beta _{i}}}dt_{0}dt_{0}^{\prime } \\
& \leq \sum_{i=1}^{\infty }\frac{\lambda _{i}^{2}}{1-2^{-2(\beta _{i}-\alpha
_{i})}\delta _{i}^{2}}t^{4H_{i}-6\gamma _{i}-2\beta _{i}-1}<\infty
\end{align*}%
for a suitable choice of sequence $\{\beta _{i}\}_{i\geq 1}\subset (0,1/2)$.

Let us continue with the term $I_{2}^{n}(t_{0}^{\prime },t_{0})$. Then
Theorem \ref{girsanov}, Cauchy-Schwarz inequality and Lemma \ref{novikov}
imply 
\begin{align*}
E[& \Vert I_{2}^{n}(t_{0}^{\prime },t_{0})\Vert ^{2}] \\
& \leq C(\Vert b_{n}\Vert _{L_{p}^{q}})E\left[ \left\Vert \sum_{m=1}^{\infty
}\int_{\Delta _{t_{0},t}^{m}}\prod_{j=1}^{m}b_{n}^{\prime }(t_{j},x+\mathbb{B%
}_{t_{j}}^{H})\mathcal{K}_{t_{0},t_{0}^{\prime }}^{H_{i}}(t_{m})I_{d}\
dt_{m}\cdots dt_{1}\right\Vert ^{4}\right] ^{1/2},
\end{align*}%
where $C:[0,\infty )\rightarrow \lbrack 0,\infty )$ is the function from
Lemma \ref{novikov}. Taking the supremum over $n$ we have 
\begin{equation*}
\sup_{n\geq 0}C(\Vert b_{n}\Vert _{L_{p}^{q}})=:C_{1}<\infty .
\end{equation*}

Let $\Vert \cdot \Vert $ from now on denote the matrix norm in $\R^{d\times
d}$ such that $\Vert A\Vert =\sum_{i,j=1}^{d}|a_{ij}|$ for a matrix $%
A=\{a_{ij}\}_{i,j=1,\dots ,d}$, then we have

\begin{align}
& E[\Vert I_{2}^{n}(t_{0}^{\prime },t_{0})\Vert ^{2}]\leq C_{1}\Bigg(%
\sum_{m=1}^{\infty }\sum_{j,k=1}^{d}\sum_{l_{1},\dots ,l_{m-1}=1}^{d}\Bigg\|%
\int_{\Delta _{t_{0},t}^{m}}\frac{\partial }{\partial x_{l_{1}}}%
b_{n}^{(j)}(t_{1},x+\mathbb{B}_{t_{1}}^{H})  \notag \\
& \times \frac{\partial }{\partial x_{l_{2}}}b_{n}^{(l_{1})}(t_{2},x+\mathbb{%
B}_{t_{2}}^{H})\cdots \frac{\partial }{\partial x_{k}}%
b_{n}^{(l_{m-1})}(t_{m},x+\mathbb{B}_{t_{m}}^{H})\mathcal{K}%
_{t_{0},t_{0}^{\prime }}^{H_{i}}(t_{m})dt_{m}\cdots dt_{1}\Bigg\|%
_{L^{4}(\Omega ,\R)}\Bigg)^{2}.  \label{I2}
\end{align}

Now, the aim is to shuffle the four integrals above. Denote 
\begin{equation} \label{VI_I}
J_{2}^{n}(t_{0}^{\prime },t_{0}):=\int_{\Delta _{t_{0},t}^{m}}\frac{\partial 
}{\partial x_{l_{1}}}b_{n}^{(j)}(t_{1},x+\mathbb{B}_{t_{1}}^{H})\cdots \frac{%
\partial }{\partial x_{k}}b_{n}^{(l_{m-1})}(t_{m},x+\mathbb{B}_{t_{m}}^{H})%
\mathcal{K}_{t_{0},t_{0}^{\prime }}^{H_{i}}(t_{m})dt.
\end{equation}

Then, shuffling $J_{2}^{n}(t_{0}^{\prime },t_{0})$ as shown in %
\eqref{shuffleIntegral}, one can write $(J_{2}^{n}(t_{0}^{\prime
},t_{0}))^{2}$ as a sum of at most $2^{2m}$ summands of length $2m$ of the
form 
\begin{equation*}
\int_{\Delta _{t_{0},t}^{2m}}g_{1}^{n}(t_{1},x+\mathbb{B}_{t_{1}}^{H})\cdots
g_{2m}^{n}(t_{2m},x+\mathbb{B}_{t_{2m}}^{H})dt_{2m}\cdots dt_{1},
\end{equation*}%
where for each $l=1,\dots ,2m$, 
\begin{equation*}
g_{l}^{n}(\cdot ,x+\mathbb{B}_{\cdot }^{H})\in \left\{ \frac{\partial }{%
\partial x_{k}}b_{n}^{(j)}(\cdot ,x+\mathbb{B}_{\cdot }^{H}),\frac{\partial 
}{\partial x_{k}}b_{n}^{(j)}(\cdot ,x+\mathbb{B}_{\cdot }^{H})\mathcal{K}%
_{t_{0},t_{0}^{\prime }}^{H_{i}}(\cdot ),\,j,k=1,\dots ,d\right\} .
\end{equation*}

Repeating this argument once again, we find that $J_{2}^{n}(t_{0}^{\prime
},t_{0})^{4}$ can be expressed as a sum of, at most, $2^{8m}$ summands of
length $4m$ of the form 
\begin{equation} \label{VI_III}
\int_{\Delta _{t_{0},t}^{4m}}g_{1}^{n}(t_{1},x+\mathbb{B}_{t_{1}}^{H})\cdots
g_{4m}^{n}(t_{4m},x+\mathbb{B}_{t_{4m}}^{H})dt_{4m}\cdots dt_{1},
\end{equation}%
where for each $l=1,\dots ,4m$, 
\begin{equation*}
g_{l}^{n}(\cdot ,x+\mathbb{B}_{\cdot }^{H})\in \left\{ \frac{\partial }{%
\partial x_{k}}b_{n}^{(j)}(\cdot ,x+\mathbb{B}_{\cdot }^{x}H\frac{\partial }{%
\partial x_{k}}b_{n}^{(j)}(\cdot ,x+\mathbb{B}_{\cdot }^{H})\mathcal{K}%
_{t_{0},t_{0}^{\prime }}^{H_{i}}(\cdot ),\,j,k=1,\dots ,d\right\} .
\end{equation*}

It is important to note that the function $\mathcal{K}_{t_{0},t_{0}^{\prime
}}^{H_{i}}(\cdot )$ appears only once in term \eqref{VI_I} and hence only
four times in term \eqref{VI_III}. So there are indices $j_{1},\dots
,j_{4}\in \{1,\dots ,4m\}$ such that we can write \eqref{VI_III} as 
\begin{equation*}
\int_{\Delta _{t_{0},t}^{4m}}\left( \prod_{j=1}^{4m}b_{j}^{n}(t_{j},x+%
\mathbb{B}_{t_{j}}^{H})\right) \prod_{l=1}^{4}\mathcal{K}_{t_{0},t_{0}^{%
\prime }}^{H_{i}}(t_{j_{l}})dt_{4m}\cdots dt_{1},
\end{equation*}%
where 
\begin{equation*}
b_{l}^{n}(\cdot ,x+\mathbb{B}_{\cdot }^{H})\in \left\{ \frac{\partial }{%
\partial x_{k}}b_{n}^{(j)}(\cdot ,x+\mathbb{B}_{\cdot }^{H}),\,j,k=1,\dots
,d\right\} ,\quad l=1,\dots ,4m.
\end{equation*}

The latter enables us to use the estimate from Proposition \ref%
{mainestimate1} for $\sum_{r=1}^{4m}\varepsilon _{r}=4,$ $\left\vert \alpha
\right\vert =4m$, $\sum_{l=1}^{d}\alpha _{j}^{(l)}=1$ for all $l,$ $H_{r}<%
\frac{1}{2(d+2)}$ for some $r\geq i$ combined with Remark \ref{Remark 3.4}.
Thus we obtain that

\begin{eqnarray*}
\left( E(J_{2}^{n}(t_{0}^{\prime },t_{0}))^{4}\right) ^{1/4} &\leq & \\
&&\frac{1}{\lambda _{r}^{md}}C_{i}^{2m}\left\Vert b_{n}\right\Vert _{L^{1}(%
\mathbb{R}^{d};L^{\infty }([0,T]))}^{m}\left\vert \frac{t_{0}-t_{0}^{\prime }%
}{t_{0}t_{0}^{\prime }}\right\vert ^{\gamma _{i}}t_{0}^{(H_{i}-\frac{1}{2}%
-\gamma _{i})} \\
&&\times \frac{C(d)^{m}((8m)!)^{1/16}\left\vert t-t_{0}\right\vert
^{-H_{r}(md+2m)+(H_{i}-\frac{1}{2}-\gamma _{i})+m}}{\Gamma (-H_{r}(2\cdot
4md+4\cdot 4m)+2(H_{i}-\frac{1}{2}-\gamma _{i})+8m)^{1/8}}
\end{eqnarray*}

for a constant $C(d)$ depending only on $d.$

Then the series in (\ref{I2}) is summable over $j,k$, $l_{1},\dots ,l_{m-1}$
and $m$. Hence, we just need to verify that the double integral is finite
for suitable $\gamma _{i}$'s and $\beta _{i}$'s. Indeed, 
\begin{equation*}
\int_{0}^{t}\int_{0}^{t}\frac{\left\vert t_{0}-t_{0}^{\prime }\right\vert
^{2\gamma _{i}-1-2\beta _{i}}}{\left\vert t_{0}t_{0}^{\prime }\right\vert
^{2\gamma _{i}}}t_{0}^{2\left( H_{i}-\frac{1}{2}-\gamma _{i}\right)
}|t-t_{0}|^{-2\left( H_{i}-\frac{1}{2}-\gamma _{i}\right)
}dt_{0}dt_{0}^{\prime }<\infty ,
\end{equation*}%
whenever $2\left( H_{i}-\frac{1}{2}-\gamma _{i}\right) >-1$, $2\gamma
_{i}-1-2\beta _{i}>-1$ and $2\left( H_{i}-\frac{1}{2}-\gamma _{i}\right)
-2\gamma _{i}>-1$ which is fulfilled if for instance $\gamma _{i}<H_{i}/4$
and $0<\beta _{i}<\gamma _{i}$.

Now we may choose for example a function $\varphi $ with $\varphi (x)=\exp
(-x^{100})$. In this case, we find that%
\begin{equation*}
C_{i}^{2m}\lambda _{i}=\phi _{i}C_{i}^{2m}\varphi (C_{i})\leq \phi
_{i}\left( \frac{1}{50}\right) ^{\frac{m}{50}}m^{\frac{m}{50}}
\end{equation*}%
So, finally, if $H_{r}$\ for a fixed $r\geq i$ is sufficiently small, the
sums over $i\geq 1$ also converge since we have $\phi _{i}$ satisfying %
\ref{Finite}.

For the term $I_{3}^{n}$ we may use Theorem \ref{girsanov}, Cauchy-Schwarz
inequality twice and observe that the first factor of $I_{3}^{n}$ is bounded
uniformly in $t_{0},t\in \lbrack 0,T]$ by a simple application of
Proposition \ref{mainestimate2} with $\varepsilon _{j}=0$ for all $j$. Then,
the remaining estimate is fairly similar to the case of $I_{2}^{n}$ by using
Proposition \ref{mainestimate2} again. As for the estimate for the Malliavin
derivative the reader may agree that the arguments are analogous.
\end{proof}

\bigskip 

The following is a consequence of combining Lemma \ref{VI_weakconv} and
Proposition \ref{Holderintegral}.

\begin{cor}
\label{VI_L2conv} For every $t\in \lbrack 0,T]$ and continuous function $%
\varphi :\R^{d}\rightarrow \R$ with at most linear growth we have 
\begin{equation*}
\varphi (X_{t}^{n})\xrightarrow{n\to \infty}\varphi (E[X_{t}|\mathcal{F}%
_{t}])
\end{equation*}%
strongly in $L^{2}(\Omega )$. In addition, $E[X_{t}|\mathcal{F}_{t}]$ is
Malliavin differentiable along any direction $W^{i}$, $i\geq 1$ of $\mathbb{B%
}_{\cdot }^{H}$. Moreover, the solution $X$ is $\mathcal{F}$-adapted, thus
being a strong solution.
\end{cor}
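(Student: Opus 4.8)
The plan is to combine the weak convergence supplied by Lemma \ref{VI_weakconv} with a relative compactness statement extracted from Proposition \ref{Holderintegral}, using the elementary principle that a weakly convergent and relatively norm-compact sequence converges strongly to its weak limit. The only genuinely delicate input, the compactness, is already carried by Proposition \ref{Holderintegral}, so what remains is to assemble the pieces.

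First I would invoke the infinite-dimensional compactness criterion (Theorem \ref{compinf}): the three uniform bounds established in Proposition \ref{Holderintegral} --- namely $\sup_{n}E[\Vert X_t^n\Vert^2]<\infty$, the summability of $\sum_i \delta_i^{-2}\int_0^t E[\Vert D_{t_0}^i X_t^n\Vert^2]\,dt_0$ uniformly in $n$, and the uniform double-integral (fractional Sobolev type) estimate on the Malliavin derivatives --- are precisely the hypotheses of that criterion. They guarantee that for each fixed $t\in[0,T]$ the family $\{X_t^n\}_{n\geq1}$ is relatively compact in $L^2(\Omega)$ and that any of its limit points lies in the domain of the Malliavin derivative operators $D^i$, $i\geq1$.

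Next, taking $\varphi=\mathrm{id}$ in Lemma \ref{VI_weakconv} gives $X_t^n\to E[X_t|\mathcal{F}_t]$ weakly in $L^2(\Omega)$. Since $\{X_t^n\}_{n\geq1}$ is relatively compact, every subsequence admits a further subsequence converging strongly in $L^2(\Omega)$, and the strong limit must coincide with the weak limit $E[X_t|\mathcal{F}_t]$; hence the whole sequence converges, $X_t^n\to E[X_t|\mathcal{F}_t]$ strongly in $L^2(\Omega)$, and the criterion simultaneously yields that $E[X_t|\mathcal{F}_t]$ is Malliavin differentiable along each $W^i$. To pass to a general continuous $\varphi$ of at most linear growth, I would extract from strong $L^2$-convergence a subsequence with $X_t^{n_k}\to E[X_t|\mathcal{F}_t]$ $P$-a.s., so that $\varphi(X_t^{n_k})\to\varphi(E[X_t|\mathcal{F}_t])$ $P$-a.s. by continuity; the linear growth of $\varphi$ together with the uniform $L^2$-bound on $X_t^n$ makes $\{\varphi(X_t^n)^2\}_{n\geq1}$ uniformly integrable, which upgrades the a.s. convergence to $L^2$-convergence and, via the subsequence principle, forces the full sequence $\varphi(X_t^n)\to\varphi(E[X_t|\mathcal{F}_t])$ strongly in $L^2(\Omega)$. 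Comparing this strong limit with the weak limit from Lemma \ref{VI_weakconv} identifies
\[
E[\varphi(X_t)|\mathcal{F}_t]=\varphi(E[X_t|\mathcal{F}_t]),\qquad t\in[0,T].
\]

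Finally, to deduce that $X$ is $\mathcal{F}$-adapted I would apply the displayed identity to bounded continuous $\varphi$ and to $\varphi^2$ (again bounded, hence of linear growth), obtaining $E[\varphi(X_t)^2|\mathcal{F}_t]=(E[\varphi(X_t)|\mathcal{F}_t])^2$, that is $\mathrm{Var}[\varphi(X_t)|\mathcal{F}_t]=0$; thus each $\varphi(X_t)$ is $\mathcal{F}_t$-measurable. Letting $\varphi$ range over a point-separating family of bounded continuous functions shows that $X_t$ itself is $\mathcal{F}_t$-measurable, whence $X_t=E[X_t|\mathcal{F}_t]$ and, by Remark \ref{VI_stochbasisrmk}, $X$ is a strong solution. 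I expect the compactness step to be the conceptual crux, but it is entirely supplied by Proposition \ref{Holderintegral}; the remaining work is the standard ``weak convergence plus relative compactness yields strong convergence'' argument, the uniform-integrability upgrade for $\varphi$, and the conditional-variance trick for adaptedness.
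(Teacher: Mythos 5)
Your argument is correct and essentially reproduces the paper's proof: relative compactness via Theorem \ref{compinf} fed by the three bounds of Proposition \ref{Holderintegral}, identification of the limit as $E[X_t|\mathcal{F}_t]$ via Lemma \ref{VI_weakconv} with $\varphi=\mathrm{id}$, strong convergence by the subsequence principle, and adaptedness through the identity $\varphi(E[X_t|\mathcal{F}_t])=E[\varphi(X_t)|\mathcal{F}_t]$ --- the paper relegates this last step to the following corollary (for Lipschitz $\varphi$, via the same a.s.-subsequence-plus-weak-limit comparison) and leaves the final measurability deduction implicit, which your conditional-variance trick with $\varphi$ and $\varphi^2$ spells out correctly. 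Two one-line repairs: Theorem \ref{compinf} by itself yields only relative compactness, so the Malliavin differentiability of the limit should be justified, as the paper does, by the uniform bound on $D^i X_t^n$ together with the closability of the Malliavin derivative (\cite[Proposition 1.2.3]{Nua10}); and uniform integrability of $\{\varphi(X_t^n)^2\}_{n\geq 1}$ does not follow from the uniform $L^2$-bound alone as you assert, but it does follow from the strong $L^2$-convergence $X_t^n\to E[X_t|\mathcal{F}_t]$ you have already established at that point, since that gives $L^1$-convergence, hence uniform integrability, of $|X_t^n|^2$, which dominates $\varphi(X_t^n)^2$ up to a constant by the linear growth of $\varphi$.
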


\begin{proof}
This is a direct consequence of the relative compactness from Theorem \ref%
{compinf} combined with Proposition \ref{Holderintegral} and by Lemma \ref%
{VI_weakconv}, we can identify the limit as $E[X_{t}|\mathcal{F}_{t}].$ Then
the convergence holds for any bounded continuous functions as well. The
Malliavin differentiability of $E[X_{t}|\mathcal{F}_{t}]$ is verified by
taking $\varphi =I_{d}$ and the second estimate in Proposition \ref%
{Holderintegral} in connection with \cite[Proposition 1.2.3]{Nua10}.
\end{proof}

\bigskip 

Finally, we can complete step (4) of our scheme.

\begin{cor}
The constructed solution $X_{\cdot }$ of \eqref{maineq} is strong.
\end{cor}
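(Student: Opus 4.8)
The plan is to show that the weak solution $X_{\cdot}$ constructed above is $\mathcal{F}$-adapted; by Remark \ref{VI_stochbasisrmk} this is exactly what upgrades it to a strong solution. Concretely, I must establish that $X_t=E[X_t\mid\mathcal{F}_t]$ $P$-a.s. for every $t\in[0,T]$, i.e. that the conditional expectation appearing as the strong $L^2$-limit in Corollary \ref{VI_L2conv} coincides with $X_t$ itself. Recall first that Corollary \ref{VI_L2conv} (applied with $\varphi=\mathrm{id}$) gives $X_t^n\to E[X_t\mid\mathcal{F}_t]$ strongly in $L^2(\Omega)$, hence weakly in the full space $L^2(\Omega,\mathfrak{A},P)$.

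The decisive step is to prove that, along the same sequence, $X_t^n\to X_t$ \emph{weakly} in $L^2(\Omega,\mathfrak{A},P)$. To this end I would test against the family $\Sigma_T$ defined exactly as the set $\Sigma_t$ in the proof of Lemma \ref{VI_weakconv}, but now allowing partitions $0=t_0<\cdots<t_k=T$ of the whole interval; since $\mathfrak{A}=\mathcal{F}_T$ is generated by $\mathbb{B}_{\cdot}^H$ and the increments of this Gaussian process are jointly Gaussian, the span of such exponentials is total in $L^2(\Omega,\mathfrak{A},P)$. Running the Girsanov computation of Lemma \ref{VI_weakconv} verbatim --- writing $\mathbb{B}_{t_j}^H-\mathbb{B}_{t_{j-1}}^H=X_{t_j}^n-X_{t_{j-1}}^n-\int_{t_{j-1}}^{t_j}b_n(s,X_s^n)\,ds$, changing measure so that $X^n$ becomes the driftless process $x+\mathbb{B}_{\cdot}^H$, and invoking the $L^p(\Omega)$-convergence of the Dol\'{e}ans-Dade densities and of the exponentials of the drift integrals established there through Proposition \ref{novikov} and Lemma \ref{interlemma} --- yields $\lim_{n\to\infty}E[X_t^n\,\xi]=E[X_t\,\xi]$ for every $\xi\in\Sigma_T$. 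The crucial difference from Lemma \ref{VI_weakconv} is that now $\xi$ is $\mathfrak{A}$-measurable, so no conditioning step intervenes and the limit is $X_t$, not $E[X_t\mid\mathcal{F}_t]$. Combined with the uniform bound $\sup_n E[\|X_t^n\|^2]<\infty$ from Proposition \ref{Holderintegral}, which secures weak relative compactness, this identifies $X_t$ as the weak limit of $X_t^n$ in $L^2(\Omega,\mathfrak{A},P)$.

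Since weak limits in the Hilbert space $L^2(\Omega,\mathfrak{A},P)$ are unique, comparing the two paragraphs forces $X_t=E[X_t\mid\mathcal{F}_t]$ $P$-a.s. Hence $X_t$ is $\mathcal{F}_t$-measurable for every $t$, and as $X_{\cdot}$ has continuous paths it is $\mathcal{F}$-adapted, so by Remark \ref{VI_stochbasisrmk} the pair $(X,\mathbb{B}_{\cdot}^H)$ is a strong solution of \eqref{maineq}. I expect the main obstacle to lie in the second paragraph: one must verify that the entire chain of limits of Lemma \ref{VI_weakconv}, in particular the convergence of the change-of-measure densities and of the exponential drift functionals, is insensitive to enlarging the test functionals from $[0,t]$ to $[0,T]$. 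This holds because those convergences were proved over $[0,T]$ from the outset; the genuinely new --- and essential --- point is simply that testing over the full interval recovers $X_t$ rather than its $\mathcal{F}_t$-projection, which is precisely what turns the identification of the limit in Corollary \ref{VI_L2conv} into the adaptedness of $X$.
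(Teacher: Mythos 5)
Your argument is correct, and it identifies the limit by a genuinely different mechanism than the paper's own proof. The paper also starts from Corollary \ref{VI_L2conv} (strong $L^{2}$-convergence $X_{t}^{n}\rightarrow E[X_{t}|\mathcal{F}_{t}]$, hence $P$-a.s.\ along a subsequence), but then keeps the test class $\Sigma_{t}$ fixed and varies the test \emph{function} instead: for every globally Lipschitz $\varphi$ it combines $\varphi(X_{t}^{n_{k}})\rightarrow\varphi(E[X_{t}|\mathcal{F}_{t}])$ $P$-a.s.\ with the weak convergence $\varphi(X_{t}^{n})\rightarrow E[\varphi(X_{t})|\mathcal{F}_{t}]$ from Lemma \ref{VI_weakconv}, and uniqueness of limits yields $\varphi(E[X_{t}|\mathcal{F}_{t}])=E[\varphi(X_{t})|\mathcal{F}_{t}]$ $P$-a.s.\ for all such $\varphi$, which forces the conditional law of $X_{t}$ given $\mathcal{F}_{t}$ to be degenerate, i.e.\ $X_{t}$ to be $\mathcal{F}_{t}$-measurable. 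You instead keep $\varphi=\mathrm{id}$ and enlarge the test class from $\Sigma_{t}$ to $\Sigma_{T}$, rerunning the change-of-measure computation with the Dol\'eans-Dade density taken over the full horizon $[0,T]$, so that the weak limit is $X_{t}$ itself rather than its $\mathcal{F}_{t}$-projection; uniqueness of weak limits in the Hilbert space $L^{2}(\Omega,\mathfrak{A},P)$ then gives $X_{t}=E[X_{t}|\mathcal{F}_{t}]$ outright. Your observation that all the needed ingredients (the Novikov bounds of Proposition \ref{novikov}, Lemma \ref{interlemma}, and the $L^{p}$-convergences of the densities and drift exponentials in Lemma \ref{VI_weakconv}) were proved on $[0,T]$ from the outset is precisely what legitimizes this extension, and the totality of $\Sigma_{T}$ in $L^{2}(\Omega,\mathfrak{A},P)$ is the same fact the paper invokes for $\Sigma_{t}$, since $\mathfrak{A}=\mathcal{F}_{T}$. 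As for what each route buys: yours dispenses entirely with the paper's final step --- the passage from the identity $\varphi(E[X_{t}|\mathcal{F}_{t}])=E[\varphi(X_{t})|\mathcal{F}_{t}]$ for all Lipschitz $\varphi$ to measurability of $X_{t}$, which the paper asserts without detail and which genuinely requires an extra argument (e.g.\ a strictly convex Lipschitz $\varphi$ together with the equality case in conditional Jensen, or a separating family of test functions) --- at the modest cost of not being able to cite Lemma \ref{VI_weakconv} as a black box and instead repeating its Girsanov computation with full-horizon test functionals.
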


\begin{proof}
We have to show that $X_{t}$ is $\mathcal{F}_{t}$-measurable for every $t\in
\lbrack 0,T]$ and by Remark \ref{VI_stochbasisrmk} we see that there exists
a strong solution in the usual sense, which is Malliavin differentiable. In
proving this, let $\varphi $ be a globally Lipschitz continuous function.
Then it follows from Corollary \ref{VI_L2conv} that there exists a
subsequence $n_{k}$, $k\geq 0$, that 
\begin{equation*}
\varphi (X_{t}^{n_{k}})\rightarrow \varphi (E[X_{t}|\mathcal{F}_{t}]),\ \
P-a.s.
\end{equation*}%
as $k\rightarrow \infty $.

Further, by Lemma \ref{VI_weakconv} we also know that 
\begin{equation*}
\varphi (X_{t}^{n})\rightarrow E\left[ \varphi (X_{t})|\mathcal{F}_{t}\right]
\end{equation*}%
weakly in $L^{2}(\Omega )$. By the uniqueness of the limit we immediately
obtain that 
\begin{equation*}
\varphi \left( E[X_{t}|\mathcal{F}_{t}]\right) =E\left[ \varphi (X_{t})|%
\mathcal{F}_{t}\right] ,\ \ P-a.s.
\end{equation*}%
which implies that $X_{t}$ is $\mathcal{F}_{t}$-measurable for every $t\in
\lbrack 0,T]$.
\end{proof}

\bigskip 

Finally, we turn to step (5) and complete this Section by showing pathwise
uniqueness. Following the same argument as in \cite[Chapter IX, Exercise
(1.20)]{RY2004} we see that strong existence and uniqueness in law implies
pathwise uniqueness. The argument does not rely on the process being a
semimartingale. Hence, uniqueness in law is enough. The following Lemma
actually implies the desired uniqueness by estimate \eqref{thetabound} in
connection with \cite[Theorem 7.7]{LS.77}.

\begin{lem}
Let $X$ be a strong solution of \eqref{maineq} where $b\in L_{p}^{q}$, $%
p,q\in (2,\infty ]$. Then the estimates \eqref{estimateh} and %
\eqref{estimatehexp} hold for $X$ in place of $\mathbb{B}_{\cdot }^{H}$. As
a consequence, uniqueness in law holds for equation \eqref{maineq} and since 
$X$ strong, pathwise uniqueness follows.
\end{lem}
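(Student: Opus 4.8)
The plan is to transfer the occupation-time estimates \eqref{estimateh} and \eqref{estimatehexp}, which are already available for the driving noise in Lemma \ref{interlemma}, from $x+\mathbb{B}_\cdot^H$ to the solution $X$ by a Girsanov change of measure that turns $X_\cdot-x$ into a regularizing fractional Brownian motion. Concretely, let $X$ be a strong solution of \eqref{maineq} on $(\Omega,\mathfrak{A},P)$ driven by $\mathbb{B}_\cdot^H$, so that $X_t=x+\int_0^t b(s,X_s)\,ds+\mathbb{B}_t^H$. Fix $n_0$ large enough that $H_{n_0}<\tfrac12-\tfrac1p$ and set $\theta_s^{n_0}:=K_{H_{n_0}}^{-1}\big(\tfrac{1}{\lambda_{n_0}}\int_0^\cdot b(r,X_r)\,dr\big)(s)$. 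By the pointwise bound \eqref{thetabound} together with the argument of Lemma \ref{domainKH}, one has $\int_0^T|\theta_s^{n_0}|^2\,ds<\infty$ $P$-a.s., so Girsanov's theorem (Theorem \ref{girsanov}) applies and yields a measure $Q$ with $\tfrac{dQ}{dP}=\mathcal{E}\big(-\int_0^\cdot(\theta_s^{n_0})^\ast\,dW_s^{n_0}\big)_T$, under which $X_\cdot-x$ is a regularizing fBm with Hurst sequence $H$. I write $G:=\tfrac{dP}{dQ}$, which under $Q$ is the Dol\'eans exponential of a $Q$-Brownian motion.

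Writing $h(t,X_t)=h(t,x+(X_t-x))$ and noting $\|h(\cdot,x+\cdot)\|_{L_p^q}=\|h\|_{L_p^q}$, I would then use the change of measure and the Cauchy--Schwarz inequality:
\begin{equation*}
E_P\Big[\exp\Big\{\int_0^T h(t,X_t)\,dt\Big\}\Big]=E_Q\Big[G\exp\Big\{\int_0^T h(t,X_t)\,dt\Big\}\Big]\le\big(E_Q[G^2]\big)^{1/2}\Big(E_Q\Big[\exp\Big\{2\int_0^T h(t,X_t)\,dt\Big\}\Big]\Big)^{1/2}.
\end{equation*}
Under $Q$ the process $X_\cdot-x$ is a regularizing fBm, so the second factor is bounded by $A(2\|h\|_{L_p^q})^{1/2}$ via Lemma \ref{interlemma}, while $E_Q[G^2]$ is bounded by a real analytic function of $\|b\|_{L_p^q}$ by Proposition \ref{novikov} (the exponent of $G$ being exactly of the form controlled there, evaluated along the $Q$-fBm $X_\cdot-x$). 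This gives \eqref{estimatehexp} for $X$ with a new analytic majorant; the linear estimate \eqref{estimateh} follows identically, using the second-moment bound for the occupation integral in place of the exponential one.

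With \eqref{estimateh} and \eqref{estimatehexp} in hand, uniqueness in law follows from \cite[Theorem 7.7]{LS.77}: for any solution, on any stochastic basis, the Girsanov density $\tfrac{dP}{dQ}=G(X_\cdot)$ is one and the same measurable functional of the solution path, and under the reference measure $X_\cdot-x$ carries the fixed law of a regularizing fBm; hence for every bounded continuous $F$ the quantity $E[F(X_\cdot)]=E[\,G(x+\mathbb{B}_\cdot^H)\,F(x+\mathbb{B}_\cdot^H)\,]$ depends only on $b$, $x$ and the law of $\mathbb{B}_\cdot^H$, not on the underlying basis. The estimates just established, via \eqref{thetabound} and \cite[Theorem 7.7]{LS.77}, are precisely what guarantee that this density has expectation one and defines a genuine probability measure. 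Finally, combining strong existence (from the previous results) with uniqueness in law yields pathwise uniqueness by the argument of \cite[Chapter IX, Exercise (1.20)]{RY2004}, which does not use the semimartingale property and so applies to our non-semimartingale noise. I expect the main obstacle to be this first step: since a priori the law of $X$ is unknown, one must ensure the reverse change of measure is legitimate --- that $\int_0^T|\theta_s^{n_0}|^2\,ds<\infty$ $P$-a.s.\ and that $G$ is a true density with finite second moment --- which is exactly where \eqref{thetabound}, Lemma \ref{domainKH} and Proposition \ref{novikov} are indispensable.
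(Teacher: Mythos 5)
Your overall strategy --- change the measure so that $X_\cdot - x$ becomes a regularizing fBm, transfer the occupation-time estimates back via Cauchy--Schwarz/H\"older, control the density's moments through Proposition \ref{novikov} and Lemma \ref{interlemma}, and finish with \cite[Theorem 7.7]{LS.77} and \cite[Chapter IX, Exercise (1.20)]{RY2004} --- is exactly the paper's route, and the second half of your argument (the moment bound on $G$ under $Q$, the identification of the law, the passage to pathwise uniqueness) matches the paper's proof. But there is a genuine gap at the very first step, and you have half-noticed it yourself. You justify the change of measure by asserting that $\int_0^T |\theta_s^{n_0}|^2\,ds < \infty$ $P$-a.s.; this only makes the stochastic exponential $\mathcal{E}\bigl(-\int_0^\cdot (\theta_s^{n_0})^\ast dW_s^{n_0}\bigr)$ a supermartingale, whereas Theorem \ref{girsanov} requires $E[\xi_T]=1$. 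You cannot obtain this from Proposition \ref{novikov}: that proposition verifies Novikov's condition when the drift is evaluated along $\mathbb{B}_\cdot^H$, while your $\theta^{n_0}$ is built from $X$, whose law under $P$ is precisely what is unknown for an arbitrary solution. Appealing to Proposition \ref{novikov} ``evaluated along the $Q$-fBm $X_\cdot-x$'' to legitimize the construction of $Q$ is circular: the measure $Q$ does not exist until the density is known to be a true martingale, and verifying Novikov for $\theta^{n_0}$ under $P$ would require the estimate \eqref{estimatehexp} for $X$ --- the very statement being proved.

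The paper closes this hole with a two-stage argument that your proposal is missing. First it assumes $b$ bounded: then \eqref{thetan} yields a deterministic bound on $\int_0^T |\eta_s^n|^2\,ds$, so the exponential-martingale identity \eqref{exp1} holds unconditionally for every $\kappa$, the measure $\widetilde P$ is legitimate, and under $\widetilde P$ the process $X_\cdot - x$ is a regularizing fBm; the moments $E_{\widetilde P}[\xi_T^\kappa]$ are then bounded, via \eqref{VI_fracL2} and Lemma \ref{interlemma}, by analytic functions of $\| |b|^{\frac{1+\varepsilon}{\varepsilon}} \|_{L_p^q}$ --- crucially, bounds that do not involve the sup-norm of $b$. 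Second, the identity \eqref{sumexp}, $E_P[\xi_T^\kappa] = E_{\widetilde P}[\xi_T^{\kappa-1}]$, is extended from bounded $b$ to general $b\in L_p^q$ by truncating $b_n := b\,\mathbf{1}_{\{|b|\leq n\}}$ and letting $n\to\infty$; only after this does the final H\"older step with $\delta$ close to $0$ deliver \eqref{estimateh} and \eqref{estimatehexp} for $X$. Your proof becomes correct once you replace the direct appeal to Girsanov for unbounded $b$ by this truncation-and-limit argument.
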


\begin{proof}
Assume first that $b$ is bounded. Fix any $n\geq 1$ and set 
\begin{equation*}
\eta _{s}^{n}=K_{H_{n}}^{-1}\left( \frac{1}{\lambda _{n}}\int_{0}^{\cdot
}b(r,X_{r})dr\right) (s).
\end{equation*}%
Since $b$ is bounded it is easy to see from \eqref{thetan} by changing $%
\mathbb{B}_{\cdot }^{H}$ with $X$ and bounding $b$ that for every $\kappa
\in \R$, 
\begin{equation} \label{exp1}
E_{\widetilde{P}}\left[ \exp \left\{ -2\kappa \int_{0}^{T}(\eta
_{s}^{n})^{\ast }dW_{s}^{n}-2\kappa ^{2}\int_{0}^{T}|\eta
_{s}^{n}|^{2}ds\right\} \right] =1,
\end{equation}%
where 
\begin{equation*}
\frac{d\widetilde{P}}{dP}=\exp \left\{ -\int_{0}^{T}(\eta _{s}^{n})^{\ast
}dW_{s}^{n}-\frac{1}{2}\int_{0}^{T}|\eta _{s}^{n}|^{2}ds\right\} .
\end{equation*}%
Hence, $X_{t}-x$ is a regularizing fractional Brownian motion with Hurst
sequence $H$ under $\widetilde{P}$. Define 
\begin{equation*}
\xi _{T}^{\kappa }:=\exp \left\{ -\kappa \int_{0}^{T}(\eta _{s}^{n})^{\ast
}dW_{s}^{n}-\frac{\kappa }{2}\int_{0}^{T}|\eta _{s}^{n}|^{2}ds\right\} .
\end{equation*}%
Then, 
\begin{align*}
E_{\widetilde{P}}\left[ \xi _{T}^{\kappa }\right] & =E_{\widetilde{P}}\left[
\exp \left\{ -\kappa \int_{0}^{T}(\eta _{s}^{n})^{\ast }dW_{s}^{n}-\frac{%
\kappa }{2}\int_{0}^{T}|\eta _{s}^{n}|^{2}ds\right\} \right] \\
& =E_{\widetilde{P}}\left[ \exp \left\{ -\kappa \int_{0}^{T}(\eta
_{s}^{n})^{\ast }dW_{s}^{n}-\kappa ^{2}\int_{0}^{T}|\eta
_{s}^{n}|^{2}ds\right\} \exp \left\{ \left( \kappa ^{2}+\frac{\kappa }{2}%
\right) \int_{0}^{T}|\eta _{s}^{n}|^{2}ds\right\} \right] \\
& \leq \left( E_{\widetilde{P}}\left[ \exp \left\{ 2\left\vert \kappa ^{2}+%
\frac{\kappa }{2}\right\vert \int_{0}^{T}|\eta _{s}^{n}|^{2}ds\right\} %
\right] \right) ^{1/2}
\end{align*}%
in view of \eqref{exp1}.

On the other hand, using \eqref{VI_fracL2} with $X$ in place of $\mathbb{B}%
_{\cdot }^{H}$ we have 
\begin{equation*}
\int_{0}^{T}|\eta _{s}|^{2}ds\leq C_{\varepsilon ,\lambda
_{n},H_{n},T}\left( 1+\int_{0}^{T}|b(r,X_{r})|^{\frac{1+\varepsilon }{%
\varepsilon }}dr\right) ,\quad P-a.s.
\end{equation*}%
for any $\varepsilon \in (0,1)$. Hence, applying Lemma \ref{interlemma} we
get 
\begin{equation*}
E_{\widetilde{P}}\left[ \xi _{T}^{\kappa }\right] \leq e^{\left\vert \kappa
^{2}+\frac{\kappa }{2}\right\vert C_{\varepsilon ,\lambda
_{n},H_{n},T}}\left( A\left( C_{\varepsilon ,\lambda _{n},H_{n},T}\left\vert
\kappa ^{2}+\frac{\kappa }{2}\right\vert \Vert |b|^{\frac{1+\varepsilon }{%
\varepsilon }}\Vert _{L_{p}^{q}}\right) \right) ^{1/2},
\end{equation*}%
where $A$ is the analytic function from Lemma \ref{interlemma}.

Furthermore, observe that for every $\kappa\in \R$ we have 
\begin{align}  \label{sumexp}
E_P[\xi_T^{\kappa}] = E_{\widetilde{P}}[\xi_T^{\kappa-1}].
\end{align}
In fact, \eqref{sumexp} holds for any $b\in L_p^q$ by considering $b_n:=b%
\mathbf{1}_{\{|b|\leq n\}}$, $n\geq 1$ and then letting $n\to \infty$.

Finally, let $\delta \in (0,1)$ and apply H\"{o}lder's inequality in order
to get 
\begin{equation*}
E_{P}\left[ \int_{0}^{T}h(t,X_{t})dt\right] \leq T^{\delta }\left( E_{%
\widetilde{P}}[(\xi _{T}^{1})^{\frac{1+\delta }{\delta }}]\right) ^{\frac{%
\delta }{1+\delta }}\left( E_{\widetilde{P}}\left[
\int_{0}^{T}h(t,X_{t})^{1+\delta }dt\right] \right) ^{\frac{1}{1+\delta }},
\end{equation*}%
and 
\begin{equation*}
E_{P}\left[ \exp \left\{ \int_{0}^{T}h(t,X_{t})dt\right\} \right] \leq
T^{\delta }\left( E_{\widetilde{P}}[(\xi _{T}^{1})^{\frac{1+\delta }{\delta }%
}]\right) ^{\frac{\delta }{1+\delta }}\left( E_{\widetilde{P}}\left[ \exp
\left\{ (1+\delta )\int_{0}^{T}h(t,X_{t})dt\right\} \right] \right) ^{\frac{1%
}{1+\delta }},
\end{equation*}%
for every Borel measurable function. Since we know that $X_{t}-x$ is a
regularizing fractional Brownian motion with Hurst sequence $H$ under $%
\widetilde{P}$, the result follows by Lemma \ref{interlemma} by choosing $%
\delta $ close enough to 0.
\end{proof}

\bigskip 

Using the all the previous intermediate results, we are now able to state
the main result of this Section:

\begin{thm}
\label{VI_mainthm} Retain the conditions for $\lambda =\{\lambda
_{i}\}_{i\geq 1}$ with respect to $\mathbb{B}_{\cdot }^{H}$ in Theorem \ref%
{Holderintegral}. Let\emph{\ }$b\in \mathcal{L}_{2,p}^{q}$\emph{, }$p,q\in
(2,\infty ]$. Then there exists a unique (global) strong solution\emph{\ }$%
X_{t},0\leq t\leq T$ of equation \eqref{maineq}. Moreover, for every\emph{\ }%
$t\in \lbrack 0,T]$\emph{, }$X_{t}$\emph{\ }is Malliavin differentiable in
each direction of the Brownian motions\emph{\ }$W^{n}$\emph{, }$n\geq 1$%
\emph{\ }in\emph{\ }\eqref{compfBm}.
\end{thm}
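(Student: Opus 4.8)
The plan is simply to thread together the five steps of the program stated at the beginning of Section \ref{strongsol}, each of which has by now been isolated as a separate result. Since $T$ is fixed and every construction takes place on $[0,T]$, the solution produced is automatically global, so there is nothing extra to prove on that count.

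First, for existence of a weak solution (step (1)), I would invoke the Girsanov construction: by Lemma \ref{domainKH} the process $t\mapsto\int_0^t b(s,\mathbb{B}_s^H)\,ds$ lies in the domain of $K_{H_{n_0}}^{-1}$, and by Proposition \ref{novikov} Novikov's condition is met for some sufficiently large $n_0$, so Theorem \ref{girsanov} furnishes a probability space $(\Omega,\mathfrak{A},P)$ carrying a weak solution $(X,\mathbb{B}_\cdot^H)$ of \eqref{maineq}. I would then fix a sequence of compactly supported smooth $b_n\to b$ a.e.\ with $\sup_n\|b_n\|_{\mathcal{L}_{2,p}^q}<\infty$ and let $X^n$ be the associated strong solutions, which exist and are Malliavin differentiable by the classical existence and uniqueness result of Section \ref{monstersection}.

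The core of the argument is steps (2)--(3), which are already packaged in Corollary \ref{VI_L2conv}: the weak $L^2(\Omega)$-convergence $\varphi(X_t^n)\to E[\varphi(X_t)\,|\,\mathcal{F}_t]$ from Lemma \ref{VI_weakconv} is upgraded to \emph{strong} convergence $\varphi(X_t^n)\to\varphi(E[X_t|\mathcal{F}_t])$ by feeding the uniform Malliavin--Sobolev bounds of Proposition \ref{Holderintegral} into the infinite-dimensional compactness criterion (Theorem \ref{compinf}); the same compactness argument simultaneously yields Malliavin differentiability of $E[X_t|\mathcal{F}_t]$ along every $W^i$, $i\geq1$, which is exactly the second assertion of the theorem. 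For the strong-solution property (step (4)) I would take $\varphi$ globally Lipschitz, extract from the strong convergence a subsequence with $\varphi(X_t^{n_k})\to\varphi(E[X_t|\mathcal{F}_t])$ $P$-a.s., compare with the weak limit $E[\varphi(X_t)|\mathcal{F}_t]$ of Lemma \ref{VI_weakconv}, and deduce by uniqueness of limits that $\varphi(E[X_t|\mathcal{F}_t])=E[\varphi(X_t)|\mathcal{F}_t]$ $P$-a.s.; this forces $X_t$ to be $\mathcal{F}_t$-measurable, so by Remark \ref{VI_stochbasisrmk} the process $X$ is a genuine strong solution.

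Finally, for uniqueness (step (5)) I would invoke the preceding Lemma, which transports the estimates \eqref{estimateh}--\eqref{estimatehexp} from $\mathbb{B}_\cdot^H$ to $X$ and, via \cite[Theorem 7.7]{LS.77}, delivers uniqueness in law; since the argument of \cite[Chapter IX, Exercise (1.20)]{RY2004} shows that strong existence together with uniqueness in law implies pathwise uniqueness \emph{without} using the semimartingale property, global pathwise uniqueness follows. The genuinely difficult work lies entirely upstream --- in the variational/Fourier estimates of Theorem \ref{mainthmlocaltime} feeding Proposition \ref{Holderintegral}, and in the delicate verification that a single sequence $\lambda_i=\phi_i\,\varphi(C_i)$ can be chosen to satisfy \eqref{Finite} uniformly in $n$ while respecting the Girsanov/Novikov constraints --- so the main theorem itself reduces to assembling these ingredients. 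The one point demanding care in the assembly is precisely that this fixed $\lambda$-sequence must simultaneously meet all hypotheses of the compactness criterion, of Girsanov's theorem, and of Proposition \ref{novikov}; once that compatibility is checked, the conclusion is immediate.
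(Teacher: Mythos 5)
Your proposal is correct and follows essentially the same route as the paper: the theorem is proved there precisely by assembling the five-step program, i.e.\ weak existence via Lemma \ref{domainKH}, Proposition \ref{novikov} and Theorem \ref{girsanov}, strong $L^{2}$-convergence and Malliavin differentiability via Lemma \ref{VI_weakconv}, Proposition \ref{Holderintegral} and Theorem \ref{compinf} (packaged as Corollary \ref{VI_L2conv}), the Lipschitz-test-function identification $\varphi(E[X_t|\mathcal{F}_t])=E[\varphi(X_t)|\mathcal{F}_t]$ giving $\mathcal{F}_t$-measurability together with Remark \ref{VI_stochbasisrmk}, and finally uniqueness in law upgraded to pathwise uniqueness via the last Lemma and the Revuz--Yor argument. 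Your closing observation that the single sequence $\lambda_i=\phi_i\varphi(C_i)$ must simultaneously satisfy the Girsanov/Novikov hypotheses and condition \eqref{Finite} is exactly the compatibility the paper arranges in Proposition \ref{Holderintegral} and Remark \ref{Phi}, so nothing is missing.
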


\section{Infinitely Differentiable Flows for Irregular Vector Fields }

\label{flowsection}

From now on, we denote by $X_{t}^{s,x}$ the solution to the following SDE
driven by a regularizing fractional Brownian motion $\mathbb{B}_{\cdot }^{H}$
with Hurst sequence $H$:  
\begin{equation*}
dX_{t}^{s,x}=b(t,X_{t}^{s,x})dt+d\mathbb{B}_{t}^{H},\quad s,t\in \lbrack
0,T],\quad s\leq t,\quad X_{s}^{s,x}=x\in \R^{d}.
\end{equation*}

We will then assume the hypotheses from Theorem \ref{VI_mainthm} on $b$ and $%
H$.

The next estimate essentially tells us that the stochastic mapping $x\mapsto
X_{t}^{s,x}$ is $P$-a.s. infinitely many times continuously differentiable.
In particular, it shows that the strong solution constructed in the former
section, in addition to being Malliavin differentiable, is also smooth in $x$
and, although we will not prove it explicitly here, it is also smooth in the
Malliavin sense, and since H\"{o}rmander's condition is met then implies
that the densities of the marginals are also smooth.

\begin{thm}
\label{VI_derivative} Let $b\in C_{c}^{\infty }((0,T)\times \R^{d})$. Fix
integers $p\geq 2$ and $k\geq 1$. Choose a $r$ such that $H_{r}<\frac{1}{%
(d-1+2k)}$. Then there exists a continuous function $C_{k,d,H_{r},p,%
\overline{p},\overline{q},T}:[0,\infty )^{2}\rightarrow \lbrack 0,\infty )$,
depending on $k,d,H_{r},p,\overline{p},\overline{q}$ and $T$. 
\begin{equation*}
\sup_{s,t\in \lbrack 0,T]}\sup_{x\in \R^{d}}\text{\emph{E}}\left[ \left\Vert 
\frac{\partial ^{k}}{\partial x^{k}}X_{t}^{s,x}\right\Vert ^{p}\right] \leq
C_{k,d,H_{r},p,\overline{p},\overline{q},T}(\Vert b\Vert _{L_{\overline{p}}^{%
\overline{q}}},\Vert b\Vert _{L_{\infty }^{1}}).
\end{equation*}
\end{thm}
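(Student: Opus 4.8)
The plan is to follow the same Malliavin-free variational scheme used in Proposition \ref{Holderintegral}, but applied now to the \emph{spatial} Jacobians of the flow and with the multi-index orders pushed up from $\{0,1\}$ to $\{0,1,\dots,k\}$. Since $b\in C_c^\infty$, the flow $x\mapsto X_t^{s,x}$ is genuinely smooth and I can differentiate the integral equation $X_t^{s,x}=x+\int_s^t b(u,X_u^{s,x})\,du+\mathbb{B}_t^H-\mathbb{B}_s^H$ in $x$. Writing $J_t^{(k)}:=\frac{\partial^k}{\partial x^k}X_t^{s,x}$, a Fa\`a di Bruno computation shows that $J^{(k)}$ solves a linear integral equation whose homogeneous part is driven by $b'(u,X_u^{s,x})$ and whose inhomogeneity is a finite sum of products $b^{(j)}(u,X_u^{s,x})\big(J^{(i_1)},\dots,J^{(i_j)}\big)$ with $2\le j\le k$ and $i_1+\cdots+i_j=k$. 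I would proceed by induction on $k$, the base case $k=1$ being essentially the estimate already carried out for the Malliavin derivative in Proposition \ref{Holderintegral}.

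Next I would resolve these linear equations by Picard iteration, exactly as in \eqref{MD}. This expresses each matrix entry of $J^{(k)}_t$ as a series over $m\ge0$ of iterated integrals
\begin{equation*}
\int_{\Delta_{s,t}^m}\prod_{\ell=1}^{m}D^{\beta_\ell}b(s_\ell,X_{s_\ell}^{s,x})\,ds_m\cdots ds_1,
\end{equation*}
where the multi-indices satisfy $|\beta_\ell|\le k$ for every $\ell$ and the total derivative budget per term is controlled through the induction hypothesis. To estimate the $L^p(\Omega)$ norm of such expressions I would raise to an even integer power dominating $p$, shuffle the resulting product of simplex integrals into a single iterated integral (Section \ref{VI_shuffles}), and then apply Girsanov's theorem (Theorem \ref{girsanov}) to replace the anticipating integrand $X_{s_\ell}^{s,x}$ by the explicit Gaussian curve $x+\mathbb{B}_{s_\ell}^H$. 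The Radon--Nikodym density produced is handled by Cauchy--Schwarz together with the exponential moment bound of Proposition \ref{novikov}, which is where the dependence on $\|b\|_{L_{\bar p}^{\bar q}}$ enters through a real-analytic function.

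The heart of the argument is then the integration-by-parts estimate of Propositions \ref{mainestimate1} and \ref{mainestimate2}, applied with these higher multi-indices. Each factor $D^{\beta_\ell}b(s_\ell,x+\mathbb{B}_{s_\ell}^H)$ is traded, via the local-time formula \eqref{ibp}, for a factor $\|b(\cdot,z_\ell)\|_{L^1(\R^d;L^\infty([0,T]))}$ at the cost of a factorial factor $(\prod_l(2|\alpha^{(l)}|)!)^{1/4}$ and a reciprocal Gamma factor; this is precisely the mechanism that makes the $L_\infty^1$-norm of $b$ appear in the final bound. The hypothesis $H_r<\frac{1}{d-1+2k}$ is exactly what is needed for the worst case, in which a single factor carries all $k$ derivatives so that $\sum_l\alpha_j^{(l)}=k$, to keep every exponent $-H_r(d+2\sum_l\alpha_j^{(l)})+(H_{r_0}-\frac{1}{2}-\gamma_{r_0})\varepsilon$ strictly above $-1$, hence the simplex integrals finite (cf. the proof of Proposition \ref{mainestimate1}). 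Crucially, translation invariance of $\|\cdot\|_{L^1(\R^d;L^\infty)}$ and the fact that $x$ enters only as a shift of the argument of $b$ render all these bounds uniform in $x$, while the $s,t$-dependence is absorbed into $T$-dependent constants.

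It remains to sum the series in $m$, and this is the step I expect to be the main obstacle. The count of shuffles is bounded by $C^m$ (Section \ref{VI_shuffles}), the Fa\`a di Bruno multiplicities and the number of summands generated by the shuffling grow only geometrically, and the Brascamp--Lieb/permanent bound behind Theorem \ref{mainthmlocaltime} keeps the factorial cost down to $\sqrt{(2|\alpha|)!}$ (via Remark \ref{Remark 3.4}) rather than something worse. Against this one has the Gamma denominators $\Gamma(-H_r(2md+4|\alpha|)+\cdots)^{1/2}$, whose growth in $m$ for $H_r$ small dominates the factorial numerators by Stirling's formula, together with the tunable prefactors $\lambda_r^{-md}$ that can be reabsorbed by choosing $\lambda_r=\phi_r\varphi(C_r)$ with $\varphi(x)=\exp(-x^{100})$ as in Proposition \ref{Holderintegral} and Remark \ref{Phi}. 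Collecting the geometric and factorial estimates shows the series converges to a bound of the form $C_{k,d,H_r,p,\bar p,\bar q,T}(\|b\|_{L_{\bar p}^{\bar q}},\|b\|_{L_\infty^1})$ with $C$ continuous, in fact real-analytic, in its two arguments and uniform in $s,t\in[0,T]$ and $x\in\R^d$, which is the claim.
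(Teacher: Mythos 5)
Your proposal is correct and follows essentially the same route as the paper's proof: differentiate the Picard expansion of the flow (the paper tracks the derivative bookkeeping via Lemma \ref{OrderDerivatives} rather than an explicit Fa\`a di Bruno induction, but the resulting iterated-integral representation is the same), shuffle the $2^q$-th power into single simplex integrals, remove the drift by Girsanov/Novikov (Theorem \ref{girsanov}, Proposition \ref{novikov}), apply Proposition \ref{mainestimate2} with $\varepsilon_j=0$, and sum the series by Gamma-function domination under the smallness condition on $H_r$. The one point you state loosely --- that factors carrying derivatives of order up to $k$ occur only a bounded number of times, so the total derivative order of each product of length $m$ is $m+k-1$ rather than of order $km$, which is precisely what makes the factorial-versus-Gamma balance close --- is exactly what Lemma \ref{OrderDerivatives} formalizes, and your argument otherwise matches the paper step for step.
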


\begin{proof}
For notational simplicity, let $s=0$, $\mathbb{B}_{\cdot }=\mathbb{B}_{\cdot
}^{H}$ and let $X_{t}^{x},$ $0\leq t\leq T$ be the solution with respect to
the vector field $b\in C_{c}^{\infty }((0,T)\times \mathbb{R}^{d})$. We know
that the stochastic flow associated with the smooth vector field $b$ is
smooth, too (compare to e.g. \cite{Kunita}).\ Hence, we get that%
\begin{equation}
\frac{\partial }{\partial x}X_{t}^{x}=I_{d}+\int_{s}^{t}Db(u,X_{u}^{x})\cdot 
\frac{\partial }{\partial x}X_{u}^{x}du,
\end{equation}%
where $Db(u,\cdot ):\mathbb{R}^{d}\longrightarrow L(\mathbb{R}^{d},\mathbb{R}%
^{d})$ is the derivative of $b$ with respect to the space variable.

By using Picard iteration, we see that%
\begin{equation}
\frac{\partial }{\partial x}X_{t}^{x}=I_{d}+\sum_{m\geq 1}\int_{\Delta
_{0,t}^{m}}Db(u,X_{u_{1}}^{x})...Db(u,X_{u_{m}}^{x})du_{m}...du_{1},
\label{FirstOrder}
\end{equation}%
where%
\begin{equation*}
\Delta _{s,t}^{m}=\{(u_{m},...u_{1})\in \lbrack 0,T]^{m}:\theta
<u_{m}<...<u_{1}<t\}.
\end{equation*}

By applying dominated convergence, we can differentiate both sides with
respect to $x$ and find that%
\begin{equation*}
\frac{\partial ^{2}}{\partial x^{2}}X_{t}^{x}=\sum_{m\geq 1}\int_{\Delta
_{0,t}^{m}}\frac{\partial }{\partial x}%
[Db(u,X_{u_{1}}^{x})...Db(u,X_{u_{m}}^{x})]du_{m}...du_{1}.
\end{equation*}%
Further, the Leibniz and chain rule yield%
\begin{eqnarray*}
&&\frac{\partial }{\partial x}%
[Db(u_{1},X_{u_{1}}^{x})...Db(u_{m},X_{u_{m}}^{x})] \\
&=&\sum_{r=1}^{m}Db(u_{1},X_{u_{1}}^{x})...D^{2}b(u_{r},X_{u_{r}}^{x})\frac{%
\partial }{\partial x}X_{u_{r}}^{x}...Db(u_{m},X_{u_{m}}^{x}),
\end{eqnarray*}%
where $D^{2}b(u,\cdot )=D(Db(u,\cdot )):\mathbb{R}^{d}\longrightarrow L(%
\mathbb{R}^{d},L(\mathbb{R}^{d},\mathbb{R}^{d}))$.

Therefore (\ref{FirstOrder}) entails%
\begin{eqnarray}
\frac{\partial ^{2}}{\partial x^{2}}X_{t}^{x} &=&\sum_{m_{1}\geq
1}\int_{\Delta
_{0,t}^{m_{1}}}%
\sum_{r=1}^{m_{1}}Db(u_{1},X_{u_{1}}^{x})...D^{2}b(u_{r},X_{u_{r}}^{x}) 
\notag \\
&&\times \left( I_{d}+\sum_{m_{2}\geq 1}\int_{\Delta
_{0,u_{r}}^{m_{2}}}Db(v_{1},X_{v_{1}}^{x})...Db(v_{m_{2}},X_{v_{m_{2}}}^{x})dv_{m_{2}}...dv_{1}\right) 
\notag \\
&&\times
Db(u_{r+1},X_{u_{r+1}}^{x})...Db(u_{m_{1}},X_{u_{m_{1}}}^{x})du_{m_{1}}...du_{1}
\notag \\
&=&\sum_{m_{1}\geq 1}\sum_{r=1}^{m_{1}}\int_{\Delta
_{0,t}^{m_{1}}}Db(u_{1},X_{u_{1}}^{x})...D^{2}b(u_{r},X_{u_{r}}^{x})...Db(u_{m_{1}},X_{u_{m_{1}}}^{x})du_{m_{1}}...du_{1}
\notag \\
&&+\sum_{m_{1}\geq 1}\sum_{r=1}^{m_{1}}\sum_{m_{2}\geq 1}\int_{\Delta
_{0,t}^{m_{1}}}\int_{\Delta
_{0,u_{r}}^{m_{2}}}Db(u_{1},X_{u_{1}}^{x})...D^{2}b(u_{r},X_{u_{r}}^{x}) 
\notag \\
&&\times
Db(v_{1},X_{v_{1}}^{x})...Db(v_{m_{2}}X_{v_{m_{2}}}^{x})Db(u_{r+1},X_{u_{r+1}}^{x})...Db(u_{m_{1}},X_{u_{m_{1}}}^{x})
\notag \\
&&dv_{m_{2}}...dv_{1}du_{m_{1}}...du_{1}  \notag \\
&=&:I_{1}+I_{2}.  \label{SecondOrder}
\end{eqnarray}

In the next step, we wish to employ Lemma \ref{OrderDerivatives} (in
connection with shuffling in Section \ref{VI_shuffles}) to the term $I_{2}$
in (\ref{SecondOrder}) and get that%
\begin{equation}
I_{2}=\sum_{m_{1}\geq 1}\sum_{r=1}^{m_{1}}\sum_{m_{2}\geq 1}\int_{\Delta
_{0,t}^{m_{1}+m_{2}}}\mathcal{H}%
_{m_{1}+m_{2}}^{X}(u)du_{m_{1}+m_{2}}...du_{1}  \label{l2}
\end{equation}%
for $u=(u_{1},...,u_{m_{1}+m_{2}}),$ where the integrand $\mathcal{H}%
_{m_{1}+m_{2}}^{X}(u)\in \mathbb{R}^{d}\otimes \mathbb{R}^{d}\otimes \mathbb{%
R}^{d}$ has entries given by sums of at most $C(d)^{m_{1}+m_{2}}$ terms,
which are products of length $m_{1}+m_{2}$ of functions being elements of
the set%
\begin{equation*}
\left\{ \frac{\partial ^{\gamma ^{(1)}+...+\gamma ^{(d)}}}{\partial ^{\gamma
^{(1)}}x_{1}...\partial ^{\gamma ^{(d)}}x_{d}}b^{(r)}(u,X_{u}^{x}),\text{ }%
r=1,...,d,\text{ }\gamma ^{(1)}+...+\gamma ^{(d)}\leq 2,\text{ }\gamma
^{(l)}\in \mathbb{N}_{0},\text{ }l=1,...,d\right\} .
\end{equation*}%
Here it is important to mention that second order derivatives of functions
in those products of functions on $\Delta _{0,t}^{m_{1}+m_{2}}$ in (\ref{l2}%
) only occur once. Hence the total order of derivatives $\left\vert \alpha
\right\vert $ of those products of functions in connection with Lemma \ref%
{OrderDerivatives} in the Appendix is%
\begin{equation}
\left\vert \alpha \right\vert =m_{1}+m_{2}+1.
\end{equation}%
Let us now choose $p,c,r\in \lbrack 1,\infty )$ such that $cp=2^{q}$ for
some integer $q$ and $\frac{1}{r}+\frac{1}{c}=1.$ Then we can employ H\"{o}%
lder's inequality and Girsanov's theorem (see Theorem \ref{VI_girsanov})
combined with Lemma \ref{novikov} and obtain that%
\begin{eqnarray}
&&E[\left\Vert I_{2}\right\Vert ^{p}]  \notag \\
&\leq &C(\left\Vert b\right\Vert _{L_{\overline{p}}^{\overline{q}}})\left(
\sum_{m_{1}\geq 1}\sum_{r=1}^{m_{1}}\sum_{m_{2}\geq 1}\sum_{i\in
I}\left\Vert \int_{\Delta _{0,t}^{m_{1}+m_{2}}}\mathcal{H}_{i}^{\mathbb{B}%
}(u)du_{m_{1}+m_{2}}...du_{1}\right\Vert _{L^{2^{q}}(\Omega ;\mathbb{R}%
)}\right) ^{p},  \label{Lp}
\end{eqnarray}%
where $C:[0,\infty )\longrightarrow \lbrack 0,\infty )$ is a continuous
function depending on $p,\overline{p}$ and $\overline{q}$. Here $\#I\leq
K^{m_{1}+m_{2}}$ for a constant $K=K(d)$ and the integrands $\mathcal{H}%
_{i}^{\mathbb{B}}(u)$ are of the form 
\begin{equation*}
\mathcal{H}_{i}^{B^{H}}(u)=\prod_{l=1}^{m_{1}+m_{2}}h_{l}(u_{l}),h_{l}\in
\Lambda ,l=1,...,m_{1}+m_{2}
\end{equation*}%
where 
\begin{equation*}
\Lambda :=\left\{ 
\begin{array}{c}
\frac{\partial ^{\gamma ^{(1)}+...+\gamma ^{(d)}}}{\partial ^{\gamma
^{(1)}}x_{1}...\partial ^{\gamma ^{(d)}}x_{d}}b^{(r)}(u,x+\mathbb{B}_{u}),%
\text{ }r=1,...,d, \\ 
\gamma ^{(1)}+...+\gamma ^{(d)}\leq 2,\text{ }\gamma ^{(l)}\in \mathbb{N}%
_{0},\text{ }l=1,...,d%
\end{array}%
\right\} .
\end{equation*}%
As above we observe that functions with second order derivatives only occur
once in those products.

Let 
\begin{equation*}
J=\left( \int_{\Delta _{0,t}^{m_{1}+m_{2}}}\mathcal{H}_{i}^{\mathbb{B}%
}(u)du_{m_{1}+m_{2}}...du_{1}\right) ^{2^{q}}.
\end{equation*}%
By using shuffling (see Section \ref{VI_shuffles}) once more, successively,
we find that $J$ has a reprsentation as a sum of, at most of length $%
K(q)^{m_{1}+m_{2}}$ with summands of the form%
\begin{equation}
\int_{\Delta
_{0,t}^{2^{q}(m_{1}+m_{2})}}%
\prod_{l=1}^{2^{q}(m_{1}+m_{2})}f_{l}(u_{l})du_{2^{q}(m_{1}+m_{2})}...du_{1},
\label{f}
\end{equation}%
where $f_{l}\in \Lambda $ for all $l$.

Note that the number of factors $f_{l}$ in the above product, which have a
second order derivative, is exactly $2^{q}$. Hence the total order of the
derivatives in (\ref{f}) in connection with Lemma \ref{OrderDerivatives}
(where one in that Lemma formally replaces $X_{u}^{x}$ by $x+\mathbb{B}_{u}$
in the corresponding terms) is 
\begin{equation}
\left\vert \alpha \right\vert =2^{q}(m_{1}+m_{2}+1).  \label{alpha2}
\end{equation}

We now aim at using Theorem \ref{mainestimate2} for $m=2^{q}(m_{1}+m_{2})$
and $\varepsilon _{j}=0$ and find that%
\begin{eqnarray*}
&&\left\vert E\left[ \int_{\Delta
_{0,t}^{2^{q}(m_{1}+m_{2})}}%
\prod_{l=1}^{2^{q}(m_{1}+m_{2})}f_{l}(u_{l})du_{2^{q}(m_{1}+m_{2})}...du_{1}%
\right] \right\vert  \\
&\leq &C^{m_{1}+m_{2}}(\left\Vert b\right\Vert _{L_{\infty
}^{1}})^{2^{q}(m_{1}+m_{2})} \\
&&\times \frac{((2(2^{q}(m_{1}+m_{2}+1))!)^{1/4}}{\Gamma
(-H_{r}(2d2^{q}(m_{1}+m_{2})+42^{q}(m_{1}+m_{2}+1))+22^{q}(m_{1}+m_{2}))^{1/2}%
}
\end{eqnarray*}%
for a constant $C$ depending on $H_{r},T,d$ and $q$.

Therefore the latter combined with (\ref{Lp}) implies that%
\begin{eqnarray*}
&&E[\left\Vert I_{2}\right\Vert ^{p}] \\
&\leq &C(\left\Vert b\right\Vert _{L_{\overline{p}}^{\overline{q}}})\left(
\sum_{m_{1}\geq 1}\sum_{m_{2}\geq 1}K^{m_{1}+m_{2}}(\left\Vert b\right\Vert
_{L_{\infty }^{1}})^{2^{q}(m_{1}+m_{2})}\right.  \\
&&\left. \times \frac{((2(2^{q}(m_{1}+m_{2}+1))!)^{1/4}}{\Gamma
(-H_{r}(2d2^{q}(m_{1}+m_{2})+42^{q}(m_{1}+m_{2}+1))+22^{q}(m_{1}+m_{2}))^{1/2}%
})^{1/2^{q}}\right) ^{p}
\end{eqnarray*}%
for a constant $K$ depending on $H_{r},$ $T,$ $d,$ $p$ and $q$.

Since $\frac{1}{2(d+3)}\leq \frac{1}{2(d+2\frac{m_{1}+m_{2}+1}{m_{1}+m_{2}})}
$ for $m_{1},$ $m_{2}\geq 1$, one concludes that the above sum converges,
whenever $H_{r}<\frac{1}{2(d+3)}$.

Further, one gets an estimate for $E[\left\Vert I_{1}\right\Vert ^{p}]$ by
using similar reasonings as above. In summary, we obtain the proof for $k=2$.

We now give an explanation how we can generalize the previous line of
reasoning to the case $k\geq 2$: In this case, we we have that%
\begin{equation}
\frac{\partial ^{k}}{\partial x^{k}}X_{t}^{x}=I_{1}+...+I_{2^{k-1}},
\label{Ik}
\end{equation}%
where each $I_{i},$ $i=1,...,2^{k-1}$ is a sum of iterated integrals over
simplices of the form $\Delta _{0,u}^{m_{j}},$ $0<u<t,$ $j=1,...,k$ with
integrands having at most one product factor $D^{k}b$, while the other
factors are of the form $D^{j}b,j\leq k-1$.

In the following we need the following notation: For multi-indices $%
m.=(m_{1},...,m_{k})$ and $r:=(r_{1},...,r_{k-1})$, set%
\begin{equation*}
m_{j}^{-}:=\sum_{i=1}^{j}m_{i}\text{ }
\end{equation*}%
and%
\begin{equation*}
\sum_{\substack{ m\geq 1 \\ r_{l}\leq m_{l}^{-} \\ l=1,...,k-1}}%
:=\sum_{m_{1}\geq 1}\sum_{r_{1}=1}^{m_{1}}\sum_{m_{2}\geq
1}\sum_{r_{2}=1}^{m_{2}^{-}}...\sum_{r_{k-1}=1}^{m_{k-1}^{-}}\sum_{m_{k}\geq
1}.
\end{equation*}%
In what follows, without loss of generality we confine ourselves to deriving
an estimate with respect to the summand $I_{2^{k-1}}$ in (\ref{Ik}). Just as
in the case $k=2,$ we obtain by employing Lemma \ref{OrderDerivatives} (in
connection with shuffling in Section \ref{VI_shuffles}) that 
\begin{equation}
I_{2^{k-1}}=\sum_{\substack{ m\geq 1 \\ r_{l}\leq m_{l}^{-} \\ l=1,...,k-1}}%
\int_{\Delta _{0,t}^{m_{1}+...+m_{k}}}\mathcal{H}%
_{m_{1}+...+m_{k}}^{X}(u)du_{m_{1}+m_{2}}...du_{1}
\end{equation}%
for $u=(u_{m_{1}+...+m_{k}},...,u_{1}),$ where the integrand $\mathcal{H}%
_{m_{1}+...+m_{k}}^{X}(u)\in \otimes _{j=1}^{k+1}\mathbb{R}^{d}$ has
entries, which are given by sums of at most $C(d)^{m_{1}+...+m_{k}}$ terms.
Those terms are given by products of length $m_{1}+...m_{k}$ of functions,
which are elements of the set%
\begin{equation*}
\left\{ 
\begin{array}{c}
\frac{\partial ^{\gamma ^{(1)}+...+\gamma ^{(d)}}}{\partial ^{\gamma
^{(1)}}x_{1}...\partial ^{\gamma ^{(d)}}x_{d}}b^{(r)}(u,X_{u}^{x}),r=1,...,d,
\\ 
\gamma ^{(1)}+...+\gamma ^{(d)}\leq k,\gamma ^{(l)}\in \mathbb{N}%
_{0},l=1,...,d%
\end{array}%
\right\} .
\end{equation*}%
Exactly as in the case $k=2$ we can invoke Lemma \ref{OrderDerivatives} in
the Appendix and get that the total order of derivatives $\left\vert \alpha
\right\vert $ of those products of functions is 
\begin{equation}
\left\vert \alpha \right\vert =m_{1}+...+m_{k}+k-1.
\end{equation}%
Then we can adopt the line of reasoning as before and choose $p,c,r\in
\lbrack 1,\infty )$ such that $cp=2^{q}$ for some integer $q$ and $\frac{1}{r%
}+\frac{1}{c}=1$ and find by applying H\"{o}lder's inequality and Girsanov's
theorem (see Theorem \ref{VI_girsanov}) combined with Lemma \ref{novikov}
that%
\begin{eqnarray}
&&E[\left\Vert I_{2^{k-1}}\right\Vert ^{p}]  \notag \\
&\leq &C(\left\Vert b\right\Vert _{L_{\overline{p}}^{\overline{q}}})\left(
\sum_{\substack{ m\geq 1 \\ r_{l}\leq m_{l}^{-} \\ l=1,...,k-1}}\sum_{i\in
I}\left\Vert \int_{\Delta _{0,t}^{m_{1}+m_{2}}}\mathcal{H}_{i}^{\mathbb{B}%
}(u)du_{m_{1}+...+m_{k}}...du_{1}\right\Vert _{L^{2^{q}}(\Omega ;\mathbb{R}%
)}\right) ^{p},  \label{Lp2}
\end{eqnarray}%
where $C:[0,\infty )\longrightarrow \lbrack 0,\infty )$ is a continuous
function depending on $p,\overline{p}$ and $\overline{q}$. Here $\#I\leq
K^{m_{1}+...+m_{k}}$ for a constant $K=K(d)$ and the integrands $\mathcal{H}%
_{i}^{\mathbb{B}}(u)$ take the form 
\begin{equation*}
\mathcal{H}_{i}^{\mathbb{B}}(u)=\prod_{l=1}^{m_{1}+...+m_{k}}h_{l}(u_{l}),%
\text{ }h_{l}\in \Lambda ,\text{ }l=1,...,m_{1}+...+m_{k},
\end{equation*}%
where 
\begin{equation*}
\Lambda :=\left\{ 
\begin{array}{c}
\frac{\partial ^{\gamma ^{(1)}+...+\gamma ^{(d)}}}{\partial ^{\gamma
^{(1)}}x_{1}...\partial ^{\gamma ^{(d)}}x_{d}}b^{(r)}(u,x+\mathbb{B}_{u}),%
\text{ }r=1,...,d, \\ 
\gamma ^{(1)}+...+\gamma ^{(d)}\leq k,\text{ }\gamma ^{(l)}\in \mathbb{N}%
_{0},\text{ }l=1,...,d%
\end{array}%
\right\} .
\end{equation*}%
Define 
\begin{equation*}
J=\left( \int_{\Delta _{0,t}^{m_{1}+...+m_{k}}}\mathcal{H}_{i}^{\mathbb{B}%
}(u)du_{m_{1}+...+m_{k}}...du_{1}\right) ^{2^{q}}.
\end{equation*}%
Once more, repeated shuffling (see Section \ref{VI_shuffles}) shows that $J$
can be represented as a sum of, at most of length $K(q)^{m_{1}+....m_{k}}$
with summands of the form%
\begin{equation}
\int_{\Delta
_{0,t}^{2^{q}(m_{1}+...+m_{k})}}%
\prod_{l=1}^{2^{q}(m_{1}+...+m_{k})}f_{l}(u_{l})du_{2^{q}(m_{1}+....+m_{k})}...du_{1},
\label{f2}
\end{equation}%
where $f_{l}\in \Lambda $ for all $l$.

By applying Lemma \ref{OrderDerivatives} again (where one in that Lemma
formally replaces $X_{u}^{x}$ by $x+B_{u}^{H}$ in the corresponding
expressions) we obtain that the total order of the derivatives in the
products of functions in (\ref{f2}) is given by%
\begin{equation}
\left\vert \alpha \right\vert =2^{q}(m_{1}+...+m_{k}+k-1).
\end{equation}

Then Proposition \ref{mainestimate2} for $m=2^{q}(m_{1}+...+m_{k})$ and $%
\varepsilon _{j}=0$ yields that%
\begin{eqnarray*}
&&\left\vert E\left[ \int_{\Delta
_{0,t}^{2^{q}(m_{1}+...+m_{k})}}%
\prod_{l=1}^{2^{q}(m_{1}+...+m_{k})}f_{l}(u_{l})du_{2^{q}(m_{1}+...+m_{k})}...du_{1}%
\right] \right\vert  \\
&\leq &C^{m_{1}+...+m_{k}}(\left\Vert b\right\Vert _{L_{\infty
}^{1}})^{2^{q}(m_{1}+...+m_{k})} \\
&&\times \frac{((2(2^{q}(m_{1}+...+m_{k}+k-1))!)^{1/4}}{\Gamma
(-H_{r}(2d2^{q}(m_{1}+...+m_{k})+42^{q}(m_{1}+...+m_{k}+k-1))+22^{q}(m_{1}+...+m_{k}))^{1/2}%
}
\end{eqnarray*}%
for a constant $C$ depending on $H_{r},$ $T,$ $d$ and $q$.

Thus we can conclude from (\ref{Lp2}) that%
\begin{eqnarray*}
&&E[\left\Vert I_{2^{k-1}}\right\Vert ^{p}] \\
&\leq &C(\left\Vert b\right\Vert _{L_{\overline{p}}^{\overline{q}}})\left(
\sum_{m_{1}\geq 1}...\sum_{m_{k}\geq 1}K^{m_{1}+...+m_{k}}(\left\Vert
b\right\Vert _{L_{\infty }^{1}})^{2^{q}(m_{1}+...+m_{k})}\right.  \\
&&\left. \times \frac{((2(2^{q}(m_{1}+...+m_{k}+k-1))!)^{1/4}}{\Gamma
(-H_{r}(2d2^{q}(m_{1}+...+m_{k})+42^{q}(m_{1}+...+m_{k}+k-1))+22^{q}(m_{1}+...+m_{k}))^{1/2}%
})^{1/2^{q}}\right) ^{p} \\
&\leq &C(\left\Vert b\right\Vert _{L_{\overline{p}}^{\overline{q}}}\left(
\sum_{m\geq 1}\sum_{\substack{ l_{1},...,l_{k}\geq 0: \\ l_{1}+...+l_{k}=m}}%
K^{m}(\left\Vert b\right\Vert _{L_{\infty }^{1}})^{2^{q}m}\right.  \\
&&\left. \times \frac{((2(2^{q}(m+k-1))!)^{1/4}}{\Gamma
(-H_{r}(2d2^{q}m+42^{q}(m+k-1))+22^{q}m)^{1/2}})^{1/2^{q}}\right) ^{p}
\end{eqnarray*}%
for a constant $K$ depending on $H_{r},$ $T,$ $d,$ $p$ and $q$.

Since $H_{r}<$ $\frac{1}{2(d-1+2k)}$ by assumption, we see that the above
sum converges. Hence the proof follows.
\end{proof}

\bigskip

\bigskip The following is the main result of this Section and shows that the
regularizing fractional Brownian motion $\mathbb{B}_{\cdot }^{H}$ "produces"
an infinitely continuously differentiable stochastic flow $x\mapsto X_{t}^{x}
$, when $b$ merely belongs to $\mathcal{L}_{2,p}^{q}$ for any $p,q\in
(2,\infty ]$.

\begin{thm}
Assume that the conditions for $\lambda =\{\lambda _{i}\}_{i=1}^{\infty }$
with respect to $\mathbb{B}_{\cdot }^{H}$ in Theorem \ref{VI_mainthm} hold.
Suppose that $b\in \mathcal{L}_{2,p}^{q}$, $p,q\in (2,\infty ]$. Let $%
U\subset \R^{d}$ be an open and bounded set and $X_{t},$ $0\leq t\leq T$ the
solution of \eqref{maineq}. Then for all $t\in \lbrack 0,T]$ we have that 
\begin{equation*}
X_{t}^{\cdot }\in \bigcap_{k\geq 1}\bigcap_{\alpha >2}L^{2}(\Omega
,W^{k,\alpha }(U)).
\end{equation*}
\end{thm}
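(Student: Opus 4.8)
The plan is to transfer the uniform moment bounds on the spatial derivatives of the \emph{smooth} approximating flows, furnished by Theorem~\ref{VI_derivative}, to the limiting flow by a weak compactness argument in a reflexive Sobolev--Bochner space. First I would fix $t\in[0,T]$, an integer $k\geq 1$ and a real number $\alpha>2$, and choose $r\geq 1$ with $H_{r}<\frac{1}{d-1+2k}$, which is possible because $H_{n}\searrow 0$. Let $b_{n}\in C_{c}^{\infty}((0,T)\times\R^{d})$ be a sequence of mollifications of $b$ with $b_{n}\to b$ a.e.\ and $\sup_{n}\Vert b_{n}\Vert_{\mathcal{L}_{2,p}^{q}}<\infty$, and let $X_{t}^{n,x}$ denote the corresponding strong solutions. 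Applying Theorem~\ref{VI_derivative} with $p=\alpha$ to each $b_{n}$, and using that the constant there is a \emph{continuous} function of $(\Vert b_{n}\Vert_{L_{\overline{p}}^{\overline{q}}},\Vert b_{n}\Vert_{L_{\infty}^{1}})$, both quantities being controlled by $\sup_{n}\Vert b_{n}\Vert_{\mathcal{L}_{2,p}^{q}}$, I obtain
\begin{equation*}
\sup_{n\geq 1}\sup_{x\in\R^{d}}E\left[\left\Vert\frac{\partial^{j}}{\partial x^{j}}X_{t}^{n,x}\right\Vert^{\alpha}\right]\leq C_{j,\alpha}<\infty,\quad j=1,\dots,k,
\end{equation*}
while the zeroth-order bound $\sup_{n}\sup_{x\in U}E[\vert X_{t}^{n,x}\vert^{\alpha}]<\infty$ follows from the Girsanov estimates (Proposition~\ref{novikov}) together with Lemma~\ref{interlemma}, the bounds being uniform in $x$ by the translation structure of $\mathbb{B}_{\cdot}^{H}$.

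Next I would integrate over the bounded set $U$. By Tonelli's theorem, summing over the finitely many multi-indices $\beta$ with $\vert\beta\vert\leq k$,
\begin{equation*}
\sup_{n\geq 1}E\left[\Vert X_{t}^{n,\cdot}\Vert_{W^{k,\alpha}(U)}^{\alpha}\right]=\sup_{n\geq 1}\sum_{\vert\beta\vert\leq k}\int_{U}E\left[\vert\partial^{\beta}X_{t}^{n,x}\vert^{\alpha}\right]dx\leq |U|\sum_{\vert\beta\vert\leq k}C_{\vert\beta\vert,\alpha}<\infty.
\end{equation*}
Since $\Omega$ has finite measure and $\alpha>2$, Hölder's inequality then gives $\sup_{n}E[\Vert X_{t}^{n,\cdot}\Vert_{W^{k,\alpha}(U)}^{2}]<\infty$, so $\{X_{t}^{n,\cdot}\}_{n}$ is bounded in the Bochner space $L^{2}(\Omega;W^{k,\alpha}(U))$. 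Because $1<\alpha<\infty$, the space $W^{k,\alpha}(U)$ is reflexive (it embeds isometrically as a closed subspace of $\prod_{\vert\beta\vert\leq k}L^{\alpha}(U)$), hence so is $L^{2}(\Omega;W^{k,\alpha}(U))$, and a subsequence $X_{t}^{n_{j},\cdot}$ converges weakly to some $Y$ there.

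Finally I would identify $Y$ with the flow $X_{t}^{\cdot}$. By Corollary~\ref{VI_L2conv} (with $\varphi=\mathrm{id}$) one has $X_{t}^{n,x}\to X_{t}^{x}$ in $L^{2}(\Omega)$ for every fixed $x$; combined with the uniform moment bound this yields, by dominated convergence over $U$, that $X_{t}^{n,\cdot}\to X_{t}^{\cdot}$ strongly in $L^{2}(\Omega\times U)$. The continuous embedding $W^{k,\alpha}(U)\hookrightarrow L^{2}(U)$ makes weak convergence in $L^{2}(\Omega;W^{k,\alpha}(U))$ imply weak convergence in $L^{2}(\Omega\times U)$, so uniqueness of weak limits forces $Y=X_{t}^{\cdot}$. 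By weak lower semicontinuity of the norm,
\begin{equation*}
E\left[\Vert X_{t}^{\cdot}\Vert_{W^{k,\alpha}(U)}^{2}\right]\leq\liminf_{j\to\infty}E\left[\Vert X_{t}^{n_{j},\cdot}\Vert_{W^{k,\alpha}(U)}^{2}\right]<\infty,
\end{equation*}
so $X_{t}^{\cdot}\in L^{2}(\Omega;W^{k,\alpha}(U))$; intersecting over the arbitrary $k\geq 1$ and $\alpha>2$ finishes the proof. I expect the identification step to be the main obstacle: the a priori estimates control only the \emph{classical} derivatives of the smooth flows $X_{t}^{n,\cdot}$, whereas $X_{t}^{\cdot}$ is produced merely as an $L^{2}$-limit, so it is the weak compactness in the reflexive Sobolev--Bochner space that endows the limit with \emph{weak} derivatives equal to the weak limits of $\partial^{\beta}X_{t}^{n,\cdot}$, and one must carefully match this functional-analytic convergence with the pointwise-in-$x$ convergence in $L^{2}(\Omega)$ to conclude $Y=X_{t}^{\cdot}$.
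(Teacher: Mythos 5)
Your proof is correct and follows essentially the same route as the paper: uniform moment bounds on $\frac{\partial^{j}}{\partial x^{j}}X_{t}^{n,x}$ from Theorem \ref{VI_derivative} give boundedness of $\{X_{t}^{n,\cdot}\}_{n}$ in the reflexive space $L^{2}(\Omega,W^{k,\alpha}(U))$, weak compactness yields a limit $Y$, and the strong $L^{2}(\Omega)$ convergence $X_{t}^{n,x}\to X_{t}^{x}$ of Corollary \ref{VI_L2conv} identifies $Y=X_{t}^{\cdot}$. The only minor deviations are cosmetic: you identify the limit through strong convergence in $L^{2}(\Omega\times U)$ and the embedding $W^{k,\alpha}(U)\hookrightarrow L^{2}(U)$, whereas the paper tests against $1_{A}$ and integrates by parts against $D^{\gamma}\varphi$ (after first re-running the Malliavin compactness argument for the pairings $\langle X_{t}^{n,\cdot},\varphi\rangle$, a step your dominated-convergence argument makes unnecessary), and since Theorem \ref{VI_derivative} is stated for integer exponents $p$ you should pick an integer $p\geq\alpha$ and use Jensen's inequality rather than setting $p=\alpha$ directly.
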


\begin{proof}
First, we approximate the irregular drift vector field $b$ by a sequence of
functions $b_{n}:[0,T]\times \R^{d}\rightarrow \R^{d}$, $n\geq 0$ in $%
C_{c}^{\infty }((0,T)\times \R^{d},\R^{d})$ in the sense of \eqref{VI_Xn}.
Let $X^{n,x}=\{X_{t}^{n,x},t\in \lbrack 0,T]\}$ be the solution to %
\eqref{maineq} with initial value $x\in \R^{d}$ associated with $b_{n}$.

We find that for any test function $\varphi \in C_{c}^{\infty }(U,\R^{d})$
and fixed $t\in \lbrack 0,T]$ the set of random variables 
\begin{equation*}
\langle X_{t}^{n,\cdot },\varphi \rangle :=\int_{U}\langle
X_{t}^{n,x},\varphi (x)\rangle _{\R^{d}}dx,\quad n\geq 0
\end{equation*}%
is relatively compact in $L^{2}(\Omega )$. In proving this, we want to apply
the compactness criterion Theorem \ref{compinf} in terms of the Malliavin
derivative in the Appendix. Using the sequence $\{\delta
_{i}\}_{i=1}^{\infty }$ in Proposition \ref{Holderintegral}, we get that

\begin{align*}
\sum_{i=1}^{\infty }\frac{1}{\delta _{i}^{2}}E[\int_{0}^{T}|D_{s}^{i,(j)}%
\langle X_{t}^{n,\cdot },\varphi \rangle |^{2}ds]=& \sum_{l=1}^{d}\left(
\int_{U}E[D_{s}^{i,(j)}X_{t}^{n,x,(l)}]\varphi _{l}(x)dx\right) ^{2} \\
\leq & d\Vert \varphi \Vert _{L^{2}(\R^{d},\R^{d})}^{2}\lambda \{\mbox{supp }%
(\varphi )\}\sup_{x\in U}\sum_{i=1}^{\infty }\frac{1}{\delta _{i}^{2}}E\left[
\int_{0}^{T}\Vert D_{s}^{i}X_{t}^{n,x}\Vert ^{2}ds\right] ,
\end{align*}%
where $D^{i,(j)}$ denotes the Malliavin derivative in the direction of $%
W^{i,(j)}$ where $W^{i}$ is the $d$-dimensional standard Brownian motion
defining $B^{H_{i},i}$ and $W^{i,(j)}$ its $j$-th component, $\lambda $ the
Lebesgue measure on $\R^{d}$, $\mbox{supp }(\varphi )$ the support of $%
\varphi $ and $\Vert \cdot \Vert $ a matrix norm. So it follows from the
estimates in Proposition \ref{Holderintegral} that 
\begin{equation*}
\sup_{n\geq 0}\sum_{i=1}^{\infty }\frac{1}{\delta _{i}^{2}}\Vert D_{\cdot
}^{i}\langle X_{t}^{n,\cdot },\varphi \rangle \Vert _{L^{2}(\Omega \times
\lbrack 0,T])}^{2}\leq C\Vert \varphi \Vert _{L^{2}(\R^{d},\R%
^{d})}^{2}\lambda \{\mbox{supp }(\varphi )\}.
\end{equation*}%
Similarly, we get that 
\begin{equation*}
\sup_{n\geq 0}\sum_{i=1}^{\infty }\frac{1}{(1-2^{-2(\beta _{i}-\alpha
_{i})})\delta _{i}^{2}}\int_{0}^{T}\int_{0}^{T}\frac{E[\Vert D_{s^{\prime
}}^{i}\langle X_{t}^{n,\cdot },\varphi \rangle -D_{s}^{i}\langle
X_{t}^{n,\cdot },\varphi \rangle \Vert ^{2}]}{|s^{\prime }-s|^{1+2\beta _{i}}%
}<\infty 
\end{equation*}%
for some sequences $\{\alpha _{i}\}_{i=1}^{\infty }$, $\{\beta
_{i}\}_{i=1}^{\infty }$ as in Proposition \ref{Holderintegral}. Hence $%
\langle X_{t}^{n,\cdot },\varphi \rangle $, $n\geq 0$ is relatively compact
in $L^{2}(\Omega )$. Denote by $Y_{t}(\varphi )$ its limit after taking (if
necessary) a subsequence.

By adopting the same reasoning as in Lemma \ref{VI_weakconv} one proves that 
\begin{equation*}
\langle X_{t}^{n,\cdot },\varphi \rangle \xrightarrow{n \to \infty}\langle
X_{t}^{\cdot },\varphi \rangle 
\end{equation*}%
weakly in $L^{2}(\Omega )$. Then by uniqueness of the limit we see that 
\begin{equation*}
\langle X_{t}^{n,\cdot },\varphi \rangle \underset{n\longrightarrow \infty }{%
\longrightarrow }Y_{t}(\varphi )=\langle X_{t}^{\cdot },\varphi \rangle 
\end{equation*}%
in $L^{2}(\Omega )$ for all $t$ (without using a subsequence).

We observe that $X_{t}^{n,\cdot },n\geq 0$ is bounded in the Sobolev norm $%
L^{2}(\Omega ,W^{k,\alpha }(U))$ for each $n\geq 0$ and $k\geq 1$. Indeed,
from Proposition \ref{VI_derivative} it follows that  
\begin{align*}
\sup_{n\geq 0}\Vert X_{t}^{n,\cdot }\Vert _{L^{2}(\Omega ,W^{k,\alpha
}(U))}^{2}=& \sup_{n\geq 0}\sum_{i=0}^{k}E\left[ \Vert \frac{\partial ^{i}}{%
\partial x^{i}}X_{t}^{n,\cdot }\Vert _{L^{\alpha }(U)}^{2}\right]  \\
\leq & \sum_{i=0}^{k}\int_{U}\sup_{n\geq 0}E\left[ \Vert \frac{\partial ^{i}%
}{\partial x^{i}}X_{t}^{n,x}\Vert ^{\alpha }\right] ^{\frac{2}{\alpha }}dx \\
<& \infty .
\end{align*}

The space $L^{2}(\Omega ,W^{k,\alpha }(U))$, $\alpha \in (1,\infty )$ is
reflexive. So the set $\{X_{t}^{n,x}\}_{n\geq 0}$ is (relatively) weakly
compact in $L^{2}(\Omega ,W^{k,\alpha }(U))$ for every $k\geq 1$. Hence,
there exists a subsequence $n(j)$, $j\geq 0$ such that 
\begin{equation*}
X_{t}^{n(j),\cdot }\xrightarrow[j\to \infty]{w}Y\in L^{2}(\Omega
,W^{k,\alpha }(U)).
\end{equation*}

We als know that $X_{t}^{n,x}\rightarrow X_{t}^{x}$ strongly in $%
L^{2}(\Omega )$ for all $t$.

So for all $A\in \mathcal{F}$ and $\varphi \in C_{0}^{\infty }(\R^{d},\R%
^{d}) $ we have for all multi-indices $\gamma $ with $\left\vert \gamma
\right\vert \leq k$ that 
\begin{eqnarray*}
E[1_{A}\langle X_{t}^{\cdot },D^{\gamma }\varphi \rangle ]
&=&\lim_{j\rightarrow \infty }E[1_{A}\langle X_{t}^{n(j),\cdot },D^{\gamma
}\varphi \rangle ] \\
&=&\lim_{j\rightarrow \infty }(-1)^{\left\vert \gamma \right\vert
}E[1_{A}\langle D^{\gamma }X_{t}^{n(j),\cdot },\varphi \rangle
]=(-1)^{\left\vert \gamma \right\vert }E[1_{A}\langle D^{\gamma }Y,\varphi
\rangle ]
\end{eqnarray*}%
Using the latter, we can conclude that 
\begin{equation*}
X_{t}^{\cdot }\in L^{2}(\Omega ,W^{k,\alpha }(U)),\ \ P-a.s.
\end{equation*}%
Since $k\geq 1$ is arbitrary, the proof follows.
\end{proof}

\appendix

\section{A Compactness Criterion for Subsets of $L^{2}(\Omega )$}

The following result which is originally due to \cite{DPMN92} in the finite
dimensional case and which can be e.g. found in \cite{B10}, provides a
compactness criterion of square integrable functionals of cylindrical Wiener
processes on a Hilbert space:

\bigskip

\begin{thm}
\label{General} Let $B_{t},0\leq t\leq T$ be a cylindrical Wiener process on
a separable Hilbert space $H$ with respect to a complete probability space $%
(\Omega ,\mathcal{F},\mu )$, where $\mathcal{F}$ is generated by $B_{t}$, $%
0\leq t\leq T$. Further, let $\mathcal{L}_{HS}(H,\mathbb{R})$ be the space
of Hilbert-Schmidt operators from $H$ to $\mathbb{R}$ and let $D: \mathbb{D}%
^{1,2}\longrightarrow L^{2}(\Omega ;L^{2}([0,T])\otimes \mathcal{L}_{HS}(H,%
\mathbb{R}))$ be the Malliavin derivative in the direction of $B_{t}$, $%
0\leq t\leq T$, where $\mathbb{D}^{1,2}$ is the space of Malliavin
differentiable random variables in $L^{2}(\Omega )$.

Suppose that $C$ is a self-adjoint compact operator on $L^{2}([0,T])\otimes 
\mathcal{L}_{HS}(H,\mathbb{R})$ with dense image. Then for any $c>0$ the set 
\begin{align*}
\mathcal{G}=\left\{ G\in \mathbb{D}^{1,2}:\left\Vert G\right\Vert
_{L^{2}(\Omega )}+\left\Vert C^{-1}DG\right\Vert _{L^{2}(\Omega
;L^{2}([0,T])\otimes \mathcal{L}_{HS}(H,\mathbb{R}))}\leq c\right\}
\end{align*}
is relatively compact in $L^{2}(\Omega )$.
\end{thm}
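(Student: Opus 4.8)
The plan is to reduce the statement to a weighted-$\ell^2$ compactness criterion by passing to the Wiener chaos expansion associated with an eigenbasis of $C$. First I would diagonalise $C$: since $C$ is self-adjoint and compact on $\mathcal{K}:=L^2([0,T])\otimes \mathcal{L}_{HS}(H,\mathbb{R})$, there is an orthonormal basis $\{e_j\}_{j\geq 1}$ of $\mathcal{K}$ with $Ce_j=\lambda_j e_j$ and $\lambda_j\to 0$; the assumption that $C$ has dense image forces $\ker C=(\operatorname{ran}C)^{\perp}=\{0\}$, so every $\lambda_j\neq 0$, $C^{-1}$ is densely defined, and $\lambda_j^{-2}\to\infty$. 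Writing $\xi_j:=B(e_j)$, where $B(\cdot)$ is the isonormal Gaussian process over $\mathcal{K}$ extending the cylindrical Wiener process (and $\mathcal{K}$ is identified with the Cameron--Martin space $L^2([0,T];H)$), the $\xi_j$ are i.i.d. standard Gaussians generating $\mathcal{F}$, and the normalised Hermite products $\Phi_a:=\prod_j \tilde H_{a_j}(\xi_j)$, with $a=(a_j)_j$ ranging over finitely supported multi-indices, form an orthonormal basis of $L^2(\Omega)$.

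The key step is the coefficient computation. For $G=\sum_a c_a\Phi_a\in\mathbb{D}^{1,2}$ I would use the derivative rule $\partial_{\xi_j}\tilde H_n=\sqrt{n}\,\tilde H_{n-1}$ together with $D_{e_j}G=\langle DG,e_j\rangle_{\mathcal{K}}$ to obtain $DG=\sum_a c_a\sum_j\sqrt{a_j}\,\Phi_{a-\delta_j}\otimes e_j$, and hence, applying $C^{-1}$ in the $\mathcal{K}$-variable and using orthonormality of both $\{\Phi_b\}$ and $\{e_j\}$,
\[
\|G\|_{L^2(\Omega)}^2=\sum_a c_a^2,\qquad \|C^{-1}DG\|_{L^2(\Omega;\mathcal{K})}^2=\sum_a c_a^2\,w_a,\quad w_a:=\sum_j \lambda_j^{-2}a_j .
\]
Since $\|G\|_{L^2(\Omega)}+\|C^{-1}DG\|_{L^2(\Omega;\mathcal{K})}\leq c$ gives both $\sum_a c_a^2\leq c^2$ and $\sum_a c_a^2 w_a\leq c^2$, the set $\mathcal{G}$ is contained in the weighted ball $\{(c_a):\sum_a c_a^2\leq c^2,\ \sum_a c_a^2 w_a\leq c^2\}\subset\ell^2$, and it suffices to prove this ball is relatively compact.

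For that I would invoke the elementary fact that a norm-bounded subset $K\subset\ell^2$ with $\sup_{(c_a)\in K}\sum_a c_a^2 w_a<\infty$ is relatively compact, provided the sublevel sets $\{a:w_a\leq R\}$ are finite for every $R>0$. This finiteness is exactly where compactness and dense image of $C$ enter: because $\lambda_j^{-2}\to\infty$, only finitely many $j$ satisfy $\lambda_j^{-2}\leq R$, any $a$ with $w_a\leq R$ must be supported on that finite set, and there each $a_j\leq R\lambda_j^2$ is bounded, leaving only finitely many such $a$. Granting this, the tail bound $\sum_{w_a>R}c_a^2\leq R^{-1}\sum_a c_a^2 w_a\leq c^2/R$ holds uniformly over $\mathcal{G}$, while the projection onto the finite coordinate set $\{w_a\leq R\}$ ranges in a bounded subset of a finite-dimensional space; total boundedness of $\mathcal{G}$ follows by letting $R\to\infty$, and completeness of $L^2(\Omega)$ yields relative compactness.

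The main obstacle I anticipate is the rigorous bookkeeping in the middle step: for $G\in\mathbb{D}^{1,2}$ with $C^{-1}DG\in L^2(\Omega;\mathcal{K})$ one must justify both that $DG$ lies in the domain on which $C^{-1}$ (acting only in the $\mathcal{K}$-variable) is defined and that the resulting norm equals the weighted sum $\sum_a c_a^2 w_a$. This amounts to interchanging the $a$- and $j$-summations and reindexing via $a=b+\delta_j$; since all terms are nonnegative the interchange is legitimate by Tonelli, and the identity is obtained cleanly by first truncating the chaos expansion at level $N$, performing the computation on the resulting cylindrical functionals where every sum is finite, and then passing to the limit $N\to\infty$ by monotone convergence.
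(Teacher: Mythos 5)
Your proof is correct, and it is worth noting at the outset that the paper does not actually prove Theorem \ref{General}: the result is quoted from Da Prato--Malliavin--Nualart \cite{DPMN92} (finite-dimensional case) and Bogachev \cite{B10}, so there is no in-paper argument to compare against; your chaos-expansion proof is essentially the standard argument underlying those references. The three pillars all check out: (i) self-adjointness plus dense range gives $\ker C=(\mathrm{ran}\,C)^{\perp}=\{0\}$, so the eigenbasis $\{e_j\}$ exists with $\lambda_j\neq 0$, $\lambda_j\to 0$, hence $\lambda_j^{-2}\to\infty$; (ii) the coefficient identity $\left\Vert C^{-1}DG\right\Vert^2_{L^2(\Omega;\mathcal{K})}=\sum_a c_a^2 w_a$ with $w_a=\sum_j \lambda_j^{-2}a_j$ is the right bookkeeping --- your reindexing $a=b+\delta_j$ producing factors $\sqrt{b_j+1}\,\lambda_j^{-1}$, justified by Tonelli and truncation, is exactly what is needed, and it simultaneously settles the domain question of when $DG$ lies in $\mathrm{Dom}(C^{-1})$; (iii) finiteness of the sublevel sets $\{a: w_a\leq R\}$ uses precisely $\lambda_j^{-2}\to\infty$ together with the finite multiplicity of nonzero eigenvalues, and the Chebyshev tail bound $\sum_{w_a>R}c_a^2\leq c^2/R$ plus boundedness of the finite-dimensional block yields total boundedness, hence relative compactness by completeness of $L^2(\Omega)$. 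One point you could make explicit: the zero multi-index has weight $w_0=0$, so the weighted bound gives no control on the constant chaos; this is harmless since the plain $L^2$ bound controls all coordinates in the finite block, which your phrase ``bounded subset of a finite-dimensional space'' implicitly covers. You also use the hypothesis that $\mathcal{F}$ is generated by $B$ in exactly the right place: it is what makes the Hermite products $\Phi_a$ a complete orthonormal system of $L^2(\Omega)$, without which the expansion step would fail. Finally, note that your argument proves directly the general statement from which the paper's working criterion (Theorem \ref{compinf}) is then derived by the special choice of $C$ diagonal in the Haar-tensor basis, so your route is, if anything, more self-contained than the paper's citation.
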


\bigskip

In this paper we aim at using a special case of the the previous theorem,
which is more suitable for explicit estimations. To this end we need the
following auxiliary result from \cite{DPMN92}. \bigskip

\begin{lem}
\label{Lemma} Denote by $v_{s}$ ,$s\geq 0$ with $v_{0}=1$ the Haar basis of $%
L^{2}([0,1])$. Define for any $0<\alpha <\frac{1}{2}$ the operator $%
A_{\alpha }$ on $L^{2}([0,1])$ by 
\begin{align*}
A_{\alpha }v_{s}=2^{k\alpha }v_{s},\quad \text{if} \quad s=2^{k}+j, \quad
k\geq 0, \quad 0\leq j\leq 2^{k}
\end{align*}
and 
\begin{align*}
A_{\alpha }1=1.
\end{align*}
Then for $\alpha <\beta <\frac{1}{2}$ we have that%
\begin{align*}
\left\Vert A_{\alpha }f\right\Vert _{L^{2}([0,1])}^{2} \leq 2
\left(\left\Vert f\right\Vert _{L^{2}([0,1])}^{2}+\frac{1}{1-2^{-2(\beta
-\alpha )}}\int_{0}^{1}\int_{0}^{1}\frac{\left\vert f(t)-f(u)\right\vert ^{2}%
}{\left\vert t-u\right\vert ^{1+2\beta }}dtdu\right).
\end{align*}
\end{lem}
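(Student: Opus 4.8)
The plan is to diagonalize $A_\alpha$ in the Haar basis and reduce the asserted inequality to a single level-by-level estimate expressed through the Gagliardo seminorm
\[
[f]_\beta^2 := \int_0^1\int_0^1 \frac{|f(t)-f(u)|^2}{|t-u|^{1+2\beta}}\,dt\,du .
\]
Since $\{v_s\}_{s\geq 0}$ is an orthonormal basis, I would first write $f = \langle f,v_0\rangle v_0 + \sum_{k\geq 0}\sum_{0\leq j<2^k} c_{k,j}\,v_{2^k+j}$ with $c_{k,j}=\langle f,v_{2^k+j}\rangle$. Because $A_\alpha$ acts diagonally, fixing $v_0$ and multiplying every level-$k$ coefficient by $2^{k\alpha}$, Parseval's identity gives
\[
\|A_\alpha f\|_{L^2([0,1])}^2 = |\langle f,v_0\rangle|^2 + \sum_{k\geq 0} 2^{2k\alpha}\sum_{0\leq j<2^k}|c_{k,j}|^2 .
\]
Thus it suffices to show, for each level $k$, that $\sum_j |c_{k,j}|^2 \leq 2^{-2k\beta}[f]_\beta^2$; summing the resulting geometric series $\sum_{k\geq 0} 2^{2k\alpha-2k\beta}$, which converges precisely because $\beta>\alpha$, then produces the factor $(1-2^{-2(\beta-\alpha)})^{-1}$.

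The core step is to rewrite each Haar coefficient as a difference of averages of $f$. Denoting by $I_{k,j}=[j2^{-k},(j+1)2^{-k})$ the supporting dyadic interval of $v_{2^k+j}$ and by $I^-_{k,j},I^+_{k,j}$ its left and right halves (each of length $2^{-k-1}$), one has $v_{2^k+j}=2^{k/2}(\mathbf 1_{I^-_{k,j}}-\mathbf 1_{I^+_{k,j}})$ and hence $c_{k,j}=2^{-k/2-1}\bigl(\mathrm{avg}_{I^-_{k,j}}f-\mathrm{avg}_{I^+_{k,j}}f\bigr)$. Writing the difference of averages as a normalized double average of $f(t)-f(u)$ over $I^-_{k,j}\times I^+_{k,j}$ and applying the Cauchy--Schwarz (Jensen) inequality, together with $|I^-_{k,j}||I^+_{k,j}|=2^{-2k-2}$, yields
\[
|c_{k,j}|^2 \leq 2^{k}\int_{I^-_{k,j}}\int_{I^+_{k,j}} |f(t)-f(u)|^2\,dt\,du .
\]
Since $|t-u|\leq 2^{-k}$ whenever $t,u\in I_{k,j}$, inserting the factor $|t-u|^{1+2\beta}|t-u|^{-(1+2\beta)}$ and bounding $|t-u|^{1+2\beta}\leq 2^{-k(1+2\beta)}$ converts this into
\[
|c_{k,j}|^2 \leq 2^{-2k\beta}\int_{I^-_{k,j}}\int_{I^+_{k,j}} \frac{|f(t)-f(u)|^2}{|t-u|^{1+2\beta}}\,dt\,du .
\]

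Finally I would sum over $j$ at fixed $k$: the product sets $I^-_{k,j}\times I^+_{k,j}$ are pairwise disjoint subsets of $[0,1]^2$, so $\sum_j |c_{k,j}|^2 \leq 2^{-2k\beta}[f]_\beta^2$. Combining this with the weighted sum over $k$ and with $|\langle f,v_0\rangle|^2\leq \|f\|_{L^2([0,1])}^2$ (as $\|v_0\|=1$) gives
\[
\|A_\alpha f\|_{L^2([0,1])}^2 \leq \|f\|_{L^2([0,1])}^2 + \frac{1}{1-2^{-2(\beta-\alpha)}}\,[f]_\beta^2 ,
\]
which is sharper than, and hence implies, the stated bound with its extra factor $2$. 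The only delicate point is the dyadic bookkeeping of the powers of $2$ in passing from $c_{k,j}$ to the average difference and in tracking how the scale $2^{-k}$ of $I_{k,j}$ combines with the kernel $|t-u|^{-(1+2\beta)}$; once those exponents are matched so that the net weight is exactly $2^{-2k\beta}$, the remainder is Parseval, Cauchy--Schwarz, disjointness, and a convergent geometric series.
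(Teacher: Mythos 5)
Your proof is correct, and it is worth noting that the paper itself offers no proof of this lemma: it is stated as an auxiliary result imported from Da Prato--Malliavin--Nualart \cite{DPMN92}, so there is no in-paper argument to compare against. Your route is the standard one and every step checks out: Parseval for the diagonal action of $A_{\alpha}$, the identity $c_{k,j}=2^{-k/2-1}\bigl(\mathrm{avg}_{I^{-}_{k,j}}f-\mathrm{avg}_{I^{+}_{k,j}}f\bigr)$, Jensen on the normalized double average giving $|c_{k,j}|^{2}\leq 2^{k}\int_{I^{-}_{k,j}}\int_{I^{+}_{k,j}}|f(t)-f(u)|^{2}\,dt\,du$, the insertion of the kernel weight via $|t-u|\leq 2^{-k}$ on $I_{k,j}\times I_{k,j}$ producing the net factor $2^{-2k\beta}$, disjointness of the rectangles $I^{-}_{k,j}\times I^{+}_{k,j}$ in $j$, and the geometric series $\sum_{k\geq 0}2^{-2k(\beta-\alpha)}=(1-2^{-2(\beta-\alpha)})^{-1}$, convergent precisely because $\beta>\alpha$. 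Your bound even improves on the statement by removing the overall factor $2$, so the lemma follows a fortiori. Two cosmetic remarks: the paper's index range $0\leq j\leq 2^{k}$ is a typo for $0\leq j\leq 2^{k}-1$, which you silently (and correctly) fixed; and strictly speaking one should note that if the double integral on the right-hand side is infinite the inequality is vacuous, while if it is finite your computation shows in particular that $A_{\alpha}f$ is well defined in $L^{2}([0,1])$, so no separate convergence argument is needed.
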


\begin{thm}
\label{compinf} Let $D^{i}$ be the Malliavin derivative in the direction of
the $i$-th component of $B_{t}$, $0\leq t\leq 1$, $i\geq 1$. In addition,
let $0<\alpha_{i}<\beta _{i}<\frac{1}{2}$ and $\delta_{i}>0$ for all $i\geq
1 $. Define the sequence $\lambda _{s,i}=2^{-k\alpha _{i}}\delta _{i}$, if $%
s=2^{k}+j$, $k\geq 0,0\leq j\leq 2^{k},$ $i\geq 1$. Assume that $%
\lambda_{s,i}\longrightarrow 0$ for $s,i\longrightarrow \infty$. Let $c>0$
and $\mathcal{G}$ the collection of all $G\in \mathbb{D}^{1,2}$ such that 
\begin{align*}
\left\Vert G\right\Vert _{L^{2}(\Omega )}\leq c,
\end{align*}
\begin{align*}
\sum_{i\geq 1}\delta _{i}^{-2}\left\Vert D^{i}G\right\Vert _{L^{2}(\Omega
;L^{2}([0,1]))}^{2}\leq c
\end{align*}
and 
\begin{align*}
\sum_{i\geq 1}\frac{1}{(1-2^{-2(\beta _{i}-\alpha _{i})})\delta _{i}^{2}}
\int_{0}^{1}\int_{0}^{1}\frac{\left\Vert D_{t}^{i}G-D_{u}^{i}G\right\Vert
_{L^{2}(\Omega )}^{2}}{\left\vert t-u\right\vert ^{1+2\beta _{i}}}dtdu\leq c.
\end{align*}
Then $\mathcal{G}$ is relatively compact in $L^{2}(\Omega)$.
\end{thm}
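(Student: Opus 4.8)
The plan is to exhibit Theorem \ref{compinf} as a concrete instance of the abstract criterion in Theorem \ref{General}, choosing the self-adjoint compact operator $C$ to be diagonal with respect to the tensor product of the Haar basis and the coordinate directions, so that $C^{-1}$ reduces on each coordinate to the multiplier $A_{\alpha_i}$ of Lemma \ref{Lemma}.

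First I would fix an orthonormal basis $(e_i)_{i\geq 1}$ of the separable Hilbert space $H$ and use the canonical isometry $\mathcal{L}_{HS}(H,\mathbb{R})\cong H$, valid because the target is one-dimensional, so that every bounded functional is Hilbert--Schmidt with HS norm equal to its Riesz norm. Under this identification the ambient space $\mathcal{K}:=L^2([0,1])\otimes\mathcal{L}_{HS}(H,\mathbb{R})$ is the orthogonal sum $\bigoplus_{i\geq 1}L^2([0,1])$, the Malliavin derivative decomposes as $DG=(D^iG)_{i\geq 1}$ with $D^iG\in L^2(\Omega;L^2([0,1]))$, and, writing $(v_s)_{s\geq 0}$ for the Haar basis of $L^2([0,1])$ with $v_0\equiv 1$, the family $\{v_s\otimes e_i:\,s\geq 0,\ i\geq 1\}$ is an orthonormal basis of $\mathcal{K}$. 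I then define $C$ on $\mathcal{K}$ by $C(v_s\otimes e_i)=\lambda_{s,i}(v_s\otimes e_i)$, where $\lambda_{s,i}=2^{-k\alpha_i}\delta_i$ for $s=2^k+j$ and $\lambda_{0,i}=\delta_i$, extended by linearity and continuity. Since all eigenvalues $\lambda_{s,i}$ are real and strictly positive, $C$ is self-adjoint with dense image, and on the $i$-th summand $C^{-1}$ acts exactly as $\delta_i^{-1}A_{\alpha_i}$, whence
\[
\|C^{-1}DG\|_{L^2(\Omega;\mathcal{K})}^2=\sum_{i\geq 1}\delta_i^{-2}\,E\big[\|A_{\alpha_i}D^iG\|_{L^2([0,1])}^2\big].
\]

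Next I would apply Lemma \ref{Lemma} to $f=D^i_\cdot G(\omega)$ for a.e.\ fixed $\omega$ with $\alpha=\alpha_i$, $\beta=\beta_i$, take expectations, and interchange expectation with the double space integral by Tonelli's theorem (the integrand being nonnegative). This yields
\[
\|C^{-1}DG\|_{L^2(\Omega;\mathcal{K})}^2\leq 2\sum_{i\geq 1}\frac{\|D^iG\|_{L^2(\Omega;L^2([0,1]))}^2}{\delta_i^2}+2\sum_{i\geq 1}\frac{1}{(1-2^{-2(\beta_i-\alpha_i)})\delta_i^2}\int_0^1\!\!\int_0^1\frac{\|D^i_tG-D^i_uG\|_{L^2(\Omega)}^2}{|t-u|^{1+2\beta_i}}\,dt\,du.
\]
For $G\in\mathcal{G}$ the three hypotheses bound $\|G\|_{L^2(\Omega)}$ and each of the two sums above by $c$, so that $\|G\|_{L^2(\Omega)}+\|C^{-1}DG\|_{L^2(\Omega;\mathcal{K})}\leq c+2\sqrt{c}$. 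Hence $\mathcal{G}$ is contained in a set of the form appearing in Theorem \ref{General}, and relative compactness in $L^2(\Omega)$ follows at once.

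The main obstacle I anticipate is the verification that $C$ is compact, which is the one place the full strength of the hypotheses enters. Being diagonal in an orthonormal basis, $C$ is compact if and only if $\{(s,i):\lambda_{s,i}\geq\varepsilon\}$ is finite for every $\varepsilon>0$; I would argue that this is precisely the content of the assumption $\lambda_{s,i}\to 0$ as $s,i\to\infty$, interpreted as the requirement that the doubly indexed family $(\lambda_{s,i})$ can be enumerated as a null sequence. The remaining points—the identification $\mathcal{L}_{HS}(H,\mathbb{R})\cong H$, the coordinatewise action of $C^{-1}$, and the Tonelli interchange—are routine once the operator is correctly set up, so the argument reduces essentially to the bookkeeping of the tensor structure and a clean application of Lemma \ref{Lemma}.
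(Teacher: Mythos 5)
Your proposal is correct and follows essentially the same route as the paper's own proof: diagonalize a compact self-adjoint operator $C$ over the Haar basis tensored with the coordinate directions $e_i$, observe that $C^{-1}$ acts coordinatewise as $\delta_i^{-1}A_{\alpha_i}$, bound $\Vert C^{-1}DG\Vert$ via Lemma \ref{Lemma}, and invoke Theorem \ref{General}. If anything, you are slightly more careful than the paper in spelling out the Tonelli interchange and in noting that the hypothesis $\lambda_{s,i}\to 0$ is exactly the finiteness of $\{(s,i):\lambda_{s,i}\geq\varepsilon\}$ needed for compactness of $C$, which the paper asserts without comment.
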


\begin{proof}
As before denote by $v_{s}$, $s\geq 0$ with $v_{0}=1$ the Haar basis of $%
L^{2}([0,1])$ and by $e_{i}^{\ast}=\langle e_i,\cdot\rangle_H$, $i\geq 1$ an
orthonormal basis of $\mathcal{L}_{HS}(H,\R)$ ($\cong H^{\ast }$%
) where $e_i$, $i\geq 1$ is an
orthonormal basis of $H$. Define a self-adjoint compact operator $C$ on $%
L^{2}([0,1])\otimes \mathcal{L}_{HS}(H,\mathbb{R})$ with dense image by 
\begin{align*}
C(v_{s}\otimes e_{i}^{\ast })=\lambda _{s,i}v_{s}\otimes e_{i}^{\ast },
\quad s\geq 0, \quad i\geq 1.
\end{align*}
Then it follows for $G\in \mathbb{D}^{1,2}$ from Lemma \ref{Lemma} that 
\begin{align*}
&\hspace{-2cm}\left\Vert C^{-1}DG\right\Vert _{L^{2}(\Omega
;L^{2}([0,1])\otimes \mathcal{L}_{HS}(H,\mathbb{R}))}^{2} \\
=&\,\sum_{i\geq 1}\sum_{s\geq 0}\lambda _{s,i}^{-2}E[\left\langle
DG,v_{s}\otimes e_{i}^{\ast }\right\rangle _{L^{2}([0,1])\otimes \mathcal{L}%
_{HS}(H,\mathbb{R}))}^{2}] \\
=&\, \sum_{i\geq 1}\delta_{i}^{-2}\left\Vert A_{\alpha
_{i}}D^{i}G\right\Vert_{L^{2}(\Omega ;L^{2}([0,1]))}^{2} \\
\leq &\, 2\sum_{i\geq 1}\delta_{i}^{-2}\left\Vert
D^{i}G\right\Vert_{L^{2}(\Omega ;L^{2}([0,1]))}^{2} \\
&+2\sum_{i\geq 1}\frac{1}{(1-2^{-2(\beta _{i}-\alpha _{i})})\delta_{i}^{2}}%
\int_{0}^{1}\int_{0}^{1}\frac{\left\Vert
D_{t}^{i}G-D_{u}^{i}G\right\Vert_{L^{2}(\Omega )}^{2}}{\left\vert
t-u\right\vert ^{1+2\beta _{i}}}dtdu \\
\leq & \, M
\end{align*}
for a constant $M<\infty $. So using Theorem \ref{General} we obtain the
result.
\end{proof}

\section{Technical Estimates}

The following technical estimate is used in the course of the paper.

\begin{lem}
\label{VI_doubleint} Let $H \in (0,1/2)$ and $t\in [0,T]$ be fixed. Then,
there exists a $\beta \in (0,1/2)$ such that 
\begin{align}  \label{VI_intI}
\int_0^t \int_0^t \frac{|K_H(t,t_0^{\prime}) - K_H(t,t_0)|^2}{%
|t_0^{\prime}-t_0|^{1+2\beta}}dt_0 dt_0 ^{\prime}< \infty.
\end{align}
\end{lem}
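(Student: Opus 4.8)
The plan is to recognize the double integral in \eqref{VI_intI} as the square of the Slobodeckij (Gagliardo) seminorm $[K_H(t,\cdot)]_{W^{\beta,2}(0,t)}^2$, so that the statement amounts to showing $K_H(t,\cdot)\in W^{\beta,2}(0,t)$ for a suitable exponent. Since $H\in(0,1/2)$, I would fix once and for all a number $\beta\in\big(0,\min\{H,\tfrac12-H\}\big)$, which is a nonempty subinterval of $(0,1/2)$; the whole argument then consists in verifying that this $\beta$ works.

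The starting point is the explicit formula for the kernel, which I would split as $K_H(t,s)=c_H\big(A(s)+B(s)\big)$ with $A(s)=(t/s)^{H-1/2}(t-s)^{H-1/2}$ and $B(s)=(\tfrac12-H)\,s^{1/2-H}\int_s^t u^{H-3/2}(u-s)^{H-1/2}\,du$. The key structural observation I would exploit is that each of the two non-integrable endpoint singularities sits in exactly one summand: $A$ is Hölder continuous up to $s=0$ (it vanishes there like $s^{1/2-H}$) but carries the singularity $(t-s)^{H-1/2}$ at $s=t$, whereas $B$ behaves like $c\,s^{H-1/2}$ near $s=0$ and like $c\,(t-s)^{H+1/2}$ (hence Hölder of order $H+\tfrac12$) near $s=t$. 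In particular both summands are smooth on the open interval $(0,t)$, and their only obstruction to higher regularity is a single power singularity of order $H-\tfrac12\in(-\tfrac12,0)$ at one endpoint.

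The model estimate on which everything rests is the elementary scaling computation
\[
\int_0^t\!\!\int_0^t\frac{\big|x^{H-1/2}-y^{H-1/2}\big|^2}{|x-y|^{1+2\beta}}\,dx\,dy<\infty\qquad\text{for }\beta<H,
\]
showing that the model singularity $x\mapsto x^{H-1/2}$ lies in $W^{\beta,2}(0,t)$ precisely in the range $\beta<H$. For the term $A$ I would then write the increment $A(u)-A(v)$, $v<u$, as $p(u)\big(q(u)-q(v)\big)+q(v)\big(p(u)-p(v)\big)$, with $p(s)=t^{H-1/2}s^{1/2-H}$ bounded and Hölder of order $1/2-H$ and $q(s)=(t-s)^{H-1/2}$: the first piece is controlled by the model estimate (using $\beta<H$), while the second, after the change of variables $x=t-s$, reduces to $\int_0^t(t-v)^{-2\beta}\,dv<\infty$ under the condition $\beta<\tfrac12-H$. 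For $B$ I would carry out the analogous splitting around the endpoint $s=0$, using the asymptotics $B(s)\sim c\,s^{H-1/2}$ obtained by the substitution $u=sv$ in the defining integral together with its Hölder behaviour near $s=t$; this again reduces to the same model estimate and to an integrable cross term.

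The step I expect to be the main obstacle is the quantitative control of the increments of the integral term $B$: unlike $A$, it is not a product of explicit powers, so one must difference (or differentiate) under the integral sign and track the behaviour of $\int_s^t u^{H-3/2}(u-s)^{H-1/2}\,du$ both near $s=0$, where it blows up like $s^{2H-1}$, and in the interior. The remaining subtlety, common to both terms, is that $K_H(t,\cdot)$ fails to belong to $W^{1,2}(0,t)$ — its $s$-derivative behaves like $(t-s)^{H-3/2}$ near $s=t$, which is not square integrable — so one genuinely cannot deduce \eqref{VI_intI} by interpolating between $L^2$ and $W^{1,2}$, and the direct Gagliardo estimate with the sharp threshold $\beta<H$ is unavoidable.
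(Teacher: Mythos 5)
Your strategy is sound, and in substance it is a Sobolev--Slobodeckij reframing of the argument the paper actually runs: the paper uses the very same two-term decomposition of the kernel (its $f_t$ is your $A$, its $(\tfrac12-H)g_t$ is your $B$), and its engine is the single elementary inequality $(y^{-\alpha}-x^{-\alpha})/(x-y)^{\gamma}\le C\,y^{-\alpha-\gamma}$ for $0<y<x$, $\alpha=\tfrac12-H$, $\gamma<H$, which is the pointwise form of your model estimate for $x^{H-1/2}$. From it the paper derives the weighted increment bound \eqref{estimateK}, $|K_H(t,t_0)-K_H(t,t_0')|\le C\,(t_0-t_0')^{\gamma}(t_0t_0')^{-\gamma}\,t_0^{H-\frac12-\gamma}(t-t_0)^{H-\frac12-\gamma}$, and then \eqref{VI_intI} follows from a Beta-function computation with $\beta<\gamma<H$ small. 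Your treatment of $A$ via the product-rule splitting is complete and correct: the $q$-piece needs $\beta<H$, the $p$-piece needs $\beta<\tfrac12-H$, and your reduction of the latter to $\int_0^t(t-v)^{-2\beta}dv$ is exactly right. (The paper avoids the extra constraint $\beta<\tfrac12-H$ by keeping the weight $(t-t_0)^{H-\frac12-\gamma}$ instead of invoking global $(\tfrac12-H)$-H\"older continuity of $s^{1/2-H}$; since the lemma only asserts existence of some $\beta\in(0,\tfrac12)$, your choice $\beta<\min\{H,\tfrac12-H\}$ is immaterial.) Your closing remark that $K_H(t,\cdot)\notin W^{1,2}(0,t)$, so that interpolation is unavailable, is also correct.

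The one genuine gap is the one you flag yourself: the increment control for $B$ is asserted, not executed. It is fillable along the lines you anticipate: writing $\Phi(s)=\int_0^{t-s}(w+s)^{H-3/2}w^{H-1/2}dw$ and differentiating gives $|B'(s)|\le C_t\bigl(s^{H-3/2}+(t-s)^{H-1/2}\bigr)$ on $(0,t)$, so near $0$ the increments of $B$ obey exactly the mean-value bounds of your model function (needing $\beta<H$), while near $t$ the derivative is square integrable since $2H-1>-1$ (indeed $B$ is $(H+\tfrac12)$-H\"older there), so any $\beta<\tfrac12$ works on that end. But you should note the cleaner device by which the paper closes this step, and which removes any need to differentiate under the integral sign: since $B(s)=(\tfrac12-H)\int_s^t f_u(s)\,u^{-1}du$ with $f_u(s)=(u/s)^{H-1/2}(u-s)^{H-1/2}$, an increment of $B$ splits as $\int_{t_0}^{t}\bigl(f_u(t_0)-f_u(t_0')\bigr)u^{-1}du$ minus the boundary-strip term $\int_{t_0'}^{t_0}f_u(t_0')u^{-1}du$; the already-established increment bound for the $A$-type factor then transfers to $B$ simply by averaging in $u$, with no separate endpoint analysis. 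Either route completes your proof; as written, the $B$-step remains a sketch.
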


\begin{proof}
Let $t_0,t_0^{\prime}\in [0,t]$, $t_0^{\prime}<t_0$ be fixed. Write 
\begin{equation*}
K_H (t,t_0) - K_H(t,t_0^{\prime}) = c_H\left[f_t(t_0) - f_t(t_0^{\prime}) +
\left(\frac{1}{2}-H\right) \left(g_t(t_0) - g_t(t_0^{\prime})\right)\right],
\end{equation*}
where $f_t (t_0):= \left(\frac{t}{t_0} \right)^{H-\frac{1}{2}} (t-t_0)^{H-%
\frac{1}{2}}$ and $g_t(t_0) := \int_{t_0}^t \frac{f_u (t_0)}{u}du$, $t_0\in
[0,t]$.

We will proceed to estimating $K_H (t,t_0) - K_H(t,t_0^{\prime})$. First,
observe the following fact, 
\begin{equation*}
\frac{y^{-\alpha} -x^{-\alpha}}{(x-y)^{\gamma}} \leq C y^{-\alpha-\gamma}
\end{equation*}
for every $0<y<x<\infty$ and $\alpha :=(\frac{1}{2}-H) \in (0,1/2)$ and $%
\gamma < \frac{1}{2}-\alpha$. This implies 
\begin{align*}
f_t(t_0) - f_t(t_0^{\prime}) &= \left(\frac{t}{t_0} (t-t_0)\right)^{H-\frac{1%
}{2}}-\left(\frac{t}{t_0^{\prime}} (t-t_0^{\prime})\right)^{H-\frac{1}{2}} \\
&\leq C \left(\frac{t}{t_0}(t-t_0 )\right)^{H-\frac{1}{2} -\gamma
}t^{2\gamma }\frac{(t_0-t_0^{\prime})^{\gamma }}{(t_0 t_0^{\prime})^{\gamma }%
} \\
&\leq C\frac{(t_0 -t_0^{\prime})^{\gamma }}{(t_0 t_0^{\prime})^{\gamma }}%
(t-t_0 )^{H-\frac{1}{2}-\gamma } \\
&\leq C\frac{(t_0 -t_0^{\prime})^{\gamma }}{(t_0 t_0^{\prime})^{\gamma }}%
t_0^{H-\frac{1}{2}-\gamma }(t-t_0)^{H-\frac{1}{2}-\gamma }.
\end{align*}

Further, 
\begin{align*}
g_{t}(t_0 )-g_{t}(t_0^{\prime}) &= \int_{t_0 }^{t}\frac{f_{u}(t_0
)-f_{u}(t_0^{\prime})}{u}du-\int_{t_0^{\prime}}^{t_0 }\frac{%
f_{u}(t_0^{\prime})}{u}du \\
&\leq \int_{t_0 }^{t}\frac{f_{u}(t_0 )-f_{u}(t_0^{\prime})}{u}du \\
&\leq C\frac{(t_0 -t_0^{\prime})^{\gamma }}{(t_0 t_0^{\prime})^{\gamma }}%
\int_{t_0 }^{t}\frac{(u-t_0 )^{H-\frac{1}{2}-\gamma }}{u}du \\
&\leq C\frac{(t_0 -t_0^{\prime})^{\gamma }}{(t_0 t_0^{\prime})^{\gamma }}%
t_0^{H-\frac{1}{2}-\gamma }\int_{1}^{\infty }\frac{(u-1)^{H-\frac{1}{2}%
-\gamma }}{u}du \\
&\leq C\frac{(t_0-t_0^{\prime})^{\gamma }}{(t_0 t_0^{\prime})^{\gamma }}%
t_0^{H-\frac{1}{2}-\gamma } \\
&\leq C\frac{(t_0 -t_0^{\prime})^{\gamma }}{(t_0 t_0^{\prime})^{\gamma }}%
t_0^{H-\frac{1}{2}-\gamma }(t-t_0 )^{H-\frac{1}{2}-\gamma }.
\end{align*}

As a result, we have for every $\gamma\in (0,H)$, $0<t_0^{\prime}<t_0<t<T$, 
\begin{align}  \label{estimateK}
K_{H}(t,t_0)-K_{H}(t,t_0^{\prime})\leq C_{H,T}\frac{(t_0
-t_0^{\prime})^{\gamma }}{(t_0 t_0^{\prime})^{\gamma }}t_0^{H-\frac{1}{2}%
-\gamma }(t-t_0 )^{H-\frac{1}{2}-\gamma },
\end{align}
for some constant $C_{H,T}>0$ depending only on $H$ and $T$.

Thus 
\begin{align*}
\int_{0}^{t}\int_{0}^{t_0 }&\frac{(K_{H}(t,t_0)-K_{H}(t,t_0^{\prime}))^{2}}{%
|t_0 -t_0^{\prime}|^{1+2\beta }}dt_0^{\prime}dt_0 \\
&\leq C\int_{0}^{t}\int_{0}^{t_0}\frac{|t_0 -t_0^{\prime}|^{-1-2\beta
+2\gamma }}{(t_0 t_0^{\prime})^{2\gamma }}t_0^{2H-1-2\gamma }(t-t_0
)^{2H-1-2\gamma }dt_0^{\prime}dt_0 \\
& =C\int_{0}^{t}t_0^{2H-1-4\gamma }(t-t_0 )^{2H-1-2\gamma }\int_{0}^{t_0
}|t_0 -t_0^{\prime}|^{-1-2\beta +2\gamma }(t_0^{\prime})^{-2\gamma
}dt_0^{\prime}dt_0 \\
&= C\int_{0}^{t}t_0^{2H-1-4\gamma }(t-t_0 )^{2H-1-2\gamma }\frac{\Gamma
(-2\beta +2\gamma )\Gamma (-2\gamma +1)}{\Gamma (-2\beta +1)}t_0 ^{-2\beta
}dt_0 \\
&\leq C\int_{0}^{t}t_0^{2H-1-4\gamma -2\beta }(t-t_0 )^{2H-1-2\gamma }dt_0 \\
&=C\frac{\Gamma (2H-2\gamma )\Gamma (2H-4\gamma -2\beta )}{\Gamma
(4H-6\gamma -2\beta )}t^{4H-6\gamma -2\beta -1}<\infty,
\end{align*}%
for appropriately chosen small $\gamma $ and $\beta$.

On the other hand, we have that 
\begin{align*}
\int_{0}^{t}\int_{t_0}^{t}&\frac{(K_{H}(t,t_0)-K_{H}(t,t_0^{\prime}))^{2}}{%
|t_0 -t_0^{\prime}|^{1+2\beta }}dt_0^{\prime}dt_0 \\
&\leq C\int_{0}^{t}t_0^{2H-1-4\gamma }(t-t_0)^{2H-1-2\gamma }\int_{t_0}^{t}%
\frac{|t_0 -t_0^{\prime}|^{-1-2\beta +2\gamma }}{(t_0^{\prime})^{2\gamma }}%
dt_0^{\prime}dt_0 \\
&\leq C\int_{0}^{t}t_0^{2H-1-6\gamma }(t-t_0)^{2H-1-2\gamma } \int_{t_0}^t
|t_0 -t_0^{\prime}|^{-1-2\beta +2\gamma } dt_0^{\prime}dt_0 \\
&=C\int_{0}^{t}t_0^{2H-1-6\gamma }(t-t_0 )^{2H-1 -2\beta }dt_0 \\
&\leq Ct^{4H-6\gamma -2\beta -1}.
\end{align*}%
Hence 
\begin{align*}
\int_{0}^{t}\int_{0}^{t}\frac{(K_{H}(t,t_0 )-K_{H}(t,t_0^{\prime}))^{2}}{%
|t_0 -t_0^{\prime}|^{1+2\beta }}dt_0^{\prime}dt_0 <\infty .
\end{align*}
\end{proof}

\bigskip

\begin{lem}
\label{VI_iterativeInt} Let $H \in (0,1/2)$, $\theta,t\in [0,T]$, $\theta<t$
and $(\varepsilon_1,\dots, \varepsilon_{m})\in \{0,1\}^{m}$ be fixed. Assume 
$w_j+\left(H-\frac{1}{2}-\gamma\right) \varepsilon_j>-1$ for all $j=1,\dots,m
$. Then exists a finite constant $C=C(H,T)>0$ such that 
\begin{align*}
\int_{\Delta_{\theta,t}^{m}} &\prod_{j=1}^{m} (K_H(s_j,\theta) -
K_H(s_j,\theta^{\prime}))^{\varepsilon_j} |s_j-s_{j-1}|^{w_j} ds \\
\leq& C^m \left(\frac{\theta-\theta^{\prime}}{\theta \theta^{\prime}}%
\right)^{\gamma \sum_{j=1}^m \varepsilon_j} \theta^{\left( H-\frac{1}{2}%
-\gamma\right)\sum_{j=1}^m \varepsilon_j} \, \Pi_{\gamma}(m) \,
(t-\theta)^{\sum_{j=1}^m w_j + \left( H-\frac{1}{2}-\gamma\right)
\sum_{j=1}^m \varepsilon_j +m}
\end{align*}
for $\gamma \in (0,H)$, where 
\begin{align}  \label{VI_Pi}
\Pi_{\gamma}(m):=\prod_{j=1}^{m-1} \frac{\Gamma \left(\sum_{l=1}^{j} w_l +
\left(H-\frac{1}{2}-\gamma \right)\sum_{l=1}^{j} \varepsilon_l +
j\right)\Gamma \left( w_{j+1}+1\right)}{\Gamma \left( \sum_{l=1}^{j+1} w_l +
\left(H-\frac{1}{2}-\gamma \right)\sum_{l=1}^{j} \varepsilon_l + j+1\right)}.
\end{align}
Observe that if $\varepsilon_j=0$ for all $j=1,\dots, m$ we obtain the
classical formula.
\end{lem}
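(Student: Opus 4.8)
The plan is to prove Lemma \ref{VI_iterativeInt} by induction on the number of time variables $m$, integrating out the variables one at a time from the innermost ($s_1$, closest to $\theta$) toward the outermost ($s_m$, closest to $t$). The key analytic input is the Beta-integral identity
\begin{equation*}
\int_a^c (s-a)^{\mu}(c-s)^{\nu}\, ds = (c-a)^{\mu+\nu+1}\frac{\Gamma(\mu+1)\Gamma(\nu+1)}{\Gamma(\mu+\nu+2)},\qquad \mu,\nu>-1,
\end{equation*}
which is exactly what produces the Gamma-quotient structure of $\Pi_\gamma(m)$ in \eqref{VI_Pi}. The hypothesis $w_j+(H-\tfrac12-\gamma)\varepsilon_j>-1$ is precisely the integrability condition that makes each successive application of this identity legitimate.

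First I would dispose of the factor $(K_H(s_j,\theta)-K_H(s_j,\theta'))^{\varepsilon_j}$ using estimate \eqref{estimateK} from Lemma \ref{VI_doubleint}. Since each $\varepsilon_j\in\{0,1\}$, for those $j$ with $\varepsilon_j=1$ we bound
\begin{equation*}
K_H(s_j,\theta)-K_H(s_j,\theta')\leq C_{H,T}\left(\frac{\theta-\theta'}{\theta\theta'}\right)^{\gamma}\theta^{H-\frac12-\gamma}(s_j-\theta)^{H-\frac12-\gamma},
\end{equation*}
which is valid for $\theta'<\theta<s_j$. This extracts the prefactor $\left(\frac{\theta-\theta'}{\theta\theta'}\right)^{\gamma\sum_j\varepsilon_j}\theta^{(H-\frac12-\gamma)\sum_j\varepsilon_j}$ and replaces each kernel difference by an extra power $(s_j-\theta)^{(H-\frac12-\gamma)\varepsilon_j}$ on the simplex. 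After this reduction the integrand is a pure product of powers of the gaps $|s_j-s_{j-1}|$ and of $(s_j-\theta)$, and the problem becomes the $\varepsilon_j=0$-type computation with the exponents $w_j$ shifted to $w_j+(H-\frac12-\gamma)\varepsilon_j$.

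The induction itself is then a clean iteration over the simplex $\Delta_{\theta,t}^m$. Writing the ordered integration $\int_\theta^t ds_m\int_\theta^{s_m}ds_{m-1}\cdots\int_\theta^{s_2}ds_1$, I integrate $s_1$ first against $(s_1-\theta)^{\text{(lower power)}}(s_2-s_1)^{w_1}$ via the Beta identity, yielding a power of $(s_2-\theta)$ and one Gamma-quotient factor; feeding this into the $s_2$-integral and repeating produces successively the factors appearing in $\Pi_\gamma(m)$, and the final $s_m$-integration over $[\theta,t]$ gives the total power $(t-\theta)^{\sum_j w_j+(H-\frac12-\gamma)\sum_j\varepsilon_j+m}$. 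Here one must be slightly careful to absorb the accumulated $(s_j-\theta)^{(H-\frac12-\gamma)\varepsilon_j}$ contributions into the running exponent at each step, which is where the cumulative sums $\sum_{l=1}^j\varepsilon_l$ inside \eqref{VI_Pi} arise; the constant $C^m$ collects the $C_{H,T}$ from the $m$ applications of \eqref{estimateK}.

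The main obstacle will be bookkeeping rather than any deep estimate: keeping track at each inductive step of the correct running exponent on $(s_{j+1}-\theta)$—which mixes the original $w$'s, the kernel-induced shifts, and the $+1$ from each integrated variable—so that the telescoping of the Gamma factors reproduces \eqref{VI_Pi} exactly. I would verify the base case $m=1$ directly (a single Beta integral, with the $\varepsilon_1=1$ kernel bound applied first), and then check that the inductive step advances the exponent-index from $j$ to $j+1$ consistently with the product in $\Pi_\gamma(m)$. The final remark that $\varepsilon_j\equiv 0$ recovers the classical formula is then immediate, since in that case all kernel factors disappear and $\Pi_\gamma(m)$ reduces to the standard iterated-Beta constant for the simplex.
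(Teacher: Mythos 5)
Your proposal is correct and follows essentially the same route as the paper's proof: bound each kernel difference via estimate \eqref{estimateK} from the proof of Lemma \ref{VI_doubleint} to pull out the $\left(\frac{\theta-\theta'}{\theta\theta'}\right)^{\gamma}\theta^{H-\frac{1}{2}-\gamma}$ prefactors and shift the exponents by $(H-\frac{1}{2}-\gamma)\varepsilon_j$, then iterate the Beta-integral identity over the simplex from $s_1$ outward so the Gamma-quotients telescope into $\Pi_{\gamma}(m)$. The paper carries out the first integration explicitly and concludes by noting the iteration, which is precisely the inductive step you describe.
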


\begin{rem}
\label{VI_remPi} Observe that 
\begin{align*}
\Pi_{\gamma}(m)&\leq \frac{\prod_{j=1}^m\Gamma (w_j +1)}{\Gamma
\left(\sum_{j=1}^{m} w_j + \left(H-\frac{1}{2}-\gamma
\right)\sum_{j=1}^{m-1} \varepsilon_j + m \right)} \\
&\leq \frac{\prod_{j=1}^m\Gamma (w_j +1)}{\Gamma \left(\sum_{j=1}^{m} w_j +
\left(H-\frac{1}{2}-\gamma \right)\sum_{j=1}^{m} \varepsilon_j + m \right)},
\end{align*}
since the function $\Gamma$ is increasing on $(1,\infty)$.
\end{rem}

\begin{proof}
First, we recall the following well-known formula: for given exponents $%
a,b>-1$ and some fixed $s_{j+1}>s_j$ we have 
\begin{equation*}
\int_{\theta}^{s_{j+1}} (s_{j+1}-s_j)^{a} (s_j-\theta)^b ds_j =\frac{\Gamma
\left( a+1\right)\Gamma \left( b+1\right)}{\Gamma \left( a+b+2\right)}
(s_{j+1}-\theta)^{a+b+1}.
\end{equation*}
We recall from Lemma \ref{VI_intI} that for every $\gamma\in (0,H)$, $%
0<\theta^{\prime}<\theta<s_j<T$, 
\begin{align*}
K_{H}(s_j,\theta )-K_{H}(s_j,\theta^{\prime})\leq C_{H,T}\frac{(\theta
-\theta^{\prime})^{\gamma }}{(\theta \theta^{\prime})^{\gamma }}\theta^{H-%
\frac{1}{2}-\gamma }(s_j-\theta )^{H-\frac{1}{2}-\gamma },
\end{align*}
for some constant $C_{H,T}>0$ depending only on $H$ and $T$. In view of the
above arguments we have 
\begin{align*}
\int_{\theta}^{s_2}
&|K_H(s_1,\theta)-K_H(s_1,\theta^{\prime})|^{\varepsilon_1}
|s_2-s_1|^{w_2}|s_1-\theta|^{w_1}ds_1 \\
&\leq C_{H,T}^{\varepsilon_1} \frac{(\theta
-\theta^{\prime})^{\gamma\varepsilon_1 }}{(\theta \theta^{\prime})^{\gamma
\varepsilon_1}}\theta^{\left(H-\frac{1}{2}-\gamma\right)\varepsilon_1
}\int_{\theta}^{s_2}|s_2-s_1|^{w_2}|s_1-\theta|^{w_1+\left(H-\frac{1}{2}%
-\gamma\right)\varepsilon_1}ds_1 \\
&= C_{H,T}^{\varepsilon_1} \frac{(\theta
-\theta^{\prime})^{\gamma\varepsilon_1 }}{(\theta \theta^{\prime})^{\gamma
\varepsilon_1}}\theta^{\left(H-\frac{1}{2}-\gamma\right)\varepsilon_1 } 
\frac{\Gamma\left(\hat{w}_1\right)\Gamma\left(\hat{w}_2\right)}{\Gamma\left(%
\hat{w}_1+\hat{w}_2\right)}(s_2-\theta)^{w_1+w_2+\left(H-\frac{1}{2}%
-\gamma\right)\varepsilon_1+1},
\end{align*}
where 
\begin{equation*}
\hat{w}_1 := w_1+\left(H-\frac{1}{2}-\gamma\right)\varepsilon_1+1, \quad 
\hat{w}_2:=w_2+1.
\end{equation*}
Integrating iteratively we obtain the desired formula.
\end{proof}

Finally, we give a similar estimate which is used in Lemma \ref{VI_relcomp}.

\begin{lem}
\label{VI_iterativeInt2} Let $H \in (0,1/2)$, $\theta,t\in [0,T]$, $\theta<t$
and $(\varepsilon_1,\dots, \varepsilon_{m})\in \{0,1\}^{m}$ be fixed. Assume 
$w_j+\left(H-\frac{1}{2}\right) \varepsilon_j>-1$ for all $j=1,\dots,m$.
Then exists a finite constant $C>0$ such that 
\begin{align*}
\int_{\Delta_{\theta,t}^{m}} &\prod_{j=1}^{m}
(K_H(s_j,\theta))^{\varepsilon_j} |s_j-s_{j-1}|^{w_j} ds \\
&\leq C^m \theta^{\left( H-\frac{1}{2}\right)\sum_{j=1}^m \varepsilon_j} \,
\Pi_0(m) \, (t-\theta)^{\sum_{j=1}^m w_j + \left( H-\frac{1}{2}\right)
\sum_{j=1}^m \varepsilon_j +m}
\end{align*}
for $\gamma \in (0,H)$, where $\Pi_0$ is given as in \eqref{VI_Pi}. Observe
that if $\varepsilon_j=0$ for all $j=1,\dots, m$ we obtain the classical
formula.
\end{lem}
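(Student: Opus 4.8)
The statement to prove is Lemma \ref{VI_iterativeInt2}, the analogue of Lemma \ref{VI_iterativeInt} with the kernel $K_H(s_j,\theta)$ in place of the increment $K_H(s_j,\theta)-K_H(s_j,\theta')$. The plan is to mimic the proof of Lemma \ref{VI_iterativeInt} exactly, replacing the increment bound \eqref{estimateK} by a one-point bound on $K_H(s,\theta)$. First I would record the pointwise estimate: from the explicit representation of the kernel in the Proposition preceding \eqref{VI_RH}, for $H\in(0,1/2)$ and $0<\theta<s$ one has
\begin{equation*}
K_H(s,\theta)\leq C_{H,T}\,\theta^{H-\frac12}(s-\theta)^{H-\frac12},
\end{equation*}
which follows by bounding the two terms $(s/\theta)^{H-\frac12}(s-\theta)^{H-\frac12}$ and $(\tfrac12-H)\theta^{\frac12-H}\int_\theta^s u^{H-\frac32}(u-\theta)^{H-\frac12}du$ in the same way as in the derivation of \eqref{estimateK} (using the substitution $u=\theta v$ in the integral term to make the remaining $v$-integral a finite constant). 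Note that this is the special case of \eqref{estimateK} obtained formally by setting $\theta'=0$ up to the prefactor, so the single-point estimate is in fact simpler and contains no $\gamma$-regularization.

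Next I would insert this bound into the simplex integral and integrate iteratively over the innermost variable $s_1$, using the classical Beta-type identity
\begin{equation*}
\int_\theta^{s_{j+1}}(s_{j+1}-s_j)^a(s_j-\theta)^b\,ds_j=\frac{\Gamma(a+1)\Gamma(b+1)}{\Gamma(a+b+2)}(s_{j+1}-\theta)^{a+b+1}
\end{equation*}
for exponents $a,b>-1$, exactly as in Lemma \ref{VI_iterativeInt}. After bounding $K_H(s_1,\theta)^{\varepsilon_1}\leq C_{H,T}^{\varepsilon_1}\theta^{(H-\frac12)\varepsilon_1}(s_1-\theta)^{(H-\frac12)\varepsilon_1}$, the first integration produces the factor $\theta^{(H-\frac12)\varepsilon_1}$ together with a Gamma quotient and raises the simplex dimension by one. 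The convergence condition $w_j+(H-\frac12)\varepsilon_j>-1$ is precisely what guarantees $b=w_1+(H-\frac12)\varepsilon_1>-1$ at each step, so the Beta identity applies. Iterating over $j=1,\dots,m$ accumulates the power $\theta^{(H-\frac12)\sum_j\varepsilon_j}$, the telescoping product of Gamma quotients $\Pi_0(m)$ as defined in \eqref{VI_Pi} (with $\gamma=0$), and the final power $(t-\theta)^{\sum_j w_j+(H-\frac12)\sum_j\varepsilon_j+m}$, yielding the claimed estimate with $C=C_{H,T}$.

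The only genuinely new ingredient compared to Lemma \ref{VI_iterativeInt} is the single-point kernel bound, and the main (mild) obstacle is verifying that the integral-term contribution $\theta^{\frac12-H}\int_\theta^s u^{H-\frac32}(u-\theta)^{H-\frac12}du$ obeys the same $\theta^{H-\frac12}(s-\theta)^{H-\frac12}$ envelope; this is handled by the scaling substitution noted above, after which the $v$-integral $\int_1^{s/\theta}v^{H-\frac32}(v-1)^{H-\frac12}dv$ is dominated by the convergent integral $\int_1^\infty v^{H-\frac32}(v-1)^{H-\frac12}dv$ (finite since $H-\frac32+H-\frac12=2H-2<-1$). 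Everything downstream is a verbatim repetition of the iterated-integration argument, so I would simply remark that the remaining steps are identical to those in the proof of Lemma \ref{VI_iterativeInt}, now without the $\gamma$-shift, and that setting $\varepsilon_j=0$ for all $j$ recovers the classical simplex-volume formula.
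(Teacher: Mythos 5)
Your proposal is correct and follows essentially the same route as the paper: the paper's proof likewise reduces everything to the single-point kernel bound $K_H(s_j,\theta)\leq C_{H,T}\,\theta^{H-\frac{1}{2}}(s_j-\theta)^{H-\frac{1}{2}}$ (obtained "by similar arguments as in the proof of Lemma \ref{VI_doubleint}", which your scaling substitution $u=\theta v$ and the convergence check $2H-2<-1$ make explicit) and then iterates the Beta-type identity over the simplex exactly as in Lemma \ref{VI_iterativeInt}, with the condition $w_j+(H-\frac{1}{2})\varepsilon_j>-1$ ensuring each integration is admissible. Your write-up is, if anything, more detailed than the paper's on the derivation of the kernel bound.
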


\begin{rem}
\label{VI_remPi2} Observe that 
\begin{equation*}
\Pi_0(m)\leq \frac{\prod_{j=1}^m\Gamma (w_j +1)}{\Gamma \left(\sum_{j=1}^{m}
w_j + \left(H-\frac{1}{2} \right)\sum_{j=1}^m \varepsilon_j + m\right)},
\end{equation*}
due to the fact that $\Gamma$ is increasing on $(1,\infty)$.
\end{rem}

\begin{proof}
By similar arguments as in the proof of Lemma \ref{VI_intI} it is easy to
derive the following estimate 
\begin{equation*}
|K_H(s_j,\theta)| \leq C_{H,T} |s_j-\theta|^{H-\frac{1}{2}}\theta^{H-\frac{1%
}{2}}
\end{equation*}
for every $0<\theta<s_j<T$ and some constant $C_{H,T}>0$. This implies 
\begin{align*}
&\int_{\theta}^{s_2} (K_H(s_1,\theta))^{\varepsilon_1}
|s_2-s_1|^{w_2}|s_1-\theta|^{w_1}ds_1 \\
&\leq C_{H,T}^{\varepsilon_1} \, \theta^{\left(H-\frac{1}{2}%
\right)\varepsilon_1} \int_{\theta}^{s_2} |s_2-s_1|^{w_2}
|s_1-\theta|^{w_1+\left(H-\frac{1}{2}\right)\varepsilon_1}ds_1 \\
&= C_{H,T}^{\varepsilon_1} \, \theta^{\left(H-\frac{1}{2}\right)%
\varepsilon_1} \frac{\Gamma\left(w_1+w_2+\left(H-\frac{1}{2}%
\right)\varepsilon_1+1\right)\Gamma\left(w_2+1\right)}{\Gamma\left(w_1+w_2+%
\left(H-\frac{1}{2}\right)\varepsilon_1+2\right)}(s_2-\theta)^{w_1+w_2+%
\left(H-\frac{1}{2}\right)\varepsilon_1+1}
\end{align*}
Integrating iteratively one obtains the desired estimate.
\end{proof}

The next auxiliary result can be found in \cite{LiWei}.

\begin{lem}
\label{LiWei} Assume that $X_{1},...,X_{n}$ are real centered jointly
Gaussian random variables, and $\Sigma =(E[X_{j}X_{k}])_{1\leq j,k\leq n}$
is the covariance matrix, then%
\begin{equation*}
E[\left\vert X_{1}\right\vert ...\left\vert X_{n}\right\vert ]\leq \sqrt{%
perm(\Sigma )},
\end{equation*}%
where $perm(A)$ is the permanent of a matrix $A=(a_{ij})_{1\leq i,j\leq n}$
defined by%
\begin{equation*}
perm(A)=\sum_{\pi \in S_{n}}\prod_{j=1}^{n}a _{j,\pi (j)}
\end{equation*}%
for the symmetric group $S_{n}$.
\end{lem}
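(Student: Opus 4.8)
The plan is to deduce the inequality from the classical fact that moments of a \emph{circularly symmetric complex} Gaussian vector are computed by permanents (the complex analogue of Isserlis'/Wick's theorem), combined with a pointwise AM--GM estimate and a tensorization step. Since $\Sigma$ is a covariance matrix it is symmetric and positive semidefinite, and no nondegeneracy assumption is needed in what follows, so the argument will also cover the degenerate case.

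First I would pass to a complexification. On a product space introduce an independent copy $X'=(X_1',\dots,X_n')$ of the Gaussian vector $X=(X_1,\dots,X_n)$, and set $W_j:=X_j+\boldsymbol{i}\,X_j'$ for $j=1,\dots,n$. Then $(W_1,\dots,W_n)$ is a centered complex Gaussian vector, and using independence together with the fact that $X$ and $X'$ are identically distributed one checks that it is circularly symmetric: $E[W_jW_k]=E[X_jX_k]-E[X_j'X_k']=0$, while $E[W_j\overline{W_k}]=E[X_jX_k]+E[X_j'X_k']=2\Sigma_{jk}$.

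The core step is the identity $E\big[\prod_{j=1}^n|W_j|^2\big]=\mathrm{perm}(2\Sigma)=2^n\,\mathrm{perm}(\Sigma)$. This is the complex Wick theorem: expanding $E\big[\prod_jW_j\prod_k\overline{W_k}\big]$ by Isserlis' formula and invoking the circularity relations $E[W_jW_k]=E[\overline{W_j}\,\overline{W_k}]=0$, only pairings that match each factor $W_j$ to some factor $\overline{W_{\sigma(j)}}$, $\sigma\in S_n$, survive; each contributes $\prod_{j=1}^nE[W_j\overline{W_{\sigma(j)}}]=\prod_{j=1}^n2\Sigma_{j\sigma(j)}$, and summing over $\sigma$ gives $\mathrm{perm}(2\Sigma)$, with the factor $2^n$ coming from multilinearity of the permanent. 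I expect this to be the only delicate point, since it needs the complex version of Isserlis' formula rather than the real one (which would instead produce the hafnian). I would either cite it or derive it directly from the real Wick theorem applied to the $2n$ real variables $(X_j,X_j')_{j}$, expanding $\prod_jW_j\overline{W_j}$ multilinearly and checking that circularity kills all non-permanent terms.

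Finally I would combine this with the elementary pointwise bound $|W_j|^2=X_j^2+(X_j')^2\ge 2\,|X_j|\,|X_j'|$, valid since $(|X_j|-|X_j'|)^2\ge0$. Taking the product over $j$ and then expectations yields
\[
2^n\,\mathrm{perm}(\Sigma)=E\Big[\prod_{j=1}^n|W_j|^2\Big]\ge 2^n\,E\Big[\prod_{j=1}^n|X_j|\,|X_j'|\Big]=2^n\Big(E\Big[\prod_{j=1}^n|X_j|\Big]\Big)^2,
\]
where the last equality uses the independence of the families $(X_j)_j$ and $(X_j')_j$ and their identical laws. Dividing by $2^n$ and taking square roots gives $E[|X_1|\cdots|X_n|]\le\sqrt{\mathrm{perm}(\Sigma)}$, as claimed.
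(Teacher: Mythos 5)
Your argument is correct, and it is worth noting that the paper itself gives no proof of this lemma at all: it is imported verbatim from the cited reference of Li and Wei, so you are supplying a proof where the paper supplies only a citation. What you have reconstructed is in substance the known complexification argument for this inequality: pass to $W_j=X_j+\boldsymbol{i}X_j'$ with $X'$ an independent copy, use the complex Wick/Isserlis formula to get $E\bigl[\prod_{j=1}^n|W_j|^2\bigr]=\mathrm{perm}(2\Sigma)=2^n\,\mathrm{perm}(\Sigma)$, and then the pointwise bound $|W_j|^2\geq 2|X_j|\,|X_j'|$ together with independence to factor the expectation into $\bigl(E\prod_j|X_j|\bigr)^2$. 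All three steps check out: in the circularity computation your display $E[W_jW_k]=E[X_jX_k]-E[X_j'X_k']$ silently drops the imaginary cross terms $\boldsymbol{i}(E[X_jX_k']+E[X_j'X_k])$, but these vanish by independence and centeredness, so the conclusion $E[W_jW_k]=0$ stands; the delicate point you correctly isolate, namely that one needs the \emph{complex} Wick theorem (producing a permanent) rather than the real one (producing a hafnian), is handled properly by your observation that the real Isserlis formula is multilinear and hence extends to complex linear combinations of the $2n$ real Gaussians $(X_j,X_j')$, after which circularity kills every pairing except the $W$--$\overline{W}$ matchings indexed by $S_n$; and the identity $\mathrm{perm}(2\Sigma)=2^n\,\mathrm{perm}(\Sigma)$ is just row-multilinearity of the permanent. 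A further small merit of your route is that it requires no invertibility of $\Sigma$, so the degenerate case is covered without any limiting argument, and all expectations involved are finite since Gaussian vectors have moments of every order.
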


The next result corresponds to Lemma 3.19 in \cite{CD}:

\begin{lem}
\label{CD} Let $Z_{1},...,Z_{n}$ be mean zero Gaussian variables which are
linearly independent. Then for any measurable function $g:\mathbb{R}%
\longrightarrow \mathbb{R}_{+}$ we have that%
\begin{equation*}
\int_{\mathbb{R}^{n}}g(v_{1})\exp (-\frac{1}{2}Var[%
\sum_{j=1}^{n}v_{j}Z_{j}])dv_{1}...dv_{n}=\frac{(2\pi )^{(n-1)/2}}{(\det
Cov(Z_{1},...,Z_{n}))^{1/2}}\int_{\mathbb{R}}g(\frac{v}{\sigma _{1}})\exp (-%
\frac{1}{2}v^{2})dv,
\end{equation*}%
where $\sigma _{1}^{2}:=Var[Z_{1}\left\vert Z_{2},...,Z_{n}\right] $.
\end{lem}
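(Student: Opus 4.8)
The plan is to read the left-hand integrand as a constant multiple of a nondegenerate Gaussian density and thereby reduce the $n$-dimensional integral to a one-dimensional one. Write $v=(v_1,\dots,v_n)^{\ast}$ and set $\Sigma:=Cov(Z_1,\dots,Z_n)$. Since the $Z_j$ are linearly independent, $\Sigma$ is symmetric and positive definite, and the exponent is exactly $Var[\sum_{j=1}^n v_j Z_j]=\langle \Sigma v,v\rangle$. As $\frac{(\det\Sigma)^{1/2}}{(2\pi)^{n/2}}\exp(-\tfrac12\langle\Sigma v,v\rangle)$ is the density of a centered Gaussian vector $V=(V_1,\dots,V_n)$ with covariance $\Sigma^{-1}$, I would rewrite
\[
\int_{\mathbb{R}^n} g(v_1)\exp\left(-\tfrac12\langle \Sigma v,v\rangle\right)dv=\frac{(2\pi)^{n/2}}{(\det\Sigma)^{1/2}}\,E[g(V_1)].
\]

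The next step is to identify the law of the marginal $V_1$. As a coordinate of a centered Gaussian vector with covariance $\Sigma^{-1}$, one has $V_1\sim\mathcal{N}(0,(\Sigma^{-1})_{11})$, so everything hinges on the linear-algebra identity $(\Sigma^{-1})_{11}=1/\sigma_1^2$ with $\sigma_1^2=Var[Z_1\mid Z_2,\dots,Z_n]$. I would obtain this from the block-inverse formula: partitioning $\Sigma$ according to the splitting $\{1\}\cup\{2,\dots,n\}$, the $(1,1)$ entry of $\Sigma^{-1}$ equals the reciprocal of the Schur complement $\sigma_{11}-\sigma_{1,-1}\Sigma_{-1,-1}^{-1}\sigma_{-1,1}$, and this Schur complement is precisely $Var[Z_1\mid Z_2,\dots,Z_n]$ by the standard formula for Gaussian conditioning.

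Finally I would evaluate the one-dimensional expectation. With $V_1\sim\mathcal{N}(0,1/\sigma_1^2)$ the substitution $v=\sigma_1 w$ gives
\[
E[g(V_1)]=\frac{\sigma_1}{\sqrt{2\pi}}\int_{\mathbb{R}}g(w)\exp\left(-\tfrac12\sigma_1^2 w^2\right)dw=\frac{1}{\sqrt{2\pi}}\int_{\mathbb{R}}g(v/\sigma_1)\exp\left(-\tfrac12 v^2\right)dv,
\]
and combining with the previous display yields the claim, the constant $(2\pi)^{n/2}/\sqrt{2\pi}=(2\pi)^{(n-1)/2}$ matching the asserted one. Since $g\geq 0$ is merely measurable, all integrals are well defined in $[0,\infty]$ and Tonelli's theorem legitimizes integrating out $v_2,\dots,v_n$ first, so no integrability hypothesis is needed. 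I expect the only genuine obstacle to be the bookkeeping in the Schur-complement identity $(\Sigma^{-1})_{11}=1/Var[Z_1\mid Z_2,\dots,Z_n]$; an elementary alternative that sidesteps the probabilistic interpretation is to integrate out $v_2,\dots,v_n$ directly by completing the square, which leaves a one-dimensional Gaussian in $v_1$ whose variance parameter is that same Schur complement, after which the substitution above finishes the computation identically.
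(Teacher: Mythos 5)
Your proof is correct. There is, however, no in-paper argument to compare it with: the paper states Lemma \ref{CD} without proof, quoting it as Lemma 3.19 of Cuzick and DuPreez \cite{CD}, so your write-up supplies a verification the paper leaves to the literature. Each step checks out: linear independence of the mean-zero Gaussians gives $\langle \Sigma v,v\rangle = Var\left[\sum_{j=1}^n v_jZ_j\right]>0$ for $v\neq 0$, so $\Sigma$ is positive definite and the integrand is, up to the factor $(2\pi)^{n/2}(\det\Sigma)^{-1/2}$, the density of $V\sim\mathcal{N}(0,\Sigma^{-1})$; the block-inverse identity $(\Sigma^{-1})_{11}=\bigl(\sigma_{11}-\sigma_{1,-1}\Sigma_{-1,-1}^{-1}\sigma_{-1,1}\bigr)^{-1}$ together with the Gaussian conditioning formula identifies $(\Sigma^{-1})_{11}=1/\sigma_1^2$; and the substitution $v=\sigma_1 w$ produces exactly the stated constant $(2\pi)^{(n-1)/2}$ (a sanity check at $n=1$, where $\sigma_1^2=\sigma_{11}$, confirms the normalization). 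Your Tonelli remark correctly disposes of integrability: since $g\geq 0$ is merely measurable, both sides are well defined in $[0,\infty]$ and the equality holds there, possibly as $\infty=\infty$. The elementary alternative you sketch, integrating out $v_2,\dots,v_n$ by completing the square so that the Schur complement emerges as the coefficient of $v_1^2$, is the natural direct computation and avoids even the probabilistic reading of the marginal; either route is a legitimate replacement for the citation.
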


\begin{lem}
\label{OrderDerivatives} Let $n,$ $p$ and $k$ be non-negative integers, $%
k\leq n$. Assume we have functions $f_{j}:[0,T]\rightarrow \mathbb{R}$, $%
j=1,\dots ,n$ and $g_{i}:[0,T]\rightarrow \mathbb{R}$, $i=1,\dots ,p$ such
that 
\begin{equation*}
f_{j}\in \left\{ \frac{\partial ^{\alpha _{j}^{(1)}+...+\alpha _{j}^{(d)}}}{%
\partial ^{\alpha _{j}^{(1)}}x_{1}...\partial ^{\alpha _{j}^{(d)}}x_{d}}%
b^{(r)}(u,X_{u}^{x}),\text{ }r=1,...,d\right\} ,\text{ }j=1,...,n
\end{equation*}%
and 
\begin{equation*}
g_{i}\in \left\{ \frac{\partial ^{\beta _{i}^{(1)}+...+\beta _{i}^{(d)}}}{%
\partial ^{\beta _{i}^{(1)}}x_{1}...\partial ^{\beta _{i}^{(d)}}x_{d}}%
b^{(r)}(u,X_{u}^{x}),\text{ }r=1,...,d\right\} ,\text{ }i=1,...,p
\end{equation*}%
for $\alpha :=(\alpha _{j}^{(l)})\in \mathbb{N}_{0}^{d\times n}$ and $\beta
:=(\beta _{i}^{(l)})\in \mathbb{N}_{0}^{d\times p},$ where $X_{\cdot }^{x}$
is the strong solution to 
\begin{equation*}
X_{t}^{x}=x+\int_{0}^{t}b(u,X_{u}^{x})du+B_{t}^{H},\text{ }0\leq t\leq T
\end{equation*}%
for $b=(b^{(1)},...,b^{(d)})$ with $b^{(r)}\in C_{c}([0,T]\times \mathbb{R}%
^{d})\mathcal{\ }$for all $r=1,...,d$. So (as we shall say in the sequel)
the product $g_{1}(r_{1})\cdot \dots \cdot g_{p}(r_{p})$ has a total order
of derivatives $\left\vert \beta \right\vert
=\sum_{l=1}^{d}\sum_{i=1}^{p}\beta _{i}^{(l)}$. We know from Section \ref%
{VI_shuffles} that 
\begin{align}
& \int_{\Delta _{\theta ,t}^{n}}f_{1}(s_{1})\dots f_{k}(s_{k})\int_{\Delta
_{\theta ,s_{k}}^{p}}g_{1}(r_{1})\dots g_{p}(r_{p})dr_{p}\dots
dr_{1}f_{k+1}(s_{k+1})\dots f_{n}(s_{n})ds_{n}\dots ds_{1}  \notag \\
& =\sum_{\sigma \in A_{n,p}}\int_{\Delta _{\theta ,t}^{n+p}}h_{1}^{\sigma
}(w_{1})\dots h_{n+p}^{\sigma }(w_{n+p})dw_{n+p}\dots dw_{1},  \label{h}
\end{align}%
where $h_{l}^{\sigma }\in \{f_{j},g_{i}:1\leq j\leq n,$ $1\leq i\leq p\}$, $%
A_{n,p}$ is a subset of permutations of $\{1,\dots ,n+p\}$ such that $%
\#A_{n,p}\leq C^{n+p}$ for an appropriate constant $C\geq 1$, and $%
s_{0}=\theta $. Then the products%
\begin{equation*}
h_{1}^{\sigma }(w_{1})\cdot \dots \cdot h_{n+p}^{\sigma }(w_{n+p})
\end{equation*}%
have a total order of derivatives given by $\left\vert \alpha \right\vert
+\left\vert \beta \right\vert .$
\end{lem}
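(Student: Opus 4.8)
The plan is to prove this purely combinatorially, exploiting the fact that the shuffling procedure of Section \ref{VI_shuffles} only reorders the integration variables and never creates, merges, or differentiates any of the integrand factors. Concretely, I would first record precisely what the identity \eqref{h} does to the integrands: by the shuffle formulas \eqref{VI_shuffle}--\eqref{VI_shuffle2}, each permutation $\sigma \in A_{n,p}$ reindexes the time variables so that the integrand of the resulting simplex integral over $\Delta_{\theta,t}^{n+p}$ is the product $h_1^\sigma(w_1)\cdots h_{n+p}^\sigma(w_{n+p})$, where the ordered tuple $(h_1^\sigma, \ldots, h_{n+p}^\sigma)$ is exactly a permutation of the tuple $(f_1, \ldots, f_n, g_1, \ldots, g_p)$. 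In particular, as a multiset the factors $\{h_1^\sigma, \ldots, h_{n+p}^\sigma\}$ coincide with $\{f_1, \ldots, f_n, g_1, \ldots, g_p\}$ for every $\sigma$, and this is the only structural fact about \eqref{h} that the claim requires.

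Next I would introduce the total order of derivatives as an additive functional on such products. To a factor of the form $\frac{\partial^{\gamma^{(1)}+\cdots+\gamma^{(d)}}}{\partial^{\gamma^{(1)}}x_1 \cdots \partial^{\gamma^{(d)}}x_d} b^{(r)}$ I assign the weight $\sum_{l=1}^d \gamma^{(l)}$, so that $f_j$ has weight $\sum_{l=1}^d \alpha_j^{(l)}$ and $g_i$ has weight $\sum_{l=1}^d \beta_i^{(l)}$, and I define the total order of derivatives of a product to be the sum of the weights of its factors. Since this functional is by construction additive over factors, it is invariant under any permutation of those factors; hence for every $\sigma \in A_{n,p}$,
\begin{equation*}
\sum_{l=1}^{n+p} \mathrm{weight}(h_l^\sigma) = \sum_{j=1}^n \sum_{l=1}^d \alpha_j^{(l)} + \sum_{i=1}^p \sum_{l=1}^d \beta_i^{(l)} = |\alpha| + |\beta|,
\end{equation*}
which is precisely the asserted value.

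The cardinality bound $\#A_{n,p} \le C^{n+p}$ I would simply inherit from the Stirling estimate $\#S(n,p) \le C^{n+p}$ recorded in Section \ref{VI_shuffles}, since $A_{n,p}$ is a subset of the shuffle permutations appearing there. There is no genuine analytic obstacle in this lemma: its entire content is the bookkeeping observation that shuffling permutes but preserves the integrand factors, so that the additive quantity $|\alpha|+|\beta|$ is conserved. The one point I would state and check explicitly is exactly this invariance of the multiset of factors under \eqref{h}, since the whole usefulness of the lemma in the proof of Theorem \ref{VI_derivative} rests on it: shuffling a product that contains a single top-order factor $D^k b$ cannot raise the total order of differentiation, which is what pins down the exponent $|\alpha| = m_1+\cdots+m_k+k-1$ used there.
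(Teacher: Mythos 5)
Your proof is correct, but it follows a genuinely different route from the paper's. The paper proves the claim by induction on $n$: it peels off the outermost factor $f_1$, treats the cases $k=0$ and $k=1$ directly (for $k=1$ the nested integral is already a single simplex integral after relabelling $w_1=s_1$, $w_2=r_1,\dots$), and for $k\geq 2$ applies the induction hypothesis to the inner $n$-fold integral and then shuffles, tracking the derivative count through each step; in effect it re-derives the decomposition \eqref{h} recursively while carrying the bookkeeping along. You instead take \eqref{h} as given (which the lemma's own phrasing ``We know from Section \ref{VI_shuffles}'' licenses) and isolate the one structural fact that matters: in the shuffle identities \eqref{VI_shuffle} and \eqref{VI_shuffle2} every summand's integrand is $\prod_i f_{\sigma(i)}(w_i)$, i.e.\ a permutation of the original tuple of factors, so the multiset $\{h_1^\sigma,\dots,h_{n+p}^\sigma\}$ equals $\{f_1,\dots,f_n,g_1,\dots,g_p\}$; since the total order of derivatives is an additive functional on factors, it is permutation-invariant and equals $|\alpha|+|\beta|$. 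Your argument is shorter and non-inductive, and it actually yields a slightly stronger conserved quantity than the lemma states: not just the sum $|\alpha|+|\beta|$ but the entire multiset of derivative multi-indices is preserved, which is precisely what the application in Theorem \ref{VI_derivative} needs when it asserts that the single factor carrying a top-order derivative $D^k b$ occurs exactly once in each shuffled summand (the paper has to remark this separately in that proof). What the paper's induction buys in exchange is self-containedness: by walking through the nested structure it simultaneously exhibits how $A_{n,p}$ arises as compositions of shuffles, whereas you inherit the existence of $A_{n,p}$ and the bound $\#A_{n,p}\leq C^{n+p}$ from Section \ref{VI_shuffles} (via $\#S(m_1,\dots,m_k)\leq C^{m_1+\cdots+m_k}$ and $\#S_{j_1,\dots,j_{k-1}}\leq \#S$), which is legitimate given how the lemma is stated.
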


\begin{proof}
The result is proved by induction on $n$. For $n=1$ and $k=0$ the result is
trivial. For $k=1$ we have 
\begin{eqnarray*}
\int_{\theta }^{t}f_{1}(s_{1})\int_{\Delta _{\theta
,s_{1}}^{p}}g_{1}(r_{1})\dots g_{p}(r_{p}) &&dr_{p}\dots dr_{1}ds_{1} \\
&=&\int_{\Delta _{\theta ,t}^{p+1}}f_{1}(w_{1})g_{1}(w_{2})\dots
g_{p}(w_{p+1})dw_{p+1}\dots dw_{1},
\end{eqnarray*}%
where we have put $w_{1}=s_{1},$ $w_{2}=r_{1},\dots ,w_{p+1}=r_{p}$. Hence
the total order of derivatives involved in the product of the last integral
is given by $\sum_{l=1}^{d}\alpha
_{1}^{(l)}+\sum_{l=1}^{d}\sum_{i=1}^{p}\beta _{i}^{(l)}=\left\vert \alpha
\right\vert +\left\vert \beta \right\vert .$

Assume the result holds for $n$ and let us show that this implies that the
result is true for $n+1$. Either $k=0,1$ or $2\leq k\leq n+1$. For $k=0$ the
result is trivial. For $k=1$ we have 
\begin{align*}
\int_{\Delta _{\theta ,t}^{n+1}}& f_{1}(s_{1})\int_{\Delta _{\theta
,s_{1}}^{p}}g_{1}(r_{1})\dots g_{p}(r_{p})dr_{p}\dots
dr_{1}f_{2}(s_{2})\dots f_{n+1}(s_{n+1})ds_{n+1}\dots ds_{1} \\
& =\int_{\theta }^{t}f_{1}(s_{1})\left( \int_{\Delta _{\theta
,s_{1}}^{n}}\int_{\Delta _{\theta ,s_{1}}^{p}}g_{1}(r_{1})\dots
g_{p}(r_{p})dr_{p}\dots dr_{1}f_{2}(s_{2})\dots
f_{n+1}(s_{n+1})ds_{n+1}\dots ds_{2}\right) ds_{1}.
\end{align*}%
Using Section \ref{VI_shuffles} we obtain by employing the shuffle
permutations that the latter inner double integral on diagonals can be
written as a sum of integrals on diagonals of length $p+n$ with products
having a total order of derivatives given by $\sum_{l=1}\sum_{j=2}^{n+1}%
\alpha _{j}^{(l)}+\sum_{l=1}^{d}\sum_{i=1}^{p}\beta _{i}^{(l)}$. Hence we
obtain a sum of products, whose total order of derivatives is $%
\sum_{l=1}^{d}\sum_{j=2}^{n+1}\alpha
_{j}^{(l)}+\sum_{l=1}^{d}\sum_{i=1}^{p}\beta _{i}^{(l)}+\sum_{l=1}^{d}\alpha
_{1}^{(l)}=\left\vert \alpha \right\vert +\left\vert \beta \right\vert .$

For $k\geq 2$ we have (in connection with Section \ref{VI_shuffles}) from
the induction hypothesis that 
\begin{align*}
\int_{\Delta _{\theta ,t}^{n+1}}f_{1}(s_{1})\dots f_{k}(s_{k})\int_{\Delta
_{\theta ,s_{k}}^{p}}g_{1}(r_{1})\dots g_{p}(r_{p})& dr_{p}\dots
dr_{1}f_{k+1}(s_{k+1})\dots f_{n+1}(s_{n+1})ds_{n+1}\dots ds_{1} \\
=\int_{\theta }^{t}f_{1}(s_{1})\int_{\Delta _{\theta
,s_{1}}^{n}}f_{2}(s_{2})\dots f_{k}(s_{k})& \int_{\Delta _{\theta
,s_{k}}^{p}}g_{1}(r_{1})\dots g_{p}(r_{p})dr_{p}\dots dr_{1} \\
& \times f_{k+1}(s_{k+1})\dots f_{n+1}(s_{n+1})ds_{n+1}\dots ds_{2}ds_{1} \\
=\sum_{\sigma \in A_{n,p}}\int_{\theta }^{t}f_{1}(s_{1})\int_{\Delta
_{\theta ,s_{1}}^{n+p}}& h_{1}^{\sigma }(w_{1})\dots h_{n+p}^{\sigma
}(w_{n+p})dw_{n+p}\dots dw_{1}ds_{1},
\end{align*}%
where each of the products $h_{1}^{\sigma }(w_{1})\cdot \dots \cdot
h_{n+p}^{\sigma }(w_{n+p})$ have a total order of derivatives given by $%
\sum_{l=1}\sum_{j=2}^{n+1}\alpha
_{j}^{(l)}+\sum_{l=1}^{d}\sum_{i=1}^{p}\beta _{i}^{(l)}.$ Thus we get a sum
with respect to a set of permutations $A_{n+1,p}$ with products having a
total order of derivatives which is%
\begin{equation*}
\sum_{l=1}^{d}\sum_{j=2}^{n+1}\alpha
_{j}^{(l)}+\sum_{l=1}^{d}\sum_{i=1}^{p}\beta _{i}^{(l)}+\sum_{l=1}^{d}\alpha
_{1}^{(l)}=\left\vert \alpha \right\vert +\left\vert \beta \right\vert .
\end{equation*}
\end{proof}

\end{document}